\newcommand*{\MRref}[2]{ \href{http://www.ams.org/mathscinet-getitem?mr=#1}{MR \textbf{#1}}}
\newcommand*{\arxiv}[1]{\href{http://www.arxiv.org/abs/#1}{arXiv: #1}}
\renewcommand{\PrintDOI}[1]{\href{http://dx.doi.org/\detokenize{#1}}{doi: \detokenize{#1}}%
  \IfEmptyBibField{pages}{, (to appear in print)}{}}
\def\commutatif{\ar@{}[rd]|{\circlearrowleft}}
\newcommand{\eq}[1][r]
   {\ar@<-3pt>@{-}[#1]
    \ar@<-1pt>@{}[#1]|<{}="gauche"
    \ar@<+0pt>@{}[#1]|-{}="milieu"
    \ar@<+1pt>@{}[#1]|>{}="droite"
    \ar@/^2pt/@{-}"gauche";"milieu"
    \ar@/_2pt/@{-}"milieu";"droite"}
\def\dar[#1]{\ar@<2pt>[#1]\ar@<-2pt>[#1]}
  \newcommand{\bigon}[4][r]{
    \ar@/^1pc/[#1]^{#2}_*=<0.3pt>{}="HAUT"
    \ar@/_1pc/[#1]_{#3}^*=<0.3pt>{}="BAS"
    \ar@{<=} "HAUT";"BAS" _{#4}
  }
\newcommand{\bigons}[6][r]{  
    \ar@/^2pc/[#1]^{#2}_*=<0.3pt>{}="HAUT"
    \ar@{}    [#1]     ^*=<0.3pt>{}="MILIEUHAUT"
                       _*=<0.3pt>{}="MILIEUBAS"
    \ar[#1]_(0.3){#3}                  
    \ar@/_2pc/[#1]_{#4}^*=<0.3pt>{}="BAS"
    \ar@{<=} "HAUT";"MILIEUHAUT" _{#5}
    \ar@{<=} "MILIEUBAS";"BAS" _{#6}
  }
\newtheorem{thm}{Theorem}[section]
\newtheorem{pro}[thm]{Proposition}
\newtheorem{lem}[thm]{Lemma}
\newtheorem{dfpro}[thm]{Definition and Proposition}
\newtheorem{cor}[thm]{Corollary}
\theoremstyle{definition}
\newtheorem{df}[thm]{Definition}
\newtheorem{nota}[thm]{Notations}
\theoremstyle{remark}
\newtheorem{rmk}[thm]{Remark}
\newtheorem{ex}[thm]{Example}
\def\id{\operatorname{1}}
\def\Id{\operatorname{id}}
\def\<{\langle}
\def\>{\rangle}
\def\rTo{\longrightarrow}
\def\lTo{\longleftarrow}
\def\rrTo{\rightrightarrows}
\def\mto{\longmapsto}
\def\RTo{\Longrightarrow}
\def\mono{\hookrightarrow}
\def\epi{\twoheadrightarrow}
\def\xTo{\xymatrix{\ar@{-x}[r]&}}
\def\hoTo{\xymatrix{\ar@<-0.03mm>[r] \ar@<-0.02mm>[r] \ar@<0.02mm>[r]\ar@<0.03mm>[r] &}} 
\newcommand{\ou}[3]{\overset{#2}{\underset{#3}{#1}}}
\let\frak\mathfrak 
\let\cal\mathcal 
\def\a{\alpha}
\def\b{\beta}
\def\g{\gamma}
\def\G{\Gamma}
\def\d{\delta}
\def\lam{\lambda}
\def\vp{\varphi}
\def\s{\sigma}
\def\t{\theta}
\def\p{\partial}
\def\k{\kappa}
\newcommand{\XExt}{\operatorname{\bf XExt}}
\newcommand{\fXExt}{\operatorname{\frak{XExt}}}
\newcommand{\XMod}{\operatorname{\bf XMod}}
\newcommand{\Ggrd}{\cal G\rrTo \cal G^0} 
\newcommand{\Gamgrd}{\Gamma \rrTo \Gamma^0} 
\newcommand{\Hgrd}{\cal H\rrTo \cal H^0} 
\newcommand{\AutH}{\xymatrix{\Aut(\cal H) \dar[r] & \cal H^0}}
\newcommand{\HG}{\xymatrix{\cal H \ar@{-x}[r]^{\p} & \cal G}}
\newcommand{\HGOne}{\xymatrix{\cal H_1 \ar@{-x}[r]^{\p_1} & \cal G_1}}
\newcommand{\HGTwo}{\xymatrix{\cal H_2 \ar@{-x}[r]^{\p_2} & \cal G_2}}
\newcommand{\HGThree}{\xymatrix{\cal H_3 \ar@{-x}[r]^{\p_3} & \cal G_3}}
\newcommand{\exch}{\xymatrix{\ar@3{->}[r]&}}
\newcommand{\iexch}{\xymatrix{\ar@3{<->}[r]&}}
\newcommand{\Aut}{\operatorname{\bf Aut}}
\newcommand{\Iso}{\operatorname{\bf Iso}}
\newcommand{\Dprod}{ 
\begin{tikzpicture}
\draw (-0.13,0) -- (0,0.13) -- (0.13,0) -- (0,-0.13) -- cycle ;
\draw[black!30] (0,-0.13) -- (0,0.13);
\end{tikzpicture}
}
\newcommand{\tr}{ 
\begin{tikzpicture}
\draw[->, line width=2pt, >=latex] (0,0) -- (1,0);
\end{tikzpicture}
}
\newcommand{\cross}{
\begin{tikzpicture}
\draw[->, dashed, >=latex] (-0.5,-0.1) -- (0.5,0.1);
\draw[<-,>=latex] (-0.5,0.1) -- (0.5,-0.1);
\end{tikzpicture}
}
\newcommand{\xext}{
\begin{tikzpicture}
\draw[->, >=latex] (-0.5,-0.1) -- (0.5,0.1);
\draw[<-,>=latex] (-0.5,0.1) -- (0.5,-0.1);
\end{tikzpicture}
}
\newcommand{\sxc}{ 
\begin{tikzpicture}
\draw[->, double distance=2pt, thick, >=stealth] (0,0) -- (1,0);
\end{tikzpicture}
}
\newcommand{\xc}{ 
\begin{tikzpicture}
\draw[<->, double distance=2pt, thick, >=stealth] (0,0) -- (1,0);
\end{tikzpicture}
}
\tikzset{2ar/.style={double distance=2pt,thick,->,>=stealth}}
\tikzset{3ar/.style={double distance =3pt, thick,->, >=stealth}}
\tikzset{1ar/.style={->,>=stealth,thick}}
\tikzset{2li/.style={-, double distance=2pt, thick}}
\tikzset{crossar/.style={preaction={draw=white, -, line width=6pt}}}
\tikzset{descr/.style={fill=white},text height=1.5ex, text depth=0.25ex}
\begin{document}

\title{Crossed extensions and equivalences of topological $2$--groupoids}

\author{El-ka\"ioum M. Moutuou%
    \thanks{E-mail: \texttt{elkaioum@moutuou.net}}}

\maketitle 

\begin{abstract}
We provide concrete models for generalized morphisms and Morita equivalences of topological $2$--groupoids by introducing the notions of crossings and crossed extensions of groupoid crossed modules. A systematic study of these objects is elaborated and an explicit description of how they do yield a groupoid and geometric picture of weak $2$--groupoid morphisms is presented. Specifically, we construct a weak 3-category whose objects are crossed modules of topological groupoids and in which weak $1$--isomorphisms correspond to Morita equivalences in the "category" of topological $2$--groupoids. 
\end{abstract}

\tableofcontents

\section{Introduction}

A {\em $2$--groupoid} is a strict $2$--category (\cite{Baez:intro-nCat}) ${\bf G}$ in which all $1$--arrows and $2$--arrows are invertible. It is the data of three sets ${\bf G}^0, {\bf G}^1, {\bf G}^2$ of {\em units, $1$--morphisms} and {$2$--morphisms}, respectively, {\em structure maps} $${\bf G}^2\ou{\rrTo}{s_2}{t_2}{\bf G}^1\ou{\rrTo}{s}{t}{\bf G}^0,$$ a partial product and an inversion map on $1$--morphisms making ${\bf G}^1\ou{\rrTo}{s}{t}{\bf G}^0$ a groupoid, and two partial products $\star_h$ and $\star_v$ respectively called {\em horizontal} and {\em vertical} compositions on $2$--morphisms, all verifying some coherence relations. Moreover, ${\bf G}^2\ou{\rrTo}{s_2}{t_2}{\bf G}^1$ is a groupoid with respect to the operation $\star_v$. ${\bf G}$ is a {\em topological $2$--groupoid} if ${\bf G}^0, {\bf G}^1, {\bf G}^0$ are topological spaces and all the structure maps as well as all the operations involved are continuous maps. For instance, every topological groupoid $\cal G\rrTo X$ can be seen as a topological $2$--groupoid with all identity $2$--morphisms. Homomorphisms (resp. isomorphisms) of topological $2$--groupoids are defined as strict $2$--functors whose components on the sets of units, $1$--morphisms, and $2$--morphisms, are all continuous maps (resp. homeomorphisms). Also, there is a well known notion of {\em equivalences} of higher categories (\cite{Lurie:HigherTopos}) which can easily be modified to adapt to topological $2$--groupoids (see Definition~\ref{def:weak_equiv_2-grpd}). For instance, suppose $\cal G\rrTo X$ is a topological groupoid and $\cal U=\{U_i\}_I$ is an open covering of $X$. We form the {\em cover $2$--groupoid} $\cal G[\cal U,2]\rrTo \cal G[\cal U]\rrTo \coprod_iU_i$ (see Example~\ref{ex:cover_2-gpd}) by taking 

\[
\cal G[\cal U,2]\colonequals \left\{(i_1,i_2,g,h,j_1,j_2)\in I^2\times \cal G^2\times I^2\mid t(g)=t(h)\in U_{i_1i_2}, s(g)=s(h)\in U_{j_1j_2}\right\}
\]
where $U_{ij}\colonequals U_i\cap U_j$, and $
\cal G[\cal U]\colonequals \{(i,g,j)\in I\times \cal G\times I\mid t(g)\in U_i, s(g)\in U_j\}$. Then, the canonical map $$\cal G[\cal U,2]\ni (i_1,i_2,g,h,j_1,j_2)\mto gh^{-1}g\in \cal G$$ defines an equivalence of topological $2$--groupoids.  

Unfortunately, such maps appear to be highly insufficient as a way to map $2$--groupoids to one another, for they rarely occur in practice. For instance, since (topological) groupoids are particular cases of (topological) $2$--groupoids, it is natural to expect generalized morphisms of groupoids (\cites{Hilsum-Skandalis:Morphismes, Moutuou:Real.Cohomology, Tu-Xu:Ring_structure}) to fit in some notion of {\em generalized morphisms} of topological $2$--groupoids. This problem was addressed by C. Zhu who proposed a formal definition of {\em generalized morphisms} of $n$--groupoids using a simplicial point of view (\cite[\S 2.2]{Zhu:Lie_n_stacky}). Precisely, she defined a generalized morphism from ${\bf G}$ to ${\bf H}$ to be a zig-zag of homomorphisms ${\bf G}\stackrel{\sim}{\lTo}{\bf G}'\rTo {\bf H}$ where the leftward one is a {\em hypercover} (Definition 2.3 in {\em ibid}.). We shall note that in the case of $2$--groupoids, Zhu's generalized morphisms correspond to what we have called {\em weak homomorphisms} in this work (Definition~\ref{df:weak_hom_2-grpd}). \\

One of the original goals of this work was an attempt to investigate a more concrete picture of what should be "generalized morphisms" and Morita equivalences of topological $2$--groupoids by laying out the whole problem on the framework of topological ($1$--)groupoids. We have reached this goal by using the well known one-to-one correspondence between $2$--groupoids and {\em crossed modules of topological groupoids}, which led us to develop a systematic study of these objects. Crossed modules were first defined by Whitehead (\cite{Whitehead:Combinatorial_homotopy_I}) for groups in order to study $3$--type spaces. Recently these gadgets have aroused much interest among mathematicians from various areas and have been used, for instance, in twisted $K$-theory (\cites{Tu-Xu:Ring_structure}), differential geometry (\cite{Ginot-Stienon:G-gerbes}), groupoid symmetries and actions on $C^*$--algebras (\cite{Buss-Meyer-Zhu:Non-Hausdorff_symmetries}), gr-stacks (\cite{Aldrovandi-Noohi:Butterflies}), etc.    \\

A {\em groupoid crossed module} (\cites{Ginot-Stienon:G-gerbes,Tu-Xu:Ring_structure}) ${\frak G}$ with unit space $X$ consists of a topological groupoid $\cal G\rrTo X$, a bundle of topological groups $\cal H\rTo X$, a strict homomorphism of topological groupoids $\cal H\stackrel{\p}{\rTo}\cal G$, and a groupoid action by automorphisms of $\cal G$ on $\cal H$; that is, for each arrow $x\stackrel{g}{\rTo}y$ in  $\cal G$, an isomorphism of topological groups $\cal H^{y}\ni h\mto h^g\in \cal H^x$, with the property that $\p(h^g)=g^{-1}\p(h)g$, and  $k^{\p(h)}=h^{-1}kh$ for  $h\in \cal H^{s(k)}$. In particular, a crossed module of topological groups (\cites{Whitehead:Combinatorial_homotopy_I, Holt:Cohomology, Noohi:two-groupoids}) is a groupoid crossed module with one-object unit space. Any topological groupoid $\cal G\rrTo X$ can be seen as a groupoid crossed module with $\cal H=X$, $\p$ being the inclusion map $X\rTo \cal G$, together with the trivial action of $\cal G$ on $X$. \\

One can "unfold" a topological $2$--groupoid ${\bf G}$ to get a groupoid crossed module ${\frak G}$ by taking $\cal G$ to be the topological groupoid ${\bf G}^1\rrTo {\bf G}^0$ and $\cal H$ to be the subspace of ${\bf G}^2$ consisting of all $2$--arrows $a\in {\bf G}^2$ whose source $s_2(a)$ are identity $1$--morphisms of the form $\id_x$ with $x\in {\bf G}^0$. The strict morphism $\p\colon \cal H\rTo {\bf G}^1$ is then given by the target map $t_2$, while the groupoid action by automorphisms of ${\bf G}^1$ on $\cal H$ is given by $a^g\colonequals \id_{g^{-1}}\star_ha\star_h\id_g$ for $g\in {\bf G}^1$ and $a\in \cal H^{t(g)}$. Conversely, given a groupoid crossed module, one forms the topological $2$--groupoid $\cal H\rtimes \cal G\ou{\rrTo}{s_2}{t_2}\cal G\rrTo X$ where $s_2(h,g)=g, t_2(h,h)=\p(h)g$, $(h_1,g_1)\star_h(h_2,g_2)=(h_1h_2^{g_1^{-1}},g_1g_1)$, and $(h,g)\star_v(k,\p(h)g)=(hk,g)$ (cf. \S~\ref{sec:cross_vs_2-grpd}). \\

Now, given two groupoid crossed modules $\frak G_1$ and $\frak G_2$, a {\em (strict) morphism} $\chi\colon \frak G_1\rTo \frak G_2$ is a commutative diagram of groupoid strict morphisms 
\[
\xymatrix{
\cal H_1\ar[r]^{\p_1}\ar[d]_{\chi} & \cal G_1\ar[d]^{\chi} \\
\cal H_2\ar[r]^{\p_2} & \cal G_2
}
\]
satisfying a certain compatibility condition related to the groupoid actions. Such morphisms clearly induce homomorphisms of the $2$--groupoids obtained from the above construction and vice-versa. Moreover, we have weakened this definition by introducing various notions including {\em transformations} between strict morphisms, {\em strong equivalences}, {\em Morita equivalences} of groupoid crossed modules, and {\em crosssings}, and shown that they encode morphisms and weak morphisms of topological $2$--groupoids. \\

More precisely, let $\frak G_1$ and $\frak G_2$ be groupoid crossed modules with unit spaces $X_1$ and $X_2$, respectively. We define a {\em crossing} $\frak G\underset{\cal M}{\cross}\frak G_2$ to be a topological groupoid $\cal M\rrTo \cal M^0$, together with two continuous maps 
\[
X_1\stackrel{\tau}{\lTo} \cal M^0 \stackrel{\s}{\rTo} X_2
\]
and a commutative diagram 
\[
\xymatrix{
\cal H_1[\cal M^0] \ar[rr]^{\tau^*\p_1} \ar[rd]_{\a_1} && \cal G_1[\cal M^0] \\
& \cal M \ar[ru]_{\a_2}  \ar[rd]^{\b_2} & \\
\cal H_2[\cal M^0] \ar[ru]^{\b_1} \ar[rr]^{\s^*\p_2} && \cal G_2[\cal M^0]
}
\]
satisfying a number of axioms (Definition~\ref{def:crossing}). For example, let $\cal G\rrTo X$ and $\Gamma\rrTo Y$ be topological groupoids viewed as the groupoid crossed modules $X\rTo \cal G$ and $Y\rTo \Gamma$. Then, a crossing 
\[
\xymatrix{
\cal M^0 \ar[rr] \ar[rd] && \cal G[\cal M^0] \\
& \cal M \ar[ru]_{\a}  \ar[rd]^{\b} & \\
\cal M^0 \ar[ru] \ar[rr] && \Gamma[\cal M^0]
}
\]
is nothing but a generalized map from $\cal G$ to $\Gamma$ in the sense of~\cite{Moerdijk:Orbifolds_groupoids}. We have shown the following result (Theorem~\ref{thm:crossing_decomposition}, Proposition~\ref{pro2:crossing} ):\\

\noindent  {\em There is a one-to-one correspondence between crossings of groupoid crossed modules and weak morphisms of topological $2$--groupoids}. \\

Groupoid crossed modules form a weak $3$--category $\XMod$ in which $1$--morphisms are crossings, $2$--morphisms are what we have called {\em exchangers} (\S~\ref{subs:exchangers}), and $3$--morphisms are morphisms of those (Theorem~\ref{thm:3-cat}). Coherently invertible $1$--morphisms in $\XMod$ are called {\em crossed extensions} and, as the terminology suggests, they generalize groupoid extensions (Example~\ref{ex:extension_crossed}) and principal $2$--group(oid) bundles (see \S~\ref{subs:principal}). Moreover (as a consequence of Theorem~\ref{thm:Morita_xext}), we have the following characterization of Morita equivalences of topological $2$--groupoids:\\

\noindent  {\em Let \ ${\bf G}, {\bf H}$ be topological $2$--groupoids, and let ${\frak G}_{\bf G}$ and ${\frak G}_{\bf H}$ be their corresponding groupoid crossed modules. Then, ${\bf G}$ is Morita equivalent to ${\bf H}$ is and only if ${\frak G}_{\bf G}$ and ${\frak G}_{\bf H}$ are coherently $1$--isomorphic in $\XMod$.} \\

Several concepts and results about crossed modules, topological $2$--groupoids, and their morphisms are presented in this work. Also, we shall note that it has come to our attention that our notion of crossings covers {\em butterflies} which were defined only for discrete groups by Noohi in~\cite{Noohi:Butterflies}. \\

\noindent {\bf General plan.} The paper is organized as follows: \S\ref{sec:2-grpd} is devoted to basic definitions and elementary properties about various kinds of morphisms of topological $2$--groupoids. In \S\ref{sec:crossed_modules}, fundamental elements of groupoid crossed modules are presented, transformations of their strict morphisms and {\em hypercovers} are introduced. \S\ref{sec:cross} is an introduction to {\em crossings} and {\em crossed extensions}; also their composition called {\em diamond product} is defined. We go further to construct in \S\ref{sec:2-cat} a $2$--category structure on crossed extensions  by introducing a kind of weak morphisms we called {\em exchangers} which generalize Morita equivalences of groupoid extensions. In \S\ref{sec:3-cat} we give a complete description of all the various compositions of crossed extensions, exchangers, and their morphisms, and then construct explicitly the weak $3$--category $\XMod$ of groupoid crossed modules.

\section{Equivalences of topological $2$--groupoids}\label{sec:2-grpd}

In this section, we revisit topological $2$--groupoids. Specifically, we give explicit definitions and derive basic results on weak morphisms and weak equivalences between topological $2$--groupoids.

\subsection{Strict $2$--groupoids revisited}

Recall 
that a topological groupoid consists of a topological space $\cal G^0$ of {\em units}  (or {\em objects}) $x, y, z$, etc., and a topological space $\cal G^1$ of invertible arrows $g:x\rTo y$ between units, together with
\begin{itemize}
\item two continuous maps $s,t:\cal G^1\rTo \cal G^0$, the {\em source} and {\em target} maps, defined by $s(g)=x$ and $t(g)=y$, for $g:x\rTo y$;
\item a continuous map $\cal G^1\ni (g:x\rTo y)\mto (g^{-1}:y\rTo x)\in \cal G^1$, called the {\em inverse} map;
\item a continuous embedding $\cal G^0\ni x\mto 1_x\in \cal G^1$, where $1_x$ is the identity arrow $x\rTo x$; and
\item a continuous {\em composition} of arrows $\cal G^{(2)}\ni (g_1,g_2)\mto g_1g_2\in \cal G^1$, where $\cal G^{(2)}:=\cal G^1\times_{s,\cal G^0,t}\cal G^1$ is the set of all composable pairs $(g_1, g_2)$; that is, $s(g_1)=t(g_2)$.
\end{itemize} 

Moreover, the inverse map satisfies $gg^{-1}=t(g)$ and $g^{-1}g=s(g)$ for all $g\in \cal G$. We will use the notation $\Ggrd$ for such a groupoid, and when there is no risk of confusion we will write $\cal G$ for $\cal G^1$. As usual, for $x,y\in \cal G^0$, we denote by $\cal G_x, \cal G^y$, and $\cal G_x^y$, as the subspaces of $\cal G$ consisting of all arrows with source $x$, with target $y$, and with source $x$ and target $y$, respectively.\\

In this paper, by a $2$--groupoid, we mean a {\em strict $2$--groupoid} (see for instance~\cites{Buss-Meyer-Zhu:Higher_twisted, Noohi:two-groupoids}); that is, a strict $2$--category (~\cite{Baez:intro-nCat}) in which all $1$--arrows and $2$--arrows are invertible. To unwrap this up, a {\em $2$--groupoid} $\bf G$ consists of 
\begin{itemize}
\item three sets ${\bf G}^0, {\bf G}^1$, and ${\bf G}^2$, whose elements are respectively called objects, {\em $1$--morphisms}, (or $1$--arrows), and {\em $2$--morphisms} (or $2$--arrows);
\item four maps ${\bf G}^2 \ou{\rrTo}{s_2}{t_2} {\bf G}^1\ou{\rrTo}{s}{t} {\bf G}^0$ such that $t\circ t_2=t\circ s_2$ and $s\circ s_2=s\circ t_2$;
\item a partial product ${\bf G}^1\times_{s,{\bf G}^0,t}{\bf G}^1\ni (g,h)\mto gh\in {\bf G}^1$ with respect to which ${\bf G}^1 \ou{\rrTo}{s}{t} {\bf G}^0$ is a groupoid;
\item a partial product ${\bf G}^2\times_{s_2,{\bf G}^1,t_2}{\bf G}^2 \ni (a,b)\mto a\star_vb \in {\bf G}^2$, called {\em vertical composition}, with respect to which ${\bf G}^2 \ou{\rrTo}{s_2}{t_2} {\bf G}^1$ is a groupoid with source and targets maps $s_2$ and $t_2$, respectively;
\item a partial product ${\bf G}^2\times_{s\circ s_2,{\bf G}^0,t\circ s_2}{\bf G}^2\ni (a,b)\mto a\star_hb\in {\bf G}^2$, called {\em horizontal composition}; 
\end{itemize}
with the requirement that the coherence law 
\[
(a_1\star_ha_2)\star_v(b_1\star_hb_2)=(a_1\star_vb_1)\star_h(a_2\star_vb_2)
\]
when the products make sense. Objects are represented by the letters $x,y,z$, etc., while $1$--arrows and $2$--arrows in $\bf G$ are respectively represented by arrows and {\em bigons} as below:

\[
\xymatrix{\relax
y & x \ar[l]_{g} && y && x \bigon[ll]{h}{g}{a}
}
\] 
Horizontal composition can then be visualized through horizontal concatenation of bigons as below
\[
\xymatrix{\relax
z && y \bigon[ll]{h_1}{g_1}{a_1} && x\bigon[ll]{h_2}{g_2}{a_2} & \mto & z &&& x \bigon[lll]{h_1h_2}{g_1g_2}{a_1\star_ha_2}
}
\]
and vertical composition is pictured through vertical concatenation of bigons as follow 
\[
\xymatrix{\relax
y &&& x \bigons[lll]{k}{h}{g}{a}{b} & \mto & y && x \bigon[ll]{k}{g}{a\star_vb} 
}
\]

\begin{df}
By a {\em topological $2$--groupoid} it is meant a $2$--groupoid $\bf G$ where $\bf G^0, \bf G^1,\bf G^2$ are topological spaces, and all the structure maps are continuous; that is, $s,t,s_2,t_2$, all the composition maps on $\bf G^1$ and $\bf G^2$, as well as the inversion maps of the groupoids ${\bf G}^1\rrTo {\bf G}^0$ and ${\bf G}^2\rrTo {\bf G}^1$ are all continuous.
\end{df}

\begin{ex}
Every topological groupoid $\cal G\rrTo \cal G^0$ is a topological $2$--groupoid with $2$--morphisms being all trivial (the identity map). 
\end{ex}

\begin{ex}[Cover $2$--groupoid]\label{ex:cover_2-gpd}
Let $\cal G\rrTo \cal G^0$ be a topological groupoid and $\cal U=\{U_i\}_{i\in I}$ be an open cover of the unit space. We form the {\em cover $2$--groupoid} $\cal G[\cal U,2]$ as follows: 
\begin{itemize}
\item the object space is the disjoint union $\coprod_iU_i=\{(i,x)\in I\times \cal G^0\mid x\in U_i\}$;
\item $1$--morphisms are elements of the space 
\[
\cal G[\cal U]\colonequals \{(i,g,j)\in I\times \cal G\times I\mid t(g)\in U_i, s(g)\in U_j\}
\]
with inverse, source, and target maps respectively defined by \[
(i,g,j)^{-1}\colonequals (j,g^{-1},i), \ s(i,g,j)\colonequals (j,s(g)), \ {\rm and\ } t(i,g,j)\colonequals (i,t(g));
\] 
\item $2$--morphisms space $\cal G[\cal U]^2$ is defined as
\[
\cal G[\cal U,2]\colonequals \left\{(i_1,i_2,g,h,j_1,j_2)\in I^2\times \cal G^2\times I^2\mid t(g)=t(h)\in U_{i_1i_2}, s(g)=s(h)\in U_{j_1j_2}\right\}
\]
with inverse, source, and target respectively given by 
\[
\begin{array}{lcl}
(i_1,i_2,g,h,j_1,j_2)^{-1} & = & (i_1,i_2,h,g,j_1,j_2); \\ 
s_2(i_1,i_2,g,h,j_1,j_2) & = & (i_1,g,j_1) ; \ {\rm and\ }\\
t_2(i_1,i_2,g,h,j_1,j_2) & = & (i_2,h,j_2);
\end{array}
\]
\item composition of $1$--morphisms is $(i,g,j)(j,h,k)\colonequals(i,gh,k)$, when $s(g)=t(h)$;
\item horizontal composition of $2$--morphisms is given by 
\[
(i_1,i_2,g_1,h_1,j_1,j_2)\star_h(j_1,j_2,g_2,h_2,k_1,k_2)\colonequals (i_1,i_2, g_1g_2, h_1h_2, k_1,k_2)
\]
and is defined for $s(g_1)=t(g_2)$;
\item vertical composition of $2$--morphisms is 
\[
(i_1,i_2,g,h,j_1,j_2)\star_v(i_2,i_3,h,k,j_2,j_3)\colonequals (i_1,i_3,g,k,j_1,j_3).
\]
\end{itemize}
\end{ex}

\noindent Two $1$--arrows $g$ and $h$ with same source and target in $\bf G$ are said to be $2$--{\em isomorphic} if there is a $2$--arrow $a\colon g\RTo h$. 

\begin{df}
A ({\em strict}) {\em homomorphism} $\phi\colon {\bf G}\rTo {\bf H}$ of topological $2$--groupoids consists of a commutative diagram of continuous maps 
\[
\xymatrix{
\bf G^2 \dar[r]^{^{s_2}}_{t_2} \ar[d]_{\phi^2} & \bf G^2\dar[r]^{^s}_t \ar[d]_{\phi^1} & \bf G^0\ar[d]^{\phi^0} \\ 
\bf H^2 \dar[r]^{^{s_2}}_{t_2} & \bf H^1\dar[r]^{^s}_t & \bf H^0
}
\]
which respects all the obvious relations defining the $2$--groupoid structures of $\bf G$ and $\bf H$. We say $\phi$ is a {\em (strict) isomorphism} of topological $2$--groupoids if all the vertical arrows of the above diagram are homeomorphisms.
\end{df} 

It is immediate that the composite $\psi\circ \phi=(\psi^2\circ \phi^2,\psi^1\circ \phi^1,\psi^0\circ\phi^0)$ of two homomorphisms of topological $2$--groupoids $\bf G \stackrel{\phi}{\rTo}\bf H\stackrel{\psi}{\rTo} \bf K$ is a homomorphism. We say the topological $2$--groupoids $\bf G$ and $\bf H$ are isomorphic if there are homomorphisms $\bf G\stackrel{\phi}{\rTo}\bf H \stackrel{\psi}{\rTo} \bf G$ such that $\phi\circ \psi =\Id_{\bf H}$ and $\psi \circ \phi=\Id_{\bf G}$. For the sake of simplicity, we will remove the superscripts from the components $\phi^2, \phi^1, \phi^0$ and denote all of them simply by $\phi$.

\begin{ex}\label{ex2:cover_2-gpd}
Let $\cal G\rrTo \cal G^0$ be a topological groupoid and $\cal U=\{U_i\}_{i\in I}$ an open cover of $\cal G^0$. Then we get a homomorphism of topological $2$--groupoids $\phi\colon \cal G[\cal U,2]\rTo \cal G$, where $\cal G$ is seen as a $2$--groupoid, by setting 
\[
\begin{array}{rcl}
\phi^2(i_1,i_2,g,h,j_1,j_2) & \colonequals & gh^{-1}g, \ (i_1,i_2,g,h,j_1,j_2)\in \cal G[\cal U,2]; \\
\phi^1(i,g,h) & \colonequals & \phi^2(i,i,g,g,j,j)=g, \ (i,g,j)\in \cal G[\cal U]; \\ 
\phi^0(i,x) & \colonequals & \phi^1(i,\id_x,i)=x, \ (i,x)\in \coprod_iU_i.
\end{array}
\]
\end{ex}

\subsection{Morita equivalences}

\begin{df}[Transformation]\label{df:transformation}
Let ${\bf G}, {\bf H}$ be topological $2$--groupoids, and $\phi, \psi\colon \bf G\rTo \bf H$ be two homomorphisms. A {\em transformation} $\phi \tr \psi$ is a continuous map $v\colon {\bf G}^1\rTo {\bf H}^2$ such that 
\begin{itemize}
\item[(i)] for every $x\in {\bf G}^0$, $v_x$ is an $1$-arrow from $\phi(x)$ to $\psi(x)$ in ${\bf H}$;
\item[(ii)] for every $1$--arrow $x\stackrel{g}{\rTo}y$ in $\bf G$, $v_g$ is a $2$--arrow from $\psi(g)v_x$ to $v_y\phi(g)$ in $\bf H$; that is, there is a bigon
\[
\xymatrix{\relax
\psi(y) && \phi(x) \bigon[ll]{v_y\phi(g)}{\psi(g)v_x}{v_g}
}
\]
\item[(iii)] for all chain of $1$--arrows $x\stackrel{h}{\rTo}y\stackrel{g}{\rTo}z$ in $\bf G$, the following equality of $2$--arrows is satisfied in $\bf H$
\[
v_{gh}=v_g\star_h\id_{v_y^{-1}}\star_h v_h
\]
\item[(iv)] for all bigon $\xymatrix{\relax  y && x \bigon[ll]{h}{g}{a}}$ the diagram of $2$--arrows below commutes in ${\bf H}^2\ou{\rrTo}{s_2}{t_2} {\bf H}^1$
\[
\xymatrix{
\psi(g)v_x \ar@{=>}[rr]^-{v_g}  \ar@{=>}[d]_-{\psi(a)\star_h\id_{v_x}} && v_y\phi(g) \ar@{=>}[d]^{\id_{v_y}\star_h\phi(a)} \\
\phi(h)v_x \ar@{=>}[rr]^{v_h} && v_y\phi(h) 
}
\] 
\end{itemize}
\end{df}

\begin{df}[Strong equivalences]\label{df:strong_equiv}
We say a strict homomorphism $\phi\colon {\bf G}\rTo {\bf H}$ of topological $2$--groupoids is a {\em strong equivalence} provided there exist a strict homomorphism $\psi\colon {\bf H}\rTo {\bf G}$ and transformations $u\colon \phi\circ \psi \tr \Id_{\bf H}$ and $v\colon \psi\circ \phi \tr \Id_{\bf G}$.
\end{df}

\begin{lem}\label{lem:strong_equiv}
Assume $\phi\colon {\bf G}\rTo {\bf H}$ is a strong equivalence of $2$--groupoids. Then the following hold.
\begin{itemize}
\item[(i)] $\phi$ is injective on $1$--arrows up to $2$--isomorphisms; that is, if $g,h\colon x\rTo y$ are $1$--arrows in $\bf G$ such that $\phi(g)$ and $\phi(h)$ are $2$--isomorphic in ${\bf H}$, then $g$ and $h$ are $2$--isomorphic in ${\bf G}$. 
\item[(ii)] $\phi$ is a homeomorphism on $2$--arrows; that is, the map 
\begin{eqnarray}\label{eq:strong_map1}
{\bf G}^2\ni a\mto (t_2(a),\phi(a),s_2(a))\in {\bf G}^1\times_{\phi, t_2}{\bf H}^2\times_{s_2,\phi}{\bf G}^1, 
\end{eqnarray}
is a homeomorphism.
\end{itemize} 
\end{lem}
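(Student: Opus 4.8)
The plan is to fix once and for all a strict homomorphism $\psi\colon{\bf H}\rTo{\bf G}$ with transformations $u\colon\phi\circ\psi\tr\Id_{\bf H}$ and $v\colon\psi\circ\phi\tr\Id_{\bf G}$, as granted by Definition~\ref{df:strong_equiv}, and to prove both assertions by transporting $2$--arrows along the cells of $u$ and $v$ and then whiskering away the correction $1$--arrows $v_x$ (using $v_xv_x^{-1}=1_x$ and the unit laws for $\star_h$). For (i), let $g,h\colon x\rTo y$ be $1$--arrows of ${\bf G}$ together with a $2$--arrow $b\colon\phi(g)\RTo\phi(h)$ in ${\bf H}$. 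Pushing $b$ through $\psi$ yields $\psi(b)\colon\psi\phi(g)\RTo\psi\phi(h)$, and with the transformation cells $v_g\colon gv_x\RTo v_y\psi\phi(g)$ and $v_h\colon hv_x\RTo v_y\psi\phi(h)$ of Definition~\ref{df:transformation}(ii) the vertical composite
\[
c\ \colonequals\ v_h^{-1}\star_v\bigl(\id_{v_y}\star_h\psi(b)\bigr)\star_v v_g
\]
is a well-defined $2$--arrow $gv_x\RTo hv_x$; whiskering $c$ on the right by $\id_{v_x^{-1}}$ then gives a $2$--arrow $g\RTo h$ in ${\bf G}$, so $g$ and $h$ are $2$--isomorphic. (Here only the existence of the cells $v_g$, not the transformation axioms, is used.)

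For (ii), the map~\eqref{eq:strong_map1} is visibly well defined and continuous, being built from $s_2$, $t_2$ and the continuous component $\phi\colon{\bf G}^2\rTo{\bf H}^2$; the task is thus to produce a continuous two-sided inverse. \emph{Injectivity} reduces to the claim that $\phi$ reflects vertical identities: if $\phi(e)=\id_{\phi(g')}$ for some $e\colon g'\RTo g'$, then $e=\id_{g'}$. Indeed, granting this, two parallel $2$--arrows $a_1,a_2$ with $\phi(a_1)=\phi(a_2)$ produce $e\colonequals a_1\star_v a_2^{-1}$ with $\phi(e)=\id$, whence $e=\id$ and $a_1=a_2$. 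To prove the claim, write $g'\colon x\rTo y$ and feed the bigon $e$ into Definition~\ref{df:transformation}(iv) for $v$: since $\psi\phi(e)=\id$, the commuting square collapses to the identity $v_{g'}=v_{g'}\star_v\bigl(e\star_h\id_{v_x}\bigr)$; cancelling the vertically invertible $v_{g'}$ gives $e\star_h\id_{v_x}=\id_{g'v_x}$, and whiskering by $\id_{v_x^{-1}}$ gives $e=\id_{g'}$.

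\emph{Surjectivity} runs the construction of (i) in reverse: a triple $(g_1,b,g_0)$ in the target of~\eqref{eq:strong_map1} is sent to the $2$--arrow $a\in{\bf G}^2$ obtained by conjugating $\psi(b)$ by the cells $v_{g_0},v_{g_1}$ and whiskering away the correction $1$--arrows, exactly as in (i). This assignment uses only $\psi$, the continuous map $g\mapsto v_g$, and the continuous structure maps of ${\bf G}$, so it is continuous; together with injectivity, it upgrades~\eqref{eq:strong_map1} to a homeomorphism \emph{as soon as} $\phi(a)=b$. Verifying this last identity is the step I expect to be the main obstacle. Pushing the formula for $a$ through the strict homomorphism $\phi$ expresses $\phi(a)$ in terms of $\phi\psi(b)$ conjugated by the cells $\phi(v_{g_i})$, and one must now genuinely bring in the \emph{other} transformation: rewrite $\phi\psi(b)$ by applying Definition~\ref{df:transformation}(iv) for $u$ to the bigon $b$, identify the cells $\phi(v_g)$ with $u_{\phi(g)}$ using axiom~(iii), and then collapse the resulting expression down to $b$ by repeated use of the interchange law $(a_1\star_ha_2)\star_v(b_1\star_hb_2)=(a_1\star_vb_1)\star_h(a_2\star_vb_2)$. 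This amounts to careful bookkeeping with the sources and targets of whiskered composites; once it is carried out, (ii) follows.
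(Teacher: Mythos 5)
Your part (i) and the injectivity half of (ii) track the paper's own argument closely: the same $2$--arrow built by conjugating $\psi(b)$ with the naturality cells of $v\colon\psi\circ\phi\tr\Id_{\bf G}$ and whiskering away $v_x$, and the same appeal to axiom (iv) of Definition~\ref{df:transformation} for injectivity (the paper cancels directly from $(a\star_h\id_{v_x})\star_v v_h=(a'\star_h\id_{v_x})\star_v v_h$ rather than passing through ``$\phi$ reflects vertical identities'', but that is cosmetic).

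The gap is exactly where you predicted it: the verification that $\phi(a)=b$. You propose to ``identify the cells $\phi(v_g)$ with $u_{\phi(g)}$ using axiom~(iii)''. No such identification is available. Axiom (iii) is the cocycle condition $v_{gh}=v_g\star_h\id_{v_y^{-1}}\star_h v_h$, which relates values of $v$ to one another and says nothing about $u$. More fundamentally, Definition~\ref{df:strong_equiv} imposes no compatibility whatsoever between the two transformations $u\colon\phi\circ\psi\tr\Id_{\bf H}$ and $v\colon\psi\circ\phi\tr\Id_{\bf G}$ --- there are no triangle/zigzag identities --- so $\phi(v_x)$ need not equal $u_{\phi(x)}$, and the $2$--arrows $\phi(v_g)$ and $u_{\phi(g)}$ need not even be parallel. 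Any computation of $\phi(a)$ that trades $\phi(v_g)$ for $u_{\phi(g)}$ therefore cannot be carried out.

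The paper's route avoids computing $\phi(a)$ inside ${\bf H}$ altogether. First, the identity-reflection argument you ran for $\phi$ via axiom (iv) applied to $v$ works verbatim for $\psi$ via axiom (iv) applied to $u$, showing that $\psi$ is injective on $2$--arrows. Next, the naturality square for $v$ at the bigon $a$ gives $(a\star_h\id_{v_x})\star_v v_h = v_g\star_v(\id_{v_y}\star_h\psi\phi(a))$, while the defining formula of $a$ produces the same left-hand side with $\psi(b)$ in place of $\psi\phi(a)$; cancelling the invertible cells yields $\psi\phi(a)=\psi(b)$, and injectivity of $\psi$ on $2$--arrows then gives $\phi(a)=b$. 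You already have every ingredient for this; only the final comparison needs to be rerouted through $\psi$ instead of attempted directly in ${\bf H}$.
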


\begin{proof}
Let $\psi\colon {\bf H}\rTo {\bf G}, \  u\colon \bf G^1\rTo {\bf G}^2$, and $v\colon {\bf H}^1\rTo {\bf H}^2$ be as in the definition. (i) Suppose we are given a $2$--arrow $$\xymatrix{\relax  \phi(y) && \phi(x) \bigon[ll]{\phi(h)}{\phi(g)}{b}}$$ in $\bf H$. Then $\xymatrix{\relax  y && x \bigon[ll]{h}{g}{a}}$ is a $2$--arrow in ${\bf G}$, where 
\begin{eqnarray}\label{eq1:strong_a}
a\colonequals (u_g\star_v(\id_{u_y}\star_h\psi(b))\star_vu_h)\star_h\id_{u_x^{-1}}.
\end{eqnarray}
\noindent (ii) First, note that from axiom (iv) in Definition~\ref{df:transformation} applied to the transformation $\psi\circ \phi \stackrel{u}{\RTo}\id_{\bf G}$, if $\phi(a)=\phi(a')$ for two $2$--arrows in $\bf G$, then we get 
\[
(a\star_h\id_{u_x})\star_vu_h=u_g\star_v(\id_{u_y}\star_h\psi(\phi(a')))=(a'\star_h\id_{u_x})\star_vu_h.
\]
Therefore, $a=a'$ in ${\bf G}$, and the continuous map~\eqref{eq:strong_map1} is injective. Clearly, the same axiom applied to the natural transformation $\phi\circ \psi \stackrel{v}{\RTo}\id_{\bf H}$ shows that when $b$ and $b'$ are $2$--arrows in ${\bf H}$ such that $\psi(b)=\psi(b')$, then $b=b'$. Now to see that the map~\eqref{eq:strong_map1} is surjective, it suffices to show that for a $2$--arows of the form
\[
\xymatrix{\relax  \phi(y) && \phi(x) \bigon[ll]{\phi(h)}{\phi(g)}{b}}
\] 
in ${\bf H}$, we have $b=\phi(a)$ where $a\colon g\RTo h\in {\bf G}^2$ is given by the formula~\eqref{eq1:strong_a}. In virtue of Definition~\ref{df:transformation} (iv), we have the following commutative diagram of $2$--arrows 
\[
\xymatrix{
gu_x\ar@{=>}[rr]^{u_g} \ar@{=>}[d]_{a\star_h\id_{u_x}} && u_y\psi\phi(g) \ar@{=>}[d]^{\id_{u_y}\star_h\psi\phi(a)} \\ 
hu_x\ar@{=>}[rr]^{u_h} && u_y\psi\phi(h) 
}
\]
But, by definition of the element $a$, we get 
\[
u_g\star_v(\id_{u_y}\star_h\psi(b))=(a\star_h\id_{u_x})\star_vu_h.
\]
It follows that $\id_{u_y}\star_h\psi(b)=\id_{u_y}\star_h\psi(\phi(a))$, hence $\phi(a)=b$; which implies surjectivity. Moreover, it is straightforward that the map
\[
{\bf G}^1\times_{\phi, t_2}{\bf H}^2\times_{s_2,\phi}{\bf G}^1\ni (h,b,g)\mto a\in {\bf G}^2
\]
is continuous.
\end{proof}

Let us now recall a few notions about $2$--categories (see~\cite[\S 1.13]{Lurie:HigherTopos}). Two objects $x,y$ in a $2$--category $\cal C$ are said {\em equivalent} provided there exists two arrows $x\stackrel{f}{\rTo}y\stackrel{g}{\rTo}x$ and two invertible $2$--arrows $fg\RTo \id_y$ and $gf\RTo \id_x$. Let $\cal C$ and $\cal C'$ be $2$--categories. A $2$--functor $\phi\colon \cal C\rTo \cal C'$ is an {\em equivalence} if 
\begin{itemize}
\item it is {\em essentially surjective}; that is, every object $y$ in $\cal C'$ is equivalent to an object of the form $\phi(x)$ for some object $x$ in $\cal C$;
\item it is fully faithful; that is, for all pair of objects $x,y$ in $\cal C$, the functor 
\[
\cal C(x,y)\rTo \cal C'(\phi(x),\phi(y))
\]
is an equivalence of categories. 
\end{itemize} 

For topological $2$--groupoids, we give the definition below.

\begin{df}[Weak equivalences]\label{def:weak_equiv_2-grpd}
A strict homomorphism $\phi\colon {\bf G}\rTo {\bf H}$ of topological $2$--groupoids is said to be a {\em weak equivalence} if the following are satisfied
\begin{enumerate}[label=(\textbf{WE\arabic*}), align=left]
\item\label{WE1} the continuous map $\xymatrix{{\bf G}^0\times_{\phi,{\bf H}^0,t}{\bf H}^1 \ar[rr]^-{s\circ pr_2} && {\bf H}^0}$ is surjective;
\item\label{WE2} the continuous map ${\bf G}^1\times_{\phi,{\bf H}^1,t_2}{\bf H}^2\ni (g,b)\mto (t(g),s_2(b),s(g))\in {\bf H}^1[\bf G^0]$ is surjective;
\item\label{WE3} the diagram 
\[
\xymatrix{
{\bf G}^2 \ar[rr]^-\phi \ar[d]_{(s_2,t_2)} && {\bf H}^2\ar[d]^{(s_2,t_2)} \\ {\bf G}^1\times {\bf G}^1 \ar[rr]^{\phi\times \phi} && {\bf H}^1\times {\bf H}^1
}
\]
is a pullback of topological spaces.
\end{enumerate}
We will write ${\bf G}\stackrel{\sim}{\rTo} {\bf H}$ for a weak equivalence of $2$--groupoids.
\end{df}

\begin{rmk}\label{rmk:we_2-grpd_Morita_grpd}
Note that, in particular, axioms~\ref{WE2} and~\ref{WE3} imply that $\phi$ induces a weak equivalence of topological groupoids between ${\bf G}^2\ou{\rrTo}{s_2}{t_2}{\bf G}^1$ and ${\bf H}^2\ou{\rrTo}{s_2}{t_2} {\bf H}^1$ (see~\cite{Moerdijk:Orbifolds_groupoids}).
\end{rmk}

\begin{ex}
It is easy to check that for any topological groupoid $\cal G\rrTo \cal G^0$ and an open cover $\cal U$ of $\cal G^0$, the homomorphism $\phi\colon \cal G[\cal U,2]\rTo \cal G$ defined in Example~\ref{ex2:cover_2-gpd} is a weak equivalence.
\end{ex}

\begin{pro}\label{pro:strong_weak}
Strong equivalences of topological $2$--groupoids are weak equivalences.
\end{pro}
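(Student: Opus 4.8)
The plan is to check the three defining conditions \ref{WE1}, \ref{WE2}, \ref{WE3} of Definition~\ref{def:weak_equiv_2-grpd} one at a time, using a strict homomorphism $\psi\colon{\bf H}\rTo{\bf G}$ together with transformations $u\colon\phi\circ\psi\tr\Id_{\bf H}$ and $v\colon\psi\circ\phi\tr\Id_{\bf G}$ supplied by Definition~\ref{df:strong_equiv}. Condition~\ref{WE3} is essentially already done: Lemma~\ref{lem:strong_equiv}(ii) asserts that $a\mapsto(t_2(a),\phi(a),s_2(a))$ is a homeomorphism of ${\bf G}^2$ onto ${\bf G}^1\times_{\phi,t_2}{\bf H}^2\times_{s_2,\phi}{\bf G}^1$; after permuting the three factors this target is precisely $({\bf G}^1\times{\bf G}^1)\times_{{\bf H}^1\times{\bf H}^1}{\bf H}^2$, and the homeomorphism in question is exactly the canonical comparison map for the square in~\ref{WE3}, so that square is a pullback.

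Next I would handle \ref{WE1}, which is immediate from the object components of $u$: for $y\in{\bf H}^0$ put $x\colonequals\psi(y)$; by Definition~\ref{df:transformation}(i), $u_y$ is a $1$--arrow $\phi\psi(y)\rTo y$ of ${\bf H}$, and $\phi\psi(y)=\phi(x)$, so $(x,u_y^{-1})$ lies in ${\bf G}^0\times_{\phi,{\bf H}^0,t}{\bf H}^1$ and is sent to $s(u_y^{-1})=y$. For~\ref{WE2}, fix $x,y\in{\bf G}^0$ and a $1$--arrow $k\colon\phi(y)\rTo\phi(x)$ in ${\bf H}$, and set
\[
g\colonequals v_x\cdot\psi(k)\cdot v_y^{-1}\colon y\rTo x\qquad\text{in }{\bf G},
\]
so that $\phi(g)=\phi(v_x)\cdot\phi\psi(k)\cdot\phi(v_y)^{-1}$. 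It then suffices to produce a $2$--arrow $b\colon k\RTo\phi(g)$ in ${\bf H}$ (i.e. with $s_2(b)=k$ and $t_2(b)=\phi(g)$), for then $(g,b)$ lies in the domain of the map of~\ref{WE2} and is carried to $(x,k,y)$. Such a $b$ I would obtain by pasting: whiskering the component $u_k\colon k\cdot u_{\phi(y)}\RTo u_{\phi(x)}\cdot\phi\psi(k)$ of Definition~\ref{df:transformation}(ii) on the right by $\id_{u_{\phi(y)}^{-1}}$ gives a $2$--arrow $k\RTo u_{\phi(x)}\cdot\phi\psi(k)\cdot u_{\phi(y)}^{-1}$, and one then composes vertically with a $2$--arrow $u_{\phi(x)}\cdot\phi\psi(k)\cdot u_{\phi(y)}^{-1}\RTo\phi(v_x)\cdot\phi\psi(k)\cdot\phi(v_y)^{-1}$ manufactured, via the coherence law, from the $\phi$--images of suitable components of $v$ and axioms (iii)--(iv) of Definition~\ref{df:transformation}. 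The resulting formula for $b$ in terms of $k$ (as well as the formula for $g$) is built entirely from continuous operations, so nothing beyond surjectivity needs checking — which is all \ref{WE1}--\ref{WE2} demand.

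The main obstacle is exactly this last pasting in~\ref{WE2}: the $2$--cell $u_k$ naturally lands at $u_{\phi(x)}\cdot\phi\psi(k)\cdot u_{\phi(y)}^{-1}$ rather than at $\phi(g)$, and bridging the gap requires coherent comparison $2$--cells $u_{\phi(x)}\RTo\phi(v_x)$ and $u_{\phi(y)}\RTo\phi(v_y)$, which are not available by inspection — the naive candidates produce an infinite regress. The clean way around this is to first replace $(u,v)$ by a coherent pair satisfying the triangle identities (always possible, and without altering $\phi$), after which the comparison cells exist on the nose; alternatively one carries out the comparison by a direct but longer diagram chase using the coherence law and axioms (iii)--(iv) of Definition~\ref{df:transformation}. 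Once the pasting closes, verifying that $(g,b)$ has the asserted image and that all the constructed maps are continuous is routine bookkeeping, while \ref{WE1} and \ref{WE3} are formal consequences of the transformation data and of Lemma~\ref{lem:strong_equiv}, respectively.
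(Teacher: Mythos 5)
Your verification of \ref{WE1} and \ref{WE3} matches the paper exactly: \ref{WE1} from the object components of the transformation on the ${\bf H}$ side, and \ref{WE3} from Lemma~\ref{lem:strong_equiv}(ii) via the permutation identifying ${\bf G}^1\times_{\phi,t_2}{\bf H}^2\times_{s_2,\phi}{\bf G}^1$ with the fibre product in the pullback square. The $1$--cell $g$ you build for \ref{WE2} is also the paper's. The problem is that for \ref{WE2} you never actually produce the $2$--cell $b\colon k\RTo\phi(g)$: your pasting stalls exactly where you say it does, because whiskering $u_k$ lands at $u_{\phi(x)}\,\phi\psi(k)\,u_{\phi(y)}^{-1}$ while $\phi(g)=\phi(v_x)\,\phi\psi(k)\,\phi(v_y)^{-1}$, and Definitions~\ref{df:transformation} and~\ref{df:strong_equiv} supply no comparison $2$--cells $u_{\phi(x)}\RTo\phi(v_x)$. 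The two escape routes you sketch (promoting $(u,v)$ to a pair satisfying triangle identities, or an unspecified diagram chase) are not carried out; the first is a genuine piece of machinery whose validity and continuity in this topological setting would themselves need proof. As it stands, \ref{WE2} is a gap.

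The paper closes \ref{WE2} by a different pasting that never needs such comparison cells. With $\gamma\colon\phi(x)\rTo\phi(y)$ and $g\colonequals u_y\psi(\gamma)u_x^{-1}$ (the paper's proof names the ${\bf G}$--side transformation $u$ and the ${\bf H}$--side one $v$, the opposite of your convention), one has the exact equality of $1$--cells $\phi(u_y)^{-1}\phi(g)\phi(u_x)=\phi\psi(\gamma)$, so the $\phi$--image of the ${\bf G}$--side $2$--cell $\id_{u_y^{-1}}\star_h u_g$ is a $2$--cell $\phi\psi(\gamma)\RTo\phi\psi\phi(g)$ on the nose. Sandwiching it between $v_\gamma$ and $v_{\phi(g)}^{-1}$ (two components of the \emph{same} ${\bf H}$--side transformation) and whiskering by $\id_{v_{\phi(x)}^{-1}}$ yields
\[
b\colonequals \left(v_{\gamma}\star_v\bigl(\id_{v_{\phi(y)}}\star_h\phi(\id_{u_y^{-1}}\star_h u_g)\bigr)\star_v v_{\phi(g)}^{-1}\right)\star_h\id_{v_{\phi(x)}^{-1}}\colon\ \gamma\RTo\phi(g),
\]
built entirely from the given data by continuous operations. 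Replacing your stalled pasting by this one (or by any equivalent route that stays on one side of the equivalence for the naturality cells) would complete your argument.
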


\begin{proof}
Suppose $\phi\colon {\bf G}\rTo {\bf G}$ is a strong equivalence of topological $2$--groupoids. Let $\psi\colon {\bf H}\rTo {\bf G}$, $\psi\phi \stackrel{u}{\RTo} \id_{\bf G}$, and $\phi\psi\stackrel{v}{\RTo} \id_{\bf H}$ be as in Definition~\ref{df:strong_equiv}. For $y\in {\bf H}^0$, we have the $1$--morphism $\phi\psi(y) \stackrel{v_y}\rTo y$, hence $s\circ pr_2(\psi(y),v_y^{-1})=y$, which implies that $\phi$ satisfies axiom~\ref{WE1}. Assume now that $(y,\g,x)\in {\bf H}^1[{\bf G}^0]$; that is $\phi(x)\stackrel{\g}{\rTo}{\phi(y)}$ is a $1$--morphism in ${\bf H}$. Let $g\in {\bf G}^1$ be the $1$--morphism from $x$ to $y$ given by 
\[
g=u_y\psi(\g)u_x^{-1}.
\]
Then, $\g=s_2(b)$ with $\phi(g)=t_2(b)$, where the $2$--arrow $\g\stackrel{b}{\RTo}\phi(g)$ in ${\bf H}$ is given by
\[
b\colonequals \left(v_{\g}\star_v(\id_{v_{\phi(y)}}\star_h\phi(\id_{u_y^{-1}}\star_hu_g))\star_vv_{\phi(g)}^{-1}\right)\star_h\id_{v_{\phi(x)}^{-1}}
\]
which shows that axiom~\ref{WE2} is satisfied. Finally, that axiom~\ref{WE3} holds for $\phi$ follows from Lemma~\ref{lem:strong_equiv}.
\end{proof}

\begin{df}\label{df:weak_hom_2-grpd}
Let ${\bf G}$ and ${\bf H}$ be topological $2$--groupoids.
\begin{enumerate}
\item A {\em weak homomorphism} from ${\bf G}$ to ${\bf H}$ is a zig-zag chain of homomorphisms of topological $2$--groupoids ${\bf G}\stackrel{\sim}{\lTo}{\bf G}'\rTo {\bf H}$ where ${\bf K}$ is a third topological $2$--groupoid, the leftward arrow is a weak equivalence. 
\item We say that ${\bf G}$ and ${\bf H}$ are {\em Morita equivalent} if there is such a chain such that the rightward homomorphism is also a weak equivalence. In this case we say that $${\bf G}\stackrel{\sim}{\lTo}{\bf G}'\stackrel{\sim}{\rTo}{\bf H}$$ is a {\em Morita equivalence} of topological $2$--groupoids.
\end{enumerate}
\end{df}

\section{Groupoid crossed modules}\label{sec:crossed_modules}

\subsection{Automorphism groupoids}

Given a topological space $Z$, a right {\em groupoid action} of $\Ggrd$ on $Z$ consists of 
\begin{itemize}
\item a continuous map $\s: Z\rTo \cal G^0$, called the {\em moment map} or {\em generalized source map};
\item a continuous map $Z\times_{\s, t}\cal G\ni (z,g)\mto zg\in Z$, where $$Z\times_{\s, t}\cal G=\{z,g)\in Z\times \cal G\mid \s(z)=t(g)\};$$
\end{itemize}
such that $z(gh)=(zg)h$ for all $(z,g)\in Z\times_{\s,t}\cal G, (g,h)\in \cal G^{(2)}$, $\s(zg)=s(g)$, and $z\s(z)=z$, where we identify $\s(z)$ with the identity arrow $1_{\s(z)}$ in $\cal G$. Similarly, one defines left groupoid actions with moment maps $\tau:Z\rTo \cal G^{(0)}$.

If a topological groupoid $\Hgrd$ satisfies $\cal H_x=\cal H^x=\cal H^x_x$ for all $x\in \cal H^0$, then $\cal H$ is a {\em bundle of topological groups} over the object space $\cal H^0$. In such a case, we define the topological groupoid $\AutH$ in which arrows from $x$ to $y$ are all isomorphisms of topological groups from $\cal H_x$ to $\cal H_y$; that is $\Aut(\cal H)^y_x=\Iso(\cal H_y,\cal H_x)$, and $\Aut(\cal H)=\coprod_{x,y\in \cal H^0}\Iso(\cal H_y,\cal H_x)$, where given two topological groups $A$ and $B$, $\Iso(B,A)$ is the space of all isomorphisms of topological groups $\xymatrix{A\ar[r]^\cong & B}$ equipped with the compact open topology. 

An {\em action by automorphisms} of the topological groupoid $\Ggrd$ on the bundle of topological groups $\cal H\rTo \cal G^0$ is a morphism of topological groupoids $c:\cal G\rTo \Aut(\cal H)$. We will often write $h^g$ for $c_g(h)$ when the action makes sense. 

\begin{df}
If $\cal G$ acts by automorphisms on $\cal H$, the {\em semi-direct product} (\cite{Tu-Xu:Ring_structure}) $$\xymatrix{\cal H\rtimes \cal G \dar[r] & \cal G^0}$$ is the topological groupoid whose arrows are elements of the topological space 
\[
\cal H\rtimes \cal G:=\left\{(h,g)\in \cal H\times \cal G\mid \ h\in {\cal H}^{t(g)}\right\},
\]
source and target maps $s(h,g)=s(h^g)$ and $t(h,g)=t(h)$, respectively, and whose partial product and inverse are $(h,g)(k,f)=(hk^{g^{-1}},gf)$ and $(h,g)^{-1}=((h^g)^{-1},g^{-1})$.
\end{df}

\subsection{Crossed modules of topological groupoids}

We recall the basics of crossed modules of topological groupoids and define some preliminary notions we are going to use in the next sections.

\begin{df}[~\cites{Tu-Xu:Ring_structure, Buss-Meyer-Zhu:Non-Hausdorff_symmetries}]\label{def:crossed}
A {\em crossed module of topological groupoids} $\frak G$ consists of a quadruple $(\cal G,\cal H, \partial, c)$ where $\Ggrd$ is a topological groupoid, $\cal H$ is a bundle of topological groups over the unit space $\cal G^0$, $\partial:\cal H\rTo \cal G$ is a morphism of topological groupoids, and $c: \cal G\rTo \Aut(\cal H)$ is an action by automorphisms of $\cal G$ on $\cal H$ such that 
\begin{enumerate}
\item\label{item1:crossed} $\partial(c_g(h))=g^{-1}\partial(h)g$, for all $(g,h)\in \cal G\times_{t,\cal G^0}\cal H$,
\item\label{item2:crossed} $c_{\partial(h)}(k)=h^{-1}kh$, for all $(h,k)\in \cal H^{(2)}$.
\end{enumerate}
\end{df}

We will often omit the map $c$ and write $\xymatrix{\cal H \ar@{-x}[r]^\partial & \cal G}$ for the crossed module $\frak G$. 

\begin{ex}\label{ex:G-module_crossed}
Let $\cal G\rrTo X$ be a topological groupoid and $\cal A \overset{p}{\rTo} X$ a $\cal G$--module; that is, $\xymatrix{\cal A\ar[r]^p & X}$ is a bundle of topological abelian groups equipped with an action of $\cal G$ by automorphisms with moment map $p$. Then $\cal A \overset{p}{\xTo}\cal G$ is a groupoid crossed module with $c$ being given by the $\cal G$--action on $\cal A$.
\end{ex}

\begin{ex}\label{ex:crossed_bundle}
Let $\xymatrix{\cal H \ar[r]^p & X}$ be a bundle of topological groups. Then we form the crossed module of topological groupoids $\xymatrix{\cal H \ar@{-x}[r]^-{Ad} & \Aut(\cal H)}$ in the obvious way: for $h\in \cal H_x$, $Ad_h\in \Iso(\cal H_x,\cal H_x)$ is the inner automorphism of the topological group $\cal H_x$; that is $Ad_h(k)=hkh^{-1}$ for $k\in \cal H_x$. It is clear that $Ad$ is a morphism of topological groupoids. Moreover, we define the groupoid action by automorphisms $c:\Aut(\cal H)\rTo \Aut(\cal H)$ by setting $c_f(h):=f\circ Ad_h=Ad_{f(h)}\circ f$, for $f\in \Iso(\cal H_y,\cal H_x)$ and $h\in \cal H_y$.
\end{ex}

\begin{ex}\label{ex:group_bundle}
A particular case of Example~\ref{ex:crossed_bundle} is when $\cal H$ is a $G$-bundle; that is, all the fibers $\cal H_x=p^{-1}(x)$ are isomorphic to a fixed topological group $G$, and $\cal H$ is locally isomorphic to $U\times G$, $U$ an open subset of $X$. Then $\Aut(\cal H)\cong X\times \Aut(G)$, hence the groupoid morphism $Ad$ is given by $Ad_h=(x,ad_h)\in X\times \Aut(G)$, for $h\in \cal H_x$, where $ad_h$ is the usual conjugation morphim in the group $G$.
\end{ex}

\begin{ex}\label{ex:extension_crossed}
Let $\Ggrd$ be a topological groupoid and $\cal A$ be a $\cal G$--module. Let $$\xymatrix{\cal A\ar@{^(->}[r]^i & \tilde{\cal G} \ar@{->>}[r]^\pi & \cal G}$$ be an {\em extension} of $\cal G$ (see~\cite{Renault:Groupoid_Cstar}). We may identify the groupoid $\xymatrix{\cal A\dar[r]&\cal G^0}$ with its image $i(\cal A)$ in $\xymatrix{\tilde{\cal G} \dar[r]&\cal G^0}$. Then $\tilde{G}$ induces a groupoid action of $\cal G$ by automorphisms on $\cal A$ by $c_g(a):=\tilde{g}a\tilde{g}^{-1}$, for $s(\tilde{g})=p(a)$, $a\in \cal A$, where $\tilde{g}$ is any lift of $g$ in $\tilde{G}$, that is $\pi(\tilde{g})=g$. Furthermore, it is easy to check that $(\cal G,\cal A, \pi\circ i, c)$ is a crossed module of topological groupoids.
\end{ex}

\begin{ex}\label{ex:inertia_crossed}
Let $\Ggrd$ be a topological groupoid. The {\em inertia groupoid} (~\cite{Tu-Xu:Ring_structure}) $S\cal G\rrTo \cal G^0$ of $\cal G$ is defined by $$S\cal G\colonequals \{g\in \cal G\mid s(g)=t(g)\}$$ with the inherited structure maps from $\cal G$. Then the source and target maps make $S\cal G$ into a bundle of topological groups over $\cal G^0$ upon which $\cal G$ acts by automorphisms through the conjugation map $Ad\colon \cal G\ni g\mto Ad_g\in \Aut(S\cal G)$. Moreover, together with the canonical inclusion $\iota\colon S\cal G\mono \cal G$, $(\cal G,S\cal G,\iota, Ad)$ is a crossed module of topological groupoids which we will call the {\em inertia crosssed module} of $\cal G$ and we will denote by $\cal S\cal G$. 
\end{ex}

\begin{df}
A {\em strict morphism} of groupoid crossed modules $\chi\colon \frak G_1 \rTo \frak G_2$ is a pair $(\chi^l, \chi^r)$ of groupoid strict morphisms $\chi^l\colon \cal H_1\rTo \cal H_2$, $\chi^r\colon \cal G_1\rTo \cal G_2$ together with a commutative diagram of topological groupoids 
\[
\xymatrix{\cal H_1 \ar[d]_{\chi^l} \ar[r]^{\p_1} & \cal G_1 \ar[d]^{\chi^r} \\ 
\cal H_2\ar[r]^{\p_2} & \cal G_2}
\]
such that for all arrow $g:x\rTo y$ in $\cal G_1$, the following is a commutative diagram of topological groups
\[
\xymatrix{\cal H_1^y \ar[d]_{\chi^l} \ar[r]^{(c_1)_g} & \cal H_1^x \ar[d]^{\chi^r} \\
\cal H_2^{\chi^l(y)} \ar[r]^{(c_2)_{\chi^r(g)}} & \cal H_2^{\chi^l(x)}
}
\]
Here the superscripts $l$ and $r$ refer to left and right, respectively. Such a strict morphism is called a {\em strict isomorphism} if the vertical arrows of the above diagram are homeomorphisms.
\end{df}

In the sequel, we will omit the superscripts $l$ and $r$ and indistinguishably denote both strict morphisms by $\chi$. 

We compose strict morphisms of crossed modules $\frak G_1\stackrel{\chi}{\rTo} \frak G_2\stackrel{\k}{\rTo}\frak G_3$ in the obvious way.  

\begin{df}\label{df:semidirect_strict-morphism}
Associated to every strict morphism of groupoid crossed modules 
\[
\xymatrix{
\cal H_1\ar[r]^{\p_1} \ar[d]_{\chi} &\cal G_1\ar[d]^{\chi}\\
\cal H_2\ar[r]^{\p_2} & \cal G_2
}
\]
with common unit space $X_1=X_2=X$, there is a topological groupoid $\cal H_2\rtimes_\chi \cal G_1\rrTo X$ called the {\em semidirect product groupoid associated to} $\chi$, and where the groupoid action by automorphisms of $\cal G_1$ on $\cal H_2$ is with respect to the composition map $\cal G_1\stackrel{\chi}{\rTo} \cal G_2\rTo \Aut(\cal H_2)$ (and the source and target maps are $s(h_2,g_1)=s(g_1)$ and $t(h_2,g_1)=t(g_1)$). When there is no risk of confusion, we will just write $\cal H_2\rtimes \cal G_1$ 
\end{df}

\subsection{Pullbacks and projective product groupoids}\label{subsec:proj}

Recall that if $\Ggrd$ is a topological groupoid and $\s:Z\rTo \cal G^0$ a continuous function, the {\em pullback groupoid} $\s^*\cal G\rrTo Z$, also denoted by $\cal G[Z] \rrTo Z$ when the map $\s$ is understood, is defined as follows: 
\begin{itemize}
\item the space of arrows is the fibered product 
\[
Z\times_{\s,t}\cal G\times_{s,\s}Z:=\{(z_1,g,z_2)\in Z\times \cal G\times Z \mid \s(z_1)=t(g), s(g)=\s(z_2)\};
\]
\item the target and source maps are $t(z_1,g,z_2)=z_1$ and $s(z_1,g,z_2)=z_2$;
\item the inverse map is given by $(z_1,g,z_2)^{-1}=(z_2,g^{-1},z_1)$, and
\item the partial product is $(z_1,g,z_2)(z_2,h,z_3):=(z_1,gh,z_3)$, when $s(g)=t(h)$.
\end{itemize}

\begin{pro}\label{pro:strict_pullback}
Suppose $\frak G$ is a groupoid crossed module and $\s:Z\rTo \cal G^0$ a continuous function. Form the bundle of topological groups $\cal H[\s]\rTo Z$ by setting 
\[
\cal H[\s]:=Z\times_{\s,s}\cal H:=\{(z,h)\in Z\times \cal H\mid \s(z)=s(h)=t(h)\}
\]
with projection $\cal H[\s]\ni (z,h)\mto z\in Z$. Then, the operations 

\begin{itemize}
\item $\s^*\p(z,h)=(z,\p(h),z), \ {\rm for\ } (z,h)\in \cal H[\s]$, and
\item $\s^*c_{(z_1,g,z_2)}(z_2,h)=(z_1,c_g(h))$, for  $(z_1,g,z_2)\in \s^*\cal G, \ h\in \cal H^{\s(z_2)}$,
\end{itemize}
make $\xymatrix{\cal H[\s] \ar@{-x}[r]^-{\s^*\p} & \s^*\cal G}$ a crossed module of topological groupoids with unit space $Z$. Furthermore, the canonical projections $\cal H[\s]\ni (z,h)\mto h\in \cal H$ and $\s^*\cal G\ni (z_1,g,z_2)\mto g\in \cal G$ define a strict morphism of crossed modules
\[
\xymatrix{
\cal H[\s] \ar[r]^-{\s^*\p} \ar[d] & \s^*\cal G \ar[d] \\
\cal H\ar[r]^-\p & \cal G
}
\]
\end{pro}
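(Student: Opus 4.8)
The plan is to verify, in order, the crossed-module axioms for $\cal H[\s]\xTo{\s^*\p}\s^*\cal G$, and then the compatibility conditions making the canonical projections a strict morphism. The proof is entirely a matter of unwinding definitions, so I would organize it as a sequence of short checks rather than one long computation.

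First I would confirm that the proposed data are well defined. One must check that $\s^*\p$ lands in $\s^*\cal G$: if $(z,h)\in \cal H[\s]$ then $\s(z)=s(h)=t(h)$, so $s(\p(h))=t(\p(h))=\s(z)$ (since $\p$ is a groupoid morphism and $h$ lives in the isotropy bundle $\cal H$), whence $(z,\p(h),z)\in\s^*\cal G$ by the definition of the pullback groupoid. Then I would verify that $(z,h)\mapsto(z,\p(h),z)$ is a continuous bundle-of-groups morphism over $Z$ (fiberwise it is just $\p\colon\cal H^{\s(z)}\to\cal G^{\s(z)}_{\s(z)}$). Next, $\s^*c$: given $(z_1,g,z_2)\in\s^*\cal G$ and $h\in\cal H^{\s(z_2)}$, we have $c_g(h)\in\cal H^{s(g)}=\cal H^{\s(z_2)}$ — wait, one must be careful with source/target conventions; since $g\colon \s(z_2)\to\s(z_1)$ in $\cal G$ and $c_g\colon\cal H^{\s(z_1)}\to\cal H^{\s(z_2)}$ by the definition of a groupoid action by automorphisms, so in fact $h\in\cal H^{\s(z_1)}$ — I would check that this matches the intended indexing and that $(z_1,c_g(h))\in\cal H[\s]^{z_1}$, and that $\s^*c$ is a groupoid morphism $\s^*\cal G\to\Aut(\cal H[\s])$ (functoriality follows from $c_{gg'}=c_{g'}\circ c_g$ and the pullback composition law $(z_1,g,z_2)(z_2,g',z_3)=(z_1,gg',z_3)$).

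Then I would check crossed-module axioms \ref{item1:crossed} and \ref{item2:crossed} of Definition~\ref{def:crossed} for the pullback data. For \ref{item1:crossed}: $\s^*\p(\s^*c_{(z_1,g,z_2)}(z_2,h))=\s^*\p(z_1,c_g(h))=(z_1,\p(c_g(h)),z_1)=(z_1,g^{-1}\p(h)g,z_1)$, using axiom~\ref{item1:crossed} for $\frak G$, and this equals $(z_1,g,z_2)^{-1}\,\s^*\p(z_2,h)\,(z_1,g,z_2)$ computed via the pullback product — here I'd just note the indices match up because $\p(h)$ is an isotropy arrow at $\s(z_2)$. Axiom~\ref{item2:crossed} is analogous: $\s^*c_{\s^*\p(z,k)}(z,h)=\s^*c_{(z,\p(k),z)}(z,h)=(z,c_{\p(k)}(h))=(z,k^{-1}hk)$ by \ref{item2:crossed} for $\frak G$, which is the conjugate of $(z,h)$ by $(z,k)$ in the bundle $\cal H[\s]$. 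Finally, for the strict-morphism claim I would check that the two squares in the definition of a strict morphism commute: the square $\cal H[\s]\to\cal H$, $\s^*\cal G\to\cal G$ against $\s^*\p$ and $\p$ commutes by the very formula $\s^*\p(z,h)=(z,\p(h),z)$; and the action-compatibility square commutes because $\s^*c_{(z_1,g,z_2)}(z_2,h)=(z_1,c_g(h))$ projects to $c_g(h)$, which is exactly $c$ applied to the projections of $(z_1,g,z_2)$ and $(z_2,h)$.

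I do not expect a genuine obstacle here; the only place demanding care is bookkeeping of source/target conventions for the action $c$ and for $\p$ on the isotropy bundle, so that all the fibered products in $\s^*\cal G$ and $\cal H[\s]$ are over the correct points of $Z$. Continuity of every map in sight is immediate since each is built from $\s$, the (continuous) structure maps of $\frak G$, and projections, and the relevant spaces carry the subspace topology from finite products; I would dispatch this with a single sentence. Thus the proof is essentially: \emph{everything is defined by pulling back along $\s$, and each crossed-module axiom for $\cal H[\s]\xTo{}\s^*\cal G$ follows termwise from the corresponding axiom for $\frak G$ together with the composition law in the pullback groupoid.}
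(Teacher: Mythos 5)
Your proposal is correct and follows essentially the same route as the paper, whose entire proof is the one-line observation that axioms~\ref{item1:crossed} and~\ref{item2:crossed} for the pullback data follow directly from the corresponding axioms for $(\cal G,\cal H,\p,c)$; you simply carry out that termwise verification explicitly. Your aside about the source/target bookkeeping for $\s^*c$ is well taken — with the paper's convention $c_g\colon\cal H^{t(g)}\to\cal H^{s(g)}$ the displayed formula should act on $(z_1,h)$ with $h\in\cal H^{\s(z_1)}$ and land in the fiber over $z_2$, so your corrected reading is the right one — but this is a notational slip in the statement rather than a gap in either argument.
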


\begin{proof}
That the groupoid morphisms $\s^*\p:\cal H[\s]\rTo \s^*\cal G$ and $\s^*c:\s^*\cal G\rTo \Aut(\cal H[\s])$ satisfy~\ref{item1:crossed} and~\ref{item2:crossed} of Definition~\ref{def:crossed} follows directly from the fact that $(\cal G,\cal H,\p,c)$ is a crossed module.
\end{proof}

\begin{df}
Given $\frak G = \HG$ and $\s:Z\rTo \cal G^0$ as above, the crossed module $$\xymatrix{\cal H[\s] \ar@{-x}[r]^-{\s^*\p} & \s^*\cal G}$$ is called the {\em pullback} of $\frak G$ over $Z$ through the map $\s$ and denoted by $\s^*\frak G$. When there is no risk of confusion with the map $\s$, we will also write $\cal H[Z]$ for the bundle of topological groups $\cal H[\s]$, and $\frak G[Z]$ for $\s^*\frak G$.
\end{df}

Now, we will see that Morita equivalences of groupoids induces chains of strict morphisms of crossed modules. More precisely, let $\frak G_1$ and $\frak G_2$ be two groupoid crossed modules whose base groupoids $\cal G_1$ and $\cal G_2$ are Morita equivalent; that is there exist a topological space $P$ and two continuous maps 
\[
\xymatrix{
\cal G_1^0 & P \ar[l]_{\tau} \ar[r]^{\s} & \cal G_2^0
}
\]
where $\tau:P\rTo \cal G_1^0$ is a locally trivial principal $\cal G_2$--bundle and $\s:P\rTo \cal G_2^0$ is a locally trivial $\cal G_1$--principal bundle. Define the {\em projective product groupoid of $\cal G_1$ and $\cal G_2$ over $P$} as the groupoid $\cal G_1\underset{P}{\ast}\cal G_2 \rrTo P$ whose morphisms are quadruples $(g_1,p_1,p_2,g_2)\in \cal G_1\times P\times P\times \cal G_2$ such that 
\[
(g_1,p_2)\in \cal G_1\times_{s,\tau}P, (p_1,g_2)\in P\times_{\s,t}\cal G_2, \ {\rm and\ } \ p_1g_2=g_1p_2.
\] 
The source and target maps of $\cal G_1\underset{P}{\ast}\cal G_2$ are $s(g_1,p_1,p_2,g_2)=p_2, t(g_1,p_1,p_2,g_2)=p_1$, and the composition and inverse are 
\[
(g_1,p_1,p_2,g_2)(g_1',p_2,p_3,g'_2)=(g_1g_1',p_1,p_3,g_2g_2'), \ (g_1,p_1,p_2,g_2)^{-1}=(g_1^{-1},p_2,p_1,g_2^{-1}).
\]

Similarly, we form $\cal H_1\underset{P}{\ast}\cal H_2\rrTo P$ to be the groupoid with unit space $P$ whose morphisms are triples $(h_1,p,h_2)\in \cal H_1\times_{s,\tau}P\times_{\s,s}\cal H_2$ such that $p\p_2(h_2)=\p_1(h_1)p$, and whose source and target maps are both given by $s(h_1,p,h_2)=t(h_1,p,h_2)=p$. Then $\cal H_1\underset{P}{\ast}\cal H_2$ is clearly a bundle of topological groups over $P$. In fact, we have the following

\begin{pro}
Let $\frak G_1$, $\frak G_2$, and $P$ be as above. Then, together with the map 
\[
\p_1\underset{P}{\ast}\p_2\colon \cal H_1\underset{P}{\ast}\cal H_2 \ni (h_1,p,h_2)\mto (\p_1(h_1),p,p,\p_2(h_2))\in \cal G_1\underset{P}{\ast}\cal G_2
\]
and the actions by of $\cal G_1\underset{P}{\ast}\cal G_2$ by automorphisms on $\cal H_1\underset{P}{\ast}\cal H_2\rTo P$ given by 
\[
(c_1\underset{P}{\ast}c_2)_{(g_1,p_1,p_2,g_2)}(h_1,p_1,h_2)\colonequals (h_1^{g_1},p_2,h_2^{g_2})
\]
$\cal H_1\underset{P}{\ast}\cal H_2\overset{\p_1\underset{P}{\ast}\p_2}{\xTo}\cal G_1\underset{P}{\ast}\cal G_2$ is a groupoid crossed module which will be called the {\em projective product of $\frak G_1$ and $\frak G_2$ over $P$}, and denoted by $\frak G_1\underset{P}{\ast}\frak G_2$. Furthermore, this construction is natural with respect to morphisms of Morita equivalence and strict morphisms of crossed modules; that is, any morphism of Morita equivalences $P\rTo Q$ induces a strict morphism of crossed modules $\frak G_1\underset{P}{\ast}\frak G_2\rTo \frak G_1\underset{Q}{\ast}\frak G_2$.
\end{pro}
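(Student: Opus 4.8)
The plan is to verify, in order, the four assertions of the statement: that $\cal H_1\underset{P}{\ast}\cal H_2$ is a bundle of topological groups over $P$; that $\p_1\underset{P}{\ast}\p_2$ and $c_1\underset{P}{\ast}c_2$ are well defined, i.e. take values in $\cal G_1\underset{P}{\ast}\cal G_2$ and in $\Aut(\cal H_1\underset{P}{\ast}\cal H_2)$; that together they satisfy axioms~\ref{item1:crossed} and~\ref{item2:crossed} of Definition~\ref{def:crossed}; and finally the naturality clause. Everything will rest on two ``bimodule identities'' coming from the Morita data: the defining relation $p\,\p_2(h_2)=\p_1(h_1)\,p$ of a point $(h_1,p,h_2)$ of $\cal H_1\underset{P}{\ast}\cal H_2$, and the relation $p_1g_2=g_1p_2$ of an arrow $(g_1,p_1,p_2,g_2)$ of $\cal G_1\underset{P}{\ast}\cal G_2$, together with its equivalent form $p_2g_2^{-1}=g_1^{-1}p_1$ obtained by acting by $g_1^{-1}$ on the left and $g_2^{-1}$ on the right.

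For the group-bundle claim I would simply recall what was noted before the statement: the fibre over $p$ is the set of $(h_1,h_2)\in\cal H_1^{\tau(p)}\times\cal H_2^{\s(p)}$ with $p\,\p_2(h_2)=\p_1(h_1)p$, and since $\p_1,\p_2$ are fibrewise group homomorphisms this is a topological subgroup of the product, the subspace topologies assembling into the desired bundle over $P$. Next, $\p_1\underset{P}{\ast}\p_2(h_1,p,h_2)=(\p_1(h_1),p,p,\p_2(h_2))$ lies in $\cal G_1\underset{P}{\ast}\cal G_2$ precisely because the defining relation of $\cal H_1\underset{P}{\ast}\cal H_2$ is the defining relation of that quadruple; it restricts to the identity on the common unit space $P$ and is multiplicative because $\p_1,\p_2$ are, hence is a morphism of topological groupoids. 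For the action, given $(g_1,p_1,p_2,g_2)\colon p_2\rTo p_1$ and $(h_1,p_1,h_2)$ over $p_1$, axiom~\ref{item1:crossed} for $\frak G_1$ and $\frak G_2$ gives $\p_1(h_1^{g_1})=g_1^{-1}\p_1(h_1)g_1$ and $\p_2(h_2^{g_2})=g_2^{-1}\p_2(h_2)g_2$, and then
\begin{align*}
p_2\,\p_2(h_2^{g_2})&=p_2g_2^{-1}\,\p_2(h_2)\,g_2=g_1^{-1}p_1\,\p_2(h_2)\,g_2\\
&=g_1^{-1}\p_1(h_1)\,p_1g_2=g_1^{-1}\p_1(h_1)g_1\,p_2=\p_1(h_1^{g_1})\,p_2,
\end{align*}
so $(h_1^{g_1},p_2,h_2^{g_2})$ is a point of $\cal H_1\underset{P}{\ast}\cal H_2$ over $p_2$, and the assignment is a group isomorphism $(\cal H_1\underset{P}{\ast}\cal H_2)^{p_1}\rTo(\cal H_1\underset{P}{\ast}\cal H_2)^{p_2}$ since it is componentwise so.

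That $c_1\underset{P}{\ast}c_2$ is a genuine action, i.e. a morphism $\cal G_1\underset{P}{\ast}\cal G_2\rTo\Aut(\cal H_1\underset{P}{\ast}\cal H_2)$, then follows componentwise from functoriality of $c_1,c_2$ and the composition rule $(g_1,p_1,p_2,g_2)(g_1',p_2,p_3,g_2')=(g_1g_1',p_1,p_3,g_2g_2')$; continuity throughout is automatic, all maps being built from the continuous data of $\frak G_1,\frak G_2$ and the two actions on $P$. Axiom~\ref{item1:crossed} for the projective product reduces to evaluating $\p_1\underset{P}{\ast}\p_2$ on $(h_1^{g_1},p_2,h_2^{g_2})$ and on $(g_1,p_1,p_2,g_2)^{-1}\big(\p_1(h_1),p_1,p_1,\p_2(h_2)\big)(g_1,p_1,p_2,g_2)$, both of which equal $\big(g_1^{-1}\p_1(h_1)g_1,\,p_2,\,p_2,\,g_2^{-1}\p_2(h_2)g_2\big)$ by axiom~\ref{item1:crossed} in the $\frak G_i$ and the multiplication and inversion rules of $\cal G_1\underset{P}{\ast}\cal G_2$; axiom~\ref{item2:crossed} reduces, for composable $h=(h_1,p,h_2)$ and $k=(k_1,p,k_2)$, to the componentwise identity $(c_1\underset{P}{\ast}c_2)_{(\p_1\underset{P}{\ast}\p_2)(h)}(k)=(k_1^{\p_1(h_1)},p,k_2^{\p_2(h_2)})=(h_1^{-1}k_1h_1,p,h_2^{-1}k_2h_2)=h^{-1}kh$, using axiom~\ref{item2:crossed} in the $\frak G_i$.

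For naturality, a morphism $f\colon P\rTo Q$ of Morita equivalences is a continuous $\cal G_1$--$\cal G_2$--biequivariant map compatible with the moment maps, so $(h_1,p,h_2)\mapsto(h_1,f(p),h_2)$ and $(g_1,p_1,p_2,g_2)\mapsto(g_1,f(p_1),f(p_2),g_2)$ are well defined (equivariance carries the relations over $P$ to the relations over $Q$), are strict morphisms of topological groupoids, commute with $\p_1\underset{P}{\ast}\p_2$ and $\p_1\underset{Q}{\ast}\p_2$, and intertwine the two actions; hence they constitute a strict morphism $\frak G_1\underset{P}{\ast}\frak G_2\rTo\frak G_1\underset{Q}{\ast}\frak G_2$, and functoriality in $f$ is immediate. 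I expect the only step with real content to be the two well-definedness checks --- the displayed computation above and its shorter analogue for $\p_1\underset{P}{\ast}\p_2$ --- where the bimodule relation $p_1g_2=g_1p_2$ is used to shuttle the $\cal G_1$--side of a crossed-module identity over to the $\cal G_2$--side; all the remaining verifications are bookkeeping with the explicit formulas.
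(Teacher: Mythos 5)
Your proof is correct and follows the same route as the paper, which simply declares the crossed-module verification "straightforward" and records the same explicit formulas $(h_1,p,h_2)\mapsto(h_1,f(p),h_2)$, $(g_1,p_1,p_2,g_2)\mapsto(g_1,f(p_1),f(p_2),g_2)$ for naturality. The one computation with real content --- using $p_2g_2^{-1}=g_1^{-1}p_1$ and $p_1\p_2(h_2)=\p_1(h_1)p_1$ to show $(h_1^{g_1},p_2,h_2^{g_2})$ again satisfies the defining relation --- is exactly the detail the paper leaves implicit, and you carry it out correctly.
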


\begin{proof}
That $\frak G_1\underset{P}{\ast}\frak G_2$ is a crossed module is straightforward. Now if $Q$ is another Morita equivalence between $\cal G_1$ and $\cal G_2$, and $f:P\rTo Q$ a morphism of Morita equivalence; that is, $f$ is a continuous map that commutes with the groupoid actions of $\cal G_1$ and $\cal G_2$ on $P$ and $Q$, then the strict morphism of crossed modules $\frak G_1\underset{P}{\ast}\frak G_2\rTo \frak G_1\underset{Q}{\ast}\frak G_2$ is obtained through the strict morphisms of groupoids 
\begin{eqnarray*}
\cal G_1\underset{P}{\ast}\cal G_2\ni (g_1,p_1,p_2,g_2)\mto (g_1,f(p_1),f(p_2),g_2)\in \cal G_1\underset{Q}{\ast}\cal G_2\\
\cal H_1\underset{P}{\ast}\cal H_2\ni (h_1,p,h_2)\mto (h_1,f(p),h_2)\in \cal H_1\underset{Q}{\ast}\cal H_2
\end{eqnarray*}
\end{proof}

\begin{pro}\label{pro:projective_product}
Let $\frak G_1$ and $\frak G_2$ be two groupoid crossed modules and $\cal G_1\overset{P}{\rTo} \cal G_2$ be a Morita equivalence of the base groupoids. Then there exist a crossed module $\frak G$ and two strict morphism 
\[
\xymatrix{
\frak G_1 & \frak G \ar[l] \ar[r] & \frak G_2
}
\]
which are natural with respect to morphisms of Morita equivalences.
\end{pro}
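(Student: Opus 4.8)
The plan is to take $\frak G$ to be the projective product $\frak G_1\underset{P}{\ast}\frak G_2$ produced by the preceding proposition, and to realize the two strict morphisms as the obvious coordinate projections. Recall that $\frak G_1\underset{P}{\ast}\frak G_2$ has unit space $P$, base groupoid $\cal G_1\underset{P}{\ast}\cal G_2\rrTo P$ with arrows the quadruples $(g_1,p_1,p_2,g_2)$ satisfying $p_1g_2=g_1p_2$, bundle of groups $\cal H_1\underset{P}{\ast}\cal H_2\rrTo P$ with elements the triples $(h_1,p,h_2)$ satisfying $p\p_2(h_2)=\p_1(h_1)p$, and structure data $\p_1\underset{P}{\ast}\p_2$ and $c_1\underset{P}{\ast}c_2$ as recalled above.

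First I would introduce, for $i=1,2$, the continuous maps
\[
q_i^r\colon \cal G_1\underset{P}{\ast}\cal G_2\rTo \cal G_i,\quad (g_1,p_1,p_2,g_2)\mto g_i, \qquad q_i^l\colon \cal H_1\underset{P}{\ast}\cal H_2\rTo \cal H_i,\quad (h_1,p,h_2)\mto h_i,
\]
and check that $(q_i^l,q_i^r)$ is a strict morphism of crossed modules $\frak G_1\underset{P}{\ast}\frak G_2\rTo\frak G_i$ covering $\tau$ (for $i=1$) or $\s$ (for $i=2$) on unit spaces. That $q_1^r$ is a morphism of topological groupoids reduces to noting that $s(g_1,p_1,p_2,g_2)=p_2$, $t(g_1,p_1,p_2,g_2)=p_1$, and that the relation $p_1g_2=g_1p_2$ together with the bibundle axioms for $P$ (the right $\cal G_2$--action preserves $\tau$, the left $\cal G_1$--action preserves $\s$) force $\tau(p_2)=s(g_1)$ and $\tau(p_1)=t(g_1)$; compatibility with products and inverses is immediate from the componentwise formulas, and likewise $q_1^l$ is a morphism of group bundles over $\tau$. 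Commutativity of the square with $\p_i$, and of the action square, then follows by reading off the $i$--th coordinate in $(\p_1\underset{P}{\ast}\p_2)(h_1,p,h_2)=(\p_1(h_1),p,p,\p_2(h_2))$ and in $(c_1\underset{P}{\ast}c_2)_{(g_1,p_1,p_2,g_2)}(h_1,p_1,h_2)=(h_1^{g_1},p_2,h_2^{g_2})$, which recover $\p_i$ and $(c_i)_{g_i}$ respectively. The case $i=2$ is symmetric.

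For naturality I would appeal to the preceding proposition: a morphism of Morita equivalences $f\colon P\rTo Q$ induces the strict morphism $\frak G_1\underset{P}{\ast}\frak G_2\rTo\frak G_1\underset{Q}{\ast}\frak G_2$ given on arrows by $(g_1,p_1,p_2,g_2)\mto(g_1,f(p_1),f(p_2),g_2)$ and $(h_1,p,h_2)\mto(h_1,f(p),h_2)$, and post-composing with $q_i^r$ or $q_i^l$ clearly leaves the $i$--th coordinate unchanged, so the triangles over $\frak G_1$ and $\frak G_2$ commute. I do not expect any real obstacle here: the statement essentially repackages the projective-product construction, and the only point demanding a little care is verifying that the coordinate projections are genuine groupoid morphisms over $\tau$ and $\s$ — which is precisely where the principality of the bibundle $P$ is used — all remaining verifications being mechanical checks of componentwise formulas.
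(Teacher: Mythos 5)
Your proposal is correct and follows essentially the same route as the paper: take $\frak G=\frak G_1\underset{P}{\ast}\frak G_2$ and use the coordinate projections as the two strict morphisms, with naturality inherited from the naturality of the projective product construction. The paper's own proof is just a terser version of your argument (it simply exhibits the projection diagrams and asserts the axioms are "easily checked"), so your more explicit verification of the unit-space compatibility via the bibundle axioms is a welcome, but not divergent, elaboration.
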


\begin{proof}
Take $\frak G\colonequals \frak G_1\underset{P}{\ast}\frak G_2$. Then, the canonical projections give the commutative diagrams 

\[
\xymatrix{
\cal H_1 \ar[r]^{\p_1} & \cal G_1 \\
\cal H_1\underset{P}{\ast} \cal H_2\ar[u]_{pr_1} \ar[r]^{\p_1\underset{P}{\ast}\p_2} \ar[d]^{pr_3} & \cal G_1\underset{P}{\ast} \cal G_2 \ar[u]^{pr_1}  \ar[d]^{pr_4}\\
\cal H_2 \ar[r]^{\p_2} & \cal G_2 
}
\]
which are easily checked to satisfy the the axioms of strict morphisms of crossed modules. These data being natural with respect to morphisms of Morita equivalence follows from the projective product over a Morita equivalence being natural.
\end{proof}

\subsection{Crossed modules vs $2$--groupoids}\label{sec:cross_vs_2-grpd}

Let $\frak G=\cal H\stackrel{\p}{\xTo} \cal G$ be a crossed module of topological groupoids. Then we form the topological $2$--groupoid $\bf G$ as the $2$--groupoid  
\[
\xymatrix{
\cal H\rtimes \cal G\dar[r]^-{s_2}_-{t_2} & \cal G\dar[r]^{s}_t & \cal G^0
} 
\]  
where the semidirect product $\cal H\rtimes \cal G\rrTo \cal G$ is equipped with the structure of a tolopological groupoid with unit space $\cal G$ with source, target, and inverse maps
\[
s_2(h,g)\colonequals g, \ t_2(h,g)\colonequals \p(h)g, \ (h,g)^{-1}\colonequals (h^{-1},\p(h)g),  \ (h,g)\in \cal H\rtimes \cal G; 
\]
and where compositions, when defined, are given by
\[
(h_1,g_1)\star_h(h_2,g_2)\colonequals (h_1h_2^{g_1^{-1}}, g_1g_2), \ (h,g)\star_v(k,\p(h)g)\colonequals (hk, g). 
\]

\noindent We call $\bf G$ the (topological) $2$--groupoid associated to the groupoid crossed module $\frak G$, and will be sometimes denoted as ${\bf G}_{\frak G}$. The topological groupoid $\cal H\rtimes \cal G\rrTo\cal G$ will be called the {\em vertical semidirect product groupoid}.  Moreover, it is easy to see that this construction is natural with respect to strict morphisms of crossed modules and strict homomorphisms of $2$--groupoids; that is, if $\chi\colon \frak G_1\rTo \frak G_2$ is a strict morphism of crossed modules, then the diagram of homomorphisms of topological groupoids
\[
\xymatrix{
\cal H_2\rtimes\cal G_2 \dar[r]^-{s_2}_-{t_2} \ar[d]_{\chi\times \chi} & \cal G_2 \dar[r]^s_t \ar[d]_{\chi} & \cal G_2^0 \ar[d]^{\chi} \\ 
\cal H_1\rtimes\cal G_1\dar[r]^-{s_2}_-{t_2} & \cal G_1\dar[r]^s_t & \cal G_1^0
}
\]
commutes, hence $\chi$ induces a strict homomorphism $\chi\colon \bf G_1\rTo \bf G_2$ between the associated topological $2$--groupoids.

Conversely, suppose $\bf G$ is a topological $2$--groupoid ${\bf G}^2\rrTo {\bf G}^1\rrTo {\bf G}^0$. Then, associated to $\bf G$, there is a crossed module of topological groupoids $\frak G$ defined as follows:
\begin{itemize}
\item we set $\cal G$ to be the groupoid ${\bf G}^1\rrTo {\bf G}^0$;
\item the bundle of topological groups $\cal H\rTo {\bf G}^0$ is defined as the subspace of ${\bf G}^2$ consisting of all the $2$--morphisms of the form 
\[
\xymatrix{\relax
x & x \bigon[l]{g}{\id_x}{a}
}
\]
with the obvious projection onto ${\bf G}^0$: $a\mto s(t_2(a))=t(t_2(a)), a\in \cal H$;
\item the groupoid morphism $\p\colon \cal H\rTo \cal G$ is $\p(a)=t_2(a)$; and finally,
\item the groupoid action by automorphisms $c\colon \cal G\rTo \Aut(\cal H)$ is given by 
\[
c_g(a)=a^g \colonequals \id_{g^{-1}}\star_ha\star_h\id_g, \ g\in \cal G, a\in {\cal H}^{t(g)}.
\]
\end{itemize}

\begin{lem}
Let $\bf G$ be a topological $2$--groupoid and $\frak G$ its associated crossed modules of topological groupoids. Denote by ${\bf G}_{\frak G}$ the $2$--groupoid associated to $\frak G$. Then there is a strict isomorphism of topological $2$--groupoids 
\[
{\bf G} \stackrel{\cong}{\rTo} {\bf G}_{\frak G}
\] 
which is natural with respect to strict morphisms; that is, if ${\frak G}={\cal H}\xTo \cal G$ as above, then there is a strict isomorphism of topological $2$--groupoids
\[
\xymatrix{
{\bf G}^2 \dar[r]^-{s_2}_-{t_2} \ar[d]_{\phi} & {\bf G}^1 \dar[r]^-{s}_-t \ar[d]_{\phi} & {\bf G}^0\ar[d]^{\phi}\\
\cal H\rtimes \cal G \dar[r]^-{s_2}_-{t_2} & \cal G \dar[r]^s_t & {\bf G}^0 
}
\]  
\end{lem}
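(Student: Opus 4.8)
The statement asserts that the roundtrip $\bf G\mapsto \frak G\mapsto {\bf G}_{\frak G}$ recovers $\bf G$ up to a strict isomorphism, naturally. The plan is to write down the map $\phi$ explicitly on each of the three levels and then verify it is a homeomorphism compatible with all structure maps and compositions. On objects, $\phi$ is the identity of ${\bf G}^0$. On $1$--morphisms, $\phi$ is the identity of ${\bf G}^1=\cal G$. The only level requiring a genuine definition is the level of $2$--morphisms: given a $2$--arrow $\xymatrix{\relax y && x \bigon[ll]{h}{g}{a}}$ in $\bf G$, I send it to the pair $(a\star_h\id_{g^{-1}},\, g)\in\cal H\rtimes\cal G$. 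Here one first checks that $a\star_h\id_{g^{-1}}$ is a $2$--morphism from $\id_y$ to $hg^{-1}$, hence lies in $\cal H^{t(g)}=\cal H^y$, so that $(a\star_h\id_{g^{-1}},g)$ is indeed a well-defined element of the semidirect product $\cal H\rtimes\cal G$ (recall $s_2=g$, $t_2=\p(a\star_h\id_{g^{-1}})g=(hg^{-1})g=h$, which matches the source/target of $a$).

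\textbf{Key steps, in order.} First I would record the inverse map $(b,g)\mapsto b\star_h\id_g$ (interpreting $b\in\cal H^{t(g)}$ as a $2$--arrow $\id_{t(g)}\RTo \p(b)$, so $b\star_h\id_g\colon g\RTo \p(b)g$), and check directly, using the interchange law $(a_1\star_ha_2)\star_v(b_1\star_hb_2)=(a_1\star_vb_1)\star_h(a_2\star_vb_2)$ together with the identities $\id_g\star_h\id_{g^{-1}}=\id_{t(g)}$ and $a\star_v\id_{t_2(a)}=a$, that the two assignments are mutually inverse; this simultaneously proves bijectivity, and continuity in both directions is immediate since everything is built from $\star_h$, the inversion on $\cal G$, and the continuous embedding of units, all of which are continuous in a topological $2$--groupoid. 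Next I would verify that $\phi$ intertwines the structure maps: $s_2\circ\phi=\phi\circ s_2$ and $t_2\circ\phi=\phi\circ t_2$ reduce, after the identifications on $\cal G$ and ${\bf G}^0$, to the computations $s_2(a\star_h\id_{g^{-1}},g)=g=s_2(a)$ and $t_2(a\star_h\id_{g^{-1}},g)=h=t_2(a)$ just recorded. Then I would check that $\phi$ carries horizontal composition to $\star_h$ in $\cal H\rtimes\cal G$ and vertical composition to $\star_v$ in $\cal H\rtimes\cal G$. For horizontal composition, given composable bigons $a_1\colon g_1\RTo h_1$ and $a_2\colon g_2\RTo h_2$ (so $s(g_1)=t(g_2)$), one must show
\[
\bigl((a_1\star_h a_2)\star_h\id_{(g_1g_2)^{-1}},\, g_1g_2\bigr)=\bigl(a_1\star_h\id_{g_1^{-1}},g_1\bigr)\star_h\bigl(a_2\star_h\id_{g_2^{-1}},g_2\bigr),
\]
and the right-hand side equals $\bigl((a_1\star_h\id_{g_1^{-1}})\,(a_2\star_h\id_{g_2^{-1}})^{g_1^{-1}},\, g_1g_2\bigr)$ by the definition of $\star_h$ on the semidirect product from Section~\ref{sec:cross_vs_2-grpd}; one then expands the conjugation $(-)^{g_1^{-1}}=\id_{g_1}\star_h(-)\star_h\id_{g_1^{-1}}$ and uses associativity of $\star_h$ together with $\id_{g_1^{-1}}\star_h\id_{g_1}=\id_{s(g_1)}=\id_{t(g_2)}$ to collapse the middle. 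For vertical composition one similarly uses that $\star_v$ on the group bundle part is just the group multiplication, and that $\star_h\id_{g^{-1}}$ is compatible with $\star_v$ via the interchange law.

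\textbf{Main obstacle and naturality.} The only delicate point is bookkeeping the interchange law correctly when verifying $\phi(a\star_h b)=\phi(a)\star_h\phi(b)$: the conjugation $h^{g^{-1}}$ appearing in the semidirect product formula must be matched against $\id_{g^{-1}}\star_h(-)\star_h\id_{g^{-1}}$-type manipulations of bigons, and one has to be careful that the identity $2$--arrows $\id_g$ are being composed on the correct side and that degenerate composites like $\id_g\star_v\id_g=\id_g$ and $a\star_v\id_{t_2(a)}=a$ are invoked legitimately; this is a routine but slightly lengthy diagram chase using the axioms listed in the definition of a $2$--groupoid. Finally, naturality: given a strict homomorphism $\varphi\colon\bf G\to\bf H$, the induced strict morphism $\frak G_{\bf G}\to\frak G_{\bf H}$ of crossed modules is $\varphi$ on $\cal G={\bf G}^1$ and the restriction of $\varphi^2$ to the subspace $\cal H\subset{\bf G}^2$ of bigons with trivial source; the square
\[
\xymatrix{
{\bf G}^2\ar[r]^-{\phi_{\bf G}}\ar[d]_{\varphi^2} & \cal H_{\bf G}\rtimes\cal G_{\bf G}\ar[d]^{\varphi\times\varphi} \\
{\bf H}^2\ar[r]^-{\phi_{\bf H}} & \cal H_{\bf H}\rtimes\cal G_{\bf H}
}
\]
commutes because $\varphi^2$ preserves $\star_h$ and identity $2$--arrows, so $\varphi^2(a\star_h\id_{g^{-1}})=\varphi^2(a)\star_h\id_{\varphi(g)^{-1}}$; this is immediate from the compatibility of strict homomorphisms with horizontal composition.
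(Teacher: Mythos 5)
Your proposal is correct and coincides with the paper's own proof: the paper simply exhibits the same two mutually inverse maps $a\mapsto(a\star_h\id_{g^{-1}},g)$ and $(b,g)\mapsto b\star_h\id_g$ and leaves all verifications to the reader. Your outline supplies exactly those omitted checks (compatibility with $s_2,t_2$, both compositions via the interchange law, continuity, and naturality), so it is the same argument carried out in more detail.
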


\begin{proof}
We get the isomorphism of topological $2$--groupoids by setting for a bigon $\xymatrix{\relax y & x \bigon[l]{\g'}{\g}{b}}\in {\bf G}^2$:
\[
\phi(b) \colonequals (b\star_h\id_{\g^{-1}}, \g) \in \cal H\rtimes \cal G, 
\]
and for $(a,g)\in \cal H\rtimes \cal G$:
\[
\psi(a,g)\colonequals a\star_h \id_g\in {\bf G}^2
\]
\end{proof}

\begin{cor}
The functor ${\bf G}\mto {\bf G}_{\frak G}$ is an equivalence of categories between the categories of topological $2$--groupoids and strict homomorphisms and the category of crossed modules of topological groupoids and strict morphisms. 
\end{cor}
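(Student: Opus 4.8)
The plan is to establish the desired equivalence of categories by combining the two constructions already spelled out in the excerpt --- the functor $\bf G\mapsto \bf G_{\frak G}$ sending a topological $2$--groupoid to (the $2$--groupoid associated to) its unfolded crossed module, and the functor $\frak G\mapsto \bf G_{\frak G}$ going the other way --- and checking that they are mutually inverse up to natural isomorphism. First I would verify that $\bf G\mapsto \frak G$ and $\frak G\mapsto \bf G_{\frak G}$ are genuinely functorial: on the crossed-module side this is the naturality already noted after Definition of the vertical semidirect product, and on the $2$--groupoid side one checks that a strict homomorphism $\phi\colon\bf G\to\bf H$ restricts to a strict morphism of the unfolded crossed modules, which is routine since $\phi$ commutes with $s_2,t_2$ and with $\star_h$, hence preserves the subspace $\cal H$ of bigons with source $\id_x$ and the action $a\mapsto \id_{g^{-1}}\star_h a\star_h\id_g$.

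Next, the core of the argument is the natural isomorphism $\bf G\xrightarrow{\cong}\bf G_{\frak G}$ furnished by the Lemma immediately preceding the Corollary. I would invoke that Lemma to get, for every topological $2$--groupoid $\bf G$, a strict isomorphism $\phi_{\bf G}\colon\bf G\to\bf G_{\frak G}$, and check that the family $(\phi_{\bf G})_{\bf G}$ is natural in $\bf G$ --- i.e. for a strict homomorphism $\psi\colon\bf G\to\bf H$ the evident square with the induced homomorphism $\bf G_{\frak G}\to\bf H_{\frak H}$ commutes. This reduces to the formula $\phi_{\bf G}(b)=(b\star_h\id_{\gamma^{-1}},\gamma)$ being compatible with $\psi$, which follows because $\psi$ respects $\star_h$ and identity $2$--arrows. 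For the other composite, given a crossed module $\frak G$ one unfolds $\bf G_{\frak G}$ and must identify the resulting crossed module with $\frak G$ itself: here $\cal G$ is literally recovered as the $1$--arrows of $\bf G_{\frak G}$, the group bundle $\cal H$ is recovered as the bigons $(h,\id_x)$ with source $\id_x$ (so $t_2(h,\id_x)=\partial(h)$), giving $\cal H\cong\cal H$ via $h\mapsto(h,\id_x)$, the boundary becomes $t_2$ which is exactly $\partial$, and the action $\id_{g^{-1}}\star_h(h,\id_x)\star_h\id_g=(h^{g},\id_x)$ recovers $c$; one checks this identification is natural in $\frak G$. Together these two natural isomorphisms exhibit the functors as an equivalence.

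I expect the only mildly delicate point to be the bookkeeping in the second composite: one must be careful that the "new" group bundle extracted from $\bf G_{\frak G}$ really is $\{(h,g)\in\cal H\rtimes\cal G : g=\id_x,\ h\in\cal H^x\}$ with its group structure under $\star_h$ matching the group structure of $\cal H$, and that $\star_h$ restricted there is $(h_1,\id_x)\star_h(h_2,\id_x)=(h_1h_2,\id_x)$ --- which holds since $h_2^{g_1^{-1}}=h_2^{\id_x}=h_2$ --- so that $h\mapsto(h,\id_x)$ is indeed a homeomorphism of group bundles. The compatibility of the action is the computation $\id_{g^{-1}}\star_h(h,\id_{t(g)})\star_h\id_g=(1,g^{-1})\star_h(h,\id_{t(g)})\star_h(1,g)=(h^{g},\id_{s(g)})$ using the semidirect horizontal product; I would state this and leave the elementary verification to the reader, as the excerpt does for similar claims. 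The continuity and the inverse-on-morphisms statements are immediate from the explicit formulas, so no further obstacle arises.
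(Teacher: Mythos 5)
Your proposal is correct and follows essentially the same route as the paper, which states the corollary without proof as an immediate consequence of the preceding Lemma (the natural strict isomorphism ${\bf G}\cong{\bf G}_{\frak G}$) together with the naturality of the two constructions in \S\ref{sec:cross_vs_2-grpd}. Your explicit verifications --- in particular the computation $\id_{g^{-1}}\star_h(h,1_{t(g)})\star_h\id_g=(h^g,1_{s(g)})$ identifying the unfolded crossed module of ${\bf G}_{\frak G}$ with $\frak G$ --- are exactly the bookkeeping the paper leaves implicit, and they check out against the stated formulas for $\star_h$ and the semidirect product.
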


\subsection{Transformations and strong equivalences}

We define a strong notion of equivalence of groupoid crossed modules. We will need the following which generalizes the notion of {\em crossed homomorphisms} (\cite{MacLane:Homology}) for topological groupoids.

\begin{df}
Let $\cal H\stackrel{\p}{\xTo} \cal G$ be a groupoid crossed module, $\Gamgrd$ a topological groupoid. A {\em $\cal G$--crossed homomorphism} $\G\stackrel{(\vp,\lam)}{\rTo} \cal H$ consists of a pair $(\vp, \lam)$ where $\vp\colon \G\rTo \cal G$ is a strict morphism of topological groupoids and $\lam\colon \G\rTo \cal H$ is a continuous map such that
\begin{itemize}
\item the diagram below commutes
\[
\xymatrix{
\G \ar[r]^{\lam} \ar[d]_{\vp} & \cal H\ar[d]^t \\ 
\cal G \ar[r]^s & \cal G^0
}
\]
that is, $\lam(\g)\in \cal H^{\vp(t(\g))}$, for every $\g\in \G$;
\item $\lam(\g\g^\prime)=\lam(\g)\lam(\g^\prime)^{\vp(\g)^{-1}}$, for every composable pair $(\g,\g^\prime)\in \G^{(2)}$.
\end{itemize}
\end{df}

In particular, when $\G=\cal G$, a continuous map $\lam\colon\cal G\rTo \cal H$ is called a crossed homomorphism if $(\Id_{\cal G},\lam)$ is a $\cal G$--crossed homomorphism; that is, $\lam(g)\in \cal H^{t(g)}$ for every $g\in \cal G$ and $$\lam(gg^\prime)=\lam(g)\lam(g^\prime)^{g^{-1}}$$ when $g$ and $g^\prime$ are composable. 

\begin{df}\label{def:transformation}
Let $\frak G_1$ and $\frak G_2$ be groupoid crossed modules with unit spaces $X_1$ and $X_2$, respectively, and $\chi, \k\colon \frak G_1\rTo \frak G_2$ be two strict morphisms of crossed modules. A {\em transformation} $\chi \tr \k$ is a pair $(T,\lam)$ where $T\colon X_1\rTo \cal G_2$ and $\lam\colon \cal G_1\rTo \cal H_2$ are continuous maps satisfying the following properties:
\begin{enumerate}[label=(\textbf{T\arabic*}), align=left]
\item\label{T1} For every $x_1\in X_1$, $T(x_1)$ is an arrow from $\chi(x_1)$ to $\k(x_1)$ in $\cal G_2$.
\item\label{T2} $\cal G_1 \stackrel{(\chi, \lam)}{\rTo}\cal H_2$ is a $\cal G_2$--crossed homomorphism.
\item\label{T3} For every arrow $x_1\stackrel{g_1}{\rTo}y_1 \in \cal G_1$, the relation $\k(g_1)T(x_1)=T(y_1)\p_2(\lam(g_1))\chi(g_1)$ holds.
\item\label{T4} $\k(h_1)^{T(x_1)}=\lam(\p_1(h_1))\chi(h_1)$ for all $x_1\in X_1$ and $h_1\in \cal H_1^{x_1}$.
\end{enumerate}
\end{df}

Notice that in the particular case where $\lam$ is trivial, that is $\lam(\cal G_1)\subset X_2$, then it follows from~\ref{T3} that every arrow $x_1\stackrel{g_1}{\rTo}y_1 \in \cal G_1$ induces a commutative diagram 
\[
\xymatrix{
\chi(x_1) \ar[r]^{T(x_1)} \ar[d]_{\chi(g_1)} & \k(x_1)\ar[d]^{\k(g_1)} \\
\chi(y_1) \ar[r]^{T(y_1)} & \k(y_1)
}
\]
of arrows in $\cal G_2$, so that $T$ is a natural transformation from the groupoid strict morphism $\chi^r\colon \cal G_1\rTo \cal G_2$ to $\k^r$ (\cite{Moerdijk:Orbifolds_groupoids}).  

As an example of transformations of strict morphisms between crossed modules, we have the lemma below.

\begin{lem}\label{lem1:transform_crosssed_module_vs_2-grpd}
Transformations of strict homomorphisms betweem topological $2$--groupoids are transformations of strict morphisms between the associated groupoid crossed modules.
\end{lem}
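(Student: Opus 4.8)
The plan is to unwind both notions of transformation explicitly and to check that the data on one side literally becomes the data on the other side under the one-to-one correspondence between topological $2$--groupoids and groupoid crossed modules described in \S\ref{sec:cross_vs_2-grpd}. So, start with topological $2$--groupoids ${\bf G}$ and ${\bf H}$, strict homomorphisms $\phi,\psi\colon {\bf G}\rTo {\bf H}$, and a transformation $v\colon \phi\tr\psi$ in the sense of Definition~\ref{df:transformation}. Let $\frak G_1$ and $\frak G_2$ be the groupoid crossed modules associated to ${\bf G}$ and ${\bf H}$; recall that $\cal G_1={\bf G}^1\rrTo {\bf G}^0$, that $\cal H_1\subset {\bf G}^2$ consists of the $2$--arrows with source an identity $1$--morphism $\id_x$, and similarly for $\frak G_2$, and that $\chi,\k\colon \frak G_1\rTo \frak G_2$ denote the strict morphisms of crossed modules induced by $\phi,\psi$ respectively. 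I would then define the candidate transformation $(T,\lam)\colon \chi\tr\k$ by the two obvious formulas: $T\colon {\bf G}^0\rTo {\bf H}^1$ is simply $T(x)\colonequals v_x$ (which, by axiom (i) of Definition~\ref{df:transformation}, is a $1$--arrow $\phi(x)\rTo\psi(x)$, i.e. $\chi(x)\rTo\k(x)$ in $\cal G_2={\bf H}^1$), and $\lam\colon {\bf G}^1\rTo \cal H_2$ is built from the $2$--arrows $v_g$ by "pre-whiskering them into the crossed-module fibre," namely $\lam(g)\colonequals v_g\star_h\id_{v_{t(g)}^{-1}}\star_h\id_{\psi(g)^{-1}}$, so that $s_2$ of this $2$--arrow is an identity $1$--morphism and hence $\lam(g)$ indeed lands in $\cal H_2$. (This is the same whiskering that appears in the proof of the lemma in \S\ref{sec:cross_vs_2-grpd}, applied to the $2$--arrow $v_g\colon\psi(g)v_x\RTo v_y\phi(g)$.)

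Next I would verify the four axioms~\ref{T1}--\ref{T4} of Definition~\ref{def:transformation} one at a time. Axiom~\ref{T1} is immediate from what was just said. For~\ref{T2}, that $(\chi,\lam)$ is a $\cal G_2$--crossed homomorphism amounts to the source/target compatibility (clear, by construction of $\lam$) together with the cocycle identity $\lam(gh)=\lam(g)\lam(h)^{\chi(g)^{-1}}$; this should be exactly a repackaging of axiom (iii) of Definition~\ref{df:transformation}, $v_{gh}=v_g\star_h\id_{v_y^{-1}}\star_h v_h$, once one translates the horizontal conjugation $a\mto\id_{g^{-1}}\star_h a\star_h\id_g$ in ${\bf H}^2$ into the crossed-module action $c$ of $\cal G_2$ on $\cal H_2$ (and uses that $\psi$, being a strict homomorphism, commutes with all compositions). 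Axiom~\ref{T3}, the relation $\k(g)T(x)=T(y)\p_2(\lam(g))\chi(g)$, is just the statement that $t_2(v_g)=v_y\phi(g)$ and $s_2(v_g)=\psi(g)v_x$ rearranged using $\p_2=t_2$ and the definition of $\lam$; it is essentially the "$v_g$ is a $2$--arrow from $\psi(g)v_x$ to $v_y\phi(g)$" clause of axiom (ii). Finally, axiom~\ref{T4}, $\k(h)^{T(x)}=\lam(\p_1(h))\chi(h)$ for $h\in\cal H_1^x$, is the image under this dictionary of the commuting square (iv) of Definition~\ref{df:transformation} applied to a bigon whose source is an identity $1$--morphism, i.e. to $h\in\cal H_1\subset{\bf G}^2$ viewed as a $2$--arrow $\id_x\RTo t_2(h)$: the square there relates $\psi(h)\star_h\id_{v_x}$, $\phi(h)\star_h\id_{v_x}$, $v_{t_2(h)}$ and $v_{s_2(h)}=v_x$, and unravelling it gives precisely~\ref{T4}.

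The only remaining point is continuity: $T$ and $\lam$ are continuous because $v$ is, because the structure maps and compositions of ${\bf H}$ are continuous (by the definition of topological $2$--groupoid), and because the passage ${\bf H}\rightsquigarrow\frak G_2$ realizes $\cal H_2$ as a subspace of ${\bf H}^2$ with the subspace topology, so a map into $\cal H_2$ is continuous iff it is continuous into ${\bf H}^2$. I expect the bookkeeping in axioms~\ref{T2} and~\ref{T4} to be the main obstacle: the coherence law $(a_1\star_h a_2)\star_v(b_1\star_h b_2)=(a_1\star_v b_1)\star_h(a_2\star_v b_2)$ has to be invoked to move whiskerings past vertical compositions, and one has to be careful about which $1$--arrow each $2$--arrow is whiskered by (the $v_y^{-1}$ and $\psi(g)^{-1}$ factors) so that everything stays inside the group bundle $\cal H_2$ rather than drifting into the rest of ${\bf H}^2$. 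Once the translation dictionary is fixed, though, each of the four axioms is a short and mechanical verification, and the converse direction (that every transformation of crossed modules arises this way) is obtained by simply reading all of the above formulas backwards, using the strict isomorphism ${\bf H}\stackrel{\cong}{\rTo}{\bf H}_{\frak G_2}$ from the lemma in \S\ref{sec:cross_vs_2-grpd} to recover a $2$--groupoid transformation from $(T,\lam)$.
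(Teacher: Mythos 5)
Your strategy is the same as the paper's: set $T(x)\colonequals v_x$, produce $\lam(g)\in\cal H_2$ by horizontally whiskering the $2$--arrow $v_g$ into the group bundle, and then check axioms~\ref{T1}--\ref{T4} against the axioms of Definition~\ref{df:transformation} (the paper, too, leaves those four verifications to the reader). The one point where your write-up would actually fail is the explicit formula for $\lam$. As written, $v_g\star_h\id_{v_{t(g)}^{-1}}\star_h\id_{\psi(g)^{-1}}$ is not horizontally composable: since $s(s_2(v_g))=s(\psi(g)v_x)=\phi(x)$, the first right-hand whisker must be $\id_{v_{s(g)}^{-1}}=\id_{v_x^{-1}}$, not $\id_{v_{t(g)}^{-1}}$. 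More importantly, even after that correction the element $v_g\star_h\id_{v_x^{-1}}\star_h\id_{\psi(g)^{-1}}$ has source $\id_{\psi(y)}$, so it lies in the fibre $\cal H_2^{\k(y)}$, whereas axiom~\ref{T2} (the $\cal G_2$--crossed homomorphism condition with $\vp=\chi$) forces $\lam(g)\in\cal H_2^{\chi(t(g))}=\cal H_2^{\chi(y)}$. The paper's formula differs from yours precisely by the conjugation that repairs this: $\lam(g)=\id_{v_y^{-1}}\star_h v_g\star_h\id_{v_x^{-1}}\star_h\id_{\k(g)^{-1}}\star_h\id_{v_y}$, whose source is $\id_{\chi(y)}$ and whose target is $\chi(g)v_x^{-1}\k(g)^{-1}v_y$. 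With that adjustment your plan for verifying \ref{T1}--\ref{T4} goes through as you describe. Finally, the ``converse direction'' you mention at the end is not part of this statement; it is the separate Lemma~\ref{lem2:transform_crosssed_module_vs_2-grpd}.
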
 

\begin{proof}
Let $\bf G$ and $\bf H$ be topological $2$--groupoids, $\phi, \psi\colon {\bf G}\rTo {\bf H}$ two strict homomorphisms. Assume $\phi \stackrel{v}{\RTo}\psi$ is a transformation. Denote by ${\frak G}_{\bf G}$ and ${\frak G}_{\bf H}$ the groupoid crossed modules associated to $\bf G$ and $\bf H$, respectively, and $\phi, \psi\colon {\frak G}_{\bf G}\rTo {\frak G}_{\bf H}$ the induced strict morphisms. We write ${\frak G}_{\bf G}=\cal H_1\xTo {\bf G}^1$, and ${\frak G}_{\bf H}=\cal H_2\xTo {\bf H}^1$. Then we obtain the desired transformation $\phi \stackrel{(T,\lambda)}{\tr}\psi$ by letting $T\colon {\bf G}^0\rTo {\bf H}^1$ to be defined by $T(x)\colonequals v_x$, and defining the function $\lambda\colon {\bf G}^1\rTo {\bf H}^2$ by the horizontal composition 
\[
\xymatrix{\relax
\chi(y) && \k(y) \bigon[ll]{v_y^{-1}}{v_y^{-1}}{\id_{v_y^{-1}}} && \chi(x)  \bigon[ll]{v_y\chi(g)}{\k(g)v_x}{v_g} && \k(x)  \bigon[ll]{v_x^{-1}}{v_x^{-1}}{\id_{v_x^{-1}}} && \k(y) \bigon[ll]{\k(g)^{-1}}{\k(g)^{-1}}{\id_{\k(g)^{-1}}} && \chi(y) \bigon[ll]{v_y}{v_y}{\id_{v_y}} 
}
\]
That is, for $x\stackrel{g}{\rTo}y\in {\bf G}^1$, we set
\[
\lambda(g)\colonequals \id_{v_y^{-1}}\star_hv_g\star_h\id_{v_x^{-1}}\star_h\id_{\k(g)^{-1}}\star_h\id_{v_y}\colon \id_{\chi(y)}\RTo \chi(g)v_x^{-1}\k(g)^{-1}v_y \in {\cal H}_2
\]
It is left to the reader to check that $(T,\lambda)$ indeed satisfies all the axioms~\ref{T1}--~\ref{T4}. 
\end{proof}

\begin{lem}\label{lem2:transform_crosssed_module_vs_2-grpd}
Transformations between strict morphisms of groupoid crossed modules induce transformations between the induced morphisms between the associated topological $2$--groupoids.
\end{lem}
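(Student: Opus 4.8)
The plan is to run the construction of Lemma~\ref{lem1:transform_crosssed_module_vs_2-grpd} in reverse, using the explicit dictionary between crossed modules and $2$--groupoids recorded in \S\ref{sec:cross_vs_2-grpd}. So let $\frak G_1$ and $\frak G_2$ be groupoid crossed modules with unit spaces $X_1$ and $X_2$, let $\chi,\k\colon\frak G_1\rTo\frak G_2$ be strict morphisms, and let $(T,\lam)$ be a transformation $\chi\tr\k$ as in Definition~\ref{def:transformation}. Write ${\bf G}_i={\bf G}_{\frak G_i}$ for the associated topological $2$--groupoid, with ${\bf G}_i^1=\cal G_i$, ${\bf G}_i^0=X_i$, and ${\bf G}_i^2=\cal H_i\rtimes\cal G_i$ (source $s_2(h,g)=g$, target $t_2(h,g)=\p(h)g$). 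The induced strict homomorphisms $\chi,\k\colon{\bf G}_1\rTo{\bf G}_2$ are $(h,g)\mto(\chi(h),\chi(g))$ and $(h,g)\mto(\k(h),\k(g))$. First I would define the candidate transformation $v\colon{\bf G}_1^1\rTo{\bf G}_2^2$ of $2$--groupoids by
\[
v_g\colonequals\bigl(\lam(g),\ \k(g)T(x_1)\bigr)\in\cal H_2\rtimes\cal G_2,\qquad (x_1\stackrel{g}{\rTo}y_1)\in\cal G_1,
\]
the guess being dictated by requiring $v_{x_1}=T(x_1)$ (take $g=1_{x_1}$, so $\lam(1_{x_1})$ should be the identity $2$--arrow, which follows from the crossed-homomorphism identity in~\ref{T2}) and by matching source and target: axiom~\ref{T3} gives $\k(g)T(x_1)=T(y_1)\p_2(\lam(g))\chi(g)$, so $s_2(v_g)=\k(g)T(x_1)=\psi(g)v_{x_1}$ and $t_2(v_g)=\p_2(\lam(g))\k(g)T(x_1)=T(y_1)\chi(g)=v_{y_1}\chi(g)$, exactly conditions (i)--(ii) of Definition~\ref{df:transformation} with $\phi=\chi$, $\psi=\k$.

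Next I would verify conditions (iii) and (iv) of Definition~\ref{df:transformation}. For (iii), the chain rule $v_{gh}=v_g\star_h\id_{v_{y_1}^{-1}}\star_h v_h$ should unwind, in the $\cal H_2\rtimes\cal G_2$ model, to the crossed-homomorphism identity $\lam(gh)=\lam(g)\,\lam(h)^{\chi(g)^{-1}}$ from~\ref{T2} (together with bookkeeping on the $\cal G_2$--components, which is automatic from multiplicativity of $\chi,\k,T$ and~\ref{T3}); here one uses the explicit formula $(h_1,g_1)\star_h(h_2,g_2)=(h_1h_2^{g_1^{-1}},g_1g_2)$ and the inverse formula for $\id_{v_{y_1}^{-1}}$. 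For (iv), given a bigon in ${\bf G}_1$, i.e.\ an element $(h_1,g)\in\cal H_1\rtimes\cal G_1$ viewed as a $2$--arrow $a\colon g\RTo \p_1(h_1)g$, one must check the square
\[
\xymatrix{
\k(g)v_{x_1}\ar@{=>}[rr]^{v_g}\ar@{=>}[d]_{\k(a)\star_h\id_{v_{x_1}}} && v_{y_1}\chi(g)\ar@{=>}[d]^{\id_{v_{y_1}}\star_h\chi(a)}\\
\chi(h)v_{x_1}\ar@{=>}[rr]^{v_{\p_1(h_1)g}} && v_{y_1}\chi(\p_1(h_1)g)
}
\]
(using $\phi=\chi$ on the right legs, $\psi=\k$ on the left) commutes; after translating through the semidirect-product formulas this reduces precisely to axiom~\ref{T4}, $\k(h_1)^{T(x_1)}=\lam(\p_1(h_1))\chi(h_1)$, possibly after also invoking the compatibility of $\chi$ and $\k$ with the actions $c_1,c_2$ and the crossed-module relation $\p_2(h_1^g)=g^{-1}\p_2(h_1)g$. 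Finally, continuity of $v$ is immediate since $\lam$, $\k$, $T$, and the structure maps of $\cal G_2$ are all continuous.

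The routine but genuinely fiddly part — and the place I expect to spend the most care — is step three: carrying out the $\star_h$ and $\star_v$ computations inside $\cal H_2\rtimes\cal G_2$ so that axioms~\ref{T2},~\ref{T3},~\ref{T4} assemble exactly into Definition~\ref{df:transformation}(i)--(iv) with no sign/conjugation discrepancies. In particular one must be consistent about which of $\chi,\k$ plays the role of $\phi$ versus $\psi$ (Lemma~\ref{lem1:transform_crosssed_module_vs_2-grpd} and this lemma should be mutually inverse up to the isomorphism ${\bf G}\cong{\bf G}_{\frak G}$ of the preceding lemma), and about whether the $2$--arrow $a$ attached to $(h_1,g)$ is $g\RTo\p_1(h_1)g$ or its inverse; getting those conventions aligned is where the proof lives. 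Everything else is either forced by source/target matching or is a direct transcription of the crossed-module axioms. I would close by remarking that these assignments are manifestly natural, so together with Lemma~\ref{lem1:transform_crosssed_module_vs_2-grpd} they exhibit an equivalence between transformations on the two sides, compatible with the equivalence of categories of the previous corollary.
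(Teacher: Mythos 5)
Your overall strategy coincides with the paper's: set $v_{x_1}\colonequals T(x_1)$ on objects, build $v_{g_1}\in\cal H_2\rtimes\cal G_2$ out of $\lam(g_1)$ and $\k(g_1)T(x_1)$, and then check the axioms of Definition~\ref{df:transformation}. But the formula you commit to, $v_g\colonequals(\lam(g),\,\k(g)T(x_1))$, is wrong, and the failure is not a mere convention mismatch. By~\ref{T2} the element $\lam(g)$ lies in the fibre $\cal H_2^{\chi(y_1)}$, whereas a pair $(h,\g)$ belongs to $\cal H_2\rtimes\cal G_2$ only when $h\in\cal H_2^{t(\g)}$, and here $t(\k(g)T(x_1))=\k(y_1)$; so your pair is not an element of the semidirect product unless $\chi$ and $\k$ agree on objects. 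Correspondingly, the target computation you assert fails: substituting~\ref{T3} into $t_2(v_g)=\p_2(\lam(g))\k(g)T(x_1)$ produces $\p_2(\lam(g))T(y_1)\p_2(\lam(g))\chi(g)$, which is not even a composable string (one would need $s(\p_2(\lam(g)))=\chi(y_1)$ to equal $t(T(y_1))=\k(y_1)$), let alone equal to $T(y_1)\chi(g)$. The paper's formula is
\[
v_{g_1}\colonequals\left((\lam(g_1)^{T(y_1)^{-1}})^{-1},\ \k(g_1)T(x_1)\right),
\]
that is, one must first transport $\lam(g_1)$ along $T(y_1)^{-1}$ into the fibre $\cal H_2^{\k(y_1)}$ and then invert it; with this correction $\p_2\bigl((\lam(g_1)^{T(y_1)^{-1}})^{-1}\bigr)=T(y_1)\p_2(\lam(g_1))^{-1}T(y_1)^{-1}$, and~\ref{T3} then gives exactly $t_2(v_{g_1})=T(y_1)\chi(g_1)$, as required by condition (ii) of Definition~\ref{df:transformation}.

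You did flag ``sign/conjugation discrepancies'' as the danger zone, but the proposal as written asserts a specific ill-formed element and a specific identity that does not hold, and this occurs precisely in the step you describe as forced by source/target matching. The rest of your plan --- deducing condition (iii) from the crossed-homomorphism identity in~\ref{T2} and condition (iv) from~\ref{T4} together with the crossed-module relation $\p_2(h^g)=g^{-1}\p_2(h)g$ --- is sound in outline and is what the paper does, but it can only be carried out after replacing your $v_g$ by the conjugated-and-inverted version above.
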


\begin{proof}
Indeed, let $(T,\lam)\colon \chi \tr \k$ be as in Definition~\ref{def:transformation}, then for every arrow $x_1\stackrel{g_1}{\rTo}y_1$ in $\cal G_1$, we have the $2$--arrow 
\[
\xymatrix{
\chi(x_1) \ar[r]^{T(x_1)} \ar[d]_{\chi(g_1)} & \k(x_1) \ar[d]^{\k(g_1)} \ar@{=>}[ld]_{v_{g_1}} \\
\chi(y_1) \ar[r]_{T(y_1)} & \k(y_1)
}
\]
in the $2$--groupoid $\cal H_2\rtimes \cal G_2 \rrTo \cal G_2\rrTo X_2$, where 
\begin{eqnarray}\label{eq:transform_transform}
v_{g_1}\colonequals ((\lam(g_1)^{T(y_1)^{-1}})^{-1},\ \k^r(g_1)T(x_1))\in \cal H_2\rtimes \cal G_2.
\end{eqnarray}

Moreover, given a $2$--arrow $(h_1,g_1)\colon g_1\RTo \p_1(h_1)g_1$ in $\cal H_1\rtimes \cal G_1\rrTo \cal G_1\rrTo X_1$, axioms~\ref{T3} and~\ref{T4} gives the commutative diagram of $2$--arrows in $\cal H_2\rtimes \cal G_2\rrTo \cal G_2\rrTo X_2$ 
\[
\xymatrix{
\k(g_1)T(x_1) \ar@{=>}[rr]^-{v_{g_1}} \ar@{=>}[d]_{(\k(h_1),\ \k(g_1)T(x_1))} && T(y_1)\chi(g_1) \ar@{=>}[d]^{(\chi(h)^{T(y_1)^{-1}},\ T(y_1)\chi(g_1))} \\
\p_2(\k(h_1))\k(g_1)T(x_1) \ar@{=>}[rr]^{v_{\p_1(h_1)g_1}} && T(y_1)\p_2(\chi(h_1))\chi(g_1)
}
\]
Hence, we get the transformation $v\colon {\cal G}_1\rTo \cal H_2\rtimes \cal G_2$ from $\chi$ to $\kappa$ as strict homomorphisms of topological $2$--groupoids ${\bf G}_1\rTo {\bf G}_2$, where $v_{x_1}=T(x_1)$ for $x_1\in X_1$, and for $g_1\in \cal G_1$, the $2$--arrow $v_{g_1}\in \cal H_2\rtimes \cal G_2$ is given by~\eqref{eq:transform_transform}. 
\end{proof}

\begin{pro}
Let $\chi$ and $\k$ be strict morphisms from the groupoid crossed module $\frak G_1$ to $\frak G_2$ with same unit space $X$. Assume $\chi \stackrel{(T,\lambda)}{\tr}\k$ is a transformation of strict morphisms. Then, the map 
\[
\begin{array}{ccc}
 \cal H_2\rtimes_{\chi}\cal G_1 & \rTo & \cal H_2\rtimes_{\k}\cal G_1 \\
 (h_2,g_1) & \mto & \left(h_2^{T(t(g_1))^{-1}}, g_1\right)
\end{array}
\]
is an isomorphism of topological groupoids.
\end{pro}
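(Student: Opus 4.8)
The plan is to exhibit an explicit continuous inverse and, separately, to check that the displayed map respects the two groupoid structures; together these give the isomorphism. Write $\Phi$ for the displayed map and keep careful track of which fibre of $\cal H_2$ each element sits in.

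\emph{Well-definedness and the inverse.} By axiom \ref{T1}, $T(t(g_1))$ is an arrow $\chi(t(g_1))\to\k(t(g_1))$ in $\cal G_2$, so $(c_2)_{T(t(g_1))^{-1}}$ restricts to a group isomorphism $\cal H_2^{\chi(t(g_1))}\cong\cal H_2^{\k(t(g_1))}$. Since an arrow $(h_2,g_1)$ of $\cal H_2\rtimes_\chi\cal G_1$ has $h_2\in\cal H_2^{\chi(t(g_1))}$, the element $h_2^{T(t(g_1))^{-1}}$ lies in $\cal H_2^{\k(t(g_1))}$, which is precisely the fibre over $t(g_1)$ required for $(h_2^{T(t(g_1))^{-1}},g_1)$ to be an arrow of $\cal H_2\rtimes_\k\cal G_1$; and $\Phi$ is continuous because it is assembled from $t$, $T$, inversion in $\cal G_2$, and the (continuous) $\cal G_2$-action on $\cal H_2$. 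The same bookkeeping makes $(h_2,g_1)\mapsto(h_2^{T(t(g_1))},g_1)$ a continuous map $\cal H_2\rtimes_\k\cal G_1\to\cal H_2\rtimes_\chi\cal G_1$; functoriality of $c_2$ together with the fact that $T(t(g_1))$ and $T(t(g_1))^{-1}$ are mutually inverse in $\cal G_2$ shows it is a two-sided inverse of $\Phi$, so $\Phi$ is a homeomorphism.

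\emph{Compatibility with the groupoid structures.} Preservation of objects, source and target is immediate since $\Phi$ does not alter the $\cal G_1$-component and both semidirect products have their source and target read off from that component; preservation of units holds because $(c_2)_{T(x)^{-1}}$, being a group isomorphism, sends the identity of $\cal H_2^{\chi(x)}$ to that of $\cal H_2^{\k(x)}$. The substantial point is multiplicativity. Given composable arrows $(h_2,g_1)$, $(k_2,f_1)$ of $\cal H_2\rtimes_\chi\cal G_1$ (so $t(f_1)=s(g_1)$), one expands $\Phi\big((h_2,g_1)(k_2,f_1)\big)$ and $\Phi(h_2,g_1)\,\Phi(k_2,f_1)$ using the semidirect-product multiplication for the $\chi$-action on the left and for the $\k$-action on the right, splits the first expression using that $(c_2)_{T(t(g_1))^{-1}}$ is a homomorphism, and uses functoriality of $c_2$ to rewrite the two iterated actions $\big(k_2^{\chi(g_1^{-1})}\big)^{T(t(g_1))^{-1}}$ and $\big(k_2^{T(t(f_1))^{-1}}\big)^{\k(g_1^{-1})}$ as single $\cal G_2$-actions. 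Comparing the resulting formulas then reduces, after transporting along the crossed-module axioms \ref{item1:crossed}--\ref{item2:crossed} of $\frak G_2$ and the crossed-homomorphism identity \ref{T2} for $\lam$, to the single naturality relation $\k(g_1)T(s(g_1))=T(t(g_1))\,\p_2(\lam(g_1))\,\chi(g_1)$, which is exactly axiom \ref{T3}. Preservation of inverses is then automatic, and $\Phi$ is an isomorphism of topological groupoids.

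\emph{Where the difficulty lies.} I expect the multiplicativity step to be the only real obstacle: two distinct twisted multiplications are in play, the various group elements a priori live in different fibres of $\cal H_2$, and one must combine \ref{T2}, \ref{T3} and the two crossed-module axioms of $\frak G_2$ in exactly the right order so that the discrepancy between the $\chi$-twist and the $\k$-twist is absorbed by conjugation by $T$ (if in the course of this computation a residual $\lam$-term survives, it signals that the correcting factor in $\Phi$ must be taken to be $h_2\lam(g_1)^{-1}$ rather than $h_2$ before applying $(c_2)_{T(t(g_1))^{-1}}$; either way the reduction to \ref{T3} is the crux). Everything else — continuity, the explicit inverse, and the object/source/target/unit compatibilities — is routine.
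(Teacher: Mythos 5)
Your treatment of well-definedness, continuity, the explicit inverse, and the source/target/unit compatibilities is fine. The gap is that you never actually carry out the multiplicativity computation, and the conditional you leave dangling at the end is exactly where the content of the proposition lives. Doing the computation: for composable $(h_2,g_1),(k_2,f_1)$, write $y=t(g_1)$, $x=s(g_1)=t(f_1)$. Then $\Phi\bigl((h_2,g_1)(k_2,f_1)\bigr)$ has first component $h_2^{T(y)^{-1}}\,\bigl(k_2^{\chi(g_1)^{-1}}\bigr)^{T(y)^{-1}}$, while $\Phi(h_2,g_1)\,\Phi(k_2,f_1)$ has first component $h_2^{T(y)^{-1}}\,k_2^{T(x)^{-1}\k(g_1)^{-1}}$. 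Axiom \ref{T3} gives $T(x)^{-1}\k(g_1)^{-1}=\chi(g_1)^{-1}\,\p_2(\lam(g_1))^{-1}\,T(y)^{-1}$, and by axiom \ref{item2:crossed} of Definition \ref{def:crossed} the middle factor acts as conjugation by $\lam(g_1)$; hence the two expressions differ by $\lam(g_1)\,(\cdot)\,\lam(g_1)^{-1}$ applied to $k_2^{\chi(g_1)^{-1}}$ before the final $T(y)^{-1}$-twist. Since $k_2^{\chi(g_1)^{-1}}$ ranges over all of $\cal H_2^{\chi(y)}$, the displayed map is a groupoid morphism only if each $\lam(g_1)$ is central in its fibre (e.g.\ if $\lam$ is trivial). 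So the residual $\lam$-term you worried about does survive, and the comparison does \emph{not} ``reduce to \ref{T3}'' as you assert; note also that \ref{T2} never even enters the computation for the map as displayed, which is a symptom of the calculation not having been done.

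Your parenthetical guess identifies the right repair: $(h_2,g_1)\mapsto\bigl((h_2\lam(g_1)^{-1})^{T(t(g_1))^{-1}},\,g_1\bigr)$ is multiplicative — here \ref{T2} is used through $\lam(g_1f_1)^{-1}=\bigl(\lam(f_1)^{-1}\bigr)^{\chi(g_1)^{-1}}\lam(g_1)^{-1}$, and the conjugation by $\lam(g_1)$ produced by \ref{T3} is exactly cancelled by the extra factor. But a proof has to decide which branch of your conditional holds rather than offering both; as written, your argument establishes the statement only under an unstated centrality hypothesis on $\lam$, and otherwise establishes a corrected statement. (The paper's own proof is the single word ``Straightforward,'' so there is no argument to compare against; the decisive computation is precisely the one you deferred.)
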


\begin{proof}
Straightforward.
\end{proof}

\begin{df}
A strict morphism $\chi\colon \frak G_1\rTo\frak G_2$ is called a {\em strong equivalence} provided that there exist a strict morphism $\k\colon \frak G_2\rTo \frak G_1$ and transformations $\chi\circ \k\overset{(T,\lam)}{\tr} \Id_{\frak G_2}$ and $\k\circ \chi \overset{(T^\prime,\lam^\prime)}{\tr}\Id_{\frak G_1}$. Two crossed modules are said to be {\em strongly equivalent} if there is a strong equivalence between them.
\end{df}

By virtue of Lemma~\ref{lem1:transform_crosssed_module_vs_2-grpd} and Lemma~\ref{lem2:transform_crosssed_module_vs_2-grpd}, we immediately have the following.

\begin{pro}
Two groupoid crossed modules are strongly equivalent if and only if their associated topological $2$--groupoids are.
\end{pro}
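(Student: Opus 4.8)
The plan is to transport the notion of strong equivalence across the equivalence of categories established just above. Write $F\colon \frak G\mapsto {\bf G}_{\frak G}$ for the functor from crossed modules of topological groupoids and strict morphisms to topological $2$--groupoids and strict homomorphisms, and $G$ for the functor in the opposite direction sending a topological $2$--groupoid to its associated crossed module. By the Lemma and Corollary preceding this proposition, $F$ and $G$ are mutually quasi-inverse equivalences of categories; in particular $F$ preserves composition and identities, $F(\chi\circ\k)=F(\chi)\circ F(\k)$ and $F(\Id_{\frak G})=\Id_{{\bf G}_{\frak G}}$, and symmetrically for $G$, and there are natural isomorphisms $F\circ G\cong\Id$, $G\circ F\cong\Id$.

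For the forward implication, I would start from a strong equivalence $\chi\colon\frak G_1\rTo\frak G_2$ with quasi-inverse $\k\colon\frak G_2\rTo\frak G_1$ and transformations $\chi\circ\k\tr\Id_{\frak G_2}$ and $\k\circ\chi\tr\Id_{\frak G_1}$. Applying $F$ produces strict homomorphisms $F(\chi)\colon{\bf G}_{\frak G_1}\rTo{\bf G}_{\frak G_2}$ and $F(\k)$ in the opposite direction. By Lemma~\ref{lem2:transform_crosssed_module_vs_2-grpd} the two transformations above induce transformations between the corresponding strict homomorphisms, and since $F(\chi\circ\k)=F(\chi)\circ F(\k)$, $F(\Id_{\frak G_2})=\Id_{{\bf G}_{\frak G_2}}$ (and likewise for the other composite), these are transformations $F(\chi)\circ F(\k)\tr\Id_{{\bf G}_{\frak G_2}}$ and $F(\k)\circ F(\chi)\tr\Id_{{\bf G}_{\frak G_1}}$. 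Hence $F(\chi)$ is a strong equivalence of topological $2$--groupoids, so ${\bf G}_{\frak G_1}$ and ${\bf G}_{\frak G_2}$ are strongly equivalent.

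For the converse I would argue symmetrically: given a strong equivalence $\phi\colon{\bf G}_{\frak G_1}\rTo{\bf G}_{\frak G_2}$ with quasi-inverse $\psi$ and the corresponding transformations, apply $G$ and use $G\circ F\cong\Id$ to identify $G({\bf G}_{\frak G_i})$ with $\frak G_i$, thereby obtaining strict morphisms $\frak G_1\rTo\frak G_2$ and $\frak G_2\rTo\frak G_1$; Lemma~\ref{lem1:transform_crosssed_module_vs_2-grpd} then converts the given transformations into transformations between their composites and the identity strict morphisms, exactly as before, so $\frak G_1$ and $\frak G_2$ are strongly equivalent. The only point requiring care in either direction is the bookkeeping: one must check that the transformation delivered by Lemma~\ref{lem1:transform_crosssed_module_vs_2-grpd} or Lemma~\ref{lem2:transform_crosssed_module_vs_2-grpd} genuinely connects the images of the two \emph{composites} (and not some other pair of morphisms), which is precisely where functoriality of $F$ and $G$ and their compatibility with the natural isomorphisms $F\circ G\cong\Id$, $G\circ F\cong\Id$ are used. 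This is routine once the equivalence of categories is in hand, so no substantive obstacle arises — which is why the statement can indeed be deduced "immediately" from the two lemmas.
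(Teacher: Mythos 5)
Your proposal is correct and follows exactly the route the paper takes: the paper deduces this proposition "immediately" from Lemma~\ref{lem1:transform_crosssed_module_vs_2-grpd} and Lemma~\ref{lem2:transform_crosssed_module_vs_2-grpd} together with the equivalence of categories, which is precisely the argument you spell out. The only difference is that you make explicit the functoriality bookkeeping that the paper leaves implicit.
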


\subsection{Hypercovers}

\begin{df}
A strict morphism of groupoid crossed modules 
\[
\xymatrix{\cal H_1 \ar[d]_{\chi} \ar[r]^{\p_1} & \cal G_1 \ar[d]^{\chi} \\ 
\cal H_2\ar[r]^{\p_2} & \cal G_2}
\]
is called a {\em hypercover} if the induced homomorphism of $2$--groupoids between the associated $2$--groupoids 
\[
\xymatrix{
\cal H_1\rtimes \cal G_1\dar[r]^-{s_2}_-{t_2}\ar[d]_{\chi\times \chi} & \cal G_1\dar[r]^{s}_t \ar[d]_\chi & X_1 \ar[d]^\chi \\ 
\cal H_2\rtimes\cal G_2\dar[r]^-{s_2}_-{t_2}& \cal G_2\dar[r]^s_t &  X_2
}
\] 
is a weak equivalence. 
\end{df}

It is an immediate observation that the composition of hypercovers is a hypercover.

\begin{rmk}\label{rmk1:hypercover}
In particular, a hypercover from $\frak G_1$ to $\frak G_2$ gives a Morita equivalence of topological groupoids between the semidirect product groupoids $\cal H_1\rtimes\cal G_1\rrTo \cal G_1$ and $\cal H_2\rtimes \cal G_2\rrTo \cal G_2$, thanks to Remark~\ref{rmk:we_2-grpd_Morita_grpd}. 
\end{rmk}

\begin{ex}
Let $\cal H\stackrel{\p}{\xTo} \cal G$ be a crossed module and $\s\colon Z\rTo \cal G^0$ a continuous surjection. Then the strict morphism 
\[
\xymatrix{\cal H[Z] \ar[d]_{pr_2} \ar[r]^{\s^\ast\p} & \cal G[Z] \ar[d]^{pr_2} \\ 
\cal H\ar[r]^{\p} & \cal G}
\]
is a hypercover. 
\end{ex}

As a consequence of Proposition~\ref{pro:strong_weak}, Lemma~\ref{lem1:transform_crosssed_module_vs_2-grpd} and Lemma~\ref{lem2:transform_crosssed_module_vs_2-grpd}, we have the result below. 

\begin{pro}
Every strong equivalence of crossed modules of topological groupoids is a hypercover.
\end{pro}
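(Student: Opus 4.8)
The plan is to transport the strong equivalence across the functor $\frak G\mapsto {\bf G}_{\frak G}$ and then quote Proposition~\ref{pro:strong_weak}. So let $\chi\colon \frak G_1\rTo \frak G_2$ be a strong equivalence of crossed modules, witnessed by a strict morphism $\k\colon \frak G_2\rTo \frak G_1$ and transformations $\chi\circ\k\overset{(T,\lam)}{\tr}\Id_{\frak G_2}$ and $\k\circ\chi\overset{(T',\lam')}{\tr}\Id_{\frak G_1}$. Write ${\bf G}_i$ for the topological $2$--groupoid associated to $\frak G_i$. Since the assignment $\frak G\mapsto {\bf G}_{\frak G}$ is functorial on strict morphisms -- immediate from the levelwise formulas recalled in \S\ref{sec:cross_vs_2-grpd} -- the morphisms $\chi$ and $\k$ induce strict homomorphisms of topological $2$--groupoids $\phi\colon {\bf G}_1\rTo {\bf G}_2$ and $\psi\colon {\bf G}_2\rTo {\bf G}_1$ with $\phi\circ\psi=(\chi\circ\k)_\ast$, $\psi\circ\phi=(\k\circ\chi)_\ast$, and $(\Id_{\frak G_i})_\ast=\Id_{{\bf G}_i}$.

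Next I would feed the two transformations into Lemma~\ref{lem2:transform_crosssed_module_vs_2-grpd}. Applied to $(T,\lam)\colon \chi\circ\k\tr\Id_{\frak G_2}$ it produces a transformation $u\colon \phi\circ\psi\tr\Id_{{\bf G}_2}$ of strict homomorphisms of topological $2$--groupoids (on objects $u_x=T(x)$, and on $1$--arrows $u$ is given by the specialization of formula~\eqref{eq:transform_transform} to the morphisms at hand), and likewise $(T',\lam')$ yields a transformation $v\colon \psi\circ\phi\tr\Id_{{\bf G}_1}$. By Definition~\ref{df:strong_equiv}, the data $(\psi,u,v)$ exhibits $\phi$ as a strong equivalence of topological $2$--groupoids.

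It then remains only to invoke Proposition~\ref{pro:strong_weak}: strong equivalences of topological $2$--groupoids are weak equivalences, so $\phi=\chi_\ast\colon {\bf G}_1\rTo {\bf G}_2$ is a weak equivalence of topological $2$--groupoids. By the very definition of a hypercover this is precisely the statement that $\chi$ is a hypercover, which completes the argument.

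I expect no genuine obstacle here: once Lemma~\ref{lem2:transform_crosssed_module_vs_2-grpd} and Proposition~\ref{pro:strong_weak} are available, the result is a formal consequence, and the only points demanding attention are small bookkeeping checks -- that $\frak G\mapsto {\bf G}_{\frak G}$ genuinely respects composition and identities (clear from \S\ref{sec:cross_vs_2-grpd}), and that Lemma~\ref{lem2:transform_crosssed_module_vs_2-grpd} does apply to a transformation whose source is a composite morphism and whose target is an identity, which just amounts to identifying the relevant common unit space ($\cal G_2^0$, resp.\ $\cal G_1^0$). Should one wish to avoid the $2$--groupoid language, an equivalent but more laborious route would be to unwind the definition of hypercover and verify axioms~\ref{WE1}--\ref{WE3} for the induced homomorphism directly from $(T,\lam)$ and $(T',\lam')$, exactly paralleling the proof of Proposition~\ref{pro:strong_weak}; I would nevertheless favour the short argument above.
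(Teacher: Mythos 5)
Your argument is correct and is exactly the route the paper takes: the paper derives this proposition as an immediate consequence of Lemma~\ref{lem2:transform_crosssed_module_vs_2-grpd} (transformations of strict morphisms of crossed modules induce transformations of the associated $2$--groupoid homomorphisms) together with Proposition~\ref{pro:strong_weak}, leaving the bookkeeping you spell out implicit. Your write-up simply makes explicit the functoriality and unit-space checks that the paper omits.
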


\begin{df}\label{df:Morita_crossed}
A {\em Morita equivalence} between two crossed modules of topological groupoids ${\frak G}_1$ and ${\frak G}_2$ is a sequence $(\tilde{\frak G}_1, \chi_1), \cdots, (\tilde{\frak G}_n, \chi_n)$ where for $i=1,\cdots, n$, $\tilde{\frak G}_i$ is a crossed module of topological groupoids with $\tilde{\frak G}_1={\frak G}_1$ and $\tilde{\frak G}_n={\frak G}_2$, and for $1\leq i\leq n-1$, $\chi_i$ is either a hypercover $\tilde{\frak G}_i\rTo \tilde{\frak G}_{i+1}$ either a hypercover $\tilde{\frak G}_{i+1}\rTo \tilde{\frak G}_i$. In such a case, we say that ${\frak G}_1$ and ${\frak G}_2$ are {\em Morita equivalent}.
\end{df}

In particular, 

\begin{pro}
Two groupoid crossed modules are Morita equivalent if and only if their associated topological $2$--groupoids are.
\end{pro}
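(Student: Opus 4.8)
The plan is to transport the zig-zag characterization of Morita equivalence freely across the equivalence of categories established in the Corollary following the Lemma in \S\ref{sec:cross_vs_2-grpd}. Concretely, a Morita equivalence of topological $2$--groupoids is, by Definition~\ref{df:weak_hom_2-grpd}, a chain ${\bf G}\stackrel{\sim}{\lTo}{\bf G}'\stackrel{\sim}{\rTo}{\bf H}$ of strict homomorphisms with both arrows weak equivalences, while a Morita equivalence of groupoid crossed modules (Definition~\ref{df:Morita_crossed}) is a chain of hypercovers (in alternating directions). Since the functor ${\bf G}\mapsto {\frak G}_{\bf G}$ and its quasi-inverse ${\frak G}\mapsto {\bf G}_{\frak G}$ carry strict homomorphisms to strict morphisms and back, and since by definition a strict morphism of crossed modules is a hypercover precisely when the induced homomorphism of associated $2$--groupoids is a weak equivalence, the two notions of chain correspond termwise. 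The statement then follows by induction on the length of the chain.

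First I would treat the base case $n=2$, i.e.\ a single hypercover. Suppose ${\frak G}_1$ and ${\frak G}_2$ are Morita equivalent via a single hypercover $\chi\colon{\frak G}_1\rTo{\frak G}_2$ (or in the other direction). By definition of hypercover, the induced homomorphism ${\bf G}_{{\frak G}_1}\rTo{\bf G}_{{\frak G}_2}$ is a weak equivalence, so we get a chain ${\bf G}_{{\frak G}_1}\stackrel{\sim}{\rTo}{\bf G}_{{\frak G}_2}$, which is trivially of the required Morita form (pad it with an identity weak equivalence on the left, ${\bf G}_{{\frak G}_1}\stackrel{=}{\lTo}{\bf G}_{{\frak G}_1}\stackrel{\sim}{\rTo}{\bf G}_{{\frak G}_2}$). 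Conversely, for ${\bf G}\stackrel{\sim}{\lTo}{\bf G}'\stackrel{\sim}{\rTo}{\bf H}$ a Morita equivalence of $2$--groupoids, apply the unfolding functor to get strict morphisms ${\frak G}_{\bf G}\lTo{\frak G}_{{\bf G}'}\rTo{\frak G}_{\bf H}$; the fact that the original homomorphisms are weak equivalences means, unwinding the definition of hypercover and using the Lemma of \S\ref{sec:cross_vs_2-grpd} (the associated $2$--groupoid of ${\frak G}_{\bf G}$ is strictly isomorphic to ${\bf G}$, naturally in strict morphisms), that both strict morphisms are hypercovers. This gives a length-$3$ chain of hypercovers, hence a Morita equivalence of crossed modules.

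For the inductive step, given a length-$n$ Morita chain in either category, one applies the appropriate functor termwise to each $\tilde{\frak G}_i$ (resp.\ ${\bf G}_i'$) and uses that each $\chi_i$ is a hypercover iff the induced $2$--groupoid homomorphism is a weak equivalence; the directions of the arrows are preserved by functoriality, so the alternating zig-zag shape is preserved. The only point requiring a little care — and the main (mild) obstacle — is that the two functors are only quasi-inverse, not strictly inverse: the composite ${\bf G}\mapsto{\bf G}_{{\frak G}_{\bf G}}$ is naturally \emph{isomorphic} to the identity (via the strict isomorphism of the Lemma in \S\ref{sec:cross_vs_2-grpd}), not equal to it. So when one round-trips a chain one must insert these strict isomorphisms as extra links; but strict isomorphisms are in particular weak equivalences (resp.\ hypercovers — a strict isomorphism induces a strict isomorphism of associated $2$--groupoids, which is a weak equivalence), so they can be absorbed into the chain without changing its Morita-equivalence status. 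Naturality of the isomorphism ${\bf G}\cong{\bf G}_{{\frak G}_{\bf G}}$ ensures these insertions are compatible with the maps $\chi_i$. Assembling these observations gives the equivalence in both directions, completing the proof.
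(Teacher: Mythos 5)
Your overall strategy is exactly the one the paper intends: the paper in fact offers no proof of this proposition at all (it is stated as an immediate "in particular" after Definition~\ref{df:Morita_crossed}, the point being that a hypercover is \emph{defined} by the property that the induced homomorphism of associated $2$--groupoids is a weak equivalence), and your termwise translation across the equivalence of categories, together with the care you take about the functors being only quasi-inverse, is a faithful elaboration of that intended argument. The backward direction as you write it is fine: a span ${\bf G}\stackrel{\sim}{\lTo}{\bf G}'\stackrel{\sim}{\rTo}{\bf H}$ transports to a length-$3$ zig-zag of hypercovers, which is literally an instance of Definition~\ref{df:Morita_crossed}.

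There is, however, one genuine gap in your forward direction, caused by an asymmetry between the two definitions that your proof silently glosses over. Definition~\ref{df:Morita_crossed} allows a zig-zag of hypercovers of \emph{arbitrary} length and in arbitrary alternating directions, whereas Definition~\ref{df:weak_hom_2-grpd} declares two $2$--groupoids Morita equivalent only when there is a \emph{single} span ${\bf G}\stackrel{\sim}{\lTo}{\bf G}'\stackrel{\sim}{\rTo}{\bf H}$ of weak equivalences. Your termwise translation of a length-$n$ chain of hypercovers produces a length-$n$ zig-zag of weak equivalences of $2$--groupoids, and for $n>3$ (or already for a cospan-shaped chain ${\bf G}_1\rTo{\bf G}_2\lTo{\bf G}_3$) this is not yet a Morita equivalence in the sense of Definition~\ref{df:weak_hom_2-grpd}. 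Closing this requires compressing the zig-zag into a single span, i.e.\ some form of composition of spans of weak equivalences (a $2$--fibered product of $2$--groupoids along which weak equivalences are stable), which is established nowhere at this point in the paper; alternatively one can route through the crossed-module side, where Proposition~\ref{pro:hypercover_xext}, the diamond product, and Corollary~\ref{cor:crossing_decomposition} (as assembled in Theorem~\ref{thm:Morita_xext}) do exactly this compression — but those results come later and cannot be invoked here without circularity. Your induction on chain length does not address this: the inductive step preserves the zig-zag shape but never reduces it to a span. Either add the span-composition lemma for weak equivalences of $2$--groupoids, or state explicitly that you are reading "Morita equivalent" for $2$--groupoids in the zig-zag sense.
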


\section{Crossed extensions}\label{sec:cross}

Our goal is to give more concrete notions of weak homomorphisms and Morita equivalences of groupoid crossed modules, hence of topological $2$--groupoids.

\subsection{Crossings}

In this section we introduce the notion of {\em crossings} and {\em crossed extensions} between groupoid crossed modules. In the sequel we write {\bf TL} for Top left, {\bf TR} for top right, {\bf BL} for bottom left, and {\bf BR} for bottom right.

\begin{pro}\label{pro:crossing}
Let 
\[
\xymatrix{\cal H_1 \ar[d]_{\chi^l} \ar[r]^{\p_1} & \cal G_1 \ar[d]^{\chi^r} \\ 
\cal H_2\ar[r]^{\p_2} & \cal G_2}
\]
be a strict morphism of groupoid crossed modules with same unit space $X=\cal H_1^0=\cal G_1^0=\cal H_2^0=\cal G_2^0$. Then there exists a topological groupoid $\xymatrix{\cal M\dar[r] & X}$ and commutative diagrams

\begin{eqnarray}\label{eq1:crossing}
\xymatrix{
\cal H_1 \ar[rr]^{\p_1} \ar[rd]_{\a_1} && \cal G_1 \\
& \cal M \ar[ru]_{\a_2}  \ar[rd]^{\b_2} & \\
\cal H_2 \ar[ru]^{\b_1} \ar[rr]^{\p_2} && \cal G_2
}
\end{eqnarray}
such that 
\begin{enumerate}[label=(\textbf{CR\arabic*}), align=left]
\item\label{CR1} $\a_i$ and $\b_i, i=1,2$ are the identity map on the unit space $X$;
\item\label{CR2} the diagonals are complexes of topological groupoids over $X$; 
\item\label{CR3} the diagonal {\bf BL-TR} is an extension of topological groupoids over $X$;
\item\label{CR4} the following equations hold

\begin{align}
\a_1(h_1^{\a_2(m)})=m^{-1}\a_1(h_1)m, \ \forall m\in \cal M, \ h_1\in \cal H_1^{s(\a_2(m))}, t(m)=\a_1(s(h_1)), \label{eq2:crossing} \\
\b_1(h_2^{\b_2(m)})=m^{-1}\b_1(h_2)m, \ \forall m\in \cal M, h_2\in \cal H_2^{s(\b_2(m))}, \ t(m)=\b_1(s(h_2)) \label{eq3:crossing}
\end{align} 
\end{enumerate}
\end{pro}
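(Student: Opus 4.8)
The plan is to construct $\cal M$ directly from the strict morphism $\chi$, using the fact (already developed in \S\ref{sec:cross_vs_2-grpd} and the semidirect product constructions) that a strict morphism of crossed modules carries with it a natural "semidirect" groupoid. Concretely, I would set $\cal M \colonequals \cal H_2 \rtimes_\chi \cal G_1$, the semidirect product groupoid associated to $\chi$ in the sense of Definition~\ref{df:semidirect_strict-morphism}, with unit space $X$; its arrows are pairs $(h_2,g_1)$ with $h_2 \in \cal H_2^{t(g_1)}$, source $s(h_2,g_1)=s(g_1)$, target $t(h_2,g_1)=t(g_1)$, and product $(h_2,g_1)(k_2,f_1)=(h_2\, (k_2)^{\chi(g_1^{-1})}, g_1f_1)$. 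Then I would define the four maps in diagram~\eqref{eq1:crossing} by
\[
\a_1(h_1)\colonequals (\chi(h_1)^{-1},\p_1(h_1)), \quad \a_2(h_2,g_1)\colonequals g_1, \quad \b_1(h_2)\colonequals (h_2, 1_x) \ (h_2\in\cal H_2^x), \quad \b_2(h_2,g_1)\colonequals \p_2(h_2)\,\chi(g_1).
\]
(The precise placement of inverses and of $\chi$ inside $\a_1$ will need to be pinned down so that all squares and the equivariance axioms come out right; I expect minor sign/variance adjustments here.)

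The verification then proceeds axiom by axiom. First I would check \ref{CR1}: on units every $h_i$ is an identity, $\p_i$ and $\chi$ fix $X$, so each $\a_i,\b_i$ restricts to $\Id_X$. Next I would verify that~\eqref{eq1:crossing} commutes: the upper triangle $\a_2\circ\a_1 = \p_1$ is immediate from the definitions, the lower triangle $\b_2\circ\b_1=\p_2$ likewise, and the commutativity of the outer square $\a_2\circ\b_1$ vs.\ $\b_2\circ\a_1$ against $\chi$ follows from the defining relation $\p_2\circ\chi^l = \chi^r\circ\p_1$ in the definition of strict morphism. For~\ref{CR2} I must show the two diagonals are complexes, i.e.\ $\a_2\circ\a_1$ and $\b_2\circ\b_1$ land in the units after one more composition --- but that is exactly the statement that $\cal H_i \xTo \cal G_i$ are crossed modules combined with $\p_i\circ\p_i = $ trivial on the relevant pieces, so it reduces to the crossed-module axioms already in Definition~\ref{def:crossed}. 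For~\ref{CR3}, the diagonal {\bf BL-TR} is the composite $\cal H_2 \xrightarrow{\b_1} \cal M \xrightarrow{\a_2} \cal G_1$; with $\b_1(h_2)=(h_2,1)$ and $\a_2(h_2,g_1)=g_1$ this is visibly the inclusion of $\cal H_2\rtimes\{1\}$ into $\cal H_2\rtimes_\chi\cal G_1$ followed by projection onto $\cal G_1$, which is a short exact sequence of topological groupoids over $X$ in the sense of~\cite{Renault:Groupoid_Cstar} --- kernel $\cal H_2$, quotient $\cal G_1$, and $\b_1$ a homeomorphism onto its (closed) image. Finally~\ref{CR4}: equation~\eqref{eq3:crossing} for $\b$ is essentially axiom~\ref{item2:crossed} of the crossed module $\frak G_2$ transported through the semidirect product (conjugation by $(h_2,1)$ inside $\cal M$ computes the $\cal H_2$-action), and~\eqref{eq2:crossing} for $\a$ follows by combining axiom~\ref{item1:crossed} of $\frak G_1$, the compatibility of $\chi$ with the actions $(c_1)$ and $(c_2)$, and the semidirect product multiplication rule.

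The main obstacle, I expect, is not any single axiom but getting the bookkeeping of variances consistent: the crossed-module action is a \emph{right} action written $h^g \in \cal H^{s(g)}$ for $h\in\cal H^{t(g)}$, the semidirect product in Definition~\ref{def:crossed} uses the convention $(h,g)(k,f)=(hk^{g^{-1}},gf)$, and the vertical semidirect product of \S\ref{sec:cross_vs_2-grpd} uses yet another placement, so I must fix one set of conventions at the outset and carry the inverses through $\a_1$ and the conjugation computations in~\eqref{eq2:crossing}--\eqref{eq3:crossing} without error. Once the maps are correctly normalized, each of \ref{CR1}--\ref{CR4} is a short direct computation from Definition~\ref{def:crossed} and the definition of a strict morphism; accordingly I would present the construction, then dispatch the four groups of axioms in order, leaving the most routine identities ``to the reader'' as is done elsewhere in the paper.
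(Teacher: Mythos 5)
Your construction is exactly the one in the paper: $\cal M=\cal H_2\rtimes_\chi\cal G_1$ with the same four maps $\a_1(h_1)=(\chi(h_1)^{-1},\p_1(h_1))$, $\a_2=pr_2$, $\b_1(h_2)=(h_2,s(h_2))$, $\b_2(h_2,g_1)=\p_2(h_2)\chi(g_1)$, and the paper likewise leaves the axiom checks as routine, so your proposal is correct and essentially identical. One small slip to fix when writing it up: the two diagonals of~\eqref{eq1:crossing} are the composites $\b_2\circ\a_1$ and $\a_2\circ\b_1$ (not $\a_2\circ\a_1$ and $\b_2\circ\b_1$, which are the triangles), and the first of these is killed by $\p_2\circ\chi^l=\chi^r\circ\p_1$ rather than by the crossed-module axioms.
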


\begin{proof}
Let $\cal M$ be the semidirect product groupoid $\xymatrix{\cal H_2\rtimes \cal G_1\dar[r] & X}$ associated to $\chi$ (see Definition~\ref{df:semidirect_strict-morphism}). Next, define the groupoid morphisms 
\[
\a_1\colon \cal H_1 \ni h_1\mto (\chi (h_1^{-1}), \p_1(h_1))\in \cal M, \ \ \a_2\colon \cal M \ni (h_2,g_1)\mto g_1\in \cal G_1, 
\]
and 
\[
\b_1\colon \cal H_2 \ni h_2\mto (h_2,s(h_2))\in \cal M, \ \ \b_2\colon \cal M\ni (h_2,g_1) \mto \p_2(h_2)\chi(g_1)\in \cal G_2.
\]
Then, it is easy to check that the quintuple $(\cal H_2\rtimes \cal G_1,\a_1,\a_2,\b_1,\b_2)$ satisfies the commutative diagram~\eqref{eq1:crossing} as well as relations~\eqref{eq2:crossing} and~\eqref{eq3:crossing}. 
\end{proof}

\begin{df}\label{def:crossing}
Let $\frak G_1$ and $\frak G_2$ be two groupoid crossed modules with unit spaces $\cal G_1^0=X_1$ and $\cal G_2^0=X_2$. A {\em crossing} from $\frak G_1$ to $\frak G_2$ consists of a quintuple $(\cal M, \a_1,\a_2,\b_1,\b_2)$, where $\cal M\rrTo \cal M^0$ is a topological groupoid together with two continuous maps 
\[
X_1\stackrel{\tau}{\lTo} \cal M^0\stackrel{\s}{\rTo}X_2
\]
and $\a_1\colon\cal H_1[\cal M^0]\rTo \cal M, \ \a_2\colon \cal M\rTo \cal G_1[\cal M^0], \b_1\colon\cal H_2[\cal M^0]\rTo \cal M$, and $\b_2\colon \cal M\rTo \cal G_2[\cal M^0]$ are groupoid strict morphisms making the following
\begin{eqnarray}\label{eq4:crossing}
\xymatrix{
\cal H_1[\cal M^0] \ar[rr]^{\tau^*\p_1} \ar[rd]_{\a_1} && \cal G_1[\cal M^0] \\
& \cal M \ar[ru]_{\a_2}  \ar[rd]^{\b_2} & \\
\cal H_2[\cal M^0] \ar[ru]^{\b_1} \ar[rr]^{\s^*\p_2} && \cal G_2[\cal M^0]
}
\end{eqnarray}
a commutative diagram satisfying to axioms~\ref{CR1},~\ref{CR2},~\ref{CR3}, and~\ref{CR4}.
We will usually omit the morphisms $\a_i,\b_i, i=1,2$. \\
\end{df}

An {\em endocrossing} of a groupoid crossed module $\frak G$ is a crossing from $\frak G$ to itself.

\begin{df}
A {\em crossed extension} is a triple $(\frak G_1,\cal M,\frak G_2)$ where $\frak G_1, \frak G_2$ are groupoid crossed modules and $\cal M$ is a crossing from $\frak G_1$ to $\frak G_2$ and from $\frak G_2$ to $\frak G_1$; that is, there are commutative diagrams such as~\eqref{eq4:crossing} satisfying~\ref{CR1} -~\ref{CR4} and the following axiom
\begin{enumerate}[label=(\textbf{CR\arabic*}'), align=left] 
 \setcounter{enumi}{2}
\item\label{CR3_prime} the diagonal {\bf TL - BR} is an extension of topological groupoids.
\end{enumerate} 
We also say that $\cal M$ is a crossed extension of $\frak G_1$ and $\frak G_2$.      
\end{df}

We will use the following notations.

\begin{rmk}
Notice that if $\cal M=(\cal M,\a_1,\a_2,\b_1,\b_2)$ is a crossed extension of $\frak G_1$ and $\frak G_2$, then $$\bar{\cal M}=(\cal M,\b_1,\b_2,\a_1,\a_2)$$ is a crossing from ${\frak G}_1$ to ${\frak G}_2$, and is actually a crossed extension of ${\frak G}_2$ and ${\frak G}_1$. 
\end{rmk}

\begin{rmk}
Note our notion of {\em crossed extension} should not be confused with Holt's {\em crossed sequences}~\cite{Holt:Cohomology}.
\end{rmk}

\begin{ex}
Let $\cal G\rrTo X$ be a topological groupoid and $p\colon \cal A\rTo X$ a $\cal G$--module. Any groupoid $\cal A$--extension $\cal A\stackrel{\iota}{\mono}\tilde{\cal G}\stackrel{\pi}{\epi} \cal G$ defines a crossed extension via the commutative diagram 
\[
\xymatrix{
\cal A \ar[rr]^{p} \ar[rd]_{\iota} && \cal G \\
& \tilde{\cal G} \ar[ru]_{\pi}  \ar[rd]^{\pi} & \\
\cal A \ar[ru]^{\iota} \ar[rr]^{p} && \cal G
}
\]
Conversely, a crossed extension of $\cal A\xTo \cal G$ and itself is a groupoid $\cal A$--extension of $\cal G$.
\end{ex}

\begin{ex}
Given two crossed modules of groups $\frak G_1= H_1\xTo G_1$ and $\frak G_2= H_2\xTo G_2$, a crossing from $\frak G_1$ to $\frak G_2$ is a {\em butterfly}, and a crossed extension $(G_1, M,G_2)$ is a {\em flippable butterfly} as defined by Noohi~\cite{Noohi:Butterflies} and Aldrovandi-Noohi~\cite{Aldrovandi-Noohi:Butterflies}.
\end{ex}

\begin{ex}\label{ex1:groupoid_crossing}
Let $\Gamgrd$ be a topological groupoid viewed as a groupoid crossed module via the inclusion map $\G^0 \overset{i}{\mono} \G$ and the trivial group bundle $\G^0\rTo \G^0$. Let $\frak G=\cal H\xTo \cal G$ be a groupoid crossed module with unit space $\G^0$. Then if $\cal M\rrTo \G^0$ is a crossing between $\G$ and $\frak G$, the morphism $\b_2$ is a groupoid isomorphism $\cal M \cong \cal G$ and $\cal G$ is a groupoid $\cal H$--extension of $\G$. In fact, we will show by Theorem~\ref{thm:crossings_vs_extensions} that all groupoid extensions are obtained this way.
\end{ex}

\begin{ex}
Let $\Gamgrd$ and $\Ggrd$ be topological groupoids regarded as crossed modules as in Example~\ref{ex1:groupoid_crossing}. Then a crossed extension $\cal M\rrTo \cal M^0$ of $\Gamma$ and $\cal G$ is the same thing a Morita equivalence between $\Gamma$ and $\cal G$. Indeed, if the crossed extension is given by the maps $\Gamma^0\stackrel{\tau}{\lTo}\cal M^0\stackrel{\s}{\rTo} \cal G^0$, then we get a commutative diagram of the form 
\[
\xymatrix{
\cal M^0 \ar@{^(->}[rr] \ar@{^(->}[rd] && \Gamma[\cal M^0] \\
& \cal M \ar[ru]_{\a_2} \ar[rd]^{\b_2} & \\
\cal M^0 \ar@{^(->}[ru] \ar@{^(->}[rr] && \cal G[\cal M^0]
}
\]
where $\a_2$ and $\b_2$ are groupoid isomorphisms. The converse is obvious by the characterizations of groupoid Morita equivalence (see for instance~\cites{Moerdijk:Orbifolds_groupoids, Moutuou:Real.Cohomology}) and the very definition of crossed extensions. 
\end{ex}

\begin{ex}
From Proposition~\ref{pro:crossing}, we see that any strict morphism of crossed modules from $\frak G_1$ to $\frak G_2$ with the same unit space $X$ induces the crossing $\cal H_2\rtimes\cal G_1$ from ${\frak G}_1$ to ${\frak G}_2$, with the maps $\tau$ and $\s$ taken to be the identity map $\Id_X$.
\end{ex}

More generally, we have the following.

\begin{pro}\label{pro2:crossing}
Every strict morphism of groupoid crossed modules induces a crossing.
\end{pro}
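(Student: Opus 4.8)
The strategy is to reduce the general statement to Proposition~\ref{pro:crossing}, which already disposes of the case of a strict morphism between crossed modules sharing a unit space, by interposing the pullback construction of \S\ref{subsec:proj}. So let $\chi=(\chi^l,\chi^r)\colon \frak G_1\rTo\frak G_2$ be a strict morphism of groupoid crossed modules, with underlying continuous map $\chi^0\colon X_1\rTo X_2$ on unit spaces (the restriction of $\chi^r$). The first step is to fix the base data of the prospective crossing: take $\cal M^0\colonequals X_1$, with $\tau\colonequals\Id_{X_1}\colon \cal M^0\rTo X_1$ and $\s\colonequals\chi^0\colon \cal M^0\rTo X_2$. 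Then $\tau^*\frak G_1=\frak G_1$ identically, while $\s^*\frak G_2=(\chi^0)^*\frak G_2$ is the pullback crossed module produced by Proposition~\ref{pro:strict_pullback}, a groupoid crossed module with unit space $X_1$.

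The next step is to factor $\chi$ through this pullback, over the common unit space $X_1$. By the universal property of the pullback groupoid, $\chi^r$ factors through the canonical projection $(\chi^0)^*\cal G_2\rTo\cal G_2$ via $\tilde\chi^r\colon \cal G_1\ni g\mto(t(g),\chi^r(g),s(g))\in(\chi^0)^*\cal G_2$, and likewise $\chi^l$ factors through $\cal H_2[\chi^0]\rTo\cal H_2$ via $\tilde\chi^l\colon\cal H_1\ni h\mto(s(h),\chi^l(h))\in\cal H_2[\chi^0]$ (which lands where claimed since $\chi^l$ sends $\cal H_1^{x}$ into $\cal H_2^{\chi^0(x)}$). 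Using the formulas for $\s^*\p_2$ and the pulled-back action by automorphisms in Proposition~\ref{pro:strict_pullback}, one checks that $\tilde\chi=(\tilde\chi^l,\tilde\chi^r)$ intertwines $\p_1$ with $(\chi^0)^*\p_2$ and the $\cal G_1$--action on $\cal H_1$ with the pulled-back action on $\cal H_2[\chi^0]$; thus $\tilde\chi\colon\frak G_1\rTo(\chi^0)^*\frak G_2$ is a strict morphism of groupoid crossed modules with common unit space $X_1$.

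Now I would invoke Proposition~\ref{pro:crossing} on $\tilde\chi$. It yields a topological groupoid $\cal M\rrTo X_1$ --- concretely the semidirect product $\cal H_2[\chi^0]\rtimes_{\tilde\chi}\cal G_1$ --- together with strict morphisms $\a_1,\a_2,\b_1,\b_2$ fitting into the commutative diagram~\eqref{eq1:crossing} for $\tilde\chi$ and satisfying~\ref{CR1}--~\ref{CR4}. Because $\cal H_1[\cal M^0]=\cal H_1$, $\cal G_1[\cal M^0]=\cal G_1$, $\tau^*\p_1=\p_1$, $\cal H_2[\cal M^0]=\cal H_2[\chi^0]$, $\cal G_2[\cal M^0]=\s^*\cal G_2$ and $\s^*\p_2=(\chi^0)^*\p_2$, this diagram is exactly the diagram~\eqref{eq4:crossing} required by Definition~\ref{def:crossing}, and axioms~\ref{CR1}--~\ref{CR4} carry over verbatim. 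Hence $(\cal M,\a_1,\a_2,\b_1,\b_2)$, equipped with $\tau=\Id_{X_1}$ and $\s=\chi^0$, is a crossing from $\frak G_1$ to $\frak G_2$, which is what we wanted; when $X_1=X_2$ and $\chi^0=\Id$ we recover the crossing $\cal H_2\rtimes\cal G_1$ recorded in the Example preceding the statement.

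The only step carrying content beyond bookkeeping is the factorization: verifying that $\chi$ really descends to a strict morphism of crossed modules $\frak G_1\rTo(\chi^0)^*\frak G_2$ over $X_1$, i.e.\ that it is compatible with the pulled-back action by automorphisms on $\cal H_2[\chi^0]$. Given the explicit description in Proposition~\ref{pro:strict_pullback}, this is a direct check, and I do not anticipate any real obstacle; everything else is transport of the diagram and axioms of Proposition~\ref{pro:crossing} through the identifications $\cal H_i[\cal M^0]$ and $\cal G_i[\cal M^0]$.
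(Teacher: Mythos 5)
Your argument is correct, and it follows the same overall strategy as the paper's proof: factor $\chi$ through a pullback so as to obtain a strict morphism of crossed modules over a common unit space on which $\chi$ acts as the identity, and then invoke Proposition~\ref{pro:crossing}. The one substantive difference is the choice of that common unit space. You take $\cal M^0=X_1$ with $\tau=\Id_{X_1}$ and $\s=\chi^0$, so that the factorization $\tilde\chi\colon\frak G_1\rTo(\chi^0)^*\frak G_2$ is the tautological one and the verification of compatibility with the pulled-back action is immediate; the paper instead takes $Z_{\chi}=X_1\times_{\chi,t}\cal G_2$ with the two evident maps to $X_1$ and $X_2$, which forces the extra conjugation $\chi^l(h_1)^{g_2}$ in its $\tilde\chi^l$. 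Both choices put you in the situation of Proposition~\ref{pro:crossing} (in either case $\tilde\chi$ restricts to the identity on units, as that proposition implicitly requires), and since Definition~\ref{def:crossing} imposes no surjectivity on $\s$, your smaller base space yields a perfectly legitimate crossing; if anything your version is the more economical one. What the paper's larger space buys is coherence with the surrounding development: $Z_{\chi}$ is precisely the generalized-morphism space $Z_f$ attached to a strict groupoid morphism in the preliminaries of \S\ref{sec:2-cat}, and it is the same space that reappears verbatim in Proposition~\ref{pro:hypercover_xext}, where the crossing must be upgraded to a crossed extension when $\chi$ is a hypercover. So the two proofs differ only in this bookkeeping choice, not in substance.
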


\begin{proof}
Let $\frak G_1, \frak G_2$ be groupoid crossed modules with unit spaces $X_1$ and $X_2$, respectively, and suppose $\chi\colon \frak G_1\rTo \frak G_2$ is a strict morphism. 
Put $Z_{\chi}\colonequals X_1\times_{\chi,t}\cal G_2$ and define the continuous maps
\[
\xymatrix{X_1 & Z_{\chi} \ar[l]_\tau \ar[r]^\s & X_2}
\]
by $\tau(x,g_2)\colonequals x$ and $\s(x,g_2)\colonequals s(g_2)$. Then we obtain the strict morphism $(\tilde{\chi}^l,\tilde{\chi}^r)\colon \tau^*\frak G_1\rTo \s^*\frak G_2$ of crossed modules with the same unit space $Z_{\chi}$, where the groupoid morphisms $\tilde{\chi}^l\colon \cal H_1[\tau]\rTo \cal H_2[\s]$ and $\tilde{\chi}^r\colon \tau^*\cal G_1\rTo \s^*\cal G_2$ are given by
\begin{eqnarray*}
\tilde{\chi}^l\colon \cal H_1[Z_\chi] \ni ((x,g_2),h_1) \mto ((x,g_2), \chi^l(h_1)^{g_2}) \in \cal H_2[Z_\chi], \\
\tilde{\chi}^r\colon \cal G_1[Z_\chi] \ni ((y,f_2),g_1,(x,g_2)) \mto ((y,f_2),f_2^{-1}\chi^r(g_1)g_2,(x,g_2))\in \cal G_2[Z_\chi].
\end{eqnarray*}
Hence, by Proposition~\ref{pro:crossing}, the groupoid semidirect product $\cal H_2[Z_{\chi}]\rtimes \cal G_1[Z_{\chi}]\rrTo Z_{\chi}$ the desired crossing from $\frak G_1$ to $\frak G_2$ over $Z_{\chi}$.
\end{proof}

\begin{pro}\label{pro:images_crossing}
Let $(\cal M,\a_1,\a_2,\b_1,\b_2)$ be a crossing between $\frak G_1$ and $\frak G_2$. Then the images of $\a_1$ and $\b_1$ commute in $\cal M$. 
\end{pro}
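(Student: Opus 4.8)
The plan is to read the commutation directly off axiom~\ref{CR4}, the crucial point being that the diagonal {\bf TL-BR} of~\eqref{eq4:crossing} is only required to be a \emph{complex} (axiom~\ref{CR2}), not an extension.

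First I would make the statement precise. Since $\a_1$ and $\b_1$ restrict to the identity on $\cal M^0$, and $\cal H_1[\cal M^0]$, $\cal H_2[\cal M^0]$ are bundles of groups, every element of $\Ima(\a_1)$ has the form $\a_1(h_1)$ with $h_1\in\cal H_1[\cal M^0]^z$ and has source and target $z$ in $\cal M$; likewise every element of $\Ima(\b_1)$ has the form $\b_1(h_2)$ with $h_2\in\cal H_2[\cal M^0]^{z'}$ and source and target $z'$. Thus $\a_1(h_1)$ and $\b_1(h_2)$ are composable in $\cal M$ precisely when $z=z'$, and the assertion to prove is $\a_1(h_1)\b_1(h_2)=\b_1(h_2)\a_1(h_1)$ for all such pairs.

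The heart of the argument is to invoke the second identity of~\ref{CR4} with $m\colonequals\a_1(h_1)$ and the loop $h_2$ based at $z$, which gives
\[
\b_1\!\left(h_2^{\,\b_2(\a_1(h_1))}\right)=\a_1(h_1)^{-1}\,\b_1(h_2)\,\a_1(h_1).
\]
This is legitimate: $s(m)=t(m)=z=s(h_2)$ and $\b_1$ restricts to the identity on $\cal M^0$, so the source/target compatibilities built into~\ref{CR4} hold. Now $\b_2\circ\a_1$ is the composite along the diagonal {\bf TL-BR}, which by~\ref{CR2} is a complex of topological groupoids over $\cal M^0$; hence $\b_2(\a_1(h_1))$ is a unit, and by~\ref{CR1} it is precisely $1_z$. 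Since the action of $\cal G_2[\cal M^0]$ on $\cal H_2[\cal M^0]$ is given by a strict groupoid morphism into $\Aut(\cal H_2[\cal M^0])$, it carries $1_z$ to the identity automorphism, so $h_2^{\,\b_2(\a_1(h_1))}=h_2$. The displayed equation then collapses to $\b_1(h_2)=\a_1(h_1)^{-1}\b_1(h_2)\a_1(h_1)$, i.e.\ $\a_1(h_1)\b_1(h_2)=\b_1(h_2)\a_1(h_1)$, as desired.

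I do not expect a genuine obstacle here; the content is simply that conjugation by an element of $\Ima(\a_1)$ acts on $\Ima(\b_1)$ through $\b_2\circ\a_1$, which is trivial. The only step demanding a little care is the bookkeeping — verifying that $m=\a_1(h_1)$ and the loop $h_2$ really satisfy the hypotheses of~\ref{CR4}, and noting that the result in fact uses none of the extension axioms~\ref{CR3},~\ref{CR3_prime}. One could equally run the symmetric argument, taking $m\colonequals\b_1(h_2)$ in the first identity of~\ref{CR4} and using that $\a_2\circ\b_1$ is a complex.
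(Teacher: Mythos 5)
Your proof is correct and is essentially the paper's argument in mirror image: the paper conjugates $\a_1(h_1)$ by $m=\b_1(h_2)$ and uses the identity $\a_1(h_1^{\a_2(m)})=m^{-1}\a_1(h_1)m$ together with the fact that $\a_2\circ\b_1$ lands in units, exactly the symmetric variant you mention in your closing remark. Your observation that only the complex axiom~\ref{CR2} is needed (and none of the extension axioms) is accurate and slightly sharper than the paper's citation of~\ref{CR3}.
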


\begin{proof}
For the sake of simplicity, we may assume both crossed modules and the groupoid $\cal M$ have the same unit space, and that $X$, and that $\tau=\s=\Id_X$ (since otherwise we would just consider the pullback groupoids). Let $h_1\in \cal H_1$ and $h_2\in \cal H_2$ such that $s(h_1)=s(h_2)$. Then, by~\ref{CR2} and~\ref{CR3}, we have 
\[
\a_1(h_1)\b_1(h_2) = \b_1(h_2)(\b_1(h_2)^{-1}\a_1(h_1)\b_1(h_2))=\b_1(h_2)\a_1(h_1^{\a_2(b_1(h_2))}) =\b_1(h_2)\a_1(h_1).
\]
\end{proof}

In particular, the above proposition implies that all the fibers $\a_1(\cal H_1[\cal M^0])\cap \b_1(\cal H_2[\cal M^0])^x$ of the bundle $\a_1(\cal H_1[\cal M^0])\cap \b_1(\cal H_2[\cal M^0])\rTo \cal M^0$ over $x\in \cal M^0$ are abelian groups; therefore, $\a_1(\cal H_1[\cal M^0])\cap \b_1(\cal H_2[\cal M^0])$ is an $\cal M$--module.

\begin{pro}\label{pro:hypercover_xext}
Let $\frak G_1$ and $\frak G_2$ be groupoid crossed modules with unit spaces $X_1$ and $X_2$. If $\chi\colon \frak G_1\rTo \frak G_2$ is a hypercover, then the groupoid $\cal H_2[Z_{\chi}]\rtimes \cal G_1[Z_\chi]\rrTo Z_{\chi}$, with $Z_{\chi}\colonequals X_1\times_{\chi,t}\cal G_2$, is a crossed extension of $\frak G_1$ and $\frak G_2$.
\end{pro}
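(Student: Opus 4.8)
The plan is to obtain the statement by checking a single extra axiom. By Proposition~\ref{pro2:crossing} the semidirect product groupoid $\cal M\colonequals\cal H_2[Z_\chi]\rtimes\cal G_1[Z_\chi]\rrTo Z_\chi$, equipped with the structure morphisms $\a_1,\a_2,\b_1,\b_2$ built there out of $\tau^*\p_1$, $\s^*\p_2$ and the twisted maps $\tilde{\chi}^l,\tilde{\chi}^r$ over $Z_\chi=X_1\times_{\chi,t}\cal G_2$, is already a crossing from $\frak G_1$ to $\frak G_2$, so the crossing axioms \ref{CR1}--\ref{CR4} are available for free. By definition $\cal M$ is a crossed extension of $\frak G_1$ and $\frak G_2$ exactly when it moreover satisfies \ref{CR3_prime}, i.e.\ when the diagonal \textbf{TL}--\textbf{BR}
\[
\cal H_1[Z_\chi]\xrightarrow{\a_1}\cal M\xrightarrow{\b_2}\cal G_2[Z_\chi]
\]
is an extension of topological groupoids over $Z_\chi$: $\a_1$ a topological embedding onto a normal bundle of groups, that image equal to $\ker\b_2$, and $\b_2$ an open surjection restricting to the identity on $Z_\chi$. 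Since $\b_2\circ\a_1=1$ is already contained in \ref{CR2}, and $\b_2$ is the identity on $Z_\chi$ because the whole diagram lives over $Z_\chi$, three points remain, and the strategy is to match them one by one with the three defining axioms \ref{WE1}--\ref{WE3} of the weak equivalence $\bf G_1\rTo\bf G_2$ of topological $2$--groupoids induced by $\chi$, which is a weak equivalence precisely because $\chi$ is a hypercover.

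The first point is surjectivity of $\b_2$. Writing out $\b_2(h_2,\gamma_1)=\s^*\p_2(h_2)\,\tilde{\chi}^r(\gamma_1)$ over a base point $\bigl((x,g_2),(y,f_2)\bigr)\in Z_\chi\times Z_\chi$ and using $\p_2(h^g)=g^{-1}\p_2(h)g$, one finds that an arrow of $\cal G_2[Z_\chi]$ lies in the image of $\b_2$ exactly when the corresponding arrow $k\colon\chi(x)\rTo\chi(y)$ of $\cal G_2$ factors as $k=\p_2(h)^{-1}\chi(g_1)$ for some $g_1\colon x\rTo y$ in $\cal G_1$ and some $h\in\cal H_2^{\chi(y)}$; that every such $k$ factors this way is \emph{exactly} axiom~\ref{WE2} for $\bf G_1\rTo\bf G_2$, whose $2$--arrows are $\cal H_i\rtimes\cal G_i$ with $s_2(h,g)=g$ and $t_2(h,g)=\p_i(h)g$. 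Axiom~\ref{WE1} says, just as literally, that $\s\colon Z_\chi\rTo X_2$ is surjective, so $Z_\chi$ genuinely covers $X_2$. The openness of $\b_2$ is the one delicate topological point: factoring $\b_2$ as $\cal M\xrightarrow{\Id\rtimes\tilde{\chi}^r}\cal H_2[Z_\chi]\rtimes\cal G_2[Z_\chi]\xrightarrow{t_2}\cal G_2[Z_\chi]$ (the last map a target map of a topological groupoid, hence open), openness reduces to the local sections that the weak--equivalence axioms provide, equivalently to the Morita equivalence of the vertical semidirect product groupoids $\cal H_1\rtimes\cal G_1\rrTo\cal G_1$ and $\cal H_2\rtimes\cal G_2\rrTo\cal G_2$ of Remark~\ref{rmk1:hypercover} (cf.\ Remark~\ref{rmk:we_2-grpd_Morita_grpd}).

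Next, $\ker\b_2=\operatorname{im}\a_1$. The inclusion $\operatorname{im}\a_1\subseteq\ker\b_2$ is again the complex condition of \ref{CR2}. For the reverse inclusion, unwinding shows that an element of $\ker\b_2$ over $z=(x,g_2)$ is a pair $\bigl((z,g_1,z),(z,\hat h_2)\bigr)$ with $g_1\in(\cal G_1)^x_x$, $\hat h_2\in\cal H_2^{s(g_2)}$ and $\chi(g_1)=\p_2(h)$ where $h\colonequals(\hat h_2^{-1})^{g_2^{-1}}\in\cal H_2^{\chi(x)}$, and that it lies in $\operatorname{im}\a_1$ if and only if there is $\check h_1\in\cal H_1^x$ with $\p_1(\check h_1)=g_1$ and $\chi(\check h_1)=h$. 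Producing such a $\check h_1$, and uniquely, is precisely axiom~\ref{WE3}, the pullback square comparing $2$--arrows with pairs of $1$--arrows, specialised to the identity arrow $1_x$; and that same square, being a pullback of topological spaces, shows at once that $\a_1$ is injective and a homeomorphism onto its image. That the image is a \emph{normal} bundle of groups in $\cal M$ is then immediate, $\cal H_1[Z_\chi]$ being a bundle of groups and normality being relation~\eqref{eq2:crossing} of axiom~\ref{CR4}. Assembling the three points gives \ref{CR3_prime}, hence $\cal M$ is a crossed extension of $\frak G_1$ and $\frak G_2$.

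The main obstacle is bookkeeping rather than conceptual content: one must faithfully transport axioms~\ref{WE1}--\ref{WE3}, phrased for the $2$--groupoid homomorphism through surjectivity of fibered products and a single pullback square of $2$--arrows, into statements about the diagonal of the concrete model $\cal M=\cal H_2[Z_\chi]\rtimes\cal G_1[Z_\chi]$, keeping straight both the base change to $Z_\chi$ and the twists by the $\cal G_2$--action hidden inside $\tilde{\chi}^l$ and $\tilde{\chi}^r$. The one genuinely topological step --- as opposed to the set-theoretic bijections, which become automatic once this dictionary is in place --- is the openness of $\b_2$, where the algebraic argument must be supplemented by the local-triviality content of the weak equivalence, equivalently of the induced groupoid Morita equivalence.
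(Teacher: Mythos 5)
Your proof is correct and follows essentially the same route as the paper: reduce to verifying axiom~\ref{CR3_prime} for the semidirect-product crossing supplied by Proposition~\ref{pro2:crossing}, and extract exactness of the diagonal $\cal H_1\rTo\cal M\rTo\cal G_2$ from the weak-equivalence axioms of the induced $2$--groupoid homomorphism, with~\ref{WE3} doing the decisive work. You are in fact somewhat more thorough than the paper's own argument, which records only the middle-exactness step and leaves the surjectivity and topological conditions on $\b_2$ implicit.
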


\begin{proof}
Thanks to Proposition~\ref{pro2:crossing}, we may assume that all of the groupoids involved have the same unit space $X$. We then show that the semidirect product groupoid associated to the hypercover $\chi$ defines a crossed extension. But for this, it only remains to show that the quadruple $(\cal H_2\rtimes\cal G_1,\a_1,\a_2,\b_1,\b_2)$ defined in the proof of Proposition~\ref{pro:crossing} satisfies axiom~\ref{CR3_prime}; that is, we need to show that the sequence 
\[
\cal H_1\stackrel{\a_1}{\rTo} \cal H_2\rtimes\cal G_1\stackrel{\b_2}{\rTo}\cal G_2
\]
is exact, where $\a_1(h_1)=(\chi(h_1^{-1}),\p_1(h_1))$ and $\b_2(h_2,g_1)=\p_2(h_2)\chi(g_1)$. But since the induced strict homomorphism of topological $2$--groupoids $\chi\colon {\bf G}_{\frak G_1}\rTo {\bf G}_{\frak G_2}$ is a weak equivalence, the map 
\[
\cal H_1\rtimes \cal G_1 \ni (h_1,g_1)\mto (\p_1(h_1)g_1, (\chi(h_1),\chi(g_1)), g_1)\in \cal G_1\times_{\chi,t_2}(\cal H_2\rtimes\cal G_2)\times_{s_2,\chi}\cal G_1
\]
is a homeomorphism. Therefore, if $(h_2,g_1), (h_2',g_1')\in \cal H_2\rtimes\cal G_1$ are such that $\b_2(h_2,g_1)=\b_2(h_2',g_1')$, so that 
\[
(g_1,(h_2(h_2')^{-1},\chi(g_1')),g_1')\in \cal G_1\times_{\chi,t_2}(\cal H_2\rtimes\cal G_2)\times_{s_2,\chi}\cal G_1 \ ,
\]
then there exists a unique $h_1\in {\cal H}_1^{t(g_1')}$ such that $g_1'=\p_1(h_1^{-1})g_1'$ and $h_2(h_2')^{-1}=\chi(h_1^{-1})$, hence
\[
(h_2',g_1')=(h_2\chi(h_1^{-1}),\p_1(h_1)g_1)=\a_1(h_1)\cdot (h_2,g_1)
\]
which implies exactness of the sequence.
\end{proof}

\begin{df}[Trivial crossed extension]\label{def:trivial_crossing}
Let $\frak G$ be a groupoid crossed module. The {\em trivial crossed extension} of $\frak G$ is the endocrossing obtained by applying the construction in Proposition~\ref{pro:crossing} to the identity strict morphism of crossed modules
\[
\xymatrix{
\cal H\ar[r]^\p \ar[d]_{\Id} & \cal G\ar[d]^{\Id} \\
\cal H\ar[r]^{\p} & \cal G
}
\]
That is, the groupoid semidirect product $\cal H\rtimes \cal G\rrTo \cal G^0$, together with the homomorphisms 
\begin{eqnarray*}
\a_1\colon \cal H\ni h\mto (h^{-1},\p(h))\in \cal H\rtimes \cal G \ , \\
\a_2\colon \cal H\rtimes \cal G \ni (h,g)\mto g\in \cal G\ , \\
\b_1\colon \cal H\ni h\mto (h,s(h))\in \cal H\rtimes \cal G \ , \\
\b_2\colon \cal H\rtimes \cal G\ni (h,g)\mto \p(h)g\in \cal G.
\end{eqnarray*} 
This crossed extension will be denoted by $\cal O_{\frak G}$.
\end{df}

\begin{nota}
If $\frak G_1$ and $\frak G_2$ are groupoid crossed modules, we will write 
\[
\frak G_1\underset{\cal M}{\cross} \frak G_2
\]
to say that the groupoid $\cal M\rrTo \cal M^0$ is a crossing from $\frak G_1$ to $\frak G_2$. Moreover, if $\cal M$ is a crossed extension of $\frak G_1$ and $\frak G_2$, we write 
\[
\frak G_1\underset{\cal M}{\xext}\frak G_2.
\]
Also, we will use the notation $\frak G_1\xext \frak G_2$ without subscript to say that both crossed modules admit a crossed extension. 
\end{nota}

\subsection{Decomposition}

In this section we examine how crossings decompose into strict morphisms of groupoid crossed modules.

\begin{thm}\label{thm:crossing_decomposition}
Let $\frak G_1, \frak G_2$ be groupoid crossed modules, and $\frak G_1\underset{\cal M}{\cross}\frak G_2$ be a crossing. Then there exists a third  groupoid crossed module ${\frak G}'$ and a zig-zag chain of strict morphisms 
\begin{eqnarray}\label{eq:zig_zag-chain}
\frak G_1\lTo \frak G'\rTo \frak G_2
\end{eqnarray}
such that the leftward arrow is a hypercover. In other words, $\cal M$ induces a weak homomorphism of the associated topological $2$--groupoids from ${\bf G}_{\frak G_1}$ to ${\bf G}_{\frak G_2}$.
\end{thm}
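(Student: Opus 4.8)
The plan is to promote the crossing $\frak G_1\underset{\cal M}{\cross}\frak G_2$ to a strict morphism between pullback crossed modules by rigidifying the middle groupoid $\cal M$. First I would observe that axiom~\ref{CR3} says that the diagonal $\cal H_2[\cal M^0]\stackrel{\b_1}{\rTo}\cal M\stackrel{\a_2}{\rTo}\cal G_1[\cal M^0]$ is an extension of topological groupoids over $\cal M^0$; in particular $\a_2$ is a surjective submersion-like quotient with kernel $\b_1(\cal H_2[\cal M^0])$. The idea is that $\cal M$ is morally the semidirect product $\cal H_2[\cal M^0]\rtimes\cal G_1[\cal M^0]$, but only up to choosing a continuous section of $\a_2$, which need not exist globally. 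To fix this, I would take the candidate for $\frak G'$ to be a crossed module whose base groupoid is $\cal M$ itself (or a pullback of $\frak G_1$ along $\tau$ further pulled back to trivialize the extension), so that the extension $\ref{CR3}$ becomes split by construction.

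Concretely, I would set $\frak G'$ to be the pullback $\tau^*\frak G_1 = \cal H_1[\cal M^0]\stackrel{\tau^*\p_1}{\xTo}\cal G_1[\cal M^0]$, but replace its base groupoid $\cal G_1[\cal M^0]$ by $\cal M$, using $\a_2\colon\cal M\rTo\cal G_1[\cal M^0]$ as the structural map. That is, $\frak G'$ has base groupoid $\cal M$, bundle of groups the pullback $\a_2^*\cal H_1[\cal M^0]$ along the unit inclusion (equivalently $\cal H_1[\cal M^0]$ again, since $\a_2$ is the identity on units by~\ref{CR1}), boundary the composite $\cal H_1[\cal M^0]\stackrel{\a_1}{\rTo}\cal M$, and the $\cal M$-action on $\cal H_1[\cal M^0]$ defined by transporting the $\cal G_1[\cal M^0]$-action through $\a_2$. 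Axiom~\ref{CR4}, specifically equation~\eqref{eq2:crossing}, is exactly the crossed-module identity $\p(h^g)=g^{-1}\p(h)g$ for this new structure, and the identity $c_{\p(h)}(k)=h^{-1}kh$ is inherited from $\frak G_1$ since $\a_1$ is a groupoid morphism and $\tau^*\p_1$ factors through $\a_2\circ\a_1$ by~\ref{CR2}. The leftward map $\frak G'\rTo\frak G_1$ is then $(\text{pr}\circ\a_2\text{ on bases},\ \text{pr on groups})$ where $\text{pr}$ denotes the pullback projection to $\frak G_1$; the rightward map $\frak G'\rTo\frak G_2$ is $(\text{pr}_2\circ\b_2,\ \text{pr}_2\circ\b_1)$ on bases and groups respectively, landing in $\frak G_2$ via $\s$.

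The rightward map lands in $\frak G_2$ because the bottom triangle of~\eqref{eq4:crossing} commutes and~\eqref{eq3:crossing} is precisely the crossed-module compatibility between $\b_1$ and $\b_2$; continuity and the groupoid-action compatibility condition in the definition of a strict morphism follow from the corresponding axioms of the crossing. For the leftward arrow $\frak G'\rTo\frak G_1$, I would verify it is a hypercover by checking axioms~\ref{WE1}--\ref{WE3} for the induced $2$-groupoid homomorphism between $\cal H_1[\cal M^0]\rtimes\cal M$ and $\cal H_1\rtimes\cal G_1$. Axiom~\ref{WE1} reduces to surjectivity of $\tau\colon\cal M^0\rTo X_1$, which is forced by~\ref{CR3} (the extension is over $\cal M^0$ but must surject onto units of $\cal G_1[\cal M^0]$, hence $\tau$ is surjective after composing with the pullback projection — I should double-check this is actually required, and if not, replace $\cal M^0$ by $\cal M^0\times_{X_1}X_1$ is already fine; surjectivity of $\tau$ is part of the extension axiom~\ref{CR3} via the base map being open surjective). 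Axioms~\ref{WE2} and~\ref{WE3} amount to showing that $\cal M\rTo\cal G_1[\cal M^0]$ being an extension (axiom~\ref{CR3}, with kernel $\b_1(\cal H_2[\cal M^0])$ absorbed into the group bundle on the $\frak G'$ side — wait, it is $\cal H_1[\cal M^0]$ that is the group bundle) gives the required Morita equivalence of vertical semidirect product groupoids; this is where Remark~\ref{rmk:we_2-grpd_Morita_grpd} and the standard fact that pullback along a surjective submersion is a Morita equivalence get used. The last sentence of the statement, that $\cal M$ induces a weak homomorphism ${\bf G}_{\frak G_1}\rTo{\bf G}_{\frak G_2}$, is then immediate from the definition of weak homomorphism together with the equivalence of categories relating crossed modules and $2$-groupoids.

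\textbf{Main obstacle.} The hard part will be pinning down the precise crossed-module structure on $\frak G'$ so that the $\cal M$-action on $\cal H_1[\cal M^0]$ is \emph{well-defined and continuous} through $\a_2$ — a priori the $\cal G_1[\cal M^0]$-action pulls back along $\a_2$ without trouble, but one must check that nothing is lost, i.e. that equation~\eqref{eq2:crossing} genuinely upgrades to the crossed-module identity rather than merely a consequence of it, and that the composite $\a_1$ followed by $\a_2$ recovers $\tau^*\p_1$ so that the two crossed-module axioms of Definition~\ref{def:crossed} hold on the nose. Equally delicate is verifying that the leftward strict morphism is a hypercover: one must show the extension structure of axiom~\ref{CR3} translates into exactly axioms~\ref{WE2} and~\ref{WE3} for the associated $2$-groupoids, which requires carefully matching the fibered-product descriptions and using that $\b_1(\cal H_2[\cal M^0])$ is normal in $\cal M$ with quotient $\cal G_1[\cal M^0]$.
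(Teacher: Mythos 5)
Your overall strategy --- build $\frak G'$ with base groupoid $\cal M$ and deduce the hypercover property from the extension axiom \ref{CR3} --- is the right one, and your observation that \eqref{eq2:crossing} is exactly the first crossed-module axiom for the new boundary $\a_1$ is correct. But your choice of group bundle for $\frak G'$, namely $\cal H_1[\cal M^0]$ alone, breaks the argument in two places, and this is not a presentational issue but the central point of the construction. First, the rightward arrow $\frak G'\rTo\frak G_2$ does not exist with your data: you propose $\mathrm{pr}_2\circ\b_1$ on groups, but $\b_1$ is a map $\cal H_2[\cal M^0]\rTo\cal M$ \emph{into} $\cal M$, so it cannot be precomposed with anything defined on $\cal H_1[\cal M^0]$; a crossing provides no natural bundle map $\cal H_1[\cal M^0]\rTo\cal H_2$ at all. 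Second, and independently, the leftward arrow $(pr_1,\a_2)$ fails axiom \ref{WE3}: on $2$--arrows it is the map $\cal H_1[\cal M^0]\rtimes\cal M\ni (h,m)\mto (h,\a_2(m))$, and given $m,m'\in\cal M$ together with $h$ satisfying $\tau^*\p_1(h)\a_2(m)=\a_2(m')$, one only obtains $\a_2(\a_1(h)m)=\a_2(m')$; since $\a_2$ has kernel $\b_1(\cal H_2[\cal M^0])$ by \ref{CR3}, the elements $\a_1(h)m$ and $m'$ may differ, so the comparison map to the fibered product is not surjective. Your own aside (``with kernel $\b_1(\cal H_2[\cal M^0])$ absorbed into the group bundle \dots wait'') is exactly where the proof breaks.

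The fix, which is what the paper's proof of Theorem~\ref{thm:crossing_decomposition} does, is to take the group bundle of $\frak G'$ to be the fibered product $\cal H_1[\cal M^0]\times_{\cal M^0}\cal H_2[\cal M^0]$, with boundary $(h_1,h_2)\mto\a_1(h_1)\b_1(h_2)$ (well defined because the two images commute, Proposition~\ref{pro:images_crossing}) and $\cal M$--action $(h_1,h_2)^m=(h_1^{\a_2(m)},h_2^{\b_2(m)})$. The extra $\cal H_2$ factor supplies both missing pieces at once: $pr_2$ gives the group component of the rightward morphism to $\frak G_2$, and the exactness of $\cal H_2[\cal M^0]\stackrel{\b_1}{\rTo}\cal M\stackrel{\a_2}{\rTo}\cal G_1[\cal M^0]$ produces, for each pair $(m,m')$ with $\a_2(\a_1(h_1)m)=\a_2(m')$, a unique $h_2$ with $m'=\b_1(h_2)\a_1(h_1)m$, which is precisely the bijectivity needed for \ref{WE3}. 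The rest of your outline (reduction to a common unit space by pullback, \ref{WE1}--\ref{WE2} from surjectivity of $\a_2$, and the final sentence via the crossed-module/$2$--groupoid correspondence) then goes through as you describe.
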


\begin{proof}
We first assume that both crossed modules have the same unit space $\cal G_1^0=\cal G_2^0=X$ and that the crossing $\cal M$ has unit space $\cal M^0=X$ together with trivial maps $\tau=\s=\Id_X$, with respect to the commutative diagram 
\[
\xymatrix{
\cal H_1\ar[rr]^{\p_1} \ar[rd]_{\a_1} &&\cal G_1 \\
& \cal M\ar[ru]_{\a_2} \ar[rd]^{\b_2} & \\
\cal H_2\ar[ru]^{\b_1} \ar[rr]^{\p_2} && \cal G_2 
}
\]
Then $\cal M$ acts by automorphisms on $\cal H_1$ and $\cal H_2$ through the compositions $\cal M\stackrel{\a_2}{\rTo}\cal G_1\rTo \Aut(\cal H_1)$ and $\cal M\stackrel{\b_2}{\rTo}\cal G_2\rTo \Aut(\cal H_2)$, respectively. Now, notice that the fibered product $$\cal H_1\times_X\cal H_2=\{(h_1,h_2)\in \cal H_1\times\cal H_2\mid s(h_1)=s(h_2)\}$$ has the structure of topological groupoid with unit space $X$, where composition and inverse maps are components-wise; in fact it is a bundle of topological groups over $X$. Furthermore, $\cal M$ acts by automorphisms on $\cal H_1\times_X\cal H_2$ by 
\[
(h_1,h_2)^m\colonequals (h_1^{\a_2(m)},h_2^{\b_2(m)}), \ {\rm for\ } (h_1,h_2)\in \cal H_1^{t(\a_2(m))}\times \cal H_2^{t(\b_2(m))}.
\] 
Moreover, together with this groupoid action by automorphisms, the groupoid morphism 
\[
\a_1\cdot \b_1\colon \cal H_1\times_X\cal H_2\ni (h_1,h_2)\mto \a_1(h_1)\b_1(h_2)=\b_1(h_2)\a_1(h_1)\in \cal M
\]
we get the crossed module of topological groupoids $\frak G'\colonequals\cal H_1\times_X\cal H_2\stackrel{\a_1\cdot\b_1}{\xTo}\cal M$ with unit space $X$. What's more, we have the following two strict morphisms 
\[
\xymatrix{
\cal H_1\ar[r]^{\p_1} & \cal G_1 \\
\cal H_1\times_X\cal H_2 \ar[u]^{pr_1} \ar[d]_{pr_2} \ar[r]^-{\a_1\cdot\b_1} & \cal M\ar[u]_{\a_2} \ar[d]^{\b_2} \\
\cal H_2\ar[r]^{\p_2} & \cal G_2
}
\] 
We claim that the upper strict morphism $\chi\colonequals (pr_1,\a_2)\colon \frak G'\rTo \frak G_1$ is a hypercover. Indeed, it is clear that, since $\a_2$ is surjective,  the maps 
\[
X\times_{\a_2,t}\cal M \ni (x,m)\mto s(m)\in X
\]
and 
\[
\cal M\times_{\a_2,t_2}(\cal H_1\rtimes\cal M)\ni (m,(h_1,g_1))\mto (t(m),g_1,s(m))\in X\times_{\a_2,t}\cal G_1\times_{s,\a_2}X
\]
are continuous surjections. Also, since $\b_1$ is injective, the continuous map 
\[
(\cal H_1\times_X\cal H_2)\rtimes\cal M\ni ((h_1,h_2),m)\mto (\a_1(h_1)\b_1(h_2)m, (h_1,\a_2(m)), m)\in (\cal H_1\rtimes\cal G_1)[\cal M]
\]
is injective. To see that this last map is also surjective, take any element $(m_2,(h_1,g_1),m_1)\in (\cal H_1\rtimes\cal G_1)[\cal M]$. Then $\a_2(m_1)=g_1$ and $\a_2(m_2)=\p_1(h_1)\a_2(m_1)=\a_2(\a_1(h_1)m_1)$. Hence, since the sequence $$\cal H_2\stackrel{\b_1}{\rTo}\cal M\stackrel{\a_2}{\rTo}\cal M$$ is exact, there is a unique $h_2\in \cal H_2$ such that $m_2=\b_2(h_2)\a_1(h_1)m_1=\a_1(h_1)\b_2(h_2)m_1$. This means that $((h_1,h_2),m_1)\in (\cal H_1\times_X\cal H_2)\rtimes \cal M$ is sent to $(m_2,(h_1,g_1),m_1)$ through the above map, which implies surjectivity; therefore the map is a homeomorphism. Finally, in the general case, the same constructions apply on pullbacks to get the strict morphisms 
\[
\frak G_1\lTo \frak G_1[\cal M^0] \stackrel{(pr_1,\a_2)}{\lTo} \frak G' \stackrel{(pr_2,\b_2)}{\rTo} \frak G_2[\cal M^0]\rTo \frak G_2
\] 
where $\frak G'$ is the crossed module $\cal H_1[\cal M^0]\times_{\cal M^0}\cal H_2[\cal M^0]\xTo \cal M$. We then achieve the proof by noticing that the composite of the leftward arrows is a hypercover. 
\end{proof}

\begin{cor}\label{cor:crossing_decomposition}
Assume two given groupoid crossed modules admit a crossed extension. Then they are Morita equivalent.  
\end{cor}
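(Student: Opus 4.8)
The plan is to derive the result from the decomposition of Theorem~\ref{thm:crossing_decomposition}, used twice. The starting observation is that, by the very definition of a crossed extension together with the observation on $\bar{\cal M}$ recorded above, a crossed extension $\cal M$ of $\frak G_1$ and $\frak G_2$ is simultaneously a crossing from $\frak G_1$ to $\frak G_2$ and a crossing from $\frak G_2$ to $\frak G_1$ (via $\bar{\cal M}=(\cal M,\b_1,\b_2,\a_1,\a_2)$); passing from $\cal M$ to $\bar{\cal M}$ merely interchanges the roles of the pairs $(\a_1,\a_2)$ and $(\b_1,\b_2)$, equivalently of $\frak G_1$ and $\frak G_2$. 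Applying Theorem~\ref{thm:crossing_decomposition} to the crossing $\cal M$ from $\frak G_1$ to $\frak G_2$ yields a groupoid crossed module $\frak G'$ and a zig-zag
\[
\frak G_1\ \stackrel{\sim}{\lTo}\ \frak G'\ \rTo\ \frak G_2
\]
whose leftward arrow is a hypercover. In view of Definition~\ref{df:Morita_crossed}, it then suffices to show that the rightward arrow is a hypercover as well, for then this zig-zag is itself a Morita equivalence of groupoid crossed modules (a chain with $n=3$).

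To show that the rightward arrow is a hypercover I would revisit the construction inside the proof of Theorem~\ref{thm:crossing_decomposition}. In the reduced situation (all unit spaces equal to $X$ and $\tau=\s=\Id_X$) that construction produces $\frak G'=\cal H_1\times_X\cal H_2\stackrel{\a_1\cdot\b_1}{\xTo}\cal M$ together with the two canonical strict morphisms $(pr_1,\a_2)\colon\frak G'\rTo\frak G_1$ and $(pr_2,\b_2)\colon\frak G'\rTo\frak G_2$, and the only feature of the crossing that enters the verification that $(pr_1,\a_2)$ is a hypercover is that the \textbf{BL}--\textbf{TR} diagonal is an extension of topological groupoids, i.e.\ axiom~\ref{CR3} (surjectivity of $\a_2$, injectivity of $\b_1$, and exactness of $\cal H_2\stackrel{\b_1}{\rTo}\cal M\stackrel{\a_2}{\rTo}\cal G_1$). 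But $\frak G'$ and this pair of projections are manifestly symmetric under the simultaneous swap $\cal H_1\leftrightarrow\cal H_2$, $(\a_1,\a_2)\leftrightarrow(\b_1,\b_2)$: under the obvious identification $\cal H_1\times_X\cal H_2\cong\cal H_2\times_X\cal H_1$ this is exactly the crossed module obtained by running the same construction on the crossing $\bar{\cal M}$ from $\frak G_2$ to $\frak G_1$, and the two projections are interchanged. Hence applying Theorem~\ref{thm:crossing_decomposition} to $\bar{\cal M}$ shows that $(pr_2,\b_2)$ is a hypercover; concretely, what the symmetric argument consumes is that the \textbf{TL}--\textbf{BR} diagonal of $\cal M$ is an extension (surjectivity of $\b_2$, injectivity of $\a_1$, exactness of $\cal H_1\stackrel{\a_1}{\rTo}\cal M\stackrel{\b_2}{\rTo}\cal G_2$), which is precisely axiom~\ref{CR3_prime} — available here because $\cal M$ is a crossed extension and not merely a crossing. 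Thus both legs of $\frak G_1\stackrel{\sim}{\lTo}\frak G'\stackrel{\sim}{\rTo}\frak G_2$ are hypercovers.

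In the general case one repeats, symmetrically on both sides, the pullback reduction that appears in the last line of the proof of Theorem~\ref{thm:crossing_decomposition}: one gets a chain
\[
\frak G_1\ \lTo\ \frak G_1[\cal M^0]\ \lTo\ \frak G'\ \rTo\ \frak G_2[\cal M^0]\ \rTo\ \frak G_2
\]
in which every arrow is a hypercover — the two outer ones because the pullback of a groupoid crossed module along a surjection is a hypercover (here $\tau$ and $\s$ are both surjective, since $\cal M$ is a crossing in each direction), and the two inner ones by the argument of the previous paragraph. By Definition~\ref{df:Morita_crossed} this chain exhibits a Morita equivalence between $\frak G_1$ and $\frak G_2$, which is the claim. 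The step I expect to require genuine care — and the main obstacle — is the bookkeeping in the middle paragraph: making precise that the crossed module $\frak G'$ together with its two projections really is invariant under the symmetry exchanging $\cal M$ and $\bar{\cal M}$, so that the hypercover delivered by Theorem~\ref{thm:crossing_decomposition} applied to $\bar{\cal M}$ is literally the rightward map of the original zig-zag, and that axiom~\ref{CR3_prime} is exactly the hypothesis this symmetric argument needs (as opposed to something strictly stronger).
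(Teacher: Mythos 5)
Your proposal is correct and follows essentially the same route as the paper: the paper's proof consists of the single remark that the constructions in the proof of Theorem~\ref{thm:crossing_decomposition} yield the zig-zag $\frak G_1\stackrel{(pr_1,\a_2)}{\lTo}\frak G'\stackrel{(pr_2,\b_2)}{\rTo}\frak G_2$ with \emph{both} legs hypercovers. You merely make explicit the symmetry argument (applying the theorem to $\bar{\cal M}$ and observing that axiom~\ref{CR3_prime} is exactly what the mirrored verification consumes) that the paper leaves implicit.
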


\begin{proof}
The constructions and arguments in the proof of Theorem~\ref{thm:crossing_decomposition} can be used to get the zig-zag chain of hypercovers 
\[
\frak G_1\stackrel{(pr_1,\a_2)}{\lTo} \frak G' \stackrel{(pr_2,\b_2)}{\rTo} \frak G_2
\]
\end{proof}

\begin{thm}
Let $\frak G_1\underset{\cal M}{\cross}\frak G_2$ be a crossing of groupoid crossed modules. Then the topological groupoids $\cal H_1\rtimes \cal G_1\rrTo \cal G_1$ and $(\cal H_1[\cal M^0]\times_{\cal M^0}\cal H_2[\cal M^0])\rtimes \cal M\rrTo \cal M$ are Morita equivalent.
\end{thm}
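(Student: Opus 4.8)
The plan is to leverage Theorem~\ref{thm:crossing_decomposition}, whose proof already exhibits the crossed module $\frak G' = \cal H_1[\cal M^0]\times_{\cal M^0}\cal H_2[\cal M^0]\xTo \cal M$ together with a hypercover $\chi = (pr_1,\a_2)\colon \frak G'\rTo \frak G_1$. By the very definition of hypercover, the induced strict homomorphism of topological $2$--groupoids ${\bf G}_{\frak G'}\rTo {\bf G}_{\frak G_1}$ is a weak equivalence, and hence, by Remark~\ref{rmk1:hypercover}, it restricts to a Morita equivalence of topological groupoids between the vertical semidirect product groupoids $(\cal H_1[\cal M^0]\times_{\cal M^0}\cal H_2[\cal M^0])\rtimes \cal M\rrTo \cal M$ and $\cal H_1\rtimes\cal G_1\rrTo \cal G_1$. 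So in essence the statement is an immediate corollary of Theorem~\ref{thm:crossing_decomposition}, and the proof should simply point this out.

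In more detail, the first step is to reduce to the case where $\frak G_1$, $\frak G_2$, and $\cal M$ share the unit space $X$ with $\tau=\s=\Id_X$; in the general case one passes to the pullbacks $\frak G_1[\cal M^0]$ and $\frak G_2[\cal M^0]$, and notes that $\cal H_1\rtimes\cal G_1\rrTo\cal G_1$ is Morita equivalent to $\cal H_1[\cal M^0]\rtimes\cal G_1[\cal M^0]\rrTo\cal G_1[\cal M^0]$ whenever $\cal M^0\rTo X$ (i.e.\ the map $\tau$ composed with the relevant surjectivity coming from~\ref{CR3}) is suitable; this is the content of Remark~\ref{rmk1:hypercover} applied to the hypercover of the last example before Remark~\ref{rmk1:hypercover}. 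Second, invoke the explicit homeomorphism established in the proof of Theorem~\ref{thm:crossing_decomposition},
\[
(\cal H_1\times_X\cal H_2)\rtimes\cal M\ni ((h_1,h_2),m)\mto (\a_1(h_1)\b_1(h_2)m,\,(h_1,\a_2(m)),\,m)\in (\cal H_1\rtimes\cal G_1)[\cal M],
\]
which identifies the vertical semidirect product groupoid of $\frak G'$ with the pullback of $\cal H_1\rtimes\cal G_1$ along $\a_2\colon\cal M\rTo\cal G_1$. Third, conclude: the pullback groupoid $(\cal H_1\rtimes\cal G_1)[\cal M]$ is Morita equivalent to $\cal H_1\rtimes\cal G_1$ precisely because $\a_2\colon\cal M\rTo\cal G_1$ is an open (indeed fibrationlike) surjection onto the unit space, being the $\b_1$-extension quotient appearing in axiom~\ref{CR2}--\ref{CR3}; hence $(\cal H_1[\cal M^0]\times_{\cal M^0}\cal H_2[\cal M^0])\rtimes\cal M$ is Morita equivalent to $\cal H_1\rtimes\cal G_1$.

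The step I expect to require the most care is verifying that the map $\a_2\colon\cal M\rTo\cal G_1[\cal M^0]$ is genuinely a Morita-type quotient map on the relevant groupoid level --- that is, that its fibers are full principal bundles for the extension kernel $\b_1(\cal H_2[\cal M^0])$, with the requisite local triviality. This is exactly what axiom~\ref{CR3} (the diagonal {\bf BL}--{\bf TR} being an extension of topological groupoids over $X$) is designed to provide, together with the commutativity of the images of $\a_1$ and $\b_1$ from Proposition~\ref{pro:images_crossing}; once one unwinds the definition of a groupoid extension, the principal-bundle structure on $\a_2$ is built in, and the local triviality is inherited. Everything else --- that the various semidirect products are well-defined topological groupoids, that the displayed bijection is continuous with continuous inverse --- has already been checked in the proof of Theorem~\ref{thm:crossing_decomposition}, so the present proof can be quite short, essentially a paragraph citing that theorem and Remark~\ref{rmk1:hypercover}.
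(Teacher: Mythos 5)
Your proposal is correct and follows essentially the same route as the paper: the paper's proof likewise cites the hypercover $\frak G'\rTo\frak G_1$ constructed in the proof of Theorem~\ref{thm:crossing_decomposition} and then concludes immediately by Remark~\ref{rmk1:hypercover}. The extra detail you supply (the explicit homeomorphism onto $(\cal H_1\rtimes\cal G_1)[\cal M]$ and the reduction to a common unit space) is consistent with, and already contained in, that earlier proof.
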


\begin{proof}
By the constructions in the proof of Theorem~\ref{thm:crossing_decomposition}, the crossing $\cal M\rrTo \cal M^0$ induces a hypercover  
\[
\frak G_1\stackrel{\chi}{\lTo} \frak G'
\]
where $\frak G'$ is the crossed module $\cal H_1[\cal M^0]\times_{\cal M^0}\cal H_2[\cal M^0]\xTo \cal M$. We then conclude using Remark~\ref{rmk1:hypercover}.
\end{proof}

\begin{cor}\label{cor:Morita_semidirect}
Assume the groupoid crossed modules $\frak G_1$ and $\frak G_2$ have a crossed extension. Then the vertical semidirect product groupoids $\cal H_1\rtimes\cal G_1\rrTo \cal G_1$ and $\cal H_2\rtimes\cal G_2\rrTo \cal G_2$ are Morita equivalent.
\end{cor}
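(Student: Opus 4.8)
The plan is to obtain the Morita equivalence by reading it off the hypercover decomposition of a crossing. First I would invoke the proof of Corollary~\ref{cor:crossing_decomposition}: from a crossed extension $\cal M$ of $\frak G_1$ and $\frak G_2$ it extracts a zig-zag of \emph{hypercovers}
\[
\frak G_1\ \overset{(pr_1,\a_2)}{\lTo}\ \frak G'\ \overset{(pr_2,\b_2)}{\rTo}\ \frak G_2,
\]
where $\frak G'$ is the crossed module $\cal H_1[\cal M^0]\times_{\cal M^0}\cal H_2[\cal M^0]\xTo\cal M$ constructed in the proof of Theorem~\ref{thm:crossing_decomposition}. The left leg $(pr_1,\a_2)$ is a hypercover by that theorem. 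For the right leg $(pr_2,\b_2)$ I would run the \emph{same} homeomorphism verification as for the left leg, but applied to the opposite crossing $\bar{\cal M}$ from $\frak G_2$ to $\frak G_1$; this is the only place where the full strength of the crossed-extension hypothesis enters, since the surjectivity step of that verification requires the diagonal {\bf TL}--{\bf BR} of $\cal M$ to be an extension, i.e.\ axiom~\ref{CR3_prime}.

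Next I would push each leg of the zig-zag through Remark~\ref{rmk1:hypercover}, which states that a hypercover of groupoid crossed modules induces a Morita equivalence between the associated vertical semidirect product groupoids. Applied to $(pr_1,\a_2)$ this gives a Morita equivalence between $(\cal H_1[\cal M^0]\times_{\cal M^0}\cal H_2[\cal M^0])\rtimes\cal M\rrTo\cal M$ and $\cal H_1\rtimes\cal G_1\rrTo\cal G_1$; applied to $(pr_2,\b_2)$ it gives a Morita equivalence between that same groupoid $(\cal H_1[\cal M^0]\times_{\cal M^0}\cal H_2[\cal M^0])\rtimes\cal M$ and $\cal H_2\rtimes\cal G_2\rrTo\cal G_2$. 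Since Morita equivalence of topological groupoids is an equivalence relation, concatenating the two equivalences yields that $\cal H_1\rtimes\cal G_1$ and $\cal H_2\rtimes\cal G_2$ are Morita equivalent, which is the assertion.

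A second, essentially equivalent, route is to apply the theorem immediately preceding this corollary twice: once to the crossing of $\frak G_1$ and $\frak G_2$ given by $\cal M$, and once to the crossing from $\frak G_2$ to $\frak G_1$ given by $\bar{\cal M}$. This produces a Morita equivalence of $\cal H_1\rtimes\cal G_1$ with $(\cal H_1[\cal M^0]\times_{\cal M^0}\cal H_2[\cal M^0])\rtimes\cal M$ and a Morita equivalence of $\cal H_2\rtimes\cal G_2$ with $(\cal H_2[\cal M^0]\times_{\cal M^0}\cal H_1[\cal M^0])\rtimes\cal M$; one then checks that the coordinate flip $(h_1,h_2)\mapsto(h_2,h_1)$ is an isomorphism of these two middle crossed modules, using Proposition~\ref{pro:images_crossing} to see that the boundary maps $\a_1\cdot\b_1$ and $\b_1\cdot\a_1$ coincide and observing that the two groupoid actions by automorphisms get exchanged accordingly. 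In either route I expect no genuine difficulty; the one step that actually uses the crossed-extension hypothesis, and hence the only one deserving care, is confirming that the right leg of the zig-zag is really a hypercover, which, as noted, follows verbatim from the argument given for the left leg in the proof of Theorem~\ref{thm:crossing_decomposition}.
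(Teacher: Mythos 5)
Your first route is exactly the argument the paper intends (the corollary is stated without proof, as an immediate consequence of Corollary~\ref{cor:crossing_decomposition} together with Remark~\ref{rmk1:hypercover}): the zig-zag of hypercovers $\frak G_1\lTo\frak G'\rTo\frak G_2$ extracted from the crossed extension gives, via the remark, Morita equivalences of both $\cal H_1\rtimes\cal G_1$ and $\cal H_2\rtimes\cal G_2$ with the vertical semidirect product of the middle crossed module, and you correctly locate the only point where axiom~\ref{CR3_prime} is needed, namely the surjectivity step making the right leg a hypercover. The proposal is correct and takes essentially the same approach as the paper.
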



\subsection{Diamond product}

Our goal in this section is to show how to concatenate crossings of groupoid crossed modules; that is, given two crossings $\frak G_1 \underset{\cal M}{\cross} \frak G_2 \underset{\cal N}{\cross} \frak G_3$, we want to be able to get a crossing $${\frak G}_1\underset{\cal P}{\cross}{\frak G}_3.$$ 

\begin{df}
Let $\frak G$ be a groupoid crossed module with unit space $X$, $\cal M\rrTo X$ and $\cal N\rrTo X$ two topological groupoids together with a commutative diagram 
\[
\xymatrix{
& \cal M \ar[rd]^{\b_2} & \\
\cal H \ar[ru]^{\b_1} \ar[rr]^{\p} \ar[rd]_{\d_1} && \cal G \\
& \cal N \ar[ru]_{\d_2} &
}
\]
of strict morphisms of topological groupoids such that 
\begin{equation}\label{CR3-prime}
\begin{array}{lcl}
\b_1(h^{\b_2(m)}) &= & m^{-1}\b_1(h)m, \forall h\in \cal H, m\in \cal M_{s(h)}, \quad {\rm and} \\
 \d_1(h^{\d_2(n)})& =& n^{-1}\d_1(h)n, \ \forall h\in \cal H, n\in \cal N_{s(h)}.
\end{array}
\end{equation}
We define the {\em diamond product} $\cal M\ou{\Dprod}{\cal H}{\cal G}\cal N$ by 
\[
\cal M\ou{\Dprod}{\cal H}{\cal G}\cal N \colonequals \cal M\times_{\cal G}\cal N/_{\sim} \colonequals \{(m,n)\in \cal M\times \cal N\mid \b_2(m)=\d_2(n)\}/_{\sim}
\]
where the equivalence relation '$\sim$' in $\cal M\times_{\cal G_2}\cal N$ is generated by 
\[
(\b_1(h)m,\d_1(h)n)\sim (m, n), \ {\rm for \ } h\in \cal H, \  s(h_2)=s(m)=s(n)
\] 
We will write $[m,n]$ for the class of $(m,n)$ in $\cal M\ou{\Dprod}{\cal H}{\cal G}\cal N$.
\end{df}

Notice that the diamond product $\cal M\ou{\Dprod}{\cal H}{\cal G}\cal N$, equipped with the quotient topology, is a topological groupoid with unit space $X$. More precisely, the source and target maps are 
\[
s([m,n])\colonequals s(m)=s(n), \ t([m,n])\colonequals t(m)=t(n)\ , 
\] 
(these are well defined since the strict groupoid morphisms $\b_2$ and $\d_2$ are the identities on $X$), 
composition is $[m,n][m',n']\colonequals [mm',nn']$ when both products are defined and the inverse is $[m,n]^{-1}\colonequals [m^{-1},n^{-1}]$. To see that the composition is well defined, notice that thanks to~\eqref{CR3-prime}, given $h\in \cal H$ and $(m,n), (m',n')\in \cal M\times_{\cal G}\cal N$ such that $t(m)=s(h)=s(m')$, we get 
\[
\left(m\b_1(h)m',n\d_1(h)n'\right)=\left(\b_1(h^{\b_2(m^{-1})})mm', \d_1(h^{\d_2(n^{-1})})nn'\right)\sim (mm',nn')
\]

\begin{lem}\label{lem:diamond_crossing}
Let $\frak G_i, i=1,2,3$ be groupoid crossed modules with the same unit space $X$. Assume two topological groupoids $\cal M\rrTo X$ and $\cal N\rrTo X$ are such that $\frak G_1 \underset{\cal M}{\cross}\frak G_2$ and $\frak G_2\underset{\cal N}{\cross}\frak G_3$ are crossings with respect to the identity maps and the commutative diagrams

\[
\xymatrix{
\cal H_1\ar[rr]^{\p_1} \ar[rd]_{\a_1} && \cal G_1  & & \cal H_2 \ar[rd]_{\d_1} \ar[rr]^{\p_2} && \cal G_2 \\
& \cal M \ar[ru]_{\a_2} \ar[rd]^{\b_2} & && & \cal N\ar[ru]_{\d_2} \ar[rd]^{\t_2} & && \\
\cal H_2 \ar[rr]^{\p_2} \ar[ru]^{\b_1} && \cal G_2 && \cal H_3 \ar[rr]^{\p_3} \ar[ru]^{\t_1} && \cal G_3  
}
\]

Then the topological groupoid $\cal M\ou{\Dprod}{\cal H_2}{\cal G_2}\cal N\rrTo X$ is a crossing from  $\frak G_1$ to $ \frak G_3$ with respect to the strict morphisms 
\begin{eqnarray*}
\a_1\Dprod\d_1\colon \cal H_1\ni h_1\mto [\a_1(h_1), s(h_1)] \in M\ou{\Dprod}{\cal H_2}{\cal G_2}\cal N\\
\a_2\Dprod\d_2\colon M\ou{\Dprod}{\cal H_2}{\cal G_2}\cal N\ni [m,n]\mto \a_2(m) \in \cal G_1\\
\b_1\Dprod\t_1\colon \cal H_3\ni h_3\mto [s(h_3),\t_1(h_3)]\\
\b_2\Dprod\t_2\colon M\ou{\Dprod}{\cal H_2}{\cal G_2}\cal N\ni [m,n]\mto \t_2(n) \in \cal G_3
\end{eqnarray*}
Furthermore, $\cal M\ou{\Dprod}{\cal H_2}{\cal G_2}\cal N$ is a crossed extension if $\cal M$ and $\cal N$ are.
\end{lem}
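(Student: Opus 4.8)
The plan is to check directly that the quintuple $(\cal M\ou{\Dprod}{\cal H_2}{\cal G_2}\cal N,\ \a_1\Dprod\d_1,\ \a_2\Dprod\d_2,\ \b_1\Dprod\t_1,\ \b_2\Dprod\t_2)$ satisfies Definition~\ref{def:crossing}; as in the proof of Theorem~\ref{thm:crossing_decomposition} it suffices to treat the case where all unit spaces equal $X$ and $\tau=\s=\Id_X$, the general case following by passage to pullbacks, and we already know $\cal M\ou{\Dprod}{\cal H_2}{\cal G_2}\cal N$ is a topological groupoid over $X$. First I would check that the four displayed maps are well defined: for $\a_2\Dprod\d_2$ and $\b_2\Dprod\t_2$, independence of the chosen representative of a class $[m,n]$ holds because the diagonals $\cal H_2\stackrel{\b_1}{\rTo}\cal M\stackrel{\a_2}{\rTo}\cal G_1$ and $\cal H_2\stackrel{\d_1}{\rTo}\cal N\stackrel{\t_2}{\rTo}\cal G_3$ are complexes (axiom~\ref{CR2} of the two given crossings), so $\a_2(\b_1(h_2))$ and $\t_2(\d_1(h_2))$ are units for every $h_2\in\cal H_2$; and $\a_1\Dprod\d_1$, $\b_1\Dprod\t_1$ land in $\cal M\times_{\cal G_2}\cal N$ again by~\ref{CR2}. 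That the four maps are strict groupoid morphisms restricting to $\Id_X$ on units (axiom~\ref{CR1}) is immediate from the formulas; that the hexagon~\eqref{eq4:crossing} for the pair $(\frak G_1,\frak G_3)$ commutes reduces to $\a_2\a_1=\p_1$ and $\t_2\t_1=\p_3$; and axiom~\ref{CR2} for the new hexagon is the one-line observation that $(\b_2\Dprod\t_2)[\a_1(h_1),s(h_1)]=\t_2(s(h_1))$ and $(\a_2\Dprod\d_2)[s(h_3),\t_1(h_3)]=\a_2(s(h_3))$ are units.

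The heart of the matter is axiom~\ref{CR3}, namely that the diagonal $\cal H_3\rTo\cal M\ou{\Dprod}{\cal H_2}{\cal G_2}\cal N\rTo\cal G_1$ with structural maps $\b_1\Dprod\t_1$ and $\a_2\Dprod\d_2$ is an extension of topological groupoids. The key remark is that $(m,n)\sim(m',n')$ if and only if $(m',n')=(\b_1(h_2)m,\d_1(h_2)n)$ for a single $h_2\in\cal H_2$ — the generating relation being already reflexive, symmetric and transitive since $\cal H_2$ is a bundle of groups. Injectivity of $\b_1\Dprod\t_1$ then follows: if $[s(h_3),\t_1(h_3)]=[s(h_3'),\t_1(h_3')]$, the first coordinates force the connecting $\b_1(h_2)$ to be a unit, hence $h_2$ to be a unit (since $\b_1$ is injective by~\ref{CR3} for the first crossing), and then $\t_1(h_3)=\t_1(h_3')$ gives $h_3=h_3'$ (since $\t_1$ is injective by~\ref{CR3} for the second crossing). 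Surjectivity of $\a_2\Dprod\d_2$: given $g_1\in\cal G_1$ one lifts it to $m\in\cal M$ with $\a_2(m)=g_1$ (surjectivity of $\a_2$), then lifts $\b_2(m)\in\cal G_2$ to $n\in\cal N$ with $\d_2(n)=\b_2(m)$ (surjectivity of $\d_2$), so that $[m,n]\mto g_1$. For exactness in the middle, $\Ima(\b_1\Dprod\t_1)\subseteq\ker(\a_2\Dprod\d_2)$ is~\ref{CR2}; conversely, if $\a_2(m)$ is a unit then exactness of $\cal H_2\stackrel{\b_1}{\rTo}\cal M\stackrel{\a_2}{\rTo}\cal G_1$ gives $h_2$ with $\b_1(h_2)=m$, and the generating relation rewrites $[m,n]=[\b_1(h_2),n]=[t(h_2),\d_1(h_2)^{-1}n]$; since $\d_2(\d_1(h_2)^{-1}n)=\p_2(h_2)^{-1}\d_2(n)$ is a unit, exactness of $\cal H_3\stackrel{\t_1}{\rTo}\cal N\stackrel{\d_2}{\rTo}\cal G_2$ produces $h_3$ with $\t_1(h_3)=\d_1(h_2)^{-1}n$, whence $[m,n]=(\b_1\Dprod\t_1)(h_3)$.

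It then remains to verify axiom~\ref{CR4} for the diamond crossing and, when $\cal M$ and $\cal N$ are crossed extensions, axiom~\ref{CR3_prime}. For~\ref{CR4} one computes in $\cal M\ou{\Dprod}{\cal H_2}{\cal G_2}\cal N$ that $[m,n]^{-1}(\a_1\Dprod\d_1)(h_1)[m,n]=[m^{-1}\a_1(h_1)m,\,1_{s(n)}]$ and compares this with $(\a_1\Dprod\d_1)(h_1^{\a_2(m)})=[\a_1(h_1^{\a_2(m)}),\,1_{s(n)}]$: the two coincide exactly by relation~\eqref{eq2:crossing} for $\cal M$, and symmetrically the second identity of~\ref{CR4} is relation~\eqref{eq3:crossing} for $\cal N$ after the evident relabelling of the crossing data. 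If moreover $\cal M$ and $\cal N$ are crossed extensions, axiom~\ref{CR3_prime} for each says that $\cal H_1\stackrel{\a_1}{\mono}\cal M\stackrel{\b_2}{\epi}\cal G_2$ and $\cal H_2\stackrel{\d_1}{\mono}\cal N\stackrel{\t_2}{\epi}\cal G_3$ are extensions, and running the same three-step argument as above along the diagonal $\cal H_1\rTo\cal M\ou{\Dprod}{\cal H_2}{\cal G_2}\cal N\rTo\cal G_3$ with structural maps $\a_1\Dprod\d_1$ and $\b_2\Dprod\t_2$ shows it is an extension too, so $\cal M\ou{\Dprod}{\cal H_2}{\cal G_2}\cal N$ is then a crossed extension of $\frak G_1$ and $\frak G_3$. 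The step I expect to require the most care is topological rather than algebraic: confirming that $\a_2\Dprod\d_2$ (and, in the crossed-extension case, $\b_2\Dprod\t_2$) is genuinely an extension of \emph{topological} groupoids — an open surjection admitting local continuous sections, not merely an algebraically exact sequence — which I would obtain by combining local sections of $\a_2$ and $\d_2$ (resp. of $\b_2$ and $\t_2$) with the openness of the quotient map $\cal M\times_{\cal G_2}\cal N\rTo\cal M\ou{\Dprod}{\cal H_2}{\cal G_2}\cal N$, and would treat separately.
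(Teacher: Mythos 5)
Your proposal is correct and matches the paper's approach: the paper's own proof of this lemma is simply the word ``straightforward,'' i.e.\ a direct verification of axioms~\ref{CR1}--\ref{CR4} (and~\ref{CR3_prime}) exactly as you carry out, with the key points being that the generating relation is already an equivalence relation and that exactness of the new diagonals follows by splicing the exactness of $\cal H_2\stackrel{\b_1}{\rTo}\cal M\stackrel{\a_2}{\rTo}\cal G_1$ with that of $\cal H_3\stackrel{\t_1}{\rTo}\cal N\stackrel{\d_2}{\rTo}\cal G_2$. Your closing remark that the openness/local-section aspect of ``extension of topological groupoids'' deserves separate attention is a fair point that the paper glosses over entirely.
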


\begin{proof}
Straightforward.
\end{proof}

More generally,

\begin{dfpro}\label{pro:diamond_product}
Suppose $\cal M\rrTo \cal M^0$ and $\cal N\rrTo \cal N^0$ are crossings from $\frak G_1$ to $\frak G_2$ and from $\frak G_2$ to $\frak G_3$, respectively, with respect to the continuous maps
\[
\xymatrix{\cal G^0_1 & \cal M^0 \ar[l]_{\tau_1} \ar[r]^{\s_1} & \cal G_2^0 & \cal N^0 \ar[l]_{\tau_2} \ar[r]^{\s_2} & \cal G_3^0}
\]
and the commutative diagrams  
\[
\xymatrix{
\cal H_1[\cal M^0] \ar[rr]^{\tau_1^*\p_1} \ar[rd]_{\a_1} && \cal G_1[\cal M^0] \\
& \cal M \ar[ru]_{\a_2}  \ar[rd]^{\b_2} & \\
\cal H_2[\cal M^0] \ar[ru]^{\b_1} \ar[rr]^{\s_1^*\p_2} && \cal G_2[\cal M^0]
\ \quad \quad }
\xymatrix{
\cal H_2[\cal N^0] \ar[rr]^{\tau_2^*\p_2} \ar[rd]_{\d_1} && \cal G_2[\cal N^0] \\
& \cal N \ar[ru]_{\d_2}  \ar[rd]^{\theta_2} & \\
\cal H_3[\cal N^0] \ar[ru]^{\theta_1} \ar[rr]^{\s_2^*\p_3} && \cal G_3[\cal N^0]
}
\]
Consider the canonical projections $pr_1\colon\cal M^0\times_{\cal G_2^0}\cal N^0\epi \cal M^0$ and $pr_2\colon\cal M^0\times_{\cal G_2^0} \cal N^0\epi \cal N^0$. Then
\begin{itemize}
\item[(i)] $\cal H_1[\cal M^0\times_{\cal G_2^0}\cal N^0]\cong \cal H_1[\cal M^0]\times_{\s_1\circ s, \tau_2}\cal N^0$ and $\cal G_1[\cal M^0\times_{\cal G_2^0}\cal N^0]\cong \cal N^0\times_{\tau_2,\s_1\circ t}\cal G_1[\cal M^0]\times_{\s_1\circ s,\tau_2}\cal N^0$ as groupoids over $\cal M^0\times_{\cal G_2^0}\cal N^0$. 
\item[(ii)] $\cal H_3[\cal M^0\times_{\cal G_2^0}\cal N^0]\cong \cal M^0\times_{\s_1,\tau_2\circ s}\cal H_3[\cal N^0]$ and $\cal G_3[\cal M^0\times_{\cal G_2^0}\cal N^0]\cong \cal M^0\times_{\s_1,\tau_2\circ t}\cal G_3[\cal N^0]\times_{\tau_2\circ s, \s_1}\cal M^0$ as groupoids over $\cal M^0\times_{\cal G_2^0}\cal N^0$.
\item[(iii)] The topological groupoid $\cal M\Dprod_{\frak G_2}\cal N\rrTo \cal M^0\times_{\cal G_2^0}\cal N^0$, where 
\[
\cal M\Dprod_{\frak G_2}\cal N\colonequals \cal M[\cal M^0\times_{\cal G_2^0}\cal N^0]\ou{\Dprod}{\cal H_2[\cal M^0\times_{\cal G_2^0}\cal N^0]}{\cal G_2[\cal M^0\times_{\cal G_2^0}\cal N^0]}\cal N[\cal M^0\times_{\cal G_2^0}\cal N^0]\ , 
\]
 is a crossing from $\frak G_1$ to $\frak G_3$ with respect to the maps $$\xymatrix{\cal G_1^0  && \cal M^0\times_{\cal G_2^0}\cal N^0 \ar[ll]_-{pr_1\circ \tau_1}  \ar[rr]^-{pr_2\circ \s_2} && \cal G_3^0}$$
 and the associated strict morphisms corresponding to the construction of Lemma~\ref{lem:diamond_crossing} which will be denoted $\a_i\Dprod_{\frak G_2}\d_i$ and $\b_i\Dprod_{\frak G_2}\t_i, i=1,2$. 
\end{itemize}
We call the groupoid $\cal M\Dprod_{\frak G_2}\cal N\rrTo \cal M^0\times_{\cal G_2^0}\cal N^0$ the {\em diamond product} of the crossings $\cal M\rrTo\cal M^0$ and $\cal N\rrTo\cal N^0$. Moreover, $\frak G_1\underset{\cal M\Dprod_{\frak G_2}\cal N}{\xext}\frak G_3$ is a crossed extension if $\frak G_1\underset{\cal M}{\xext}\frak G_2$ and $\frak G_2\underset{\cal N}{\xext}\frak G_3$ are crossed extensions. 
\end{dfpro}

\begin{proof}
(i) and (ii) are immediate consequences of the definition of the pullback of a groupoid. (iii) The groupoid $\cal M\Dprod_{\frak G_2}\cal N\rrTo \cal M^0\times_{\cal G_2^0}\cal N^0$ is a crossing from $\frak G_1[\cal M^0\times_{\cal G_2^0}\cal N^0]$ to $\frak G_3[\cal M^0\times_{\cal G_2^0}\cal N^0]$ over the unit space $\cal M^0\times_{\cal G_2^0}\cal N^0$, thanks to Lemma~\ref{lem:diamond_crossing}. The last statement is obvious. 
\end{proof}


We have already seen that crossed extensions imply Morita equivalences of groupoid crossed modules (see Corollary~\ref{cor:crossing_decomposition}). Now, diamond product allows us to prove the converse. Specifically, we show that crossed extensions provide, in a sense, a more concrete interpretation of Morita equivalences of groupoid crossed modules. More precisely, we have the result below.

\begin{thm}\label{thm:Morita_xext}
Let ${\frak G}_1$ and ${\frak G}_2$ be two crossed modules of topological groupoids. Then,  $\frak G_1 \xext \frak G_2$ if and only if $\frak G_1$ and $\frak G_2$ are Morita equivalent.
\end{thm}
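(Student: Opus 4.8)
The plan is to prove the two implications separately. The forward implication---that a crossed extension $\frak G_1 \xext \frak G_2$ forces $\frak G_1$ and $\frak G_2$ to be Morita equivalent---has already been recorded as Corollary~\ref{cor:crossing_decomposition}, so no new argument is needed there.

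For the converse, the idea is to collapse a Morita equivalence into a single crossed extension, one link of its zig-zag at a time, using the diamond product. By Definition~\ref{df:Morita_crossed}, a Morita equivalence is a finite chain $\frak G_1=\tilde{\frak G}_1,\tilde{\frak G}_2,\dots,\tilde{\frak G}_n=\frak G_2$ in which each $\chi_i$ is a hypercover $\tilde{\frak G}_i\rTo\tilde{\frak G}_{i+1}$ or $\tilde{\frak G}_{i+1}\rTo\tilde{\frak G}_i$. First I would observe that each individual link already yields a crossed extension between its two endpoints: applying Proposition~\ref{pro:hypercover_xext} to $\chi_i$ produces a crossed extension of $\tilde{\frak G}_i$ and $\tilde{\frak G}_{i+1}$ when $\chi_i$ points to the right, and a crossed extension of $\tilde{\frak G}_{i+1}$ and $\tilde{\frak G}_i$ when it points to the left; since swapping the two diagonals of a crossed extension of $\frak G$ and $\frak G'$ produces a crossed extension of $\frak G'$ and $\frak G$ (the Remark following the definition of crossed extensions), in both cases we get a crossed extension $\tilde{\frak G}_i\xext\tilde{\frak G}_{i+1}$.

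Next I would argue by induction on $n$. The cases $n\le 2$ are exactly the previous paragraph. For the inductive step, assume the first $n-2$ links have been collapsed into a crossed extension $\cal P$ of $\frak G_1$ and $\tilde{\frak G}_{n-1}$, and let $\cal Q$ be the crossed extension of $\tilde{\frak G}_{n-1}$ and $\frak G_2$ coming from the last link. Definition and Proposition~\ref{pro:diamond_product} then produces the diamond product $\cal P\Dprod_{\tilde{\frak G}_{n-1}}\cal Q$, which is a crossing from $\frak G_1$ to $\frak G_2$; and its concluding assertion states that, since $\cal P$ and $\cal Q$ are both crossed extensions, so is $\cal P\Dprod_{\tilde{\frak G}_{n-1}}\cal Q$. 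Hence $\frak G_1\xext\frak G_2$, which closes the induction and finishes the proof.

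The part demanding care is bookkeeping rather than a genuine obstacle: one must verify that the diamond product can actually be iterated, i.e.\ that at each stage its output is again a crossing (indeed a crossed extension) carrying a well-behaved unit space and base maps $\cal G_1^0\lTo \cal M^0\times_{\cal G_2^0}\cal N^0\rTo \cal G_3^0$ so that the next diamond product is defined. This is precisely what the general form of Definition and Proposition~\ref{pro:diamond_product}, together with its pullback identifications (i) and (ii), guarantees; in particular no associativity statement for $\Dprod$ is required---only the repeated application of the fact that the diamond product of two crossed extensions sharing a middle crossed module is again a crossed extension.
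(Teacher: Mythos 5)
Your proposal is correct and follows essentially the same route as the paper: the forward direction is delegated to Corollary~\ref{cor:crossing_decomposition}, and the converse converts each hypercover in the Morita chain into a crossed extension via Proposition~\ref{pro:hypercover_xext} and then collapses the chain with iterated diamond products using Definition and Proposition~\ref{pro:diamond_product}. Your explicit appeal to the remark that $\bar{\cal M}$ reverses the direction of a crossed extension, and your phrasing as an induction on the length of the chain, merely spell out details the paper leaves implicit.
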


\begin{proof}
($\RTo$): See Corollary~\ref{cor:crossing_decomposition}. 
\noindent ($\Longleftarrow$) : Let $(\tilde{\frak G}_i, \chi_i), i=1,\cdots, n$ be as in Definition~\ref{df:Morita_crossed}. Using the diamond product of crossed extensions, it suffices to show that no matter the directions of the hypercovers $\chi_i=(\chi_i^l,\chi_i^r)$ are, the sequence induces crossed extensions
\[
\frak G_1=\tilde{\frak G}_1\underset{\cal M_1}{\xext} \tilde{\frak G}_2 \underset{\cal M_2}{\xext} \cdots \underset{\cal M_n}{\xext} \tilde{\frak G}_n=\frak G_2
\]
so that the groupoid $(\cdots (\cal M_1\Dprod_{\tilde{\frak G}_2}\cal M_2)\Dprod_{\tilde{\frak G}_3}\cdots )\Dprod_{\tilde{\frak G}_{n-1}}\cal M_{n-1}\rrTo \cal M^0_1\times_{\tilde{\cal G}_2^0}\cal M_2^0\times_{\tilde{\cal G}_3^0}\cdots \times_{\tilde{\cal G}_{n-1}^0}\cal M_{n-1}^0$ is a crossed extension of $\frak G_1$ and $\frak G_2$. But this, in fact, follows from Proposition~\ref{pro:hypercover_xext}; indeed, each hypercover $\chi_i$ induces a crossed extension $\cal M_i$ between $\tilde{\frak G}_i$ and $\tilde{\frak G}_{i+1}$, $i=1,\ldots, n-1$. .
\end{proof}

\begin{cor}\label{cor:xext_pullback}
Let $\frak G$ be a crossed module of topological groupoids and let $\vp\colon P\rTo \cal G^0$ and $\psi\colon Q\rTo \cal G^0$ be two continuous surjections. Then $\frak G[P]\xext \frak G[Q]$.
\end{cor}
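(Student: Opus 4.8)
The plan is to reduce the statement to Theorem~\ref{thm:Morita_xext} by producing a Morita equivalence between $\frak G[P]$ and $\frak G[Q]$; the whole point is that pullbacks along continuous surjections are hypercovers. Recall from the examples of hypercovers in \S\ref{sec:crossed_modules} (the example following Remark~\ref{rmk1:hypercover}) that for any crossed module $\frak G$ and any continuous surjection $\s\colon Z\rTo \cal G^0$, the canonical strict morphism $\frak G[Z]\rTo \frak G$, both of whose components are the second projection, is a hypercover.

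First I would apply this twice: with $Z=P$ and $\s=\vp$ it gives a hypercover $\frak G[P]\rTo \frak G$, and with $Z=Q$ and $\s=\psi$ it gives a hypercover $\frak G[Q]\rTo \frak G$. These assemble into the zig-zag
\[
\frak G[P]\rTo \frak G\lTo \frak G[Q],
\]
which, by Definition~\ref{df:Morita_crossed} (take $n=3$, with $\tilde{\frak G}_1=\frak G[P]$, $\tilde{\frak G}_2=\frak G$, $\tilde{\frak G}_3=\frak G[Q]$, and $\chi_1,\chi_2$ the two hypercovers just produced), is exactly a Morita equivalence of the crossed modules $\frak G[P]$ and $\frak G[Q]$.

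It then remains only to invoke Theorem~\ref{thm:Morita_xext}: since $\frak G[P]$ and $\frak G[Q]$ are Morita equivalent, they admit a crossed extension, i.e. $\frak G[P]\xext \frak G[Q]$. If one prefers to exhibit the crossed extension concretely rather than abstractly, one can unwind the proof of that theorem: by Proposition~\ref{pro:hypercover_xext} the hypercover $\frak G[P]\rTo \frak G$ yields a crossed extension $\cal M_1$ of $\frak G[P]$ and $\frak G$ and the hypercover $\frak G[Q]\rTo \frak G$ yields a crossed extension $\cal M_2$ of $\frak G$ and $\frak G[Q]$, and then the diamond product $\cal M_1\Dprod_{\frak G}\cal M_2$ of Definition and Proposition~\ref{pro:diamond_product} is a crossed extension of $\frak G[P]$ and $\frak G[Q]$.

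There is essentially no hard step here: the only thing used beyond the formalism already in place is that surjectivity of $\vp$ and $\psi$ is precisely what makes the pullback morphisms hypercovers, and that is the content of the cited example. The main point to be careful about is purely bookkeeping — matching up the directions of the two hypercovers with the indexing convention in Definition~\ref{df:Morita_crossed}.
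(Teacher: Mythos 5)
Your proposal is correct and follows essentially the same route as the paper: both produce the two pullback hypercovers $\frak G[P]\rTo \frak G\lTo \frak G[Q]$ (the paper cites Proposition~\ref{pro:strict_pullback} for the morphisms, you cite the example recording that surjectivity makes them hypercovers) and then conclude by Theorem~\ref{thm:Morita_xext}. Your optional unwinding via Proposition~\ref{pro:hypercover_xext} and the diamond product is just the content of that theorem's proof, so nothing new is needed.
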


\begin{proof}
By Proposition~\ref{pro:strict_pullback}, we have two strict morphisms of crossed modules $$\xymatrix{\frak G[P] \ar[r] & \frak G &\frak G[Q] \ar[l]}$$ which actually are hypercover. We then conclude by applying Theorem~\ref{thm:Morita_xext}.
\end{proof}



We end this section by the following observation.

\begin{cor}\label{cor:crossing_pullback}
If $\frak G_1\underset{\cal M}{\xext} \frak G_2$ and $\vp\colon Z\rTo \cal M^0$ is a continuous map, then $\frak G_1\underset{\cal M[Z]}{\xext} \frak G_2$.
\end{cor}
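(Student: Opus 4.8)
The statement to prove is Corollary~\ref{cor:crossing_pullback}: if $\frak G_1\underset{\cal M}{\xext}\frak G_2$ is a crossed extension and $\vp\colon Z\rTo \cal M^0$ is continuous, then the pullback groupoid $\cal M[Z]$ carries the structure of a crossed extension of $\frak G_1$ and $\frak G_2$. The approach is to transport the entire crossing data along the map $\vp$ by pulling everything back over $Z$, and then verify that the defining axioms \ref{CR1}--\ref{CR4} together with \ref{CR3_prime} are preserved.

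First I would set up the relevant composite maps. The crossing $\cal M$ comes with $X_1\stackrel{\tau}{\lTo}\cal M^0\stackrel{\s}{\rTo}X_2$, so composing with $\vp$ gives $X_1\stackrel{\tau\circ\vp}{\lTo}Z\stackrel{\s\circ\vp}{\rTo}X_2$, which will be the structural maps for the new crossing. The bundles of groups $\cal H_i[\cal M^0]$ and the pullback groupoids $\cal G_i[\cal M^0]$ can each themselves be pulled back over $Z$ via $\vp$, and because pullback of groupoids along a composite is (canonically isomorphic to) the iterated pullback, one identifies $\cal H_i[\cal M^0][Z]\cong \cal H_i[(\tau\text{ or }\s)\circ\vp]=\cal H_i[Z]$ and similarly $\cal G_i[\cal M^0][Z]\cong \cal G_i[Z]$ as groupoids over $Z$ (here $\cal H_i[Z]$ and $\cal G_i[Z]$ mean the pullbacks of $\frak G_i$'s data along $\tau\circ\vp$ resp.\ $\s\circ\vp$). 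Then the four strict morphisms $\a_1,\a_2,\b_1,\b_2$ pull back to strict morphisms $\vp^*\a_1\colon \cal H_1[Z]\rTo \cal M[Z]$, $\vp^*\a_2\colon \cal M[Z]\rTo\cal G_1[Z]$, and likewise $\vp^*\b_1,\vp^*\b_2$, fitting into the analogue of diagram~\eqref{eq4:crossing} over $Z$; commutativity of that diagram is immediate from commutativity over $\cal M^0$ since pullback is functorial.

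Next I would check the axioms one by one, which is where most of the (routine) work lies. Axiom \ref{CR1} is automatic: all of $\vp^*\a_i,\vp^*\b_i$ restrict to the identity on the unit space $Z$ because the originals do on $\cal M^0$. For \ref{CR2}, being a complex of groupoids ($\a_2\circ\a_1$ and $\b_2\circ\b_1$ factor through the units) is a pointwise identity on arrows that is preserved verbatim under pullback. The only real content is \ref{CR3}, \ref{CR3_prime}, and \ref{CR4}. For \ref{CR3} and \ref{CR3_prime} I need that the pullback along $\vp$ of an extension of topological groupoids over $\cal M^0$ is again an extension over $Z$; this is a standard fact (pullback preserves the defining exactness: $\vp^*\a_1$ remains a closed embedding onto the kernel bundle and $\vp^*\b_2$ remains an open surjection with the universal property), and one can simply cite that pullback groupoids preserve extensions, or verify it directly by writing out the fibered products. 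Axiom \ref{CR4}, the equivariance equations~\eqref{eq2:crossing}--\eqref{eq3:crossing}, holds fiberwise over $\cal M^0$ and the pulled-back action $\vp^*c$ is defined fiberwise by the same formula, so these equations transfer immediately to arrows of $\cal M[Z]$.

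The main obstacle — though a mild one — is the careful bookkeeping of the nested pullbacks: one must make sure that $\cal H_i[\cal M^0][Z]$ really is naturally identified with $\cal H_i[Z]$ (pullback along $\tau\circ\vp$), so that the pulled-back diagram genuinely has the shape required by Definition~\ref{def:crossing}, rather than some larger auxiliary groupoid. This is exactly the kind of identification already used implicitly in Proposition~\ref{pro2:crossing} and in Corollary~\ref{cor:xext_pullback}, so it is safe to record it as "$\vp^*$ commutes with the pullback constructions and with forming semidirect products" and move on. Once that identification is in place, the verification that $\cal M[Z]$ is a crossed extension is entirely formal, and the corollary follows.
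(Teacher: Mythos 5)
Your proposal is correct and follows essentially the same route as the paper: pull the structural maps back to $Z$ via $\tau\circ\vp$ and $\s\circ\vp$, identify the iterated pullbacks $\cal H_i[\cal M^0][Z]\cong\cal H_i[Z]$ and $\cal G_i[\cal M^0][Z]\cong\cal G_i[Z]$, and check that the pulled-back morphisms $\vp^*\a_i,\vp^*\b_i$ satisfy \ref{CR1}--\ref{CR4} and \ref{CR3_prime}. The paper simply records the explicit formulas for $\vp^*\a_i,\vp^*\b_i$ and the resulting commutative diagram over $Z$ (after first invoking Corollary~\ref{cor:xext_pullback} for the abstract existence statement), which is exactly the bookkeeping you describe.
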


\begin{proof}
Suppose the crossed extension $\cal M$ is given through the maps $\xymatrix{\cal G_1^0 & \cal M^0 \ar[l]_{\tau} \ar[r]^{\s} & \cal G_2^0}$. 
Notice first that Corollary~\ref{cor:xext_pullback} guarantees that $\frak G_1[Z] \xext \frak G_1$ and $\frak G_2 \xext \frak G_2[Z]$, considering the maps $\tau\circ \vp\colon Z\rTo \cal G_1^0$ and $\s\circ \vp\colon Z\rTo \cal G_2^0$. Therefore, $\frak G_1[Z]\xext \frak G_2[Z]$. So, the main point here is that precisely $\cal M[Z]$ is a crossed extension of $\frak G_1[Z]$ and $\frak G_2[Z]$ over $Z$, which indeed follows from the commutative diagrams 
\[
\xymatrix{
\cal H_1[Z] \ar[rr]^{\vp^*\tau^*\p_1} \ar[rd]_{\vp^*\a_1} && \cal G_1[Z] \\
& \cal M[Z] \ar[ru]_{\vp^*\a_2}  \ar[rd]^{\vp^*\b_2} & \\
\cal H_2[Z] \ar[ru]^{\vp^*\b_1} \ar[rr]^{\vp^*\s^*\p_2} && \cal G_2[Z]
}
\]
where 
\begin{eqnarray*}
\vp^*\a_1\colon \cal H_1[Z] \ni (z,h_1)\mto (z,\a_1(\vp(z),h_1),z)\in \cal M[Z]\ , \\
\vp^*\a_2\colon \cal M[Z]\ni (z_1,m,z_2)\mto (z_1,pr_2(\a_2(m)),z_2)\in \cal G_1[Z]\ ,\\
\vp^*\b_1\colon \cal H_2[Z] \ni (z,h_2)\mto (z, \b_1(\vp(z),h_2),z)\in \cal M[Z], \ {\rm and\ } \\
\vp^*\b_2\colon \cal M[Z]\ni (z_1,m,z_2)\mto (z_1,pr_2(\b_2(m)),z_2)\in \cal G_2[Z]\ .
\end{eqnarray*}
\end{proof}



\subsection{Diamond product vs semidirect product}

Recall that if a crossed extension $\cal M$ of the crossed modules $\frak G_1$ and $\frak G_2$ is given by $(\cal M,\a_1,\a_2,\b_1,\b_2)$, we denote by $\bar{\cal M}$ the crossed extension $\frak G_2$ and $\frak G_1$ given by the the quintuple $(\cal M, \b_1,\b_2,\a_1,\a_2)$. We thus obtain the crossed extensions $\frak G_1\underset{\cal M\Dprod_{\frak G_2}\bar{\cal M}}{\xext}\frak G_1$ and $\frak G_2\underset{\bar{\cal M}\Dprod_{\frak G_1}\cal M}{\xext}\frak G_2$.

\begin{thm}\label{thm:M-M_bar}
Let $\frak G_1\underset{\cal M}{\xext}\frak G_2$ be a crossed extension. Then there is a Morita equivalence of topological groupoids 
\[
\cal M\Dprod_{\frak G_2}\bar{\cal M} \sim \bar{\cal M}\Dprod_{\frak G_1}\cal M
\]
\end{thm}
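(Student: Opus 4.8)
The goal is to show that the two diamond-product groupoids $\cal M\Dprod_{\frak G_2}\bar{\cal M}$ and $\bar{\cal M}\Dprod_{\frak G_1}\cal M$ are Morita equivalent. The natural strategy is to realise both of them as pullbacks of a common crossed module, or more directly to exhibit an explicit Morita equivalence bimodule between the two. Since both groupoids sit over spaces of the form (roughly) $\cal M^0\times_{\cal G_2^0}\cal M^0$ and $\cal M^0\times_{\cal G_1^0}\cal M^0$ respectively, I would first reduce, via the pullback techniques already established (Corollary~\ref{cor:crossing_pullback} and Corollary~\ref{cor:xext_pullback}), to the case where all unit spaces coincide with a single space $X$ and $\tau=\s=\Id_X$; this is the standing simplification used throughout \S\ref{sec:cross}, and it is harmless because Morita equivalence of topological groupoids is insensitive to such pullbacks along surjections. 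In that case $\cal M\Dprod_{\frak G_2}\bar{\cal M}$ has underlying set of classes $[m,m']$ with $m,m'\in\cal M$, $\b_2(m)=\b_2(m')$, modulo $(\b_1(h)m,\b_1(h)m')\sim(m,m')$ for $h\in\cal H_2$; symmetrically $\bar{\cal M}\Dprod_{\frak G_1}\cal M$ consists of classes $[n,n']$ with $\a_2(n)=\a_2(n')$ modulo $(\a_1(h)n,\a_1(h)n')\sim(n,n')$ for $h\in\cal H_1$.

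The cleanest route is to observe that both diamond products are, up to canonical isomorphism, the ``gauge groupoid'' (linking / Morita) picture attached to $\cal M$ itself viewed through its two extension structures. Concretely, by Corollary~\ref{cor:Morita_semidirect} a crossed extension gives $\cal H_1\rtimes\cal G_1\sim\cal H_2\rtimes\cal G_2$, and one expects $\cal M\Dprod_{\frak G_2}\bar{\cal M}$ to be Morita equivalent to $\cal H_2\rtimes\cal G_2$ (it is, in effect, the ``$\bar{\cal M}$-then-$\cal M$ round trip'' for the right-hand crossed module) while $\bar{\cal M}\Dprod_{\frak G_1}\cal M$ is Morita equivalent to $\cal H_1\rtimes\cal G_1$. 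Thus the strategy is: (1) prove a lemma that for a crossed extension $\frak G_1\underset{\cal M}{\xext}\frak G_2$ the diamond $\cal M\Dprod_{\frak G_2}\bar{\cal M}$ is Morita equivalent to the vertical semidirect product $\cal H_2\rtimes\cal G_2$ — this follows by running the decomposition argument of Theorem~\ref{thm:crossing_decomposition} / Proposition~\ref{pro:hypercover_xext} on the self-crossing and noting the resulting hypercover, together with Remark~\ref{rmk1:hypercover}; (2) apply the same lemma with the roles of $\frak G_1$ and $\frak G_2$ (and of $\cal M$, $\bar{\cal M}$) swapped to get $\bar{\cal M}\Dprod_{\frak G_1}\cal M\sim\cal H_1\rtimes\cal G_1$; (3) invoke Corollary~\ref{cor:Morita_semidirect} to conclude $\cal H_1\rtimes\cal G_1\sim\cal H_2\rtimes\cal G_2$; (4) chain the three Morita equivalences. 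Alternatively, and perhaps more transparently, one can build a single explicit $\bigl(\cal M\Dprod_{\frak G_2}\bar{\cal M}\bigr)$--$\bigl(\bar{\cal M}\Dprod_{\frak G_1}\cal M\bigr)$ principal bibundle whose total space is a quotient of $\cal M\times_X\cal M$ by the commuting actions of $\a_1(\cal H_1)$ and $\b_1(\cal H_2)$ (using Proposition~\ref{pro:images_crossing} that these images commute), with moment maps $[m,m']\mapsto$ (class of $(m,m')$) on the left and on the right; one then checks principality using axioms~\ref{CR3} and~\ref{CR3_prime} (exactness of the two diagonal sequences).

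I would carry the steps out in that order. The bookkeeping of unit spaces in step (1)–(2) is routine once the $\tau=\s=\Id$ reduction is in place. The genuinely delicate point — the main obstacle — is verifying that the candidate bibundle (or equivalently the chain of hypercovers extracted from the self-crossings $\cal M\Dprod_{\frak G_2}\bar{\cal M}$ and $\bar{\cal M}\Dprod_{\frak G_1}\cal M$) is \emph{locally trivial} / that the relevant orbit maps are open, i.e. that we genuinely get a Morita equivalence and not merely a weak equivalence of topological groupoids; this is exactly where one must use that the extensions $\cal H_1\epi\cal M\epi\cal G_2$ (diagonal \textbf{TL-BR}) and $\cal H_2\epi\cal M\epi\cal G_1$ (diagonal \textbf{BL-TR}) are honest topological-groupoid extensions, so that $\b_1$ and $\a_1$ are embeddings with ``nice'' images and the quotients by $\a_1(\cal H_1)$, $\b_1(\cal H_2)$ behave well. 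Everything else — the groupoid structure on the bibundle total space, compatibility of the two moment maps, associativity of the actions — is a direct computation of the same flavour as Lemma~\ref{lem:diamond_crossing}, and I would relegate it to a phrase such as ``the remaining verifications are straightforward and left to the reader.''
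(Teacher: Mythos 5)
Your overall skeleton coincides with the paper's: identify each of $\cal M\Dprod_{\frak G_2}\bar{\cal M}$ and $\bar{\cal M}\Dprod_{\frak G_1}\cal M$ with a vertical semidirect product, then chain through Corollary~\ref{cor:Morita_semidirect}. The problem is that your step (1) -- the identification itself -- is precisely the substantive content of the theorem, and the justification you offer for it does not deliver it. Running Theorem~\ref{thm:crossing_decomposition} / Remark~\ref{rmk1:hypercover} on the endocrossing $\cal M\Dprod_{\frak G_2}\bar{\cal M}$ of $\frak G_1$ produces a Morita equivalence between $\cal H_1\rtimes\cal G_1$ and $(\cal H_1\times_X\cal H_1)\rtimes(\cal M\Dprod_{\frak G_2}\bar{\cal M})$, i.e.\ a semidirect product \emph{built over} the diamond groupoid, not the diamond groupoid itself; nothing in those results identifies $\cal M\Dprod_{\frak G_2}\bar{\cal M}$ with $\cal H_1\rtimes\cal G_1$ or $\cal H_2\rtimes\cal G_2$. (Incidentally, your identification is also the wrong way around: the natural isomorphism sends $\cal M\Dprod_{\frak G_2}\bar{\cal M}$ to $\cal H_1\rtimes\cal G_1$, the trivial endocrossing of $\frak G_1$, not to $\cal H_2\rtimes\cal G_2$; this does not affect the final conclusion because of Corollary~\ref{cor:Morita_semidirect}, but it is a sign the identification was not actually computed.)

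What the paper does to fill exactly this gap is to introduce the \emph{crossed semidirect products} $\cal H_1\rtimes_{\cal H_2}\cal M$ and $\cal H_2\rtimes_{\cal H_1}\cal M$ (quotients of $\cal H_1\rtimes\cal M$, resp.\ $\cal H_2\rtimes\cal M$, by the translation actions of $\b_1(\cal H_2)$, resp.\ $\a_1(\cal H_1)$), prove in Lemma~\ref{lem:crossed_semidirect} that exactness of the diagonals gives isomorphisms $\cal H_1\rtimes_{\cal H_2}\cal M\cong\cal H_1\rtimes\cal G_1$ and $\cal H_2\rtimes_{\cal H_1}\cal M\cong\cal H_2\rtimes\cal G_2$, and then construct explicit groupoid isomorphisms $[m_1,m_2]\mapsto[\tilde{\a}_1(m_1,m_2),m_1]$ and $[m_1,m_2]\mapsto[\tilde{\b}_1(m_1,m_2),m_1]$, where $\tilde{\a}_1(m_1,m_2)\in\cal H_1$ is the unique element with $m_2=\a_1(\tilde{\a}_1(m_1,m_2))m_1$ (this uses~\ref{CR3_prime}) and $\tilde{\b}_1$ is defined analogously using~\ref{CR3}. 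Because these are honest isomorphisms rather than bibundles, the local-triviality and openness worries you flag for your alternative bibundle route never arise: the only genuine Morita equivalence in the chain is the one already secured by Corollary~\ref{cor:Morita_semidirect}. Your proposal correctly senses that exactness of the two diagonals is the engine, but as written it neither constructs the maps $\tilde{\a}_1,\tilde{\b}_1$ nor verifies that they descend to the quotients defining the diamond products, so the central step remains unproved.
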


\begin{cor}\label{cor:extension_diamond}
Let $\cal A\overset{\iota}{\mono} \tilde{\cal G} \overset{\pi}{\epi} \cal G$ be a groupoid extension. Then $\cal A\rtimes \cal G\cong \tilde{\cal G}\ou{\Dprod}{\cal A}{\cal G}\tilde{\cal G}$.
\end{cor}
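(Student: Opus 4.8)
The plan is to unravel both sides into explicit topological groupoids and exhibit a direct comparison map. First I would record that $\frak G\colonequals\cal A\xTo\cal G$ is the crossed module attached to the extension in Example~\ref{ex:extension_crossed}, and that the crossed extension of $\frak G$ and $\frak G$ produced there has underlying groupoid $\tilde{\cal G}$ with structure maps $\a_1=\b_1=\iota$ and $\a_2=\b_2=\pi$. In particular $\bar{\cal M}=\cal M$, all the groupoids involved have unit space $\cal G^0$, the pullbacks in Definition and Proposition~\ref{pro:diamond_product} are superfluous, and $\cal M\Dprod_{\frak G}\bar{\cal M}$ is literally $\tilde{\cal G}\ou{\Dprod}{\cal A}{\cal G}\tilde{\cal G}$: the quotient of $\{(\tilde g,\tilde h)\in\tilde{\cal G}\times\tilde{\cal G}\mid \pi(\tilde g)=\pi(\tilde h)\}$ by the relation $(\iota(a)\tilde g,\iota(a)\tilde h)\sim(\tilde g,\tilde h)$, with componentwise product. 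Since $\cal A\rtimes\cal G\rrTo\cal G^0$ is the semidirect product groupoid (equivalently, the underlying groupoid of the trivial crossed extension $\cal O_{\frak G}$ of Definition~\ref{def:trivial_crossing}), what has to be proved is $\cal A\rtimes\cal G\cong\tilde{\cal G}\ou{\Dprod}{\cal A}{\cal G}\tilde{\cal G}$.

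I would then introduce
\[
\Psi\colon\tilde{\cal G}\ou{\Dprod}{\cal A}{\cal G}\tilde{\cal G}\rTo\cal A\rtimes\cal G,\qquad [\tilde g,\tilde h]\mto\bigl(\iota^{-1}(\tilde h\tilde g^{-1}),\pi(\tilde g)\bigr),
\]
which makes sense because $\pi(\tilde g)=\pi(\tilde h)$ forces $\tilde h\tilde g^{-1}\in\iota(\cal A)$ and $\iota$ is an embedding. Its well-definedness on the quotient is the one point where abelianness of the fibres of $\cal A$ is used: passing from $(\tilde g,\tilde h)$ to $(\iota(a)\tilde g,\iota(a)\tilde h)$ conjugates $\tilde h\tilde g^{-1}$ by $\iota(a)$, and $a$ lies in the same (abelian) fibre of $\cal A$ as $\iota^{-1}(\tilde h\tilde g^{-1})$. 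Bijectivity is then a short check: $(b,g)=\Psi[\tilde g,\iota(b)\tilde g]$ for any lift $\tilde g$ of $g$ gives surjectivity, and if $\pi(\tilde g)=\pi(\tilde g')$ and $\tilde h\tilde g^{-1}=\tilde h'\tilde g'^{-1}$ then, setting $a\colonequals\iota^{-1}(\tilde g'\tilde g^{-1})$, one verifies $(\iota(a)\tilde g,\iota(a)\tilde h)=(\tilde g',\tilde h')$, which gives injectivity and simultaneously shows $\Psi^{-1}(b,g)=[\tilde g,\iota(b)\tilde g]$ is independent of the lift.

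It remains to check that $\Psi$ is a homomorphism of topological groupoids and a homeomorphism. Compatibility with sources, targets, units and inverses is routine bookkeeping with the structure maps of $\cal A\rtimes\cal G$; the computation worth spelling out is multiplicativity, where in the product $\iota(b_1)\tilde g_1\cdot\iota(b_2)\tilde g_2$ one pushes $\tilde g_1$ past $\iota(b_2)$ via $\tilde g_1\iota(b_2)\tilde g_1^{-1}=\iota(b_2^{g_1^{-1}})$ --- i.e. by the very definition of the $\cal G$-action on $\cal A$ as conjugation by lifts in Example~\ref{ex:extension_crossed} --- thereby matching the twisted product $(b_1,g_1)(b_2,g_2)=(b_1b_2^{g_1^{-1}},g_1g_2)$ with componentwise multiplication of the representatives $\iota(b_i)\tilde g_i$. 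Continuity of $\Psi$ is immediate, since it is induced through the diamond quotient from the continuous map $p\colon(\tilde g,\tilde h)\mto(\iota^{-1}(\tilde h\tilde g^{-1}),\pi(\tilde g))$ on $\tilde{\cal G}\times_\pi\tilde{\cal G}$; and $\Psi$ is open because $p$ is, a continuous local section of $p$ near a given point being provided by $(b,g)\mto(\sigma(g),\iota(b)\sigma(g))$ for a local continuous section $\sigma$ of $\pi$. The only ingredient beyond bookkeeping is this last point --- the existence of local sections of $\pi$, which is part of what it means to be a groupoid extension --- together with the care needed to keep the left/right composition and action conventions consistent throughout the multiplicativity computation.
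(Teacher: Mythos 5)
Your proof is correct and is essentially the paper's argument made explicit: the paper obtains this corollary by specializing the isomorphism $\Phi_1\colon \cal M\Dprod_{\frak G_2}\bar{\cal M}\rTo \cal H_1[\cal M^0]\rtimes_{\cal H_2[\cal M^0]}\cal M$, $[m_1,m_2]\mto[\tilde{\a}_1(m_1,m_2),m_1]$, from the proof of Theorem~\ref{thm:M-M_bar} together with the identification $\cal A\rtimes_{\cal A}\tilde{\cal G}\cong\cal A\rtimes\cal G$ of Lemma~\ref{lem:crossed_semidirect}, and the composite of those two maps is exactly your $\Psi\colon[\tilde g,\tilde h]\mto\bigl(\iota^{-1}(\tilde h\tilde g^{-1}),\pi(\tilde g)\bigr)$. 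Your direct verification (including the use of abelianness of the fibres of $\cal A$ for well-definedness and the conjugation identity for multiplicativity) matches the computations in that proof, and is if anything more careful than the paper on the topological points.
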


Before we prove the theorem, let us first assume the two crossed modules $\frak G_1$ and $\frak G_2$ have the same unit space $X$ and $\cal M\rrTo X$ is crossing with respect to the morphisms $\a_i,\b_i, i=1,2$. Then, the morphism $\b_1\colon \cal H_2\rTo \cal M$ induces a groupoid left action of $\cal H_2$ on the topological space $\cal H_1\rtimes \cal M$ as follows: the momentum map is the target map $t\colon \cal H_1\rtimes \cal M\rTo X$, and the action is given by $$h_2\cdot (h_1,m)\colonequals (h_1, \b_1(h_2)m)$$ for $h_2\in \cal H_2, (h_1,m)\in \cal H_1\rtimes \cal M$ with $s(h_2)=t(m)$. Similarly, the morphism $\a_1\colon \cal H_1\rTo \cal M$ define a groupoid left action of $\cal H_1$ on the topological space $\cal H_2\rtimes \cal M$ with respect to the target map of $\cal H_2\rtimes \cal M$ by $h_1\cdot (h_2,m)\colonequals (h_2,\a_1(h_1)m)$ when the product is defined.

\begin{df}
Let $\frak G_1, \frak G_2$, and $\cal M$ be as above. We define the {\em crossed semidirect product $\cal M$ over} $\frak G_2$ (resp. over $\frak G_1$) to be the quotient spaces $\frac{\cal H_1\rtimes\cal M}{\cal H_2}$ (resp. $\frac{\cal H_2\rtimes\cal M}{\cal H_1}$). We denote these spaces by $\cal H_1\rtimes_{\cal H_2}\cal M$ and $\cal H_2\rtimes_{\cal H_1}\cal M$, respectively.
\end{df}

\begin{lem}\label{lem:crossed_semidirect}
Let $\frak G_1$ and $\frak G_2$ be groupoid crossed modules with same unit space $X$ and let $\cal M\rrTo X$ be a crossing from $\frak G_1$ to $\frak G_2$. Then the groupoid structures of the semidirect products $\cal H_1\rtimes \cal M$ and $\cal H_2\rtimes \cal M$ descend onto the crossed semidirect products $\cal H_1\rtimes_{\cal H_2} \cal M$ and $\cal H_2\rtimes_{\cal H_1}\cal M$, respectively, and both with unit space $X$. Moreover, the groupoids $\cal H_1\rtimes \cal G_1$ and $\cal H_1\rtimes_{\cal H_2}\cal M$ are isomorphic. Similarly, if $\cal M$ is a crossed extension, then $\cal H_2\rtimes\cal G_2\cong \cal H_2\rtimes_{\cal H_1}\cal M$.
\end{lem}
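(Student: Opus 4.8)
Following the reductions made just before the statement, the plan is to work in the situation where $\frak G_1,\frak G_2$ and $\cal M$ all have unit space $X$ and $\tau=\s=\Id_X$, the crossing being carried by the morphisms $\a_1,\a_2,\b_1,\b_2$ as in~\eqref{eq1:crossing}; the general case then follows by running the same constructions after pulling $\frak G_1$ and $\frak G_2$ back along $\tau$ and $\s$, exactly as in the proof of Theorem~\ref{thm:crossing_decomposition}. Throughout one keeps in mind that $\cal M$ acts on $\cal H_1$ \emph{through} $\a_2$ (and on $\cal H_2$ through $\b_2$), so that $\cal H_1\rtimes\cal M$ and $\cal H_2\rtimes\cal M$ are the semidirect-product groupoids over $X$ attached to these actions.

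\emph{The $\cal H_1$--side.} The key device is the map $\Phi\colon\cal H_1\rtimes\cal M\rTo\cal H_1\rtimes\cal G_1$, $(h_1,m)\mapsto(h_1,\a_2(m))$. First I would check that $\Phi$ is a morphism of topological groupoids over $\Id_X$: compatibility with source, target, product and inverse is witnessed by the same computations that make $\cal H_1\rtimes\cal M$ a groupoid, because the $\cal M$--action on $\cal H_1$ factors through $\a_2$. It is surjective since $\a_2$ is (by~\ref{CR3}), and its fibres are precisely the orbits of the left $\cal H_2$--action $h_2\cdot(h_1,m)=(h_1,\b_1(h_2)m)$: the inclusion ``orbit $\subseteq$ fibre'' is the triviality of $\a_2\circ\b_1$ (\ref{CR2}), and conversely if $\a_2(m)=\a_2(m')$ then $m,m'$ have the same source and target, so $m'm^{-1}$ lies both in the isotropy group of $\cal M$ at $t(m)$ and in $\ker\a_2=\b_1(\cal H_2)$ (again~\ref{CR3}), whence $m'=\b_1(h_2)m$ with $h_2\in\cal H_2^{t(m)}$. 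Consequently the orbit equivalence coincides with ``having the same image under $\Phi$'', which is manifestly a groupoid congruence; this already shows that the structure maps, product and inverse of $\cal H_1\rtimes\cal M$ descend to $\cal H_1\rtimes_{\cal H_2}\cal M$, making it a groupoid with unit space $X$, and that the induced bijection $\cal H_1\rtimes_{\cal H_2}\cal M\rTo\cal H_1\rtimes\cal G_1$ is a groupoid isomorphism. Since $\a_2$ is an open surjection --- being the quotient map of a groupoid extension --- $\Phi$ is open, so this bijection is a homeomorphism, giving the asserted isomorphism of topological groupoids.

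\emph{The $\cal H_2$--side.} For a general crossing I would verify directly that the left $\cal H_1$--action $h_1\cdot(h_2,m)=(h_2,\a_1(h_1)m)$ on $\cal H_2\rtimes\cal M$ has orbit equivalence a groupoid congruence: the source and the first coordinate are orbit--invariant because $\b_2\circ\a_1$ is trivial (\ref{CR2}), and for the product the second coordinate transforms, using~\eqref{eq2:crossing}, as
\[
\a_1(h_1)\,m\,\a_1(h_1')\,m'=\a_1\!\big(h_1\cdot(h_1')^{\a_2(m^{-1})}\big)\,mm',
\]
which lies in the $\cal H_1$--orbit of $mm'$ (the case of inverses is analogous). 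This produces the groupoid $\cal H_2\rtimes_{\cal H_1}\cal M$ with unit space $X$. If, moreover, $\cal M$ is a crossed extension, then~\ref{CR3_prime} says $\b_2$ is an open surjection with kernel $\a_1(\cal H_1)$, so the map $\Psi\colon\cal H_2\rtimes\cal M\rTo\cal H_2\rtimes\cal G_2$, $(h_2,m)\mapsto(h_2,\b_2(m))$, is a morphism of topological groupoids whose fibres are exactly the $\cal H_1$--orbits --- by the argument of the previous paragraph with $(\b_2,\a_1,\ref{CR3_prime})$ in place of $(\a_2,\b_1,\ref{CR3})$ --- and hence descends to the isomorphism $\cal H_2\rtimes_{\cal H_1}\cal M\cong\cal H_2\rtimes\cal G_2$.

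The main thing to be careful about will be the bookkeeping: checking that the fibering conditions under which the conjugation identities~\eqref{eq2:crossing}--\eqref{eq3:crossing} and the exactness statements in~\ref{CR3} and~\ref{CR3_prime} apply are met at every step, together with the (standard) topological point that the quotient maps of the groupoid extensions in~\ref{CR3} and~\ref{CR3_prime} are open, which is precisely what upgrades the continuous bijections induced by $\Phi$ and $\Psi$ to homeomorphisms; everything else is a routine verification.
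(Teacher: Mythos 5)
Your proof is correct, and it reorganizes the argument in a way that differs mildly but genuinely from the paper's. The paper establishes descent by a direct computation on translated representatives --- expanding $(h_1,\b_1(h_2)m)(k_1,\b_1(k_2)n)$ and using $\a_2\circ\b_1$ trivial together with~\eqref{eq3:crossing} to see the product lands in the same orbit --- and only at the end invokes exactness of $\cal H_1\stackrel{\a_1}{\rTo}\cal M\stackrel{\b_2}{\rTo}\cal G_2$ and $\cal H_2\stackrel{\b_1}{\rTo}\cal M\stackrel{\a_2}{\rTo}\cal G_1$ to get the two isomorphisms. You instead, on the $\cal H_1$--side, package descent and the isomorphism into a single step by exhibiting the quotient as the coimage of the groupoid morphism $\Phi(h_1,m)=(h_1,\a_2(m))$ and checking that its fibres are exactly the $\cal H_2$--orbits; the same axioms (\ref{CR2}, \ref{CR3}, the conjugation identities) are used, just in a different order. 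Your route buys a cleaner handle on the topology --- the openness of $\a_2$ (resp.\ $\b_2$) is exactly what makes the induced bijection a homeomorphism, a point the paper's proof leaves implicit --- at the cost of having to treat the $\cal H_2$--side of a general crossing separately by the direct congruence check (which is essentially the paper's computation with the roles of $\a$ and $\b$ exchanged), since there $\b_2$ need not be surjective and no analogue of $\Phi$ onto $\cal H_2\rtimes\cal G_2$ is available. Both arguments are complete modulo the same routine verifications.
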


\begin{proof}
We show the result only for $\cal H_1\rtimes_{\cal H_2} \cal M$ as analogous arguments apply to the other quotient. Let $h_2,k_2\in \cal H_2$, and $(h_1,m), (k_1,n)\in \cal H_1\rtimes \cal M$ such that the product $(h_1,\b_1(h_2)m)(k_1,\b_1(k_2)n)$ makes sense. Then, by using the axioms of a crossing, we get 
\[
\begin{array}{lcl}
(h_1,\b_1(h_2)m)(k_1,\b_1(k_2)n) & = & (h_1k_1^{(\a_2(\b_1(h_2)m))^{-1}},\b_1(h_2)m\b_1(k_2)n) \\
&=& (h_1k_1^{\a_2(m)^{-1}},\b_1(h_2)\b_1(k_2^{\b_2(m)^{-1}})mn)\\
&=& (h_1k_1^{\a_2(m)^{-1}},\b_1(h_2k_2^{\b_2(m)^{-1}})mn)
\end{array}
\]
This implies that the product $(h_1,\b_1(h_2)m)(k_1,\b_1(k_2)n)$ is equivalent to $(h_1,m)(k_1,n)$ in the quotient space. Therefore the partial product of the semidirect product groupoid descends to a partial product on the quotient space $[h_1,m]\cdot [k_1,n]\colonequals [h_1k_1^{\a_2(m)^{-1}},mn]$, where $[h_1,m]$ is the class of $(h_1,m)$ in the quotient. As for the inverse, we have 
\[
\begin{array}{lcl}
(h_1,\b_1(h_2)m)^{-1} & = & ((h_1^{\a_2(m)})^{-1},m^{-1}\b_1(h_2)^{-1})\\
& = & ((h_1^{\a_2(m)})^{-1}, \b_1((h_2^{-1})^{\b_2(m)})m^{-1})
\end{array}
\]
Thus, the formula $[h_1,m]^{-1}\colonequals[(h_1^{\a_2(m)})^{-1},m^{-1}]$ is a well defined inversion map $$\cal H_1\rtimes_{\cal H_2} \cal M\rTo \cal H_1\rtimes_{\cal H_2} \cal M.$$ Also, it is clear that the source and target maps of the semidirect product descent to the quotient. Finally, the last two statments follow from the exactness of the complexes 
\[
\cal H_1\stackrel{\a_1}{\rTo} \cal M\stackrel{\b_2}{\rTo}\cal G_2, \ {\rm and\ } \  \cal H_2\stackrel{\b_1}{\rTo} \cal M\stackrel{\a_2}{\rTo} \cal G_1
\]
\end{proof}

\begin{proof}[Proof of Theorem~\ref{thm:M-M_bar}]
Since the sequence of topological groupoid morphisms 
\[
\cal H_1[\cal M^0]\overset{\a_1}{\rTo}\cal M\overset{\b_2}{\rTo}\cal G_2[\cal M^0] \quad {\rm and } \quad  \cal H_2[\cal M^0]\overset{\b_1}{\rTo}\cal M \overset{\a_2}{\rTo}\cal G_1[\cal M^0]
\]
are exact, there are continuous maps $\tilde{\a}_1\colon \cal M\times_{\b_2,\cal G_2[\cal M^0],\b_2}\cal M\rTo \cal H_1[\cal M^0]$ and $\tilde{\b}_1\colon \cal M\times_{\a_2,\cal G_1[\cal M^0],\a_2}\cal M\rTo \cal H_2[\cal M^0]$ defined by the property that for $(m_1,m_2)\in \cal M\times_{\b_2,\cal G_2[\cal M^0],\b_2}\cal M$, $\tilde{\a}_1(m_1,m_2)$ is the unique element in $\cal H_1[\cal M^0]$ satisfying $m_2=\a_1(\tilde{a}_1(m_1,m_2))m_1$, and analogously, for each element $(m_1,m_2)\in \cal M\times_{\a_2,\cal G_1,\a_2}\cal M$, $\tilde{\b}_1(m_1,m_2)\in\cal H_2[\cal M^0]$ is uniquely determined by $m_2=\b_1(\tilde{\b}_1(m_1,m_2))m_1$. Let 
\[
\Phi_1\colon \cal M\Dprod_{\frak G_2}\bar{\cal M}\ni [m_1,m_2]\mto [\tilde{\a}_1(m_1,m_2), m_1]\in \cal H_1[\cal M^0]\rtimes_{\cal H_2[\cal M^0]}\cal M 
\]
and 
\[
\Phi_2\colon \bar{\cal M}\Dprod_{\frak G_1}\cal M\ni [m_1,m_2]\mto [\tilde{\b}_1(m_1,m_2),m_1]\in \cal H_2[\cal M^0]\rtimes_{\cal H_1[\cal M^0]}\cal M
\] 
Let us show these are the desired isomorphisms. Here again we will only deal with the former, as similar methods apply to the latter. $\Phi_1$ is well defined, for by uniqueness, $\tilde{\a}_1$ is invariant by the $\cal H_2[\cal M^0]$--action on the fibered product $\cal M\times_{\b_2,\cal G_2,\b_2}\cal M$; indeed, we get $$\tilde{\a}_1(\b_1(h_2)m_1,\b_1(h_2)m_2) = \tilde{\a}_1(m_1,m_2)^{\a_2(\b_1(h_2^{-1}))}=\tilde{\a}_1(m_1,m_2).$$
Moreover, $\Phi_1$ is a groupoid morphism; indeed, by simple calculations and by uniqueness of the elements $\tilde{\a}_1(m_1,m_2), \tilde{\a}_1(m_1',m_2'), \tilde{\a}_1(m_1m_1',m_2m_2')$, we have
\[
\tilde{\a}_1(m_1m_1',m_2m_2')=\tilde{\a}_1(m_1,m_2)\tilde{\a}_1(m_1',m_2')^{\a_2(m_1)^{-1}}
\]
which gives $\Phi_1([m_1,m_2][m_1'm_2'])=\Phi_1([m_1,m_2])\Phi_1([m_1',m_2'])$ whenever these products make sense. Thus, since $\Phi_1$ is clearly continuous, it is a strict morphism of topological groupoids. Furthermore, it is easy to verify that the map 
\[
\Psi_1\colon \cal H_1[\cal M^0]\rtimes_{\cal H_2[\cal M^0]}\cal M\ni [h_1,m]\mto [m,\a_1(h_1)m]\in \cal M\Dprod_{\frak G_2}\bar{\cal M}
\]
is a well defined strict morphism of topological groupoids and that $\Phi_1\circ \Psi_1=\Id$ and $\Psi_1\circ \Phi_1=\Id$. This isomorphisms, combined with Corollary~\ref{cor:Morita_semidirect} and Lemma~\ref{lem:crossed_semidirect} the desired Morita equivalence of topological groupoids. 
\end{proof}


\section{$2$--groupoids of crossed extensions}\label{sec:2-cat}

We define for each pair of crossed modules of topological groupoids $\frak G_1$ and $\frak G_2$, a weak $2$--groupoid $\XExt(\frak G_1,\frak G_2)$ whose objects are crossed extensions of $\frak G_1$ and $\frak G_2$. 

Throughout, we denote by $\fXExt$ the collection of all triples of the form $(\frak G_1,\cal M,\frak G_2)$ where $\frak G_1,\frak G_2$ are crossed modules of topological groupoids and $\frak G_1\underset{\cal M}{\xext}\frak G_2$ is a crossed extension. Such triples will be represented by bold capital letters $\bf A, \bf B$, etc. Moreover, by virtue of the results and observations of the preceding sections, we will avoid complicated notations and calculations by assuming that for ${\bf A}=(\frak G_1, \cal M, \frak G_2)$, all the groupoids involved share the same unit space $X$ and that $\tau=\s=\Id_X$; by abuse of language, the triple $\bf A$ will be referred to as a crossed extension with unit space $X$.
 
\subsection{Preliminaries on generalized morphisms of topological groupoids}

We revisit generalized morphisms of topological groupoids and give a few results we are going to use later. 

Let $\cal M\rrTo \cal M^0$ and $\cal N\rrTo\cal N^0$ be topological groupoids. Associated to any generalized morphism $\xymatrix{\cal M^0& Z\ar[l]_{\vp} \ar[r]^{\psi} &\cal N^0}$, there is a continuous map $g^Z\colon Z\times_{\cal M^0}Z\rTo \cal N$ completely determined by the property that for $(z,z')\in Z\times_{\cal M^0}Z$, $g^Z(z,z')$ is the unique element in $\cal N$ such that $z'=z\cdot g^Z(z,z')$. The existence and uniqueness of $g^Z(z,z')\in \cal N$ come from the fact that the right action of $\cal N$ on $Z$ is free and principal.

\begin{lem}
Let $\cal M, \cal N$, be topological groupoids and $\xymatrix{\cal M^0& Z\ar[l]_{\vp} \ar[r]^{\psi} &\cal N^0}$ a generalized morphism. Then, the correspondence 
\[
\cal M[Z]\ni (z,m,z')\mto (z,g^Z(z,mz'),z')\in \cal N[Z]
\]
defines a strict morphism $\Phi^Z$ of topological groupoids. Furthermore, $\Phi^Z$ is an isomorphism if and only if $Z$ is a Morita equivalence.
\end{lem}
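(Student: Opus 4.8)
The plan is to unravel the definition of the map $g^Z$ and check the groupoid axioms directly, then handle the "if and only if" by relating $\Phi^Z$ to the standard characterization of Morita equivalences of topological groupoids via generalized morphisms (as recalled in \cite{Moerdijk:Orbifolds_groupoids}). First I would verify that $\Phi^Z$ is well defined: given $(z,m,z')\in\cal M[Z]$, we have $\vp(z)=t(m)$ and $s(m)=\vp(z')$, and since $\vp(mz')=t(m)=\vp(z)$ the pair $(z,mz')$ lies in $Z\times_{\cal M^0}Z$, so $g^Z(z,mz')\in\cal N$ is defined; moreover $\psi(z)=t(g^Z(z,mz'))$ and $\psi(z')=\psi(mz')=s(g^Z(z,mz'))$ because the $\cal N$--action preserves $\psi$--fibers and the defining relation $mz'=z\cdot g^Z(z,mz')$ forces the source to be $\psi$ of the target point. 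Hence $(z,g^Z(z,mz'),z')\in\cal N[Z]$. Continuity of $\Phi^Z$ follows from continuity of $g^Z$ (itself a consequence of the action being free and principal, so that $g^Z$ is continuous on $Z\times_{\cal M^0}Z$).

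Next I would check functoriality. The key computational fact is the cocycle-type identity: for $z,z',z''$ in the same $\cal M^0$--fiber of $Z$, $g^Z(z,z'')=g^Z(z,z')g^Z(z',z'')$, which is immediate from $z''=z'\cdot g^Z(z',z'')=z\cdot g^Z(z,z')g^Z(z',z'')$ together with freeness of the action; one also has $g^Z(z,z)=1_{\psi(z)}$ and $g^Z(z',z)=g^Z(z,z')^{-1}$. Given composable $(z,m,z')$ and $(z',n,z'')$ in $\cal M[Z]$, the product in $\cal M[Z]$ is $(z,mn,z'')$, and I need $g^Z(z,mnz'')=g^Z(z,mz')\,g^Z(z',nz'')$. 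Writing $w=mz'$ and $w'=nz''$, one uses $m\cdot(nz'')=m\cdot w'$; since $m$ acts and $w=mz'$ lies in the $\vp$--fiber of $\vp(z)$, and the $\cal M$-- and $\cal N$--actions on $Z$ commute (they are the two actions of a biprincipal bibundle), I get $mnz'' = z\cdot g^Z(z,w)\cdot g^Z(w, mnz'')$ — here the point $w=mz'$ and $mnz''=m w'$, and $g^Z(w,mw')=g^Z(z',nz'')$ follows from $\cal M$--equivariance of $g^Z$, namely $g^Z(\mu\cdot a,\mu\cdot b)=g^Z(a,b)$ for $\mu\in\cal M$, which holds because $\mu\cdot b=\mu\cdot a\cdot g^Z(a,b)$ and the two actions commute. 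Combining, $g^Z(z,mnz'')=g^Z(z,mz')g^Z(z',nz'')$, which is exactly $\Phi^Z$ respecting composition; compatibility with units and inverses is analogous and easier.

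For the last assertion: if $Z$ is a Morita equivalence then $\vp$ and $\psi$ are both open surjections with the respective principal-bundle properties, and the inverse of $\Phi^Z$ is constructed symmetrically from the map $h^Z\colon Z'\times_{\cal N^0}Z'\rTo\cal M$ attached to the opposite bibundle, i.e.\ $(z,n,z')\mto (z, h^Z(z,nz'),z')$ where $h^Z$ is determined by $n\cdot z' = z\cdot h^Z(z,n z')$ using left/right symmetry; one checks the two composites are the identity using the defining relations and freeness, and each builds a strict morphism by the same argument as above, so $\Phi^Z$ is a strict isomorphism. Conversely, if $\Phi^Z$ is an isomorphism of topological groupoids, then in particular $\vp$ is essentially surjective on objects in the appropriate (open, local-section) sense forced by $\cal N[Z]\cong\cal M[Z]$ having unit space $Z$ surjecting onto both $\cal M^0$ and $\cal N^0$ via the required open maps, and freeness/principality of the $\cal N$--action on $Z$ is built into the existence of $g^Z$; the remaining bundle condition for $\cal M$ acting on $Z$ follows because the isomorphism lets one transport the principal $\cal N$--structure back, so $Z$ satisfies both biprincipality conditions, i.e.\ it is a Morita equivalence. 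I expect the main obstacle to be the bookkeeping in the functoriality computation — specifically keeping straight that the $\cal M$-- and $\cal N$--actions on $Z$ commute and using this to reduce $g^Z(z,mnz'')$ to the product $g^Z(z,mz')g^Z(z',nz'')$ — rather than the topological part, which is routine once the free-and-principal hypothesis is invoked.
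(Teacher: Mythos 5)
Your proposal is correct and follows essentially the same route as the paper: well-definedness and multiplicativity both come from the uniqueness of $g^Z$ (the paper compresses your cocycle/equivariance computation into a one-line appeal to uniqueness), and the equivalence with Morita equivalence is handled the same way. The only place the paper is more concrete is the converse direction, where it applies bijectivity of $\Phi^Z$ to the arrows $(w,1_{\psi(w)},w')\in\cal N[Z]$ to produce the unique $m\in\cal M$ with $mw'=w$ --- which is exactly the principality of the $\cal M$--action over $\psi$ that you describe as ``transporting the principal structure back.''
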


\begin{proof}
For $(z,m,z')\in \cal M[Z]$, we have $\vp(mz')=t(m)=\vp(z)$, hence there exists a unique $g^Z(z,mz')\in \cal N$ with $mz'=z\cdot g^Z(z,mz')$, and the map $\Phi^Z$ is well defined and continuous. Moreover, given $(z',m',z'')\in \cal M[Z]$, then by uniqueness we get $g^Z(z,mm'z'')=g^Z(z,mz')g^Z(z',m'z'')$; therefore, $$\Phi^Z((z,m,z')(z',m',z''))=\Phi^Z(z,m,z')\Phi^Z(z',m',z''),$$
which means $\Phi^Z$ is a strict morphism. \\
Suppose $\Phi^Z$ is an isomorphism. Then for all $(w,w')\in Z\times_{\cal N^0}Z$, the triple $(w,\psi(w),w')$ is an element in $\cal N[Z]$, and there is a unique element $m\in \cal M$ such that $\psi(w)=g^Z(w,mw')$ and $w=m w'$. This implies that $\psi\colon Z\rTo \cal N^0$ is a $\cal M$--principal bundle, thus $Z$ is a Morita equivalence. Conversely, if $Z$ is a Morita equivalence, $\Phi^Z$ is obviously an isomorphism of topological groupoids.
\end{proof}

\begin{ex}
Let $f\colon \cal M\rTo \cal N$ be a strict morphism of topological groupoids and $Z_f\colonequals \cal M^0\times_{f,\cal N^0, t}\cal N$ its induced generalized morphism. Recall that the momentum maps of $Z_f$ are 
\[
\xymatrix{
\cal M^0 & Z_f \ar[l]_-{pr_1} \ar[r]^{t\circ pr_2} & \cal N^0
}
\]
 the left $\cal M$--action on $Z_f$ is given $m\cdot (x,n)\colonequals (t(m),f(m)n)$ when $s(m)=x$, and the right $\cal N$--action in $(x,n)\cdot n'\colonequals (x,nn')$ when $s(n)=t(n')$. Then the strict morphism $\Phi^{Z_f}\colon \cal M[Z_f]\rTo \cal N[Z_f]$ if given by 
 \[
 \Phi^{Z_f}((x_1,n_1),m,(x_2,n_2))=((x_1,n_1),n_1^{-1}f(m)n_2,(x_2,n_2))
 \]
In particular, we recover $f$ on elements of the form $((x_1,f(x_1)),m,(x_2,f(x_2)))$; that is, 
\[
\Phi^{Z_f}((x_1,f(x_1)),m,(x_2,f(x_2)))=((x_1,f(x_1)),f(m),(x_2,f(x_2))).
\] 
Moreover, $Z_f$ is a Morita equivalence if and only if $f$ is an isomorphism of topological groupoids.
\end{ex}

\begin{lem}
Let $\cal M\rrTo\cal M^0, \cal N\rrTo \cal N^0, \cal R\rrTo\cal R^0$ be topological groupoids and $\cal M\overset{Z_1}{\rTo}\cal N\overset{Z_2}{\rTo} \cal R$ generalized morphisms with respect to the maps 
\[
\xymatrix{
\cal M^0 &Z_1\ar[l]_{\vp_1} \ar[r]^{\psi_1} &\cal N^0 & Z_2\ar[l]_{\vp_2} \ar[r]^{\psi_2} & \cal R^0
}
\]
Consider the generalized morphism $\xymatrix{\cal M \ar[r]^-{Z_1\underset{\cal N}{\bullet}Z_2} &\cal R}$. Then, for all $[z_1,z_2], [z_1',z_2']\in Z_1\underset{\cal N}{\bullet}Z_2$ such that $\vp_1(z_1)=\vp_1(z_1')$, we have 
\[
g^{Z_1\underset{\cal N}{\bullet}Z_2}([z_1,z_2],[z_1',z_2'])=g^{Z_2}(z_2,g^{Z_1}(z_1,z_1')\cdot z_2')
\]
Therefore the morphism $\Phi^{Z_1\underset{\cal N}{\bullet}Z_2}$ is given by 
\[
\cal M[Z_1\underset{\cal N}{\bullet}Z_2]\ni ([z_1,z_2],m,[z_1',z_2'])\mto ([z_1,z_2],g^{Z_2}(z_2,g^{Z_1}(z_1,mz_1')z_2'),[z_1',z_2'])\in \cal R[Z_1\underset{\cal N}{\bullet}Z_2]
\]
\end{lem}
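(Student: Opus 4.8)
The plan is to compute both sides directly from the construction of the composite bimodule $Z_1\underset{\cal N}{\bullet}Z_2$ and from the defining property of the maps $g^{(-)}$, and then to conclude via the uniqueness built into the definition of $g$. Recall that $Z_1\underset{\cal N}{\bullet}Z_2$ is the quotient of $Z_1\times_{\psi_1,\cal N^0,\vp_2}Z_2$ by the relation $(z_1n,z_2)\sim(z_1,nz_2)$, with class $[z_1,z_2]$; its moment maps are $[z_1,z_2]\mapsto\vp_1(z_1)$ on the left and $[z_1,z_2]\mapsto\psi_2(z_2)$ on the right, the left $\cal M$--action is $m\cdot[z_1,z_2]=[mz_1,z_2]$, and the right $\cal R$--action is $[z_1,z_2]\cdot r=[z_1,z_2r]$.

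First I would record the elementary observation that, for a fixed $z_1\in Z_1$, the map $z_2\mapsto[z_1,z_2]$ is injective on the fibre $\vp_2^{-1}(\psi_1(z_1))$: if $[z_1,a]=[z_1,b]$, then there is $n\in\cal N$ with $z_1n=z_1$ and $b=na$, and freeness of the right $\cal N$--action on $Z_1$ forces $n$ to be a unit, whence $a=b$. Next, given $[z_1,z_2]$ and $[z_1',z_2']$ with $\vp_1(z_1)=\vp_1(z_1')$, I would set $n\colonequals g^{Z_1}(z_1,z_1')\in\cal N$, so that $z_1'=z_1n$; then $t(n)=\psi_1(z_1)=\vp_2(z_2)$ and $s(n)=\psi_1(z_1')=\vp_2(z_2')$, so $n\cdot z_2'$ is defined and lies in $\vp_2^{-1}(\psi_1(z_1))$, whence $r\colonequals g^{Z_2}(z_2,nz_2')\in\cal R$ is defined and satisfies $z_2r=nz_2'$. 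Consequently $[z_1,z_2]\cdot r=[z_1,z_2r]=[z_1,nz_2']=[z_1n,z_2']=[z_1',z_2']$, and uniqueness in the definition of $g^{Z_1\underset{\cal N}{\bullet}Z_2}$ yields the first displayed identity. The formula for $\Phi^{Z_1\underset{\cal N}{\bullet}Z_2}$ then follows by substituting into the description $\Phi^Z(z,m,z')=(z,g^Z(z,mz'),z')$ from the previous lemma, using $m\cdot[z_1',z_2']=[mz_1',z_2']$ and then applying the first identity with $z_1'$ replaced by $mz_1'$.

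I do not anticipate any genuine obstacle here: the argument is essentially bookkeeping, and the only points deserving care are that the various moment maps line up so that $n\cdot z_2'$ and $g^{Z_2}(z_2,nz_2')$ are legitimate expressions, together with the short freeness argument underlying the injectivity of $z_2\mapsto[z_1,z_2]$ on fibres. As a consistency check one may note that the right--hand sides are manifestly independent of the chosen representatives $(z_1,z_2)$ and $(z_1',z_2')$ — as they must be, since the left--hand sides are — but this is not needed for the proof.
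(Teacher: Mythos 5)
Your proof is correct and follows essentially the same route as the paper: both arguments exhibit $r=g^{Z_2}(z_2,g^{Z_1}(z_1,z_1')z_2')$ as an element satisfying $[z_1,z_2]\cdot r=[z_1',z_2']$ and then invoke the uniqueness built into the definition of $g^{Z_1\underset{\cal N}{\bullet}Z_2}$ (the paper phrases this as an "if and only if" characterization, you run it forwards, but the content is identical). The preliminary injectivity observation you record is not actually needed once you appeal to that uniqueness, but it does no harm.
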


\begin{proof}
By the very definition of the actions of $\cal M$ and $\cal R$ on the space $Z_1\underset{\cal N}{\bullet}Z_2$, we have $[z_1',z_2']=[z_1,z_2]\cdot r$ if and only if there is $n\in \cal N$ such that $z_1'=z_1n$ and $nz_2'=z_2r$; which, from the definition of the functions $g^{Z_1}$ and $g^{Z_2}$, necessarily means $n=g^{Z_1}(z_1,z_1')$, hence $r=g^{Z_2}(z_2,g^{Z_1}(z_1,z_1')\cdot z_2')$.
\end{proof}


\subsection{Equivalences of crosssed extensions}

\begin{df}\label{def:SCM}
Let ${\bf A}=(\frak G_1,\cal M, \frak G_2)$ and ${\bf B}=(\frak G_3,\cal N,\frak G_4)$ be objects in $\fXExt$ with unit spaces $X$ and $Y$, respectively. A {\em Homomorphism of crossed extensions} from ${\bf A}$ to ${\bf B}$ is a commutative diagram as below
\begin{eqnarray}
\xymatrix{
&& \cal H_3 \ar[rr]^-{\p_3} \ar@/^/[rdd]^-{\d_1}|!{[d];[rr]}\hole && \cal G_3 \\
\cal H_1 \ar[rr]^-{\p_1} \ar[rru]^{\chi_1} \ar@/^/[rdd]_-{\a_1} && \cal G_1 \ar[rru]^{\chi_1} &&\\
 && & \cal N \ar[ruu]_{\d_2} \ar[rdd]^{\t_2} &\\
& \cal M \ar[ruu]_{\a_2} \ar[rdd]_{\b_2} \ar[rru]^{\Phi} &&& \\
  && \cal H_4 \ar[rr]^{\p_4} \ar@/_/[ruu]^{\t_1} && \cal G_4 \\
\cal H_2 \ar[rr]^{\p_2} \ar@/_/[ruu]^{\b_1} \ar[rru]^{\chi_2} && \cal G_2 \ar[rru]^{\chi_2} & &
}
\end{eqnarray} 
where the top and bottom squares are strict morphisms of crossed modules, and $\Phi$ is a groupoid strict morphism from $\cal M\rrTo X$ to $\cal N\rrTo Y$ such that for all $x\in X$, the induced maps 
\begin{eqnarray}\label{SCM1} 
\Phi\colon \a_1(\cal H_1)^x \rTo \d_1(\cal H_3)^{\chi_1(x)}
\end{eqnarray} 
and
\begin{eqnarray}\label{SCM2} 
\Phi\colon \b_1(\cal H_2)^x \rTo \t_1(\cal H_4)^{\chi_2(x)}
\end{eqnarray}
are isomorphisms of topological groups.

We write ${\bf A}\overset{(\chi_1,\Phi, \chi_2)}{\rTo}{\bf B}$ or just ${\bf A}\overset{\Phi}{\rTo}{\bf B}$ for a  homomorphism of crossed extensions. 
\end{df}

In particular, a homomorphism of crossed extensions induces strict morphisms of groupoid extensions (\cite{Laurent-Stienon-Xu:Non-Abelian_Gerbes}) 
\[
\xymatrix{
\cal H_1 \ar[r]^{\a_1} \ar[d]_{\chi_1} & \cal M\ar[r]^{\b_2} \ar[d]_{\Phi} & \cal G_2 \ar[d]_{\chi_2} \dar[r] &X\ar[d] \\
\cal H_1' \ar[r]^{\a_1'} & \cal M' \ar[r]^{\b_2'} & \cal G_2' \dar[r] & X'
}
\]
and 

\[
\xymatrix{
\cal H_2 \ar[r]^{\b_1} \ar[d]_{\chi_2} & \cal M\ar[r]^{\a_2} \ar[d]_{\Phi} & \cal G_1 \ar[d]_{\chi_1} \dar[r] & X \ar[d]\\
\cal H_2' \ar[r]^{\b_1'} & \cal M' \ar[r]^{\a_2'} & \cal G_1' \dar[r] & X' 
}
\]

Also we deduce the following from the definition 

\begin{lem}
Given a homomorphism of crossed extensions as above, the commutative diagrams below 
\[
\xymatrix{
\cal H_1 \ar[d]_{\chi_1} \ar[r]^{\a_1} & \cal M \ar[d]^{\Phi} & & \cal H_2 \ar[d]_{\chi_2} \ar[r]^{\b_1} & \cal M \ar[d]^{\Phi} \\
\cal H_3 \ar[r]^{\d_1} & \cal N & & \cal H_4 \ar[r]^{\t_1} & \cal N
}
\]
are strict morphisms of groupoid crossed modules.
\end{lem}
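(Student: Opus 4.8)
\medskip
\noindent\textbf{Proof proposal.} The plan is to check, for each of the two squares, the two conditions defining a strict morphism of groupoid crossed modules; everything else is already contained in the data of a homomorphism ${\bf A}\rTo{\bf B}$, so the argument reduces to a short diagram chase with essentially no computation.

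First I would record why the two vertical legs of each square are themselves groupoid crossed modules. For $\cal H_1\stackrel{\a_1}{\rTo}\cal M$ one lets $\cal M$ act by automorphisms on the bundle of groups $\cal H_1$ through $\cal M\stackrel{\a_2}{\rTo}\cal G_1\rTo\Aut(\cal H_1)$, i.e.\ $h_1\mapsto h_1^{\a_2(m)}$. Then condition~\ref{item1:crossed} of Definition~\ref{def:crossed} is precisely relation~\eqref{eq2:crossing} of~\ref{CR4}, while the Peiffer identity~\ref{item2:crossed} follows by combining the triangle identity $\a_2\circ\a_1=\tau^*\p_1$ coming from the commutative diagram~\eqref{eq4:crossing} (see~\ref{CR2}), which gives $h_1^{\,\a_2(\a_1(k_1))}=h_1^{\,\p_1(k_1)}$, with the crossed-module relation $h_1^{\,\p_1(k_1)}=k_1^{-1}h_1k_1$ valid in $\frak G_1$. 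The same reasoning makes $\cal H_3\stackrel{\d_1}{\rTo}\cal N$ a crossed module with $\cal N$ acting through $\d_2$, and likewise $\cal H_2\stackrel{\b_1}{\rTo}\cal M$ (action through $\b_2$) and $\cal H_4\stackrel{\t_1}{\rTo}\cal N$ (action through $\t_2$).

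Next I would treat the pair $(\chi_1,\Phi)$. The commuting square $\d_1\circ\chi_1=\Phi\circ\a_1$ is one of the commutativities built into the defining diagram of Definition~\ref{def:SCM}; $\Phi$ is a strict morphism of topological groupoids by hypothesis; and $\chi_1\colon\cal H_1\rTo\cal H_3$ is a morphism of the underlying bundles of groups because the top square of that diagram is the strict morphism of crossed modules $\chi_1\colon\frak G_1\rTo\frak G_3$. What remains is compatibility with the actions, namely $\chi_1(h_1^{\a_2(m)})=\chi_1(h_1)^{\d_2(\Phi(m))}$ for every arrow $m$ in $\cal M$ and every $h_1$ in the relevant fibre of $\cal H_1$; I would obtain this from the chain
\[
\chi_1\big(h_1^{\a_2(m)}\big)=\chi_1(h_1)^{\chi_1(\a_2(m))}=\chi_1(h_1)^{\d_2(\Phi(m))},
\]
the first equality because $\chi_1\colon\frak G_1\rTo\frak G_3$ intertwines the $\cal G_1$-action on $\cal H_1$ with the $\cal G_3$-action on $\cal H_3$, the second because the face $\chi_1\circ\a_2=\d_2\circ\Phi$ holds in the diagram of Definition~\ref{def:SCM}. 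Running the identical argument with $(\b_1,\b_2,\t_1,\t_2)$ in place of $(\a_1,\a_2,\d_1,\d_2)$, using that $\chi_2\colon\frak G_2\rTo\frak G_4$ is a strict morphism and that $\chi_2\circ\b_2=\t_2\circ\Phi$, handles the square on the right; continuity of all four maps is automatic since they are among the continuous maps making up the homomorphism of crossed extensions.

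I do not expect a genuine obstacle here: the statement is close to a direct restatement of the axioms in Definition~\ref{def:SCM}. The one step deserving a little care is the first one --- checking the Peiffer identity~\ref{item2:crossed} for the outer columns, which passes through the factorization $\a_2\circ\a_1=\p_1$ rather than being immediate --- and, in the non-reduced situation, remembering to pass first to the relevant pullback groupoids (exactly as in Theorem~\ref{thm:crossing_decomposition}) before running the chase. I would also note in passing that the isomorphism conditions~\eqref{SCM1}--\eqref{SCM2} imposed on $\Phi$ are not used here; they become relevant only for the compositions studied in the sections that follow.
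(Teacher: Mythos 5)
Your proposal is correct and matches the paper's intent exactly: the paper states this lemma without proof as an immediate consequence of Definition~\ref{def:SCM}, and your argument is precisely the unpacking of that definition — the diagonals are crossed modules by~\ref{CR2} and~\ref{CR4}, the squares commute by the defining diagram, and action-compatibility follows from chaining the intertwining property of $\chi_1$ (resp.\ $\chi_2$) with the face $\chi_1\circ\a_2=\d_2\circ\Phi$ (resp.\ $\chi_2\circ\b_2=\t_2\circ\Phi$). Your closing remarks about passing to pullbacks in the non-reduced case and about~\eqref{SCM1}--\eqref{SCM2} being unused here are both accurate.
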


Composition of strict morphisms ${\bf A}\rTo {\bf B} \rTo {\bf C}$ is defined in the obvious way. 

\begin{df}
A homomorphism of crossed extensions ${\bf A}\stackrel{(\chi_1,\Phi,\chi_2)}{\rTo}{\bf B}$ is called an {\em equivalence} of crossed extensions if 
\begin{itemize}
\item[(i)] the strict morphisms of groupoids crossed modules $\chi_1$ and $\chi_2$ are hypercovers; and
\item[(ii)] $\Phi:\cal M\rTo \cal N$ is a weak equivalence of topological groupoids. 
\end{itemize}
\end{df}

It is clear that composition of equivalences is an equivalence of crossed extensions.

\begin{ex}[Pullback]\label{ex:pullback_strict}
Let ${\bf A}=(\frak G_1,\cal M,\frak G_2)$ be a crossed extension with unit space $X$. Given a continuous map $\vp\colon Z\rTo X$, the canonical projections $\frak G_i[Z]\rTo \frak G_i, i=1,2$, and $\cal M[Z]\rTo \cal M$ define a homomorphism ${\bf A}[Z] \rTo {\bf A}$, where ${\bf A}[Z]$ is the pullback $(\frak G_1[Z],\cal M[Z],\frak G_2[Z])$. If in addition $\vp$ is surjective, then this homomorphism is an equivalence of crossed extensions.
\end{ex}

\begin{pro}\label{pro:unit_w-morphism}
Let $\bf A=(\frak G_1, \cal M, \frak G_2)\in \fXExt$ where the crossed extension $\frak G_1\underset{\cal M}{\xext}\frak G_2$ is defined with respect to the maps 
\[
\cal G_1^0 \overset{\tau}{\lTo} \cal M^0 \overset{\s}{\rTo} \cal G_2^0
\]
Consider the canonical Morita equivalence $\cal M^0\overset{t}{\lTo}\cal M\overset{s}{\rTo}\cal M^0$ between the topological groupoids $\cal M$ and itself, and let $\Phi^{\cal M}:t^*\cal M\rTo s^*\cal M$ be the associated strict morphism defined in the previous section. Then, there is an equivalence of crossed extensions 
$
\xymatrix{
 t^*{\bf A }\ar[rr]^-{(\a,\Phi^{\cal M},\b)} && s^*{\bf A}
}
$
where $t^*{\bf A}=(t^\ast \frak G_1, t^*\cal M, t^\ast\frak G_2), s^*{\bf A} = (s^\ast\frak G_1, s^\ast\cal M, s^\ast\frak G_2)$,  $\Phi^{\cal M}\colon \cal M[\cal M]\rTo \cal M[\cal M]$ is the groupoid strict morphism associated to the generalized morphism $\cal M$ constructed in the previous section, and where, for $i=1,2$, we have identified $t^\ast\frak G_i$ and $s^\ast\frak G_i$ with $(\tau\circ t)^\ast\frak G_i$ and $(\s\circ s)^*\frak G_i$.
\end{pro}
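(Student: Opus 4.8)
The plan is to construct the two strict morphisms $\a$ and $\b$ explicitly, to check that together with $\Phi^{\cal M}$ they assemble into a homomorphism of crossed extensions in the sense of Definition~\ref{def:SCM}, and then to observe that all three components are invertible, so that the two conditions defining an \emph{equivalence} of crossed extensions hold automatically.

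Set $\gamma\colonequals pr_2\circ\a_2\colon\cal M\rTo\cal G_1$ and $\delta\colonequals pr_2\circ\b_2\colon\cal M\rTo\cal G_2$. These are strict morphisms of topological groupoids, and for $w\in\cal M$ the arrow $\gamma(w)$ runs from $\tau s(w)$ to $\tau t(w)$ in $\cal G_1$, while $\delta(w)$ runs from $\s s(w)$ to $\s t(w)$ in $\cal G_2$. Recall that $t^*\frak G_1$ and $s^*\frak G_1$ are the pullbacks of $\frak G_1$ over the space $\cal M$ along $\tau t$ and along $\tau s$, respectively, and similarly for $\frak G_2$ with $\s$ in place of $\tau$. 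I would define $\a\colon t^*\frak G_1\rTo s^*\frak G_1$ to be the identity on the common unit space $\cal M$, to be $(w,h_1)\mto(w,h_1^{\gamma(w)})$ on the group bundles, and to be $(w_1,g_1,w_2)\mto(w_1,\gamma(w_1)^{-1}g_1\gamma(w_2),w_2)$ on the base groupoids; $\b$ is defined the same way from $\delta$. Using the crossed--module relation $\p_1(h_1^{g})=g^{-1}\p_1(h_1)g$ and the identity $(h_1^{g})^{g'}=h_1^{gg'}$ for the action, one checks directly that $\a$ is a well defined strict morphism of crossed modules, with two--sided inverse given by $(w_1,g_1,w_2)\mto(w_1,\gamma(w_1)g_1\gamma(w_2)^{-1},w_2)$ and $(w,h_1)\mto(w,h_1^{\gamma(w)^{-1}})$. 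Hence $\a$, and likewise $\b$, is a strict isomorphism of crossed modules, in particular a hypercover; this settles requirement (i) of an equivalence of crossed extensions.

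For the diagram of Definition~\ref{def:SCM}, first note that, by the construction of $\Phi^{Z}$ from the previous subsection applied to the canonical Morita self--equivalence $\cal M^0\overset{t}{\lTo}\cal M\overset{s}{\rTo}\cal M^0$ (for which the function $g^{\cal M}$ of that subsection is $g^{\cal M}(w,w')=w^{-1}w'$), one has $\Phi^{\cal M}(w_1,m,w_2)=(w_1,w_1^{-1}mw_2,w_2)$. The two faces of the diagram involving $\a_2$ and $\b_2$ reduce to the identities $\gamma(w_1^{-1}mw_2)=\gamma(w_1)^{-1}\gamma(m)\gamma(w_2)$ and its analogue for $\delta$, which hold because $\gamma$ and $\delta$ are groupoid morphisms. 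The two faces involving $\a_1$ and $\b_1$ read $\a_1\big(s(w),h_1^{\gamma(w)}\big)=w^{-1}\a_1\big(t(w),h_1\big)w$ and $\b_1\big(s(w),h_2^{\delta(w)}\big)=w^{-1}\b_1\big(t(w),h_2\big)w$, which are exactly equations~\eqref{eq2:crossing} and~\eqref{eq3:crossing} of the crossing $\cal M$ (axiom~\ref{CR4}). Finally, for each $w\in\cal M$ axiom~\ref{CR4} says that conjugation by $w$ in $\cal M$ carries the isotropy subgroup $\a_1(\cal H_1)^{t(w)}$ onto $\a_1(\cal H_1)^{s(w)}$; hence $\Phi^{\cal M}$ restricts to an isomorphism of topological groups $\a_1(\cal H_1)^{w}\rTo\a_1(\cal H_1)^{w}$ sitting inside the isotropy of $s^*\cal M$, with inverse $\nu\mto w\nu w^{-1}$. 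This is condition~\eqref{SCM1}, and~\eqref{SCM2} follows symmetrically from $\b_1$ and~\eqref{eq3:crossing}. Thus $(\a,\Phi^{\cal M},\b)$ is a homomorphism of crossed extensions.

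It remains to verify requirement (ii): that $\Phi^{\cal M}$ is a weak equivalence of topological groupoids. But the bibundle $\cal M^0\overset{t}{\lTo}\cal M\overset{s}{\rTo}\cal M^0$ is a Morita equivalence, so by the lemma of the previous subsection stating that $\Phi^{Z}$ is an isomorphism exactly when $Z$ is a Morita equivalence, $\Phi^{\cal M}$ is in fact an isomorphism of topological groupoids, hence in particular a weak equivalence. Putting (i) and (ii) together yields the desired equivalence of crossed extensions $t^*{\bf A}\rTo s^*{\bf A}$. I expect the only genuinely laborious part to be the face--by--face verification of the three--dimensional diagram in Definition~\ref{def:SCM}; conceptually, everything reduces to the observation that the nontrivial faces are precisely the crossing axiom~\ref{CR4}, together with the fact that $\Phi^{\cal M}$ is invertible straight from its definition.
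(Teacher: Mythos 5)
Your proposal is correct and follows essentially the same route as the paper: the paper defines exactly the same maps $\a^l(m,h_1)=(m,h_1^{\a_2(m)})$, $\a^r(m_1,g,m_2)=(m_1,\a_2(m_1^{-1})g\,\a_2(m_2),m_2)$ (and likewise for $\b$) and then asserts that the diagram of Definition~\ref{def:SCM} commutes by construction. You simply spell out the verification the paper leaves implicit — that the nontrivial faces are axiom~\ref{CR4}, that conditions~\eqref{SCM1}--\eqref{SCM2} hold, and that $\a$, $\b$, $\Phi^{\cal M}$ are isomorphisms, hence the morphism is an equivalence.
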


\begin{proof}
We identify the element $(m_1,(t(m_1),g,t(m_2)),m_2)\in t^\ast \cal G_i[\cal M^0]$ with $(m_1,g,m_2)\in t^\ast\cal G_i$, and use similar identifications for the pullbacks through the source map $s$ and for $\cal H_i, i=1,2$.  
We define the strict morphisms $\a=(\a^l, \a^r)$ and $\b=(\b^l,\b^r)$ of crossed modules by setting 
\begin{eqnarray*}
\a^l\colon t^\ast\cal H_1 \ni (m,h_1)\mto (m, h_1^{\a_2(m)})\in s^\ast\cal H_1 \\
\a^r\colon t^\ast\cal G_1 \ni (m_1,g_1,m_2)\mto (m_1,\a_2(m_1^{-1})g\a_2(m_2),m_2)\in s^\ast\cal G_1 \\
\b^l\colon t^\ast\cal H_2 \ni (m,h_2)\mto (m, h_1^{\b_2(m)})\in s^\ast\cal H_2 \\
\b^r\colon t^\ast\cal G_2 \ni (m_1,g_2,m_2)\mto (m_1,\b_2(m_1^{-1})g_2\b_2(m_2),m_2)\in s^\ast\cal G_2
\end{eqnarray*}
Hence, by construction, the diagram  
\begin{eqnarray*}
\xymatrix{
&& s^\ast\cal H_1 \ar[rr]^-{s^*\p_1} \ar@/^/[rdd]^-{s^\ast\a_1}|!{[d];[rr]}\hole && s^\ast\cal G_1 \\
t^\ast\cal H_1 \ar[rr]^-{t^\ast\p_1} \ar[rru]^{\a^l} \ar@/^/[rdd]_-{t^\ast\a_1} && t^\ast\cal G_1 \ar[rru]^{\a^r} &&\\
 && & s^\ast\cal M \ar[ruu]_{s^\ast\a_2} \ar[rdd]^{s^\ast\b_2} &\\
& t^\ast\cal M \ar[ruu]_{t^\ast\a_2} \ar[rdd]_{t^\ast\b_2} \ar[rru]^{\Phi^{\cal M}} &&& \\
  && s^\ast\cal H_2 \ar[rr]^{s^\ast\p_2} \ar@/_/[ruu]^{s^\ast\b_1} && s^\ast\cal G_2 \\
t^\ast\cal H_2 \ar[rr]^{t^\ast\p_2} \ar@/_/[ruu]^{t^\ast\b_1} \ar[rru]^{\b^l} && t^\ast\cal G_2 \ar[rru]^{\b^r} & &
}
\end{eqnarray*}
commutes.
\end{proof}

In particular, if ${\bf A}=(\frak G_1,\cal M, \frak G_2)$ is a crossed extension with unit space $X$, then $s^\ast \cal M=t^\ast\cal M=\cal M$, $\Phi^{\cal M}=\Id_{\cal M}$, and $\id_{\bf A}\colonequals(\a, \Id_{\cal M}, \b)$ is an equivalence $t^\ast {\bf A}\rTo s^\ast{\bf A}$.

\subsection{Exchangers}\label{subs:exchangers}

Our goal is to investigate a more concrete interpretation of weak morphisms of crosssings and crossed extensions. 

\begin{df}
Let ${\bf A}=(\frak G_1,\cal M,\frak G_2)$ and ${\bf B}=(\frak G_3,\cal N,\frak G_4)$ be crossed extensions with unit spaces $X$ and $Y$, respectively, through the diagrams

\[
\xymatrix{
\cal H_1\ar[rr]^{\p_1} \ar[rd]_{\a_1} && \cal G_1  & & \cal H_3 \ar[rd]_{\d_1} \ar[rr]^{\p_3} && \cal G_3 \\
& \cal M \ar[ru]_{\a_2} \ar[rd]^{\b_2} & && & \cal N\ar[ru]_{\d_2} \ar[rd]^{\t_2} & && \\
\cal H_2 \ar[rr]^{\p_2} \ar[ru]^{\b_1} && \cal G_2 && \cal H_4 \ar[rr]^{\p_4} \ar[ru]^{\t_1} && \cal G_4 
}
\]

\noindent A {\em semi-exchanger} from ${\bf A}$ to ${\bf B}$ consists of a generalized morphism of topological groupoids 
\[
X \overset{\vp}{\lTo} P \overset{\psi}{\rTo} Y
\]
such that the following properties hold. 
\begin{enumerate}[label=(\textbf{E\arabic*}), align=left]
\item\label{E1} The left groupoid action of $\cal H_1$ and the right action of $\cal H_3$ on $P$ induced by the groupoid morphisms $\cal H_1\overset{\a_1}{\rTo}\cal M$ and $\cal H_3\overset{\d_1}{\rTo}\cal N$, respectively, are free and have the same orbit space $\frac{P}{\cal H_1}=\frac{P}{\cal H_3}$.
\item\label{E2} The left groupoid action of $\cal H_2$ and the right action of $\cal H_4$ on $P$ induced by the groupoid morphisms $\cal H_2\overset{\b_1}{\rTo} \cal M$ and $\cal H_4\overset{\t_1}{\rTo}\cal N$, respectively, are free and have the same orbit space $\frac{P}{\cal H_2}= \frac{P}{\cal H_4}$.
\end{enumerate}
\noindent An {\em exchanger} between ${\bf A}$ and ${\bf B}$ is a semi-exchanger $P$ which defines a Morita equivalence bewteen $\cal M$ and ${\cal N}$. We write ${\bf A} \overset{P}{\sxc} {\bf B}$ for a semi-exchanger and sometimes ${\bf A}\overset{P}{\xc} {\bf B}$ for an exchanger. 
\end{df}

\begin{ex}\label{ex:unit_exchanger}
If ${\bf A}=(\frak G_1,\cal M,\frak G_2)$ with unit space $X$ is a crossed extension, it is easy to check that the canonical generalised morphism 
\[
X \stackrel{t}{\lTo} \cal M \stackrel{s}{\rTo} X
\]
is an exchanger ${\bf A}\xc {\bf A}$. We call this exchanger the {\em trivial exchanger} and denote it by $\bf I_{\cal M}$.
\end{ex}

\begin{lem}
Assume ${\bf A}$ and ${\bf B}$ are crossed extensions as above and ${\bf A}\overset{P}{\RTo} {\bf B}$ is an exchanger. Then 
\begin{itemize}
\item[(i)] The left actions of $\cal H_1$ and $\cal H_2$ on $P$ commute; that is $\a_1(h_1)\cdot (\b_1(h_2)\cdot p)=\b_1(h_2)\cdot (\a_1(h_1)\cdot p)$ for all $p\in P, h_1\in \cal H_1$ and $h_2\in \cal H_2$ such that the actions are defined.
\item[(ii)] The right actions of $\cal H_3$ and $\cal H_4$ on $P$ commute; that is, $\d_1(h_3)\cdot (\t_1(h_4)\cdot p)=\t_1(h_4)\cdot (\d_1(h_3)\cdot p)$, for all $p\in P, h_3\in \cal H_3$, and $h_4\in \cal H_4$ such that the actions are defined.
\item[(iii)] $P$ is a generalized morphism of groupoid extensions from $\cal H_1 \stackrel{\a_1}{\rTo} \cal M\stackrel{\b_2}{\rTo} \cal G_2$ to $\cal H_3\stackrel{\d_1}{\rTo}\cal N\stackrel{\t_2}{\rTo} \cal G_4$;
\item[(iv)] $P$ is a generalized morphism of groupoid extensions from $\cal H_2 \stackrel{\b_1}{\rTo} \cal M\stackrel{\a_2}{\rTo} \cal G_1$ to $\cal H_4\stackrel{\t_1}{\rTo}\cal N\stackrel{\d_2}{\rTo} \cal G_3$.
\end{itemize}
\end{lem}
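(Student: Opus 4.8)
The four claims (i)--(iv) are essentially formal consequences of the axioms defining a crossing/crossed extension, together with the commutativity of images established in Proposition~\ref{pro:images_crossing}, transported along the groupoid actions on $P$. The plan is to reduce everything to algebra inside $\cal M$ and $\cal N$ and then "push forward" via the principal $\cal M$- and $\cal N$-structures on $P$. As usual, I will assume all the unit spaces agree and the structural maps $\tau,\s,\vp,\psi$ are understood, so that $\cal H_i$ acts on $P$ through $\a_1,\b_1$ (on the left, via the left $\cal M$-action) and through $\d_1,\t_1$ (on the right, via the right $\cal N$-action).

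For (i): since the left $\cal H_1$- and $\cal H_2$-actions on $P$ are restrictions of the left $\cal M$-action along $\a_1$ and $\b_1$, and $P$ is a left $\cal M$-space, one has $\a_1(h_1)\cdot(\b_1(h_2)\cdot p)=(\a_1(h_1)\b_1(h_2))\cdot p$ and likewise with the factors swapped; so (i) follows at once from Proposition~\ref{pro:images_crossing}, which says $\a_1(\cal H_1)$ and $\b_1(\cal H_2)$ commute in $\cal M$. Claim (ii) is the mirror image: the right $\cal H_3$- and $\cal H_4$-actions on $P$ factor through the right $\cal N$-action via $\d_1$ and $\t_1$, and $\d_1(\cal H_3)$ commutes with $\t_1(\cal H_4)$ in $\cal N$ by the same proposition applied to the crossing $\frak G_3\underset{\cal N}{\cross}\frak G_4$. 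So (i) and (ii) are immediate; the only care needed is to check that the moment maps line up so that the composites are actually defined, which is exactly what axioms~\ref{CR1}--\ref{CR4} (and the freeness hypotheses~\ref{E1},~\ref{E2}) guarantee.

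For (iii) and (iv): here I must verify that $P$, with the indicated actions, satisfies the definition of a generalized morphism of groupoid \emph{extensions} from $\cal H_1\stackrel{\a_1}{\rTo}\cal M\stackrel{\b_2}{\rTo}\cal G_2$ to $\cal H_3\stackrel{\d_1}{\rTo}\cal N\stackrel{\t_2}{\rTo}\cal G_4$ (and symmetrically for (iv)). A generalized morphism of extensions is a generalized morphism of the middle groupoids ($P$ from $\cal M$ to $\cal N$, which is given) that additionally intertwines the sub- and quotient-groupoid data; concretely, the $\cal H_1$-action and $\cal H_3$-action on $P$ must have the \emph{same} orbits --- this is precisely axiom~\ref{E1} --- and the induced map on orbit spaces $P/\cal H_1=P/\cal H_3$ must be a generalized morphism from the quotient $\cal M/\a_1(\cal H_1)\cong \cal G_2$ to $\cal N/\d_1(\cal H_3)\cong \cal G_4$, where the isomorphisms come from exactness of the diagonals in the crossed-extension diagrams (axioms~\ref{CR3},~\ref{CR3_prime}). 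So the key steps are: (1) identify $P/\cal H_1$ with a $\cal G_2$--$\cal G_4$ bispace using~\ref{E1} and the exactness $\cal H_1\stackrel{\a_1}{\rTo}\cal M\stackrel{\b_2}{\rTo}\cal G_2$; (2) check the residual $\cal G_2$-action is free and principal over $\psi$-induced base data, using that $P$ is already principal as an $\cal M$--$\cal N$ bispace and that the $\cal H_1$-action is free; (3) verify compatibility of moment maps, which is automatic from $\b_2\circ\a_1$ being trivial and the commutativity of the crossing diagram. Then (iv) follows verbatim by interchanging the roles $(\a_1,\b_2,\cal G_2)\leftrightarrow(\b_1,\a_2,\cal G_1)$ and $(\d_1,\t_2,\cal G_4)\leftrightarrow(\t_1,\d_2,\cal G_3)$, using~\ref{E2} in place of~\ref{E1} and exactness of $\cal H_2\stackrel{\b_1}{\rTo}\cal M\stackrel{\a_2}{\rTo}\cal G_1$.

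The main obstacle is step (2) in the paragraph above: showing that the quotient action on $P/\cal H_1$ is still \emph{free and principal} (i.e.\ that $P/\cal H_1$ is a genuine Morita-type generalized morphism of the quotient extensions, not merely an equivariant bispace). This requires combining the freeness of the $\cal M$-action on $P$ with the freeness of the $\cal H_1$-action in a way that descends to the quotient; the cleanest argument is to note that $P\to P/\cal H_1$ is itself a principal $\cal H_1$-bundle (by~\ref{E1} and freeness), that $\cal M\to \cal G_2$ is a principal $\a_1(\cal H_1)$-bundle, and that principality is preserved under the induced quotient because the $\cal M$-action descends compatibly --- a short diagram chase, but one where I must be careful that local triviality / openness of orbit maps is inherited (this is where the "free and principal" hypothesis in the definition of semi-exchanger, as opposed to merely "free", will be used). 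Everything else is bookkeeping with the axioms already in place.
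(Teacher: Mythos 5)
Your proposal is correct and follows essentially the same route as the paper: parts (i)--(ii) are obtained exactly as you say, from associativity of the $\cal M$- and $\cal N$-actions on $P$ together with Proposition~\ref{pro:images_crossing}, and parts (iii)--(iv) are, as in the paper, read off from axioms~\ref{E1} and~\ref{E2} together with the definition of a generalized morphism of groupoid extensions. The only difference is one of detail: the paper declares (iii)--(iv) immediate from the definition, whereas you explicitly unpack the descent of principality to $P/\cal H_1$, which is a legitimate (and correctly resolved) elaboration rather than a different argument.
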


\begin{proof}
(i) and (ii) follow from Proposition~\ref{pro:images_crossing} and from the associativity of the groupoid actions of $\cal M$ and $\cal N$ on $P$. (iii) and (iv) are immediate consequences of axioms~\ref{E1} and~\ref{E2} and the very definition of generalized morphisms of groupoid extensions (see for instance~\cites{Moutuou:Real.Cohomology, Laurent-Stienon-Xu:Non-Abelian_Gerbes}).
\end{proof}

\begin{pro}\label{pro:equivalence_vs_exchanger}
Homomorphisms (resp. equivalences) of crossed extensions are semi-exchangers (resp. exchangers).
\end{pro}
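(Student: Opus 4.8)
The plan is to construct, from a homomorphism $(\chi_1,\Phi,\chi_2)\colon {\bf A}\rTo {\bf B}$, a canonical generalized morphism $P$ of topological groupoids and verify the exchanger axioms \ref{E1} and \ref{E2}, then check that $P$ is a Morita equivalence precisely when $(\chi_1,\Phi,\chi_2)$ is an equivalence. The natural candidate is the generalized morphism $Z_\Phi$ induced by the strict groupoid morphism $\Phi\colon \cal M\rTo \cal N$, namely $P\colonequals X\times_{\chi_1,Y,t}\cal N$ (using that on unit spaces $\chi_1=\chi_2$ agree and equal, say, $\chi^0\colon X\rTo Y$), with moment maps $X\stackrel{pr_1}{\lTo} P \stackrel{t\circ pr_2}{\rTo}Y$, the left $\cal M$--action $m\cdot(x,n)\colonequals (t(m),\Phi(m)n)$, and the right $\cal N$--action $(x,n)\cdot n'\colonequals (x,nn')$. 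This is exactly the construction recalled in the Example following the proof of the generalized-morphism lemmas.

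First I would record that via $\a_1\colon \cal H_1\rTo \cal M$ and $\d_1\colon \cal H_3\rTo \cal N$ the bundle of groups $\cal H_1$ acts on $P$ on the left by $h_1\cdot(x,n)=(t(\a_1(h_1)),\Phi(\a_1(h_1))n)$ and $\cal H_3$ acts on the right by $(x,n)\cdot h_3=(x,n\,\d_1(h_3))$; similarly for $\cal H_2,\cal H_4$ through $\b_1,\t_1$. Freeness of all four actions follows from freeness of the corresponding $\cal M$-- and $\cal N$--actions on $P$ together with injectivity of $\a_1,\b_1$ (from axiom~\ref{CR3_prime}) and of $\d_1,\t_1$. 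The key point is that the $\cal H_1$-- and $\cal H_3$--orbits coincide: the commutativity of the top square of the defining diagram says $\Phi\circ\a_1=\d_1\circ\chi_1$, and the hypothesis~\eqref{SCM1} that $\Phi$ restricts to an \emph{isomorphism} $\a_1(\cal H_1)^x\rTo \d_1(\cal H_3)^{\chi_1(x)}$ for each $x$ guarantees that moving $(x,n)$ by an $\cal H_3$--element $\d_1(h_3)$ on the right is the same as moving it by a suitable $\cal H_1$--element on the left (using the relation $h_1\cdot(x,n)=(x,n)\cdot\chi_1(h_1)'$ obtained by conjugating through $n$, exactly as in the computation $\Phi^{Z_f}((x_1,n_1),m,(x_2,n_2))=((x_1,n_1),n_1^{-1}f(m)n_2,(x_2,n_2))$ in the worked example). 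Hence $P/\cal H_1=P/\cal H_3$, giving \ref{E1}; the analogous argument with $\b_1,\t_1$ and~\eqref{SCM2} gives \ref{E2}. This yields the semi-exchanger statement.

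For the equivalence statement, I would invoke the lemma that $\Phi^Z\colon \cal M[Z]\rTo \cal N[Z]$ is an isomorphism iff $Z$ is a Morita equivalence, combined with the characterization of weak equivalences of groupoids: when $\Phi\colon \cal M\rTo\cal N$ is a weak equivalence, $Z_\Phi$ is a Morita equivalence of $\cal M$ and $\cal N$ (this is the standard fact that a weak equivalence of groupoids induces a Morita equivalence, cf.\ Remark~\ref{rmk:we_2-grpd_Morita_grpd} and~\cite{Moerdijk:Orbifolds_groupoids}), so $P=Z_\Phi$ is an exchanger. The hypercover conditions on $\chi_1,\chi_2$ are not needed for $P$ itself to be a Morita equivalence of $\cal M$ and $\cal N$ — they are what makes $P$ compatible with the crossed-module structures on both sides in the stronger sense, but the definition of exchanger only requires the Morita equivalence of $\cal M$ and $\cal N$ plus \ref{E1},~\ref{E2}, so we are done.

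The main obstacle I anticipate is the bookkeeping in the non-trivial-unit-space situation: strictly speaking $\cal M\rrTo X$ and $\cal N\rrTo Y$ live over different bases with the crossings defined through maps $X_1\stackrel{\tau}{\lTo}\cal M^0\stackrel{\s}{\rTo}X_2$, and one must chase the various pullbacks $\cal H_i[\cal M^0]$, $\cal G_i[\cal M^0]$ through $\Phi$ and through $\chi_1,\chi_2$ to even write down the $\cal H_i$--actions on $P$ coherently. Following the convention adopted at the start of \S\ref{sec:2-cat} — assuming all groupoids in a crossed extension share one unit space and $\tau=\s=\Id$ — removes this difficulty for the statement of the proposition, and the general case then follows by the same pullback manipulations used throughout \S\ref{sec:cross} (e.g.\ in Corollary~\ref{cor:crossing_pullback}); I would simply remark that the reduction is routine rather than carry it out.
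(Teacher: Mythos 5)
Your proposal is correct and follows essentially the same route as the paper: both take $P_\Phi=X\times_{\Phi,t}\cal N$ with the actions you describe, deduce freeness and the coincidence of the $\cal H_1/\cal H_3$ and $\cal H_2/\cal H_4$ orbit spaces from the isomorphisms~\eqref{SCM1} and~\eqref{SCM2}, and observe that $P_\Phi$ is a Morita equivalence when $\Phi$ is a weak equivalence. The only quibble is that the left $\cal M$--action on $P_\Phi$ itself need not be free (only the right $\cal N$--action is), but the freeness you actually need — of the four $\cal H_i$--actions — does follow from the injectivity of $\a_1,\b_1,\d_1,\t_1$ together with~\eqref{SCM1}--\eqref{SCM2}, exactly as the paper argues.
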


\begin{proof}
Let ${\bf A}=(\frak G_1,\cal M,\frak G_2)$ and ${\bf B}=(\frak G_3,\cal N,\frak G_4)$ be objects in $\fXExt$ with unit spaces $X$ and $Y$, respectively. If $\xymatrix{{\bf A} \ar[r]^-{\Phi} & {\bf B}}$ is a homomorphism of crossed extensions, where $\Phi=(\chi_1,\Phi,\chi_2)$ as in Definition~\ref{def:SCM}, we let $P_{\Phi}=X\times_{\Phi,t}\cal N$ be the generalized morphism of topological groupoids from $\cal M$ to $\cal N$ 
\[
\xymatrix{
X & P_{\Phi} \ar[l]_{pr_1} \ar[r]^-{pr_2\circ s} & Y
}
\]
with the left $\cal M$--action and right $\cal N$--action $m\cdot (s(m),n)=(t(m),\Phi(m)n)$ and $(x,n)\cdot n'=(x, nn')$, respectively. Then, the induced left $\cal H_1$--action and right $\cal H_3$--action of $P_{\Phi}$ are 
\[
h_1\cdot (s(h_1),n)=(s(h_1),\Phi(\a_1(h_1))n)=(s(h_1), \d_1(\chi_1(h_1))n) \quad {\rm and\ } \quad (x,n)\cdot h_3=(x,n\d_1(h_3)).
\]
It follows from the isomorphism~\eqref{SCM1} that these two actions are free and have the same orbit space $$\frac{P_{\Phi}}{\cal H_1}=\frac{P_{\Phi}}{\cal H_3}=X\times_{\chi_2,t}\cal G_4.$$ Similarly, we deduce from isomorphism~\eqref{SCM2} that the induced left $\cal H_2$--action and right $\cal H_4$--action on $P_{\Phi}$ below
\[
h_2\cdot (s(h_2),n)=(s(h_2),\Phi(\b_1(h_2))n)=(s(h_2),\t_1(\chi_2(h_2))n) \quad {\rm and\ } \quad (x,n)\cdot h_4=(x, n\t_1(h_4))
\]
are free and have the same orbit space $$\frac{P_{\Phi}}{\cal H_2}=\frac{P_{\Phi}}{\cal H_4}=X\times_{\chi_1,t}\cal G_3.$$
Therefore, $P_\Phi$ is a semi-echanger from ${\bf A}$ to ${\bf B}$. Obviously, if $\Phi:{\bf A}\rTo {\bf B}$ is an equivalence, the generalised morphism $P_{\Phi}$ is a Morita equivalence, hence an exchanger ${\bf A}\overset{P_\Phi}{\xc}{\bf B}$. 
\end{proof}

\noindent In particular the identity  homomorphism $\id_{\bf A}$ induces the exchanger ${\bf A} \stackrel{\bf I_{\cal M}}{\xc} {\bf A}$.

Semi-exchangers can be composed as it can be seen in the straightforward proposition below. 

\begin{pro}\label{pro:exchangers_compo}
Let ${\bf A} = (\frak G_1, \cal M,\frak G_2), {\bf B}=(\frak G_3,\cal N,\frak G_4), {\bf C}=(\frak G_5,\cal R,\frak G_6)$ be crossed extensions with unit spaces $X,Y$, and $Z$, respectively.  Assume ${\bf A}\stackrel{P}{\sxc} {\bf B} \stackrel{Q}{\sxc} {\bf C}$ are semi-echangers. Then, the generalized morphism $\cal M\overset{P\underset{\cal N}{\bullet}Q}{\rTo} \cal R$ is a semi-exchanger from ${\bf A}$ to ${\bf C}$. The semi-exchanger ${\bf A} \overset{P\underset{\cal N}{\bullet}Q}{\sxc} {\bf B}$ will be called the {\em vertical composition} of $P$ with $Q$.
\end{pro}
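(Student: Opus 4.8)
The plan is to check directly that the composite generalized morphism $P\underset{\cal N}{\bullet}Q$ satisfies the two defining axioms~\ref{E1} and~\ref{E2} of a semi-exchanger, by \emph{chaining together} the corresponding properties of $P$ and of $Q$. Throughout I adopt the standing convention of \S\ref{sec:2-cat}: for each of ${\bf A},{\bf B},{\bf C}$ all the ambient groupoids share one unit space ($X$, $Y$, $Z$ respectively) and the internal $\tau,\s$ of each crossed extension are identities. Recall that $P\underset{\cal N}{\bullet}Q=(P\times_{\cal N^0}Q)/\cal N$, with the left $\cal M$--action carried by the $P$--slot and the right $\cal R$--action carried by the $Q$--slot, and with moment maps $X\lTo P\underset{\cal N}{\bullet}Q\rTo Z$ induced by those of $P$ and $Q$; that this is a generalized morphism of topological groupoids from $\cal M$ to $\cal R$ is just the usual (associative) composition of generalized morphisms of topological groupoids. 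Hence the only thing left is to verify~\ref{E1} and~\ref{E2}.

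I first treat~\ref{E1}. On $P\underset{\cal N}{\bullet}Q$ the left $\cal H_1$--action reads $h_1\cdot[p,q]=[\a_1(h_1)\cdot p,\,q]$, and the right $\cal H_5$--action, induced by the morphism $\cal H_5\rTo\cal R$, reads $[p,q]\cdot h_5=[p,\,q\cdot h_5]$; both are well defined because left $\cal M$-- commutes with right $\cal N$--actions on $P$ and left $\cal N$-- with right $\cal R$--actions on $Q$. \emph{Freeness of the $\cal H_1$--action}: if $[\a_1(h_1)\cdot p,q]=[p,q]$, there is $n\in\cal N$ with $\a_1(h_1)\cdot p=p\cdot n$ and $n^{-1}\cdot q=q$. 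Since ${\bf A}\stackrel{P}{\sxc}{\bf B}$, the $\cal H_1$-- and $\cal H_3$--orbits on $P$ coincide, so $\a_1(h_1)\cdot p=p\cdot\d_1(h_3)$ for a unique $h_3\in\cal H_3$ (unique by freeness of the right $\cal H_3$--action on $P$); therefore $n=\d_1(h_3)$ and $\d_1(h_3^{-1})\cdot q=q$. By axiom~\ref{E1} for ${\bf B}\stackrel{Q}{\sxc}{\bf C}$ the left $\cal H_3$--action on $Q$ is free, so $h_3$, hence $n$, is a unit; then $\a_1(h_1)\cdot p=p$ and freeness of the left $\cal H_1$--action on $P$ forces $h_1$ to be a unit. \emph{Freeness of the $\cal H_5$--action} is easier: $[p,q\cdot h_5]=[p,q]$ gives $n\in\cal N$ with $p\cdot n=p$, so $n$ is a unit because the right $\cal N$--action on $P$ is free, whence $q\cdot h_5=q$ and $h_5$ is a unit by freeness of the right $\cal H_5$--action on $Q$.

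\emph{Matching orbit spaces}: for every $[p,q]$, running successively through the coincidence of $\cal H_1$-- and $\cal H_3$--orbits on $P$, the defining relation of $\underset{\cal N}{\bullet}$, and the coincidence of $\cal H_3$-- and $\cal H_5$--orbits on $Q$, one obtains
\[
\cal H_1\cdot[p,q]=\{[p\cdot\d_1(h_3),q]:h_3\in\cal H_3\}=\{[p,\d_1(h_3)\cdot q]:h_3\in\cal H_3\}=\{[p,q\cdot h_5]:h_5\in\cal H_5\}=[p,q]\cdot\cal H_5 ,
\]
so the two partitions of $P\underset{\cal N}{\bullet}Q$ agree, which is~\ref{E1}. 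Axiom~\ref{E2} follows verbatim after replacing the indices $1,3,5$ by $2,4,6$, using that $P$ matches the $\cal H_2$-- and $\cal H_4$--orbits and $Q$ the $\cal H_4$-- and $\cal H_6$--orbits. All the continuity and quotient--topology bookkeeping is inherited from the corresponding facts for $P$ and $Q$ and from the construction of $\underset{\cal N}{\bullet}$, so nothing more is needed; moreover, if $P$ and $Q$ are Morita equivalences, then so is $P\underset{\cal N}{\bullet}Q$, which yields the analogous statement for the vertical composition of exchangers.

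I do not expect a genuine obstacle here — the statement is routine, as the authors indicate — the single point that calls for a little care being the step in the freeness argument where an element $n\in\cal N$ intertwining $p$ with $\a_1(h_1)\cdot p$ must be identified as lying in $\d_1(\cal H_3)$; this is exactly what the equality of orbit spaces for $P$ provides, and it is also the mechanism that makes the orbit--space computation above go through.
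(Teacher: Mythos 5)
Your proof is correct. The paper itself offers no argument for this proposition (it is introduced as "straightforward" and left unproved), and your verification supplies exactly the missing details in the natural way: the key step — identifying the intertwining element $n\in\cal N$ as $\d_1(h_3)$ via the orbit-coincidence~\ref{E1} for $P$ together with freeness of the principal right $\cal N$--action, and then chaining the orbit identifications of $P$ and $Q$ through the balanced-product relation $[pn,q]=[p,nq]$ — is the right mechanism, and the symmetric argument for~\ref{E2} goes through verbatim.
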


Notice that the vertical composition of two exchangers is an exchanger.  

\begin{df}
Given ${\bf A}, {\bf B}\in \fXExt$, the collection of all semi-exchangers from ${\bf A}$ to ${\bf B}$ is denoted by $\fXExt({\bf A}, {\bf B})$.
\end{df}

The result below draws a link between exchangers and equivalences of crossed extensions. 

\begin{thm}
Let ${\bf A}, {\bf B}\in \fXExt$. Then there is an exchanger ${\bf A}\stackrel{P}{\xc}{\bf B}$ if and only if there are a third crossed extension ${\bf P}$ and two equivalences of crossed extensions 
\begin{eqnarray}\label{eq:decomposition_exchanger}
{\bf A} \lTo {\bf P} \rTo {\bf B}.
\end{eqnarray}
\end{thm}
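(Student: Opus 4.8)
The two implications will be proved separately; the substantial one is that an exchanger produces a span of equivalences. For the easy direction, suppose we are given a crossed extension ${\bf P}$ together with equivalences ${\bf P}\rTo {\bf A}$ and ${\bf P}\rTo {\bf B}$. By Proposition~\ref{pro:equivalence_vs_exchanger} these induce exchangers ${\bf P}\xc {\bf A}$ and ${\bf P}\xc {\bf B}$. Next I would observe that every exchanger can be reversed: if ${\bf C}\overset{Q}{\xc}{\bf D}$ is given by $X'\overset{\vp}{\lTo}Q\overset{\psi}{\rTo}Y'$ with its left and right groupoid actions, then the same space $Q$ with moment maps $Y'\overset{\psi}{\lTo}Q\overset{\vp}{\rTo}X'$ and with the left (resp.\ right) action replaced by the inverse of the original right (resp.\ left) action is again an exchanger $\bar Q\colon {\bf D}\xc {\bf C}$, because Morita equivalences of topological groupoids are symmetric and the orbits occurring in~\ref{E1} and~\ref{E2} are unchanged under this operation. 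Reversing ${\bf P}\xc {\bf A}$ into ${\bf A}\xc {\bf P}$ and composing with ${\bf P}\xc {\bf B}$ by Proposition~\ref{pro:exchangers_compo} then yields an exchanger ${\bf A}\xc {\bf B}$.

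For the converse, write ${\bf A}=(\frak G_1,\cal M,\frak G_2)$ and ${\bf B}=(\frak G_3,\cal N,\frak G_4)$, and let ${\bf A}\overset{P}{\xc}{\bf B}$ be an exchanger through $\cal G_1^0\overset{\vp}{\lTo}P\overset{\psi}{\rTo}\cal G_3^0$; both $\vp$ and $\psi$ are surjective, $P$ being a Morita equivalence of $\cal M$ and $\cal N$. The idea is to pull everything back over $P$. By Corollary~\ref{cor:crossing_pullback}, the triples ${\bf A}[P]\colonequals(\frak G_1[P],\cal M[P],\frak G_2[P])$ (pullback along $\vp$) and ${\bf B}[P]\colonequals(\frak G_3[P],\cal N[P],\frak G_4[P])$ (pullback along $\psi$) are again objects of $\fXExt$, and by Example~\ref{ex:pullback_strict} the canonical homomorphisms ${\bf A}[P]\rTo {\bf A}$ and ${\bf B}[P]\rTo {\bf B}$ are equivalences of crossed extensions. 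It therefore suffices to exhibit an isomorphism of crossed extensions ${\bf A}[P]\cong {\bf B}[P]$; one then takes ${\bf P}\colonequals {\bf A}[P]$, obtaining the span ${\bf A}\lTo {\bf A}[P]\overset{\cong}{\rTo}{\bf B}[P]\rTo {\bf B}$.

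To construct that isomorphism I would use the fact that, $P$ being a Morita equivalence, the induced strict morphism $\Phi^P\colon \cal M[P]\rTo \cal N[P]$ is an isomorphism of topological groupoids. Writing $\a_i',\b_i'$ and $\d_i',\theta_i'$ for the pullbacks to $P$ of the structure morphisms of ${\bf A}$ and ${\bf B}$, the maps $\a_1',\b_1',\d_1',\theta_1'$ are injective (as ${\bf A}$ and ${\bf B}$ are \emph{crossed} extensions), hence realize $\cal H_1[P],\cal H_2[P],\cal H_3[P],\cal H_4[P]$ as sub-bundles of groups of $\cal M[P]$ and $\cal N[P]$. The crucial point is that, under $\Phi^P$, one has $\a_1'(\cal H_1[P])=\d_1'(\cal H_3[P])$ and $\b_1'(\cal H_2[P])=\theta_1'(\cal H_4[P])$: indeed, for $p\in P$ and $h_1\in \cal H_1^{\vp(p)}$, freeness of the $\cal H_3$--action and the fact that the $\cal H_1$--orbit of $p$ equals the $\cal H_3$--orbit of $p$ (axiom~\ref{E1}) give a unique $h_3\in \cal H_3^{\psi(p)}$ with $\a_1(h_1)\cdot p=p\cdot\d_1(h_3)$, and the definition of $\Phi^P$ then forces $\Phi^P(\a_1'(p,h_1))=\d_1'(p,h_3)$, so that $h_1\mto h_3$ is a continuous isomorphism of group bundles $\cal H_1[P]\cong\cal H_3[P]$; axiom~\ref{E2} gives $\cal H_2[P]\cong\cal H_4[P]$ symmetrically. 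Taking quotients in the extensions $\cal H_2[P]\rTo\cal M[P]\rTo\cal G_1[P]$ and $\cal H_4[P]\rTo\cal N[P]\rTo\cal G_3[P]$ yields $\cal G_1[P]\cong\cal G_3[P]$ compatibly with $\a_2'$ and $\d_2'$, and similarly $\cal G_2[P]\cong\cal G_4[P]$; since the boundary maps of the pulled-back crossed modules factor as $\a_2'\a_1'$ and $\d_2'\d_1'$, and since by~\ref{CR4} the groupoid actions are implemented by conjugation inside $\cal M[P]$ and $\cal N[P]$ (which $\Phi^P$ preserves), all of the crossed-module structure is carried across, and together with $\Phi^P$ these data assemble into an isomorphism ${\bf A}[P]\cong {\bf B}[P]$. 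I expect this last step to be the main obstacle: translating the purely set-theoretic orbit conditions~\ref{E1} and~\ref{E2} into the identification of sub-bundles of groups under $\Phi^P$, and then verifying that the boundary homomorphisms and the groupoid actions of the pulled-back crossed modules genuinely survive that identification --- everything else is routine pullback bookkeeping or a direct appeal to Propositions~\ref{pro:equivalence_vs_exchanger} and~\ref{pro:exchangers_compo}.
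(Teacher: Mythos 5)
Your proof is correct, and the backward implication (from a span of equivalences to an exchanger) follows the paper exactly: invoke Proposition~\ref{pro:equivalence_vs_exchanger}, reverse one of the resulting exchangers, and compose via Proposition~\ref{pro:exchangers_compo}. For the forward implication, however, you take a genuinely different route. The paper builds a \emph{new} middle object over $P$: the projective product groupoid $\cal M\underset{P}{\ast}\cal N\rrTo P$, flanked by the two quotient crossed modules $\frak G_1^P=\bigl(\cal H_1\underset{P}{\ast}\cal H_3\xTo \cal M\ou{\ast}{\cal H_2,\cal H_4}{P}\cal N\bigr)$ and $\frak G_2^P$, and then checks that the canonical projections $pr_1,pr_4$ assemble into two equivalences ${\bf A}\lTo{\bf P}\rTo{\bf B}$. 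You instead take the middle object to be the pullback ${\bf A}[P]$ --- already known to be a crossed extension by Corollary~\ref{cor:crossing_pullback}, with ${\bf A}[P]\rTo{\bf A}$ an equivalence by Example~\ref{ex:pullback_strict} --- and concentrate the work in showing ${\bf A}[P]\cong{\bf B}[P]$ via the Morita-induced isomorphism $\Phi^P\colon\cal M[P]\rTo\cal N[P]$. Your key step, that $\Phi^P$ carries $\a_1'(\cal H_1[P])$ onto $\d_1'(\cal H_3[P])$ and $\b_1'(\cal H_2[P])$ onto $\t_1'(\cal H_4[P])$, is a correct translation of~\ref{E1} and~\ref{E2} (freeness plus equality of orbit spaces gives the unique $h_3$ with $\a_1(h_1)\cdot p=p\cdot\d_1(h_3)$, and the defining property of $g^P$ does the rest), and passing to quotients along the extensions~\ref{CR3} and~\ref{CR3_prime} then identifies the base groupoids, while~\ref{CR4} and Proposition~\ref{pro:images_crossing} guarantee the actions are conjugation-implemented and hence transported by $\Phi^P$. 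The two constructions produce essentially the same object (the paper itself records the homeomorphism $\cal M\underset{P}{\ast}\cal N\cong P\times_{\vp,t}\cal M\times_{s,\vp}P$), but your packaging buys a cleaner statement --- an exchanger is precisely a Morita equivalence along which the two crossed extensions become isomorphic after pullback --- at the cost of an asymmetric choice of middle object; the paper's version is symmetric in ${\bf A}$ and ${\bf B}$ but has to verify from scratch that the quotient crossed modules $\frak G_i^P$ and the projections are well behaved.
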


\begin{proof} ($\RTo$). Assume ${\bf A}=(\frak G_1,\cal M,\frak G_2)$ and ${\bf B}=(\frak G_3,\cal N,\frak G_4)$ with unit spaces $X$ and $Y$, and crossings $(\cal M,\a_1,\a_2,\b_1,\b_2)$ and $(\cal N,\d_1,\d_2,\t_1,\t_2)$, respectively. Suppose 
\[
X\stackrel{\vp}{\lTo} P\stackrel{\psi}{\rTo} Y
\]
is an exchanger from ${\bf A}$ to ${\bf B}$. Form the projective product groupoid $\cal M\underset{P}{\ast}\cal N\rrTo P$ over the Morita equivalence $P$. Define the topological space $\cal M\ou{\ast}{\cal H_2,\cal H_4}{P}\cal N$ as the quotient of $\cal M\underset{P}{\ast}\cal N$ out of the equivalence relation given by $(m,p_1,p_2,n)\sim (m',p_1,p_2, n')$ if and only if $m'=m\b_1(h_2)$ and $n'=\t_1(h_4)n$ for some $h_2\in \cal H_2^{\vp(p_2)}, h_4\in \cal H_4^{\psi(p_1)}$. Then it is easy to check that the groupoid structure of $\cal M\underset{P}{\ast}\cal N$ descends to $\cal M\ou{\ast}{\cal H_2,\cal H_4}{P}\cal N$ turning it into a topological groupoid with unit space $P$. Moreover, we get the crossed module of topological groupoids 
\[
{\frak G}_1^P = \left(\cal H_1\underset{P}{\ast}\cal H_3 \stackrel{\p_1^P}{\xTo} \cal M\ou{\ast}{\cal H_2,\cal H_4}{P}\cal N\right)
\]
by setting 
\begin{itemize}
\item $\p_1^P(h_1,p,h_3)\colonequals [\a_1(h_1),p,p,\t_1(h_3)]$, for $(h_1,p,h_3)\in \cal H_1\underset{P}{\ast}\cal H_3$;
\item $(h_1,p_1,h_3)^{[m,p_1,p_2,n]}\colonequals (h_1^{\a_2(m)},p_2,h_3^{\d_2(n)})$. 
\end{itemize}
With analogous reasoning, we form the topological groupoid $\cal M\ou{\ast}{\cal H_1,\cal H_3}{P}\cal N\rrTo P$ as the quotient of $\cal M\underset{P}{\ast}\cal N$ by the equivalence relation $(m,p_1,p_2,n)\sim (m',p_1,p_2,n')$ if and only if $m'=m\a_1(h_1)$ and $n'=\d_1(h_3)n$ for some $h_1\in \cal H_1^{\vp(p_2)}, h_3\in \cal H_3^{\psi(p_1)}$, hence we get the groupoid crossed module
\[
\frak G_2^P = \left(\cal H_2\underset{P}{\ast}\cal H_4\stackrel{\p_2^P}{\xTo}\cal M\ou{\ast}{\cal H_1,\cal H_3}{P}\cal N\right)
\]
with $\p_2^P(h_2,p,h_4)\colonequals [\b_1(h_2),p,p,\t_1(h_4)]$. 
Furthermore, we obtain the crossed extension ${\bf P}=(\frak G_1^P,\cal M\underset{P}{\ast}\cal N,\frak G_2^P)$ as
\[
\xymatrix{
\cal H_1\underset{P}{\ast}\cal H_3\ar[rr]^{\p_1^P} \ar[rd]_{\a_1\ast\d_1} && \cal M\ou{\ast}{\cal H_2,\cal H_4}{P}\cal N \\ 
& \cal M\underset{P}{\ast}\cal N\ar[ru]_{\pi}\ar[rd]^{\pi'} & \\
\cal H_2\underset{P}{\ast}\cal H_4 \ar[rr]^{\p_2^P} \ar[ru]^{\b_1\ast\t_1} && \cal M\ou{\ast}{\cal H_1,\cal H_3}{P}\cal N
}
\]
where $\pi$ and $\pi'$ are the quotient maps, and 
\[
\a_1\ast\d_1(h_1,p,h_3)\colonequals (\a_1(h_1),p,p,\d_1(h_3)), \ \ \b_1\ast\t_1(h_2,p,h_4)\colonequals (\b_1(h_2),p,p,\t_1(h_4)).
\] 

Moreover, since $P$ is a Morita equivalence, the maps  
\[
\cal M\underset{P}{\ast}\cal N\ni (m,p_1,p_2,n)\mto (p_1n,m,p_2) \in P\times_{\vp,t}\cal M\times_{s,\vp}P,
\] 
and
\[
 \cal M\underset{P}{\ast}\cal N\ni (m,p_1,p_2,n)\mto (p_1,n,mp_2)\in P\times_{\psi,t}\cal N\times_{s,\psi}P
\]
are homeomorphisms, and the maps 
\[
P\times_{\vp,t}\cal M\ni (p,m)\mto s(m)\in X, \ {\rm and\ } \ P\times_{\psi,t}\cal N\ni (p,n)\mto s(n)\in Y
\]
are continuous surjections; therefore the canonical projections 
\[
\cal M\stackrel{pr_1}{\lTo} \cal M\underset{P}{\ast}\cal N\stackrel{pr_4}{\rTo}\cal N
\]

are equivalences of topological groupoids. Finally, we construct the homomorphisms of crossed extensions below

\[
\begin{tikzpicture}
\node (A) at (-2.5,1) {$\cal H_1$};
\node (B) at (0.5,1) {$\cal G_1$};
\node (M) at (-1,-2) {$\cal M$};
\node (C) at (-2.5,-5) {$\cal H_2$};
\node (D) at (0.5,-5) {$\cal G_2$};

\node (A1) at (1.5,2) {$\cal H_1\underset{P}{\ast}\cal H_3$};
\node (B1) at (4.5,2) {$\cal M\ou{\ast}{\cal H_2,\cal H_4}{P}\cal N$};
\node (M1) at (3,-1) {$\cal M\underset{P}{\ast}\cal N$};
\node (C1) at (1.5,-4) {$\cal H_2\underset{P}{\ast}\cal H_4$};
\node (D1) at (4.5,-4) {$\cal M\ou{\ast}{\cal H_1,\cal H_3}{P}\cal N$};

\node (A2) at (4.5,4) {$\cal H_3$};
\node (B2) at (7.5,4) {$\cal G_3$};
\node (M2) at (6,1) {$\cal N$};
\node (C2) at (4.5,-2) {$\cal H_4$};
\node (D2) at (7.5,-2) {$\cal G_4$};

\draw[1ar] (A) -- (B) node[midway, below] {$\p_1$};
\draw[1ar] (A) to[bend left=20] (M);
\draw[1ar] (M) -- (D);
\draw[1ar] (M) -- (B);
\draw[1ar] (C) to[bend right=20] (M); 
\draw[1ar] (C) -- (D) node[midway, below] {$\p_2$};

\draw[1ar] (A1) -- (B1) node[midway, above,swap] {$\p_1^P$};
\draw[1ar] (A1) to[bend left=20] (M1);
\draw[1ar] (M1) edge[-,line width=6pt, draw=white] (D1) edge (D1);
\draw[1ar] (M1) -- (B1);
\draw[1ar] (C1) to[bend right=20] (M1); 
\draw[1ar] (C1) -- (D1) node[midway,above, swap] {$\p_2^P$};

\draw[1ar] (A2) -- (B2) node[midway, above] {$\p_3$};
\draw[1ar] (A2) to [bend left=20] (M2);
\draw[1ar] (M2) -- (D2);
\draw[1ar] (M2) -- (B2);
\draw[1ar] (C2) to[bend right=20] (M2);
\draw[1ar] (C2) -- (D2) node[midway,above] {$\p_4$}; 
\draw[1ar] (A1) -- (A) node[midway,above] {$\chi_1$};
\draw[1ar] (B1) -- (B) node[midway,below] {$\chi_1$}; 
\draw[1ar] (M1) -- (M) node[midway,above] {$pr_1$};
\draw[1ar] (C1) -- (C) node[midway,above] {$\chi_2$};
\draw[1ar] (D1)  -- (D) node[midway,below] {$\chi_2$}; 

\draw[1ar] (A1) -- (A2) node[midway,above] {$\k_1$};
\draw[1ar] (B1) -- (B2) node[midway,above] {$\k_1$};
\draw[1ar] (M1) -- (M2) node[midway,above] {$pr_4$};
\draw[1ar] (C1) -- (C2) node[midway,above] {$\k_2$};
\draw[1ar] (D1) -- (D2) node[midway,below] {$\k_2$};
\end{tikzpicture}
\]
where the strict morphisms of top and bottom groupoid crossed modules $\chi_i, \k_i, i=1,2$, are defined by
\[
\begin{array}{c}
\chi_1([m,p_1,p_2,n])\colonequals \a_2(m), \chi_1(h_1,p, h_3)\colonequals h_1, \chi_2([m,p_1,p_2,n])\colonequals \b_2(m), \chi_2(h_2,p,h_4)\colonequals h_2, \\
\k_1([m,p_1,p_2,n])\colonequals \d_2(n), \k_2(h_1,p,h_3)\colonequals h_3, \k_2([m,p_1,p_2,n])\colonequals \t_2(n), \ {\rm and\ } \ \k_2(h_2,p,h_4)\colonequals h_4, 
\end{array}
\]
which, using the properties of $\a_2,\b_2,\d_2$, and $\t_2$,  guaranteed by axioms~\eqref{CR1}~--~\ref{CR4}, can be seen to be hypercovers; this gives the desired decomposition~\ref{eq:decomposition_exchanger}\\

 ($\Longleftarrow$). Conversely, suppose we have a a chain of equivalences of crossed extensions ${\bf A}\stackrel{\Phi}{\lTo} {\bf P} \stackrel{\Psi}{\rTo}{\bf B}$ as in~\eqref{eq:decomposition_exchanger}, where ${\bf P}=(\frak G_5,\cal R,\frak G_6)$. Then, by Proposition~\ref{pro:equivalence_vs_exchanger}, we have two exchangers 
 \[
 {\bf A} \stackrel{P_{\Phi}}{\xc} {\bf P}\stackrel{P_{\Psi}}{\xc}{\bf B}
 \]
Therefore, we get the exchanger ${\bf A}\stackrel{P_\Phi\underset{\cal R}{\bullet}P_\Psi}{\xc}{\bf B}$, which achieves the proof. 

\end{proof}

\subsection{Morphisms of exchangers}

\begin{df}
Let ${\bf A}$ and ${\bf B}$ be crossed extensions with unit spaces $X$ and $Y$, respectively, and ${\bf A}\stackrel{P,Q}{\sxc} {\bf B}$ two semi-exchangers. A {\em morphism} from $P$ to $Q$ is just a morphism of generalized morphisms from $P$ to $Q$; that is, a continuous map $\eta:P\rTo Q$ such that 
\[
\xymatrix{
& X & \\
P \ar[ru]^{\vp} \ar[rd]_{\psi} \ar[rr]^{\eta} && Q \ar[lu]_{\vp^\prime} \ar[ld]^{\psi^\prime} \\
& Y&
} 
\]
is commutative, and such that $\eta$ commutes with the groupoid actions of $\cal M$ and $\cal N$ on $P$ and $Q$. Semi-exchanger morphisms will be symbolized as 
\begin{center}
\begin{tikzpicture}
\node (A) at (0,1) {${\bf A}$};
\node (B) at (0,-1) {${\bf B}$};
\node (P) at (-1,0) {$P$};
\node (Q) at (1,0) {$Q$};
\node (e) at (0,0.3) {$\eta$};

\draw[2ar] (A) to[bend right=60] (B);
\draw[2ar] (A) to[bend left=60] (B);
\draw[3ar] (-0.5,0) -- (0.5,0);
\draw (-0.5,0) -- (0.5,0);
\end{tikzpicture}
\end{center}

or just $P\stackrel{\eta}{\exch} Q$ when the crossed extensions ${\bf A}$ and ${\bf B}$ are understood. A {\em semi-exchanger isomorphism} is a semi-exchanger morphism that is a homeomorphism such that the inverse is also a semi-exchanger morphism.
\end{df}

It is easily seen that given two semi-exchanger morphisms 
\begin{center}
\begin{tikzpicture}
\node (A) at (0,1) {${\bf A}$};
\node (B) at (0,-1) {${\bf B}$};
\node (P) at (-1.2,0) {$P$};
\node (Q) at (0.2,-0.3) {$Q$};
\node (T) at (1.2,0) {$T$};
\node (e) at (-0.4,0.3) {$\eta$};
\node (z) at (0.5,0.3) {$\zeta$};

\draw[2ar] (A) to[bend right=100] (B);
\draw[2ar] (A) -- (B);
\draw[2ar] (A) to[bend left=100] (B);
\draw[3ar] (-0.7,0) -- (-0.1,0);
\draw (-0.7,0) -- (-0.1,0);
\draw[3ar] (0.2,0) -- (0.8,0);
\draw (0.2,0) -- (0.8,0);
\end{tikzpicture}
\end{center}
the composite $\zeta\circ \eta\colon P\rTo T$ of the continuous maps $\eta$ and $\zeta$ is a semi-exchanger morphism from $P$ to $T$. We call the resulting morphism the {\em horizontal composition} of $\eta$ with $\zeta$, and we denote it by $\eta\star_h\zeta$. 

On the other hand, given two morphisms of semi-exchangers 
\[
\begin{tikzpicture}
\node (A) at (0,1) {${\bf A}$};
\node (B) at (0,-1) {${\bf B}$};
\node (C) at (0,-3) {${\bf C}$};
\node (P1) at (-1.1,0) {$P_1$};
\node (Q1) at (1.1,0) {$Q_1$};
\node (P2) at (-1.1,-2) {${P_2}$};
\node (Q2) at (1.1,-2) {$Q_2$};
\node (e1) at (0,0.3) {$\eta_1$};
\node (e1) at (0,-1.7) {$\eta_2$};

\draw[2ar] (A) to[bend right=60] (B);
\draw[2ar] (A) to[bend left=60] (B);
\draw[3ar] (-0.5,0) -- (0.5,0);
\draw (-0.5,0) -- (0.5,0);

\draw[2ar] (B) to[bend right =60] (C);
\draw[2ar] (B) to[bend left=60] (C);
\draw[3ar] (-0.5,-2) -- (0.5, -2);
\draw (-0.5,-2) -- (0.5,-2);
\end{tikzpicture}
\]

the continuous map 
\[
\eta_1\star_v\eta_2\colon P_1\underset{\cal N}{\bullet}P_2 \ni [p_1,p_2]\mto [\eta_1(p_1),\eta_2(p_2)]\in Q_1\underset{\cal N}{\bullet}Q_2
\]
defines the morphism
\[
\begin{tikzpicture}
\node (A) at (0,1) {${\bf A}$};
\node (C) at (0,-1) {${\bf C}$};
\node (P) at (-1.8,0) {$P_1\underset{\cal N}{\bullet}P_2$};
\node (Q) at (1.8,0) {$Q_1\underset{\cal N}{\bullet}Q_2$};
\node (e) at (0,0.3) {$\eta_1\star_v\eta_2$};

\draw[2ar] (A) to[bend right=90] (C);
\draw[2ar] (A) to[bend left=90] (C);
\draw[3ar] (-0.7,0) -- (0.7,0);
\draw (-0.7,0) -- (0.7,0);
\end{tikzpicture}
\]

Now, let ${\bf A}$ and ${\bf B}$ be crossed extensions as usual and ${\bf A} \stackrel{P}{\xc} {\bf B}$ an exchanger. We denote by $\bar{P}$ the topological space $P$ equipped with the left ${\cal N}$--action $n\cdot \bar{p}\colonequals pn^{-1}$ and right $\cal M$--action $\bar{p}\cdot m\colonequals m^{-1}p$ when the left hand sides make sense, where $\bar{p}$ is $p$ viewed as an element in $\bar{P}$. Then,
\[
Y\stackrel{\psi}{\lTo} \bar{P} \stackrel{\vp}{\rTo} X
\]
together with these groupoid actions, is a generalized morphism from $\cal N$ to $\cal M$. Moreover,

\begin{lem}\label{lem:exchanger_product}
Let ${\bf A} \stackrel{P}{\xc} {\bf B}$ be an exchanger. Then ${\bf B} \stackrel{\bar{P}}{\RTo} {\bf A}$ is an exchanger. Furthermore, there are exchanger isomorphisms $P\underset{\cal N}{\bullet}\bar{P} \exch {\bf I}_{\cal M}$ and $\bar{P}\underset{\cal M}{\bullet}P\exch {\bf I}_{\cal N}$.
\end{lem}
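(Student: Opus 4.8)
The plan is, first, to upgrade the reversed bibundle $\bar P$ from a generalized morphism to an exchanger ${\bf B}\overset{\bar P}{\xc}{\bf A}$, and then to deduce the two cancellation isomorphisms essentially for free. Since $\bar P$ carries the left $\cal N$--action $n\cdot\bar p = pn^{-1}$ and the right $\cal M$--action $\bar p\cdot m = m^{-1}p$, it is a generalized morphism $\cal N\to\cal M$ with moment maps $Y\overset{\psi}{\lTo}\bar P\overset{\vp}{\rTo}X$, and it is a Morita equivalence precisely because $P$ is one: reversing and inverting a biprincipal bibundle yields a biprincipal bibundle. To verify axioms~\ref{E1} and~\ref{E2} for $\bar P$ viewed as a candidate exchanger ${\bf B}\sxc{\bf A}$, I would unwind the inversion. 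The left $\cal H_3$--action on $\bar P$ induced through $\d_1\colon\cal H_3\to\cal N$ is $h_3\cdot\bar p = p\,\d_1(h_3)^{-1}$, which has the same orbits as the right $\cal H_3$--action on $P$; likewise the right $\cal H_1$--action on $\bar P$ induced through $\a_1\colon\cal H_1\to\cal M$ is $\bar p\cdot h_1 = \a_1(h_1)^{-1}p$, with the same orbits as the left $\cal H_1$--action on $P$. Both are therefore free, and $\bar P/\cal H_3 = P/\cal H_3 = P/\cal H_1 = \bar P/\cal H_1$, the middle equality being axiom~\ref{E1} for $P$; this is exactly axiom~\ref{E1} for $\bar P$. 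Axiom~\ref{E2} for $\bar P$ follows symmetrically from axiom~\ref{E2} for $P$. Hence ${\bf B}\overset{\bar P}{\xc}{\bf A}$ is an exchanger.

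For the cancellation isomorphisms, by Proposition~\ref{pro:exchangers_compo} the composite $P\underset{\cal N}{\bullet}\bar P$ is again an exchanger ${\bf A}\xc{\bf A}$, and ${\bf I}_{\cal M}$ is the trivial exchanger of Example~\ref{ex:unit_exchanger}; so it suffices to produce an isomorphism of generalized morphisms between the two and then observe automatic compatibility with the crossed--module data. The map I would use is $\Theta\colon P\underset{\cal N}{\bullet}\bar P\rTo\cal M={\bf I}_{\cal M}$, $\Theta([p,\bar q])\colonequals m$, where (recalling $\psi(p)=\psi(q)$ for a class of the composite) $m$ is the unique arrow of $\cal M$ with $p = m\cdot q$; existence and uniqueness of $m$ come from the left $\cal M$--action on $P$ being free and principal with quotient map $\psi$. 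The routine verifications are: that $m$ is independent of the chosen representative $(p,q)$ and depends continuously on it; that $\Theta(m'\cdot[p,\bar q]\cdot m'') = m'\,\Theta([p,\bar q])\,m''$; that $\Theta$ carries the two moment maps of the composite to $t$ and $s$, namely $t(m)=\vp(p)$ and $s(m)=\vp(\bar q)$; and that $m\mapsto[m\cdot p,\bar p]$ for any $p$ with $\vp(p)=s(m)$ — well defined by freeness of the right $\cal N$--action, continuous because $\vp\colon P\to X$ is a locally trivial principal $\cal N$--bundle — is a two-sided inverse. Since $\Theta$ is then an $\cal M$--biequivariant homeomorphism compatible with the moment maps, and the $\cal H_1$-- and $\cal H_2$--actions on both sides are induced through $\a_1,\b_1\colon\cal H_i\to\cal M$, it automatically intertwines those actions; hence $\Theta$ is an exchanger isomorphism $P\underset{\cal N}{\bullet}\bar P\exch{\bf I}_{\cal M}$. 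Interchanging the roles of $\cal M$ and $\cal N$ (and of the pairs $\a_i,\d_i$ versus $\b_i,\t_i$) yields $\bar P\underset{\cal M}{\bullet}P\exch{\bf I}_{\cal N}$.

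The mathematical content is slight, and the only real obstacle is bookkeeping: keeping the left/right conventions and the inversions in $\bar P$, in the composition $\underset{\cal N}{\bullet}$, and in the induced $\cal H_i$--actions mutually consistent, so that the orbit-space equalities for~\ref{E1}/\ref{E2} and the biequivariance of $\Theta$ come out on the nose. A secondary delicate point is the continuity of $\Theta^{-1}$, which I would base on local triviality of the Morita bibundle $\vp\colon P\to X$; alternatively one can avoid writing $\Theta$ explicitly by invoking the general fact that a generalized morphism of topological groupoids is invertible in the bicategory of groupoids exactly when it is a Morita equivalence — with $\bar P$ the inverse of $P$ — and then only check that the canonical invertibility $2$--cells respect the $\cal H_i$--actions, which is again automatic from $\cal M$-- and $\cal N$--equivariance.
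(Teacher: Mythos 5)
Your proposal is correct and follows essentially the same route as the paper: the paper also dispatches the first claim as immediate and obtains the two cancellation isomorphisms from principality of $\vp\colon P\rTo X$ and $\psi\colon P\rTo Y$, via the homeomorphisms $P\times_X\cal M\ni(p,m)\mto(p,\overline{mp})\in P\times_Y\bar P$ (and its mirror) descending to the quotients. Your map $\Theta$ is just the inverse direction of the paper's descended homeomorphism, and your extra care with axioms~\ref{E1}--\ref{E2} for $\bar P$ and with equivariance fills in details the paper leaves implicit.
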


\begin{proof}
The first statement is immediate. Now, note that since $P\rTo X$ is a principal $\cal N$--bundle and $P\rTo Y$ is a principal $\cal M$--bundle, the maps $P\times_{X}\cal M\ni (p,m)\mto (p,\bar{mp})\in P\times_Y\bar{P}$ and $\cal N\times_YP \ni (n,p)\mto (\bar{pn}, p)\in \bar{P}\times_XP$ are homeomorphisms which descend to the quotients and give two homeomorphisms $P\underset{\cal N}{\bullet}\bar{P}\overset{\cong}{\rTo} \cal M$ and $\bar{P}\underset{\cal M}{\bullet}P\overset{\cong}{\rTo}\cal N$. 
\end{proof}

\begin{thm}\label{thm:2-cat}
The collection $\fXExt$ of all crossed extensions of groupoid crossed modules is a weak $2$--category whose $1$--arrows are semi-exchangers and $2$--arrows are semi-exchanger morphisms.
\end{thm}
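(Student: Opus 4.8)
The plan is to verify the axioms of a (weak) bicategory directly, assembling the pieces that have already been established in the preceding subsections. The objects are the triples $\bf A=(\frak G_1,\cal M,\frak G_2)\in\fXExt$; for each pair $\bf A,\bf B$ the hom-category has as objects the semi-exchangers ${\bf A}\sxc{\bf B}$ (Definition of semi-exchanger) and as morphisms the semi-exchanger morphisms $P\exch Q$. First I would check that $\fXExt({\bf A},{\bf B})$ really is a category: identities are the identity maps $\Id_P\colon P\to P$, which trivially satisfy the compatibility with the $\cal M$- and $\cal N$-actions, and composition of morphisms is the horizontal composition $\eta\star_h\zeta=\zeta\circ\eta$ of continuous equivariant maps, which is associative and unital because composition of functions is. Next, horizontal composition of $1$-arrows is the vertical composition $P\underset{\cal N}{\bullet}Q$ of Proposition~\ref{pro:exchangers_compo} (which indeed lands in $\fXExt({\bf A},{\bf C})$), and its action on $2$-arrows is the operation $\eta_1\star_v\eta_2$ defined above via $[p_1,p_2]\mapsto[\eta_1(p_1),\eta_2(p_2)]$; one checks this is a bifunctor $\fXExt({\bf A},{\bf B})\times\fXExt({\bf B},{\bf C})\to\fXExt({\bf A},{\bf C})$ by the usual interchange computation on representatives.

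The bulk of the remaining work is producing the structural $2$-isomorphisms and verifying the coherence pentagon and triangle. The unit $1$-arrow on $\bf A$ is the trivial exchanger $\bf I_{\cal M}$ of Example~\ref{ex:unit_exchanger}, i.e. the canonical bibundle $X\stackrel{t}{\lTo}\cal M\stackrel{s}{\rTo}X$. The left and right unitors are the canonical homeomorphisms ${\bf I}_{\cal M}\underset{\cal M}{\bullet}P\cong P$ and $P\underset{\cal N}{\bullet}{\bf I}_{\cal N}\cong P$ of balanced tensor products of bibundles over their unit bibundles (these are standard: $\cal M\underset{\cal M}{\bullet}P\to P$, $[m,p]\mapsto mp$, with inverse $p\mapsto[t(p')\,,p]$ chosen via a local section — but since $P$ is a principal bundle the map $m\cdot p$ is already a quotient identification). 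The associator is the canonical homeomorphism $(P\underset{\cal N}{\bullet}Q)\underset{\cal R}{\bullet}S\cong P\underset{\cal N}{\bullet}(Q\underset{\cal R}{\bullet}S)$, $[[p,q],s]\mapsto[p,[q,s]]$. That each of these canonical maps respects the induced $\cal H_i$-actions and the source/target maps — hence is a semi-exchanger isomorphism, not merely a homeomorphism of bibundles — is immediate from the way the $\cal H_i$ act through the $\cal M$- and $\cal N$-actions (axioms~\ref{E1},~\ref{E2}). Naturality of the unitors and associator in the $2$-arrows, and the pentagon and triangle identities, then reduce to equalities of the form $[[\eta(p),\zeta(q)],\xi(s)]=[\eta(p),[\zeta(q),\xi(s)]]$ on representatives, which hold strictly; I would state these and indicate the one-line check rather than write them out.

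I would organize the proof as: (1) $\fXExt({\bf A},{\bf B})$ is a category; (2) $\underset{\cal N}{\bullet}$ with $\star_v$ is a bifunctor; (3) definition of $\bf I_{\cal M}$, the unitors, and the associator as semi-exchanger isomorphisms, citing Lemma~\ref{lem:exchanger_product} for the fact that $\bf I_{\cal M}$ behaves as a unit up to isomorphism and Proposition~\ref{pro:exchangers_compo} for composability; (4) naturality and coherence. The main obstacle I anticipate is purely bookkeeping rather than conceptual: confirming that every one of the canonical bibundle isomorphisms is compatible with \emph{all four} induced group-bundle actions ($\cal H_1,\cal H_2$ on the left, $\cal H_3,\cal H_4$ on the right) and descends correctly through the various quotients, so that it is genuinely a morphism in $\fXExt$; and, relatedly, checking that $P\underset{\cal N}{\bullet}Q$ still satisfies axioms~\ref{E1} and~\ref{E2} with the right orbit spaces, which is where the commutation results of Proposition~\ref{pro:images_crossing} and the preceding lemma on commuting $\cal H_i$-actions are used. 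None of this requires a new idea — the principality of the bibundles makes every tensor-product identification canonical — but it is the step most in need of care.
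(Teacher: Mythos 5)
Your proposal follows essentially the same route as the paper: the paper's proof likewise defines the hom-categories of semi-exchangers and their morphisms, takes $P\underset{\cal N}{\bullet}Q$ as the composition of $1$--arrows, exhibits the associator $a_{P,Q,T}\colon [p,[q,t]]\mapsto[[p,q],t]$ and the unitors $r_P\colon[m,p]\mapsto mp$, $l_P\colon[p,n]\mapsto pn$ as canonical semi-exchanger isomorphisms, and leaves the interchange law $(\eta_1\star_h\zeta_1)\star_v(\eta_2\star_h\zeta_2)=(\eta_1\star_v\eta_2)\star_h(\zeta_1\star_v\zeta_2)$ and the coherence identities to a routine check on representatives. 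Your additional attention to the compatibility of these canonical maps with all four $\cal H_i$--actions is exactly the bookkeeping the paper elides, and no new idea is needed.
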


\begin{proof}
Define the source and target maps for $1$--arrows in $\fXExt$ in the obvious way: 
\[
s({\bf A}\stackrel{P}{\sxc}{\bf B}) \colonequals {\bf A}, \ {\rm and\ } \ t({\bf A}\stackrel{P}{\sxc}{\bf B}) \colonequals {\bf B}, 
\]
and similarly one defines the source and target maps for the $2$--arrows.
Furthermore, by definition of the horizontal composition of exchangers, associated to any triple of semi-exchangers ${\bf A} \stackrel{P}{\sxc} {\bf B} \stackrel{Q}{\sxc}{\bf C} \stackrel{T}{\RTo} {\bf D}$, where ${\bf B}=(\frak G_3,\cal N,\frak G_4), {\bf C}=(\frak G_5,\cal R,\frak G_6)$, there is an {\em associator} 

\[
\begin{tikzpicture}
\node (A) at (0,1) {${\bf A}$};
\node (D) at (0,-1) {${\bf D}$};
\node (P) at (-2.2,0) {$P\underset{\cal N}{\bullet}(Q\underset{\cal R}{\bullet}T)$};
\node (Q) at (2.2,0) {$(P\underset{\cal N}{\bullet}Q)\underset{\cal R}{\bullet}T$};
\node (e) at (0,0.3) {$a_{P,Q,T}$};

\draw[2ar] (A) to[bend right=90] (D);
\draw[2ar] (A) to[bend left=90] (D);
\draw[3ar] (-0.7,0) -- (0.7,0);
\draw (-0.7,0) -- (0.7,0);
\end{tikzpicture}
\]
given by the map $[p,[q,t]]\mto [[p,q],t]$, which is clearly a semi-exchanger isomorphisms. 
Moreover, for all semi-exchanger ${\bf A} \overset{P}{\sxc} {\bf B}$, we get two semi-exchanger isomorphisms 
\[
\begin{tikzpicture}
\node (A) at (0,1) {${\bf A}$};
\node (B) at (0,-1) {${\bf B}$};
\node (P) at (-1.5,0) {${\bf I}_{\cal M}\underset{\cal M}{\bullet}P$};
\node (Q) at (1,0) {$P$};
\node (r) at (0,0.3) {$r_P$};

\draw[2ar] (A) to[bend right=60] (B);
\draw[2ar] (A) to[bend left=60] (B);
\draw[3ar] (-0.5,0) -- (0.5,0);
\draw (-0.5,0) -- (0.5,0);

\node (and) at (2,0) {and};

\node (A1) at (5,1) {${\bf A}$};
\node (B1) at (5,-1) {${\bf B}$};
\node (P1) at (3.5,0) {$P\underset{\cal N}{\bullet}{\bf I}_{\cal N}$};
\node (Q1) at (6,0) {$P$};
\node (l) at (5,0.3) {$l_P$};

\draw[2ar] (A1) to[bend right=60] (B1);
\draw[2ar] (A1) to[bend left=60] (B1);
\draw[3ar] (4.5,0) -- (5.5,0);
\draw (4.5,0) -- (5.5,0);
\end{tikzpicture}
\]
through the homeomorphisms $$r_P\colon {\bf I}_{\cal M}\underset{\cal M}{\bullet}P\ni [m,p]\mto mp\in P \quad {\rm and} \quad l_P\colon P\underset{\cal N}{\bullet}{\bf I}_{\cal N}\ni [p,n]\mto pn\in P.$$
Furthermore, we leave to the reader to check that horizontal and vertical compositions of semi-exchanger morphisms satisfy the following coherence law 
 
\[
\begin{tikzpicture}
\node (A1) at (0,2.5) {${\bf A}$};
\node (B1) at (0,0) {${\bf B}$};
\node (P1) at (-1.2,1) {$P_1$};
\node (Q1) at (0.29,0.7) {$Q_1$};
\node (T1) at (1.2,1) {$T_1$};
\node (e1) at (-0.4,1.5) {$\eta_1$};
\node (z1) at (0.5,1.5) {$\zeta_1$};

\node (C1) at (0,-2.5) {${\bf C}$};
\node (P2) at (-1.2,-1) {$P_2$};
\node (Q2) at (0.29, -1.7) {$Q_2$};
\node (T2) at (1.2,-1) {$T_2$};
\node (e2) at (-0.4,-1) {$\eta_2$};
\node (z2) at (0.5,-1) {$\zeta_2$};

\draw[2ar] (-0.3,2.3) to[bend right=100] (-0.3,0.2);
\draw[2ar] (A1) -- (B1);
\draw[2ar] (0.3,2.3) to[bend left=100] (0.3,0.2);
\draw[3ar] (-0.8,1.2) -- (-0.1,1.2);
\draw (-0.8,1.2) -- (-0.1,1.2);
\draw[3ar] (0.2,1.2) -- (0.8,1.2);
\draw (0.2,1.2) -- (0.8,1.2);

\draw[2ar] (-0.3,-0.2) to[bend right=100] (-0.3,-2.3);
\draw[2ar] (B1) -- (C1);
\draw[2ar] (0.3,-0.2) to[bend left=100] (0.3,-2.3);
\draw[3ar] (-0.8,-1.3) -- (-0.1,-1.3);
\draw (-0.8,-1.3) -- (-0.1,-1.3);
\draw[3ar] (0.2,-1.3) -- (0.8,-1.3);
\draw (0.2,-1.3) -- (0.8,-1.3);

\node (mto) at (1.8,0) {$\mto$};

\node (A2) at (5,1.5) {${\bf A}$};
\node (C2) at (5,-1.5) {${\bf C}$};
\node (P) at (3,0) {$P_1\underset{\cal N}{\bullet}P_2$};
\node (Q) at (7.2,0) {$T_1\underset{\cal N}{\bullet}T_2$};
\node (e3) at (5,0.3) {$\eta$};

\draw[2ar] (A2) to[bend right=100] (C2);
\draw[2ar] (A2) to[bend left=100] (C2);
\draw[3ar] (4,0) -- (6,0);
\draw (4,0) -- (6,0);
\end{tikzpicture}
\]
where $\eta = (\eta_1\star_h\zeta_1)\star_v(\eta_2\star_h\zeta_2) = (\eta_1\star_v\eta_2)\star_h(\zeta_1\star_v\zeta_2)$.
\end{proof}

\begin{cor}
Let $\frak G_1$ and $\frak G_2$ be groupoid crossed modules. The collection $\XExt(\frak G_1,\frak G_2)$ forms a weak $2$--groupoid whose objects are crossed extensions $\frak G_1\underset{\cal M}{\xext}\frak G_2$, $1$--arrows are exchangers, and $2$--arrows are exchanger isomorphisms. 
\end{cor}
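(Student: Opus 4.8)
The plan is to derive this as a straightforward specialization of Theorem~\ref{thm:2-cat} together with Lemma~\ref{lem:exchanger_product}. First I would observe that $\XExt(\frak G_1,\frak G_2)$ is precisely the sub-$2$-category of $\fXExt$ whose objects are the triples $(\frak G_1,\cal M,\frak G_2)$, whose $1$-arrows are the \emph{exchangers} between them, and whose $2$-arrows are the \emph{exchanger isomorphisms}; so the first point to settle is that this sub-collection is closed under all the structural operations of the weak $2$-category $\fXExt$. Closure of exchangers under the vertical composition $\underset{\cal N}{\bullet}$ is the remark following Proposition~\ref{pro:exchangers_compo} (a composite of Morita equivalences of the middle groupoids is again a Morita equivalence); the unit $1$-arrow at $(\frak G_1,\cal M,\frak G_2)$ is the trivial exchanger ${\bf I}_{\cal M}$ of Example~\ref{ex:unit_exchanger}; and the associators $a_{P,Q,T}$ and unitors $r_P,l_P$ constructed in the proof of Theorem~\ref{thm:2-cat} are, by their defining formulas $[p,[q,t]]\mapsto [[p,q],t]$, $[m,p]\mapsto mp$, $[p,n]\mapsto pn$, homeomorphisms that manifestly intertwine the relevant groupoid actions, hence are exchanger isomorphisms as soon as the $1$-arrows appearing in them are exchangers. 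The same applies to the horizontal and vertical composites of exchanger isomorphisms. Thus $\XExt(\frak G_1,\frak G_2)$ inherits a weak $2$-category structure, with pentagon and triangle coherences holding because they are equations among $2$-arrows already valid in $\fXExt$.

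Next I would check the two invertibility conditions that promote this weak $2$-category to a weak $2$-groupoid. For $2$-arrows there is nothing to do: an exchanger isomorphism is by definition a semi-exchanger morphism that is a homeomorphism with semi-exchanger-morphism inverse, so its inverse is again an exchanger isomorphism. For $1$-arrows, given an exchanger ${\bf A}\stackrel{P}{\xc}{\bf B}$, Lemma~\ref{lem:exchanger_product} furnishes the reversed exchanger ${\bf B}\stackrel{\bar P}{\xc}{\bf A}$ together with exchanger isomorphisms $P\underset{\cal N}{\bullet}\bar P \ \exch\ {\bf I}_{\cal M}$ and $\bar P\underset{\cal M}{\bullet}P \ \exch\ {\bf I}_{\cal N}$. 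Since ${\bf I}_{\cal M}$ and ${\bf I}_{\cal N}$ are the identity $1$-arrows of ${\bf A}$ and ${\bf B}$ in $\XExt(\frak G_1,\frak G_2)$ (by the remark after Proposition~\ref{pro:equivalence_vs_exchanger}), this exhibits $P$ as an equivalence with pseudo-inverse $\bar P$. Hence every $1$-arrow and every $2$-arrow of $\XExt(\frak G_1,\frak G_2)$ is invertible in the appropriate weak sense, which is exactly the assertion.

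The only point demanding any care — and the one I would actually spell out — is that the coherence data of Theorem~\ref{thm:2-cat} genuinely restrict to $\XExt(\frak G_1,\frak G_2)$; this is bookkeeping rather than a real obstacle, and it holds because the operations $\underset{\cal N}{\bullet}$ and $\overline{(\ \cdot\ )}$, as well as the trivial exchangers, all preserve the property of defining a Morita equivalence of the middle groupoids, so none of the ambient associators or unitors leaves the sub-$2$-category when fed exchangers. With this observation in place the corollary follows at once; no separate construction is needed, since the objects, $1$-arrows, and $2$-arrows are exactly those by which $\XExt(\frak G_1,\frak G_2)$ was introduced.
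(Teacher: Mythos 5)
Your proof is correct and follows essentially the same route as the paper: restrict the weak $2$--category structure of $\fXExt$ from Theorem~\ref{thm:2-cat} to the triples $(\frak G_1,\cal M,\frak G_2)$, take the trivial exchangers as weak units, and invoke Lemma~\ref{lem:exchanger_product} for the weak invertibility of $1$--arrows, with invertibility of $2$--arrows holding by definition. The only ingredient the paper includes that you omit is the appeal to Theorem~\ref{thm:crossing_decomposition} to ensure that $\XExt(\frak G_1,\frak G_2)$ is actually a set, which is a set-theoretic bookkeeping point rather than a substantive one.
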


\begin{proof}
For all fixed $\frak G_1$ and $\frak G_2$, it is a consequence of the decomposition theorem of crosssings (Theorem~\ref{thm:crossing_decomposition}) that $\XExt(\frak G_1,\frak G_2)$ is indeed a set. Now, for two crossed extensions $\frak G_1\underset{\cal M}{\xext}\frak G_2$ and $\frak G_1\underset{\cal N}{\xext}\frak G_2$, we define 
\[
\XExt(\frak G_1,\frak G_2)(\cal M,\cal N)=\fXExt((\frak G_1,\cal M,\frak G_2), (\frak G_1,\cal N,\frak G_2))
\] 
which, thanks to the preceding theorem and Lemma~\ref{lem:exchanger_product}, is a groupoid; for, every exchanger ${\bf A}\stackrel{P}{\xc}{\bf B}$ has an inverse $\bar{P}$ up to an exchanger isomorphism. Moreover, the composition "functor" 
\[
\XExt(\frak G_1,\frak G_2)(\cal M,\cal N)\times \XExt(\frak G_1,\frak G_2)(\cal N,\cal R)\rTo \XExt(\frak G_1,\frak G_2)(\cal M,\cal R)
\]
is given by vertical composition of exchangers and the two compositions of exchanger morphisms defined previously.
\end{proof}

\subsection{Principal $(\cal A\rTo \Aut(\cal A))$--bundles over $2$--groupoids}\label{subs:principal}

In this section we generalize the notion of principal $(G\rTo \Aut(G))$--bundles over Lie groupoids defined in~\cite{Ginot-Stienon:G-gerbes} to topological $2$--groupoids and crossed modules of topological groupoids and show their connection with crossings.  

\begin{df}
Let $\cal A\rTo X$ be a bundle of topological groups and let ${\bf G}$ be a topological $2$--groupoids. A {\em principal} $(\cal A\rTo \Aut(\cal A))$--{\em bundle} over ${\bf G}$ is a weak homomorphism from $\bf G$ to the topological $2$--groupoid $\cal A\rtimes\Aut(\cal A)\rrTo \Aut(\cal A)\rrTo X$ associated to the groupoid crossed module $\cal A\xTo \Aut(\cal A)$.
\end{df}

In view of Theorem~\ref{thm:crossing_decomposition}, a principal $(\cal A\rTo \Aut(\cal A))$--bundle over $\bf G$ is equivalent to a crossing 
\[
{\frak G}_{\bf G}\underset{\cal M}{\cross}(\cal A\stackrel{Ad}{\xTo}\Aut(\cal A)).
\]

This suggests the following equivalent definition in terms of groupoid crossed modules and crossings. 

\begin{df}
Let $\frak G$ be a groupoid crossed module. A {\em principal} $(\cal A\rTo \Aut(\cal A))$--{\em bundle} over ${\frak G}$ is a crossing from $\frak G$ to the groupoid crossed module $\cal A\stackrel{Ad}{\xTo}\Aut(\cal A)$.
\end{df} 

The result below generalizes~\cite[Theorem 3.4]{Ginot-Stienon:G-gerbes}. 

\begin{thm}\label{thm:crossings_vs_extensions}
Let $\cal G\rrTo X$ be a topological groupoid and $\cal A\rTo X$ a groupoid $\cal G$--module. There is a bijection between the set ${\bf ext}(\cal G,\cal A)$ of Morita equivalence classes of groupoid $\cal A$--extensions and the set of crossings $(\cal G^0\xTo \cal G)\cross (\cal A\xTo \Aut(\cal A))$ up to exchangers. 
\end{thm}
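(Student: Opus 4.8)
The statement asserts a bijection between Morita equivalence classes of groupoid $\cal A$--extensions of $\cal G$ and crossings $(\cal G^0\xTo\cal G)\cross(\cal A\xTo\Aut(\cal A))$ up to exchangers. The strategy is to build the two maps in each direction and check they are mutually inverse, exploiting the structure theory developed above.

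First I would construct the map from extensions to crossings. Given a groupoid $\cal A$--extension $\cal A\stackrel{\iota}{\mono}\widetilde{\cal G}\stackrel{\pi}{\epi}\cal G$, Example~\ref{ex:extension_crossed} packages it as a crossed module $\cal A\xTo\cal G$; but here I instead need a \emph{crossing} to the fixed crossed module $\cal A\xTo\Aut(\cal A)$. The key observation is that the extension determines a canonical strict morphism of crossed modules $(\cal A\xTo\cal G)\rTo(\cal A\xTo\Aut(\cal A))$: on the bundle part it is the identity of $\cal A$, and on $\cal G$ it is the action map $\cal G\rTo\Aut(\cal A)$, $g\mto c_g$, which is precisely the $\cal G$--module structure on $\cal A$. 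Wait --- this does not quite commute with $\p$, so I would more carefully take as the crossing the groupoid $\cal M\colonequals\cal A\rtimes_\chi\cal G$ produced by Proposition~\ref{pro2:crossing}, or directly use $\widetilde{\cal G}$ itself: one checks that $\widetilde{\cal G}\rrTo X$, together with $\a_1=\iota$, $\a_2=\pi$, $\b_1=$ the structure inclusion of $\cal A$ via $\widetilde{\cal G}$, and $\b_2\colon\widetilde{\cal G}\rTo\Aut(\cal A)$, $\widetilde{g}\mto(a\mto\widetilde{g}a\widetilde{g}^{-1})$, satisfies axioms~\ref{CR1}--\ref{CR4}: \ref{CR3} holds because $\cal A\mono\widetilde{\cal G}\epi\cal G$ is an extension, and \ref{CR4} is exactly the relation $\p(h^g)=g^{-1}\p(h)g$ for the two crossed modules. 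This produces a crossing; a Morita equivalence of extensions then yields an exchanger by Proposition~\ref{pro:equivalence_vs_exchanger} (after rephrasing a Morita equivalence of extensions as a homomorphism of crossed extensions), so the map descends to the quotient sets.

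Next I would construct the reverse map. Given a crossing $(\cal G^0\xTo\cal G)\underset{\cal M}{\cross}(\cal A\xTo\Aut(\cal A))$, apply the decomposition of Theorem~\ref{thm:crossing_decomposition}: since the left crossed module has trivial bundle $\cal G^0\rTo\cal G^0$, the crossed module $\frak G'=\cal H_1\times_X\cal H_2\xTo\cal M$ from that proof collapses --- $\cal H_1=\cal G^0$ so $\cal H_1\times_X\cal H_2\cong\cal H_2=\cal A[\cal M^0]$ --- and one reads off that $\cal M$ is (after passing to the pullback over $\cal M^0$, which changes nothing up to Morita equivalence by Corollary~\ref{cor:xext_pullback}) a groupoid $\cal A$--extension of $\cal G$; this is the content already signaled in Example~\ref{ex1:groupoid_crossing}, where $\b_2$ becomes an isomorphism $\cal M\cong\cal G$ only in the pure extension case --- here instead $\a_2\colon\cal M\epi\cal G$ has kernel $\a_1(\cal G^0)=\cal G^0$, so it is $\b_1\colon\cal A[\cal M^0]\mono\cal M$ and $\a_2\colon\cal M\epi\cal G$ that exhibit the extension $\cal A[\cal M^0]\mono\cal M\epi\cal G[\cal M^0]$. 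One then checks, using \ref{CR4} for the $\b$--side, that the induced $\cal G$--action on $\cal A$ by conjugation inside $\cal M$ agrees with the given module structure (this is where $\b_2\colon\cal M\rTo\Aut(\cal A)$ being compatible with $Ad$ gets used). Exchangers between crossings go to Morita equivalences of the resulting extensions by unwinding the definition of exchanger through axioms~\ref{E1}--\ref{E2}.

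Finally, the two maps are mutually inverse: starting from an extension, forming its crossing, and decomposing returns the same extension up to Morita equivalence essentially by construction; starting from a crossing and reassembling the extension into a crossing returns the original crossing up to an exchanger, which follows because the diamond/pullback manipulations only alter things within their Morita class (Corollaries~\ref{cor:xext_pullback} and~\ref{cor:crossing_pullback}, Theorem~\ref{thm:M-M_bar}). \textbf{The main obstacle} I anticipate is the bookkeeping in the reverse direction: verifying that a general crossing $\cal M$ --- whose unit space $\cal M^0$ need not be $X$ and which a priori carries the full group-bundle $\cal A[\cal M^0]$ rather than a split-off copy of $\cal A$ --- genuinely reduces, up to exchanger, to an honest $\cal A$--extension with the \emph{correct} module structure, and that this assignment is well-defined on exchanger-equivalence classes. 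The compatibility of the conjugation action in $\cal M$ with the prescribed $\cal G$--module structure on $\cal A$, forced by axiom~\ref{CR4} together with the specific form $Ad$ of the action in $\cal A\xTo\Aut(\cal A)$, is the delicate point and is exactly the step that generalizes \cite[Theorem 3.4]{Ginot-Stienon:G-gerbes}.
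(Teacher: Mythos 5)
Your proposal is correct in substance and follows the same overall skeleton as the paper -- manufacture a crossing from an extension, and read the extension back off from axiom~\ref{CR3} -- but your forward direction is carried out by a genuinely different construction. The paper never verifies the crossing axioms for $\tilde{\cal G}$ by hand: it interposes the crossed module $\cal A\xTo\tilde{\cal G}$ of Example~\ref{ex:extension_crossed}, observes that $(p,\pi)\colon(\cal A\xTo\tilde{\cal G})\rTo(\cal G^0\xTo\cal G)$ is a hypercover while $(\Id,Ad)\colon(\cal A\xTo\tilde{\cal G})\rTo(\cal A\xTo\Aut(\cal A))$ is a strict morphism, converts each into a crossing via Proposition~\ref{pro2:crossing} and Theorem~\ref{thm:Morita_xext}, and takes the diamond product $\tilde{\cal G}\ou{\Dprod}{\cal A}{\tilde{\cal G}}(\cal A\rtimes\tilde{\cal G})$. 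Your route -- checking directly that $\cal M=\tilde{\cal G}$, equipped with the unit embedding, $\pi$, $\iota$ and $Ad$, satisfies~\ref{CR1}--\ref{CR4} -- is more economical and produces an exchanger-equivalent crossing; what the paper's detour buys is that compatibility with Morita equivalence of extensions on one side and exchangers on the other is then delegated to the general machinery (horizontal composition of exchangers), which is precisely the part that both you and the paper leave at the level of a sketch. The reverse direction is identical in both arguments: axiom~\ref{CR3} exhibits $\cal A[\cal M^0]\stackrel{\b_1}{\mono}\cal M\stackrel{\a_2}{\epi}\cal G[\cal M^0]$ as a groupoid extension, as in Example~\ref{ex1:groupoid_crossing}, and your extra remark that~\ref{CR4} on the $\b$--side pins down the induced module structure is a point the paper passes over in silence.

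Two slips in your write-up are worth correcting, though neither derails the argument: since $\cal H_1=\cal G^0$, the map $\a_1$ of your crossing must be the unit embedding $\cal G^0\rTo\tilde{\cal G}$ rather than $\iota$ (which is your $\b_1$); and in the reverse direction the kernel of $\a_2$ is $\b_1(\cal A[\cal M^0])$, not $\a_1(\cal G^0)$ -- that identification of the kernel is exactly what~\ref{CR3} asserts and what makes $\cal M$ an $\cal A[\cal M^0]$--extension of $\cal G[\cal M^0]$.
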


\begin{proof}
We outline the correspondence between groupoid extensions of $\cal G$ and crosssings. That this correspondence respects Morita equivalences on one hand and exchangers on the other will be clear once we will have defined horizontal compositions of exchangers in the next section. Let $\cal A\stackrel{p}{\rTo} \cal G^0$ be a $\cal G$--module and $\cal A\stackrel{\iota}{\rTo}\tilde{\cal G}\stackrel{\pi}{\rTo}\cal G$ be a groupoid $\cal A$--extension. Then by identifying $\cal A$ with its image in $\tilde{\cal G}$ via $\iota$, Example~\ref{ex:extension_crossed} shows that $\cal A\stackrel{\iota}{\rTo}\tilde{\cal G}$ is a groupoid crossed module with respect to the action $Ad_{\tilde{g}}(a)\colonequals \tilde{g}^{-1}a\tilde{g}$, for $a\in \cal A_{t(\tilde{g})}$. Consider the two strict morphisms of crossed modules 
\[
\xymatrix{
\cal G^0\ar[r]^{\iota} & \cal G \\
\cal A \ar[u]^{p} \ar[r]^{\iota} \ar@{=}[d] & \tilde{\cal G} \ar[u]_{\pi} \ar[d]^{Ad} \\ 
\cal A\ar[r]^{Ad} & \Aut(\cal A)
}
\]
with the upper strict morphism being naturally a hypercover. Therefore, by Proposition~\ref{pro2:crossing} and Theorem~\ref{thm:Morita_xext}, we have the crossed extension and crossing below 
\[
(\cal G^0\xTo \cal G)\underset{\cal G^0\rtimes \tilde{\cal G}=\tilde{\cal G}}{\xext} (\cal A\xTo \tilde{\cal G})\underset{\cal A\rtimes \tilde{\cal G}}{\cross}(\cal A\xTo \Aut(\cal A)), 
\] 
hence the crossing $\tilde{\cal G}\ou{\Dprod}{\cal A}{\tilde{\cal G}}(\cal A\rtimes \tilde{\cal G})$ from $(\cal G^0\xTo \cal G)$ to $(\cal A\xTo \Aut(\cal G))$.

Conversely, as it was shown in Example~\ref{ex1:groupoid_crossing}, any crossing $(\cal G^0\xTo \cal G)\underset{\cal M}{\cross} (\cal A\xTo \Aut(\cal A))$ is a groupoid $\cal A$--extension of $\cal G$, thanks to~\ref{CR3}. 
\end{proof}

\section{The $3$--category $\XMod$}\label{sec:3-cat}

We are going to show the collection of all crossed modules of topological groupoids has the structure of a weak $3$--category. Precisely, we think of a crossing  $\frak G_1\underset{\cal M}{\cross}\frak G_2$ as an arrow from $\frak G_1$ to $\frak G_2$, and given another crossing $\frak G_1\underset{\cal N}{\xext}\frak G_2$, a semi-exchanger $P$ from $(\frak G_1,\cal M,\frak G_2)$ to $(\frak G_1,\cal N,\frak G_2)$ will be regarded as an arrow $\cal M\stackrel{P}{\sxc} \cal N$, and will be represented by a bigon 
\begin{center}
\begin{tikzpicture}
\node (G1) at (-1,0) {${\frak G_1}$};
\node (G2) at (1,0) {${\frak G_2}$};
\node (M) at (0,0.8) {${\cal M}$};
\node (N) at (0,-0.8) {${\cal N}$};
\node (P) at (-0.25,0) {$P$};

\draw[2ar] (0,0.4) -- (0,-0.4);
\draw[->,>=latex] (G1) to[bend left=45] (G2);
\draw[->,>=latex] (G1) to[bend right=45] (G2);
\end{tikzpicture}
\end{center}

As for (semi-)exchanger morphisms, we will represent them in a way to include the crossed modules, the crossings, and the (semi-)exchangers, altogether, as the following figure 
\begin{center}
\begin{tikzpicture}
\node (G1) at (-2,0) {${\frak G_1}$};
\node (G2) at (2,0) {${\frak G_2}$};
\node (M) at (0,1.35) {${\cal M}$};
\node (N) at (0,-1.35) {${\cal N}$};
\node (P) at (-1,0) {$P$};
\node (Q) at (1,0) {$Q$};
\node (e) at (0,0.3) {$\eta$};

\draw[->, >=latex] (G1) to[bend left=55] (G2);
\draw[->,>=latex] (G1) to[bend right=55] (G2);
\draw[2ar] (-0.2,1) to[bend right=60] (-0.2,-1);
\draw[2ar] (0.2,1) to[bend left=60] (0.2,-1);
\draw[3ar] (-0.5,0) -- (0.5,0);
\draw (-0.5,0) -- (0.5,0);
\end{tikzpicture}
\end{center} 

\noindent  We now want to show the following.

\begin{thm}\label{thm:3-cat}
There is a weak \ $3$--category \ $\XMod$ whose objects are crossed modules of topological groupoids, and in which an $1$--arrow from an object $\frak G_1$ to an object $\frak G_1$ is a crossing $\frak G_1\cross\frak G_2$, $2$--arrows are semi-exchangers, and $3$--arrows are semi-exchanger morphisms. Furthermore, weak identity $1$--arrows are given by the trivial crossed extensions $\cal O_{\frak G}$. 
\end{thm}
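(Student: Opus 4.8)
The strategy is to assemble the $3$--category $\XMod$ from the pieces already constructed, verifying the coherence data level by level. First I would fix the underlying globular data: objects are crossed modules $\frak G$ of topological groupoids; $1$--arrows $\frak G_1 \cross \frak G_2$ are crossings in the sense of Definition~\ref{def:crossing}; $2$--arrows are semi-exchangers $\cal M \sxc \cal N$ (\S\ref{subs:exchangers}); and $3$--arrows are semi-exchanger morphisms. The source and target maps at each level are the obvious ones. The key point for $2$-- and $3$--arrows is that semi-exchangers and their morphisms were only \emph{defined} between crossed \emph{extensions}, so I would first note that a generic crossing $\cal M$ from $\frak G_1$ to $\frak G_2$ need not be a crossed extension; however, the diamond product $\cal M \Dprod_{\frak G_2} \bar{\cal M}$ and Theorem~\ref{thm:crossing_decomposition} together let us replace any crossing by an equivalent one arising from a hypercover (Proposition~\ref{pro:hypercover_xext}), and semi-exchangers between arbitrary crossings are then defined exactly as in \S\ref{subs:exchangers} using the underlying generalized morphisms of topological groupoids. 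I would state this extension of the definitions explicitly at the outset.

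\textbf{Composition operations.} Next I would lay out the three composition functors and their weak units. Horizontal composition of $1$--arrows is the diamond product $\cal M \Dprod_{\frak G_2} \cal N$ of Definition and Proposition~\ref{pro:diamond_product}, with weak identity $1$--arrow at $\frak G$ given by the trivial crossed extension $\cal O_{\frak G}$ of Definition~\ref{def:trivial_crossing} (this is forced, since by construction $\cal O_{\frak G}$ is the crossing induced by $\Id_{\frak G}$, so diamond-multiplying by it recovers the original crossing up to the isomorphisms of Theorem~\ref{thm:M-M_bar} and Lemma~\ref{lem:crossed_semidirect}). For $2$--arrows there are two compositions: the vertical composition $P \underset{\cal N}{\bullet} Q$ of semi-exchangers from Proposition~\ref{pro:exchangers_compo} (composition along a common crossing), and a horizontal composition along a common crossed module, given by the fibered-product construction $\cal M \ou{\ast}{\cal H_2,\cal H_4}{P} \cal N$ appearing in the proof of the decomposition theorem for exchangers; their units are the trivial exchangers $\bf I_{\cal M}$ of Example~\ref{ex:unit_exchanger}. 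For $3$--arrows there are the horizontal composition $\eta \star_h \zeta$, the vertical composition $\eta_1 \star_v \eta_2$, and a third composition coming from whiskering against $1$--arrows, all defined in \S5 on morphisms of exchangers. I would verify each of these is well defined on equivalence classes and continuous, mostly by pointing to the cited lemmas.

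\textbf{Coherence.} The substance of the theorem is the coherence data. For the bottom two layers this is already Theorem~\ref{thm:2-cat}: for each pair $\frak G_1, \frak G_2$ the collection of crossings with semi-exchangers and their morphisms is a weak $2$--category, supplying associators $a_{P,Q,T}$, unitors $r_P, l_P$, and the interchange law $(\eta_1\star_h\zeta_1)\star_v(\eta_2\star_h\zeta_2) = (\eta_1\star_v\eta_2)\star_h(\zeta_1\star_v\zeta_2)$. What remains is the coherence \emph{between} layers: the associator $2$--isomorphism for the diamond product, $a_{\cal M,\cal N,\cal P}\colon \cal M\Dprod(\cal N\Dprod\cal P)\exch(\cal M\Dprod\cal N)\Dprod\cal P$, given on representatives by $[m,[n,p]]\mapsto[[m,n],p]$; the left and right unitors $\cal O_{\frak G_1}\Dprod\cal M \exch \cal M$ and $\cal M\Dprod\cal O_{\frak G_2}\exch\cal M$; the pentagonator and the various $2$--$3$ interchange cells; and the triangle/pentagon/hexagon identities these must satisfy. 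I would check these exactly as in the proof of Theorem~\ref{thm:2-cat}: each cell is an honest homeomorphism of quotient spaces described by an explicit bracket-rearranging formula, each compatibility is a routine diagram chase on representatives, and the upshot is that $\XMod$ satisfies the axioms of a tricategory (in the sense of Gordon--Power--Street / Baez--Dolan) with all the constraint cells being isomorphisms.

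\textbf{Main obstacle.} The hard part will be bookkeeping the coherence cells rather than producing any one of them: every diamond product silently introduces a change of unit space ($\cal M^0 \times_{\cal G_2^0} \cal N^0$), so the associator and unitor isomorphisms are not strict even on unit spaces, and one must track the pullback identifications of part (i)--(ii) of Definition and Proposition~\ref{pro:diamond_product} consistently through the pentagon. I would handle this by using the reduction already flagged in \S\ref{sec:2-cat} --- namely replacing every crossed extension by an equivalent one with a fixed common unit space $X$ and $\tau=\s=\Id_X$ (legitimate by Corollary~\ref{cor:crossing_pullback} and Example~\ref{ex:pullback_strict}, which show such pullbacks are connected by equivalences/exchangers) --- so that on representatives all the constraint cells become literal rebracketings and the coherence identities reduce to the ones verified in Theorem~\ref{thm:2-cat}. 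Finally, I would record that under this dictionary a coherently invertible $1$--arrow is precisely a crossed extension, recovering the earlier identification and completing the verification that $\XMod$ is a weak $3$--category with weak identities $\cal O_{\frak G}$.
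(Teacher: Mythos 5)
Your overall architecture is the paper's: treat each $\XMod(\frak G_1,\frak G_2)$ --- crossings, semi-exchangers, and their morphisms --- as a weak $2$--category (this is Theorem~\ref{thm:2-cat}), promote the diamond product to a weak $2$--functor $\XMod(\frak G_1,\frak G_2)\times\XMod(\frak G_2,\frak G_3)\rTo\XMod(\frak G_1,\frak G_3)$ via the coherence laws of Propositions~\ref{pro:coherence_exchangers},~\ref{pro2:coherence_exchangers} and~\ref{pro3:coherence_exchangers}, and then verify that $\cal O_{\frak G}$ is a weak unit by exhibiting explicit semi-exchangers $\cal M\Dprod_{\frak G}\cal O_{\frak G}\sxc\cal M$ together with weak inverses and the connecting $3$--isomorphisms. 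Your normalization to a common unit space with $\tau=\s=\Id_X$ is also exactly the reduction the paper uses throughout. Two remarks: the paper does not attempt the full tricategory pentagonator you announce --- it stops at the three coherence propositions plus the unit proposition --- and your worry that semi-exchangers are only defined between crossed extensions has a simpler resolution than invoking Theorem~\ref{thm:crossing_decomposition}: axioms~\ref{E1}--\ref{E2} use only the crossing data, so the definition applies verbatim to crossings.

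There is, however, one concrete misstep. You identify the horizontal composition of two semi-exchangers ${\bf A}_1\stackrel{P_1}{\sxc}{\bf B}_1$ and ${\bf A}_2\stackrel{P_2}{\sxc}{\bf B}_2$ (composition along the middle crossed module) with the space $\cal M\ou{\ast}{\cal H_2,\cal H_4}{P}\cal N$ from the proof of the decomposition theorem for exchangers. That construction takes a \emph{single} exchanger $P$ and two base groupoids and produces the base groupoid of the intermediate crossed module $\frak G_2^P$; it is a groupoid over $P$, not a bimodule between $\cal M_1\Dprod_{\frak G_2}\cal M_2$ and $\cal N_1\Dprod_{\frak G_5}\cal N_2$, so it cannot serve as the composite $2$--arrow. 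The correct construction is the diamond product of semi-exchangers
\[
P_1\Dprod P_2\colonequals \frac{(P_1\times_XP_2)\cap(P_1\times_YP_2)}{\cal H_2\times\cal H_5}
\]
defined at the start of \S\ref{sec:3-cat}, which is precisely the object Propositions~\ref{pro:coherence_exchangers}--\ref{pro3:coherence_exchangers} are formulated for. With that substitution the rest of your plan goes through along the paper's lines.
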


\noindent In order to prove this statement, we need a few more constructions.

\subsection{Horizontal composition of exchangers}

We have already seen how to concatenate "vertically" exchangers. We are now going to define a new composition of exchangers that we will call {\em horizontal composition}. Specifically, Suppose we have four crossed extensions (or just crossings)
\[
\frak G_1\underset{\cal M_1}{\xext}\frak G_2\underset{\cal M_2}{\xext}\frak G_3, \ {\rm and\ } \ \frak G_4\underset{\cal N_1}{\xext}\frak G_5\underset{\cal N_2}{\xext}\frak G_6
\]  
together with two (semi-)exchengers ${\bf A}_1\stackrel{P_1}{\sxc}{\bf B}_1$ and ${\bf A}_2\stackrel{P_2}{\sxc}{\bf B}_2$, where $${\bf A}_1=(\frak G_1,\cal M_1,\frak G_2), {\bf B}_1=(\frak G_2,\cal M_2,\frak G_3), {\bf A}_2=(\frak G_4,\cal N_1,\frak G_5), {\rm and\ } \ {\bf B}_2=(\frak G_5,\cal N_2,\frak G_5).$$ 

Then, if we denote by ${\bf A}_1\Dprod_{\frak G_2}{\bf B}_1$ and ${\bf A}_2\Dprod_{\frak G_5}{\bf B}_2$ the crossed extensions $$(\frak G_1,\cal M_1\Dprod_{\frak G_2}\cal M_2,\frak G_3), \ {\rm and\ } \ (\frak G_4,\cal N_1\Dprod_{\frak G_5}\cal N_2,\frak G_6),$$ we aim at constructing a new (semi-)exchanger 
\[
{\bf A}_1\Dprod_{\frak G_2}{\bf B}_1\stackrel{P_1\Dprod P_2}{\sxc} {\bf A}_2\Dprod_{\frak G_5}{\bf B}_2
\]
which we will call the {\em horizontal composition} of the (semi-)exchangers $P_1$ and $P_2$. Moreover, we wish this construction to be compatible with vertical composition in a higher categorical sense. \\

For this end, we may assume that ${\bf A}_1, {\bf B}_1$ have common unit space $X$, and ${\bf A}_2, {\bf B}_2$ have common unit space $Y$; otherwise, as usual, it suffices to work on pullbacks. We then have the (semi-)exchangers and commutative diagrams as below 
\[
\xymatrix{
\cal H_1\ar[rr]^{\p_1}\ar[rd]_{\a_1} && \cal G_1 && \cal H_4\ar[rr]^{\p_4} \ar[rd]_{\lambda_1} && \cal G_4 \\
& \cal M_1\ar[ru]_{\a_2} \ar[rd]^{\b_2} & & \stackrel{P_1}{\sxc} & & \cal N_1\ar[ru]_{\lambda_2}\ar[rd]^{\mu_2} & \\ 
\cal H_2 \ar[rr]^{\p_2}\ar[ru]^{\b_1} \ar[rd]_{\d_1} && \cal G_2 && \cal H_5 \ar[ru]^{\mu_1} \ar[rr]^{\p_5} \ar[rd]_{\nu_1} && \cal G_5 \\
& \cal M_2\ar[ru]_{\d_2} \ar[rd]^{\t_2} & & \stackrel{P_2}{\sxc} & & \cal N_2\ar[ru]_{\nu_2} \ar[rd]^{\rho_2} & \\
\cal H_3 \ar[ru]^{\t_1} \ar[rr]^{\p_3} && \cal G_3 && \cal H_6\ar[ru]^{\rho_1} \ar[rr]^{\p_6} && \cal G_6
}
\]

Suppose that the (semi-)exchangers $P_1$ and $P_2$ are defined through the continuous maps 
\[
X\stackrel{\vp_1}{\lTo}P_1\stackrel{\psi_1}{\rTo}Y, \ {\rm and\ } \ X\stackrel{\vp_2}{\lTo}P_2\stackrel{\psi_2}{\rTo}Y.
\]

Then the pair groupoid $\cal H_1\times \cal H_5\rrTo X\times Y$ acts continuously on the space 
\[
(P_1\times_XP_2)\cap (P_1\times_YP_2)
\]
(equipped with the induced topology of $P_1\times P_2$), through the momentum map $$(p_1,p_2)\mto (\vp_2(p_2),\psi_1(p_1))$$ and the formula 
\begin{eqnarray}\label{eq:product_exchangers}
(h_2,h_5)\cdot (p_1,p_2)\colonequals (\b_1(h_2)p_1\mu_1(h_5)^{-1},\d_1(h_2)p_2\nu_1(h_5)^{-1}), 
\end{eqnarray}
for $(h_2,h_5)\in \cal H_2^{\vp_2(p_2)}\times \cal H_5^{\psi_1(p_1)}$. 

\begin{df}
Let ${\bf A}_1\stackrel{P_1}{\sxc}{\bf B}_1$ and ${\bf A}_1\stackrel{P_2}{\sxc}{\bf B}_2$ be as above. We define the following space 
\[
P_1\Dprod P_2 \colonequals \frac{(P_1\times_XP_2)\cap (P_1\times_YP_2)}{\cal H_2\times \cal H_5}
\]
where the quotient is taken out of the $\cal H_2\times\cal H_5$--action~\eqref{eq:product_exchangers}. This space (when non-empty) will be called the {\em diamond product} of $P_1$ and $P_2$ over the groupoid crossed modules $\frak G_2$ and ${\frak G}_5$.
\end{df}

 As usual, equivalence classes in this quotient will be symbolized with brackets $[\cdot,\cdot]$. Next, equip $P_1\Dprod P_2$ with the quotient topology and consider the canonical projections 
\begin{eqnarray}\label{eq2:product_exchangers}
\xymatrix{
X && P_1\Dprod P_2 \ar[ll]_{\vp_2\circ pr_2}   \ar[rr]^{\psi_1\circ pr_1} && Y.
}
\end{eqnarray}

Now, notice we have left and right groupoid actions of $\cal M_1\Dprod_{\frak G_2}\cal M_2\rrTo X$ and $\cal N_1\Dprod_{\frak G_5}\cal N_2\rrTo Y$, respectively given by 
\begin{eqnarray}\label{eq3:product_exchangers}
[m_1,m_2]\cdot [p_1,p_2]\colonequals [m_1p_1,m_2p_2], \ {\rm and\ \ } [p_1,p_2]\cdot [n_1,n_2]\colonequals [p_1n_1,p_2n_2],
\end{eqnarray}
for $[m_1,m_2]\in \cal M_1\Dprod_{\frak G_2}\cal M_2$ such that $s(m_1)=s(m_2)=\vp_1(p_1)=\vp_2(p_2)$,  $[n_1,n_2]\in \cal N_1\Dprod_{\frak G_5}\cal N_2$ such that $t(n_1)=t(n_2)=\psi_1(p_1)=\psi_2(p_2)$. Furthermore, we have the following

\begin{pro}
Let ${\bf A}_1\stackrel{P_1}{\sxc}{\bf B}_1$ and ${\bf A}_1\stackrel{P_2}{\sxc}{\bf B}_2$ be semi-exchangers (resp. exchangers) as above. Then the diamond product space $P_1\Dprod P_2$, together with the maps~\eqref{eq2:product_exchangers} and the actions~\eqref{eq3:product_exchangers}, is a generalized morphism (resp. Morita equivalence) from $\cal M_1\Dprod_{\frak G_2}\cal M_2\rrTo X$ to $\cal N_1\Dprod_{\frak G_5}\cal N_2\rrTo Y$.
\end{pro}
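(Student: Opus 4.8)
The plan is to verify the three structural properties that together say $P_1\Dprod P_2$ is a generalized morphism (resp. Morita equivalence) from $\cal M_1\Dprod_{\frak G_2}\cal M_2$ to $\cal N_1\Dprod_{\frak G_5}\cal N_2$: first, that the two actions in~\eqref{eq3:product_exchangers} are well defined, compatible with the momentum maps in~\eqref{eq2:product_exchangers}, and commute with each other; second, that the left $\cal M_1\Dprod_{\frak G_2}\cal M_2$--action is free and that $\psi_1\circ pr_1\colon P_1\Dprod P_2\rTo Y$ realizes $P_1\Dprod P_2$ as a principal $(\cal M_1\Dprod_{\frak G_2}\cal M_2)$--bundle; and third, the symmetric statement for the right $(\cal N_1\Dprod_{\frak G_5}\cal N_2)$--action and the map $\vp_2\circ pr_2$. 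In the exchanger case one must additionally check that both bundles are \emph{locally trivial}, so that $P_1\Dprod P_2$ is an honest Morita equivalence and not merely a biprincipal bibundle.

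First I would check well-definedness of the formulas in~\eqref{eq3:product_exchangers}. Since the $\cal H_2\times\cal H_5$--action~\eqref{eq:product_exchangers} twists $P_1$ by $\b_1(h_2)$ on the left and $\mu_1(h_5)^{-1}$ on the right while twisting $P_2$ by $\d_1(h_2)$ and $\nu_1(h_5)^{-1}$, and since $[m_1,m_2]$ in $\cal M_1\Dprod_{\frak G_2}\cal M_2$ is already a class modulo $(\b_1(h_2),\d_1(h_2))$ (the diamond relation over $\frak G_2$), one sees the left action is independent of the representative $(p_1,p_2)$ and of the representative $(m_1,m_2)$ by the same bookkeeping that makes the diamond product groupoid composition well defined (Definition~\ref{pro:diamond_product}); the key algebraic fact is the intertwining $\b_1(h_2)m_1=\b_1(h_2^{\b_2(m_1^{-1})})m_1$ type identity, i.e.\ axiom~\ref{CR4}. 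The same argument handles the right $\cal N_1\Dprod_{\frak G_5}\cal N_2$--action, now using the relations over $\frak G_5$ together with axioms~\ref{E1}--\ref{E2} that identify the orbit spaces of $\cal H_5$ and $\cal H_4$ on $P_i$. Commutativity of the two actions is immediate from componentwise commutativity of the $\cal M_i$-- and $\cal N_i$--actions on $P_i$.

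Next I would establish principality. Fix the left action: I must show the map $(P_1\Dprod P_2)\times_X(\cal M_1\Dprod_{\frak G_2}\cal M_2)\rTo (P_1\Dprod P_2)\times_Y(P_1\Dprod P_2)$, $([p_1,p_2],[m_1,m_2])\mapsto ([m_1 p_1,m_2 p_2],[p_1,p_2])$, is a homeomorphism onto its image (the fibered product over $Y$ via $\psi_1\circ pr_1$), and that $\psi_1\circ pr_1$ admits local sections. Freeness comes from freeness of the $\cal M_i$--actions on $P_i$ (the $P_i$ are semi-exchangers, hence principal over their base maps after passing to the crossing data) combined with the observation that if $[m_1p_1,m_2p_2]=[p_1,p_2]$ then there exist $h_2,h_5$ with $m_1p_1=\b_1(h_2)p_1\mu_1(h_5)^{-1}$ and $m_2p_2=\d_1(h_2)p_2\nu_1(h_5)^{-1}$; principality of $P_1$ over $\psi_1$ forces $m_1\mu_1(h_5)=\b_1(h_2)$ after absorbing the $\cal N_1$--part, and then the exactness of the complex $\cal H_2\stackrel{\b_1}{\rTo}\cal M_1\stackrel{\a_2}{\rTo}\cal G_1$ (axiom~\ref{CR3}) together with the corresponding statement for $\cal M_2$ pins down $[m_1,m_2]$ as the identity class. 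Surjectivity onto the $Y$--fibered product is the translation map: given $[p_1,p_2]$ and $[p_1',p_2']$ with $\psi_1(p_1)=\psi_1(p_1')$, I use $g^{P_1}$ and $g^{P_2}$ (the division maps of the $\cal N_i$--principal bundles, as in the Lemmas of \S\ref{subs:exchangers}) to produce $n_i$ with $p_i'n_i^{-1}$ lying in the $\cal M_i$--orbit of $p_i$, then transport back through the diamond relations; I expect this to reduce to the biprincipality of each $P_i$ plus the compatibility of the two orbit spaces guaranteed by~\ref{E1}--\ref{E2}. Local triviality in the exchanger case follows because $P_1,P_2$ are locally trivial Morita equivalences, so local sections can be chosen simultaneously on a common cover of $X$ pulled back through $\vp_1,\vp_2$.

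The main obstacle I anticipate is the surjectivity/translation-map step: one must show that the "off-diagonal" pairs $(p_1,p_2)$ and $(p_1',p_2')$ that agree after applying $\psi_1\circ pr_1$ (but possibly differ in the $P_2$--component's image under $\psi_2$) are nonetheless related by a single element of $\cal M_1\Dprod_{\frak G_2}\cal M_2$ — this is where the compatibility of the two quotients $\tfrac{P_i}{\cal H_2}=\tfrac{P_i}{\cal H_5}$ (from~\ref{E1}--\ref{E2}) and the exactness axioms~\ref{CR3},~\ref{CR3_prime} must be used in concert, and it is the only place where the full force of "$P_i$ is a semi-exchanger between crossed extensions" rather than merely "a bibundle" is needed. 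Everything else is a routine but lengthy verification that the quotient topology behaves well and that the actions descend, for which I would simply say "by a direct computation entirely analogous to the proof of Definition and Proposition~\ref{pro:diamond_product}" and the proof of Lemma~\ref{lem:crossed_semidirect}.
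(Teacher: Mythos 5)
The paper states this proposition without any proof at all, so there is no argument of the author's to compare yours against; I can only assess your plan on its own terms. Its overall architecture is reasonable and you have correctly located where the real content lies, but the step you yourself call ``the main obstacle'' is exactly the non-routine part of the statement, and you do not carry it out. Concretely: for the right $(\cal N_1\Dprod_{\frak G_5}\cal N_2)$--action to be principal over $\vp_2\circ pr_2$, given $[p_1,p_2]$ and $[p_1',p_2']$ in the same fibre you obtain, from principality of $P_1$ and of $P_2$ \emph{separately}, unique elements $n_1\in\cal N_1$ and $n_2\in\cal N_2$ with $p_i'=p_in_i$; but before you can act by $[n_1,n_2]$ you must show that $(n_1,n_2)$ lies in the fibred product over $\cal G_5$, i.e.\ that $\mu_2(n_1)=\nu_2(n_2)$, possibly after correcting by the $\cal H_2\times\cal H_5$--ambiguity in the chosen representatives. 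Likewise, for freeness, the computation reduces to comparing the two orbit identifications $\cal H_2\cong\cal H_5$ furnished by~\ref{E2} for $P_1$ at $p_1$ and by~\ref{E1} for $P_2$ at $p_2$, and one must show that these agree after composing with $\nu_1$ (resp.\ with $\p_5$). Neither verification is the same routine bookkeeping as in Definition and Proposition~\ref{pro:diamond_product}: it is precisely where the relations $\mu_2\circ\mu_1=\p_5=\nu_2\circ\nu_1$, the injectivity of $\mu_1$ and $\b_1$ coming from~\ref{CR3}, and the exactness in~\ref{CR3_prime} must be combined, and your text defers all of this with ``I expect this to reduce to\dots''. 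As written, the proof is a plan whose central lemma is left unproved.

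A secondary point: you have the two principality conditions the wrong way around. In this paper a generalized morphism from $\cal M$ to $\cal N$ over $X\stackrel{\vp}{\lTo}Z\stackrel{\psi}{\rTo}Y$ requires only that the \emph{right} $\cal N$--action be free and principal over $\vp$ (this is what makes the division map $g^Z$ exist); principality of the \emph{left} action over $\psi$ is exactly the additional condition characterizing a Morita equivalence, not a shared requirement with local triviality as the only extra. In the semi-exchanger case the $P_i$ are mere generalized morphisms, so their left $\cal M_i$--actions need not be principal, and your proposed second step (left-action principality of $P_1\Dprod P_2$) is neither available nor needed there. This does not damage the Morita half of your argument, but it means the plan, read literally, attempts to prove something in the generalized-morphism case that the hypotheses do not support.
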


In fact, this construction gives us a (semi-)exchanger. Precisely,

\begin{pro}
Let ${\bf A}_1\stackrel{P_1}{\sxc}{\bf B}_1$ and ${\bf A}_2\stackrel{P_2}{\sxc}{\bf B}_2$ be semi-exchangers (resp. exchangers) as previously. Then $P_1\Dprod P_2$ is a semi-exchanger 
\[
(\frak G_1,\cal M_1\Dprod_{\frak G_2}\cal M_2,\frak G_3)\sxc (\frak G_4,\cal N_1\Dprod_{\frak G_5}\cal N_2,\frak G_6)
\]
which is an exchanger if $P_2$ and $P_2$ are. This (semi-)exchanger will be labeled as the {\em horizontal composition} of ${\bf A}_i\stackrel{P_i}{\sxc}{\bf B}_i, i=1,2$.
\end{pro}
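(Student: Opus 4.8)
The statement to prove is that the diamond product $P_1\Dprod P_2$, which the previous proposition already identifies as a generalized morphism (resp. Morita equivalence) from $\cal M_1\Dprod_{\frak G_2}\cal M_2$ to $\cal N_1\Dprod_{\frak G_5}\cal N_2$, satisfies the two extra axioms \ref{E1} and \ref{E2} making it a semi-exchanger (resp. exchanger). So the bulk of the work is to check the freeness of the induced actions of $\cal H_1$, $\cal H_3$ (resp. $\cal H_4$, $\cal H_6$) on $P_1\Dprod P_2$ and that the pairs of actions have coinciding orbit spaces. The approach I would take is: first unwind the induced actions explicitly from the morphisms $\a_1\Dprod_{\frak G_2}\d_1$, $\b_1\Dprod_{\frak G_5}\t_1$ etc.\ that build $\cal M_1\Dprod_{\frak G_2}\cal M_2$ as in Definition and Proposition~\ref{pro:diamond_product}; then reduce the freeness and orbit-space statements to the corresponding properties already known for $P_1$ (as a semi-exchanger ${\bf A}_1 \sxc {\bf B}_1$) and $P_2$ (as a semi-exchanger ${\bf A}_2\sxc {\bf B}_2$), using that the $\cal H_2\times\cal H_5$--quotient defining $P_1\Dprod P_2$ is along exactly the ``middle'' copies of $\cal H_2$ and $\cal H_5$ that do not interfere with the ``outer'' copies $\cal H_1,\cal H_3$ (resp. $\cal H_4,\cal H_6$).

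Concretely, I would first observe that under $\a_1\Dprod_{\frak G_2}\d_1\colon \cal H_1\ni h_1\mto [\a_1(h_1),s(h_1)]$, the left $\cal H_1$--action on $P_1\Dprod P_2$ is $h_1\cdot [p_1,p_2]=[\a_1(h_1)p_1,p_2]$, i.e.\ it is induced purely by the left $\cal H_1$--action on the first factor $P_1$, which by axiom \ref{E1} for $P_1$ is free. A class $[\a_1(h_1)p_1,p_2]$ equals $[p_1,p_2]$ iff there is $(h_2,h_5)\in\cal H_2\times\cal H_5$ with $\a_1(h_1)p_1=\b_1(h_2)p_1\mu_1(h_5)^{-1}$ and $p_2=\d_1(h_2)p_2\nu_1(h_5)^{-1}$; since the $\cal H_2$-- and $\cal H_4$--actions on $P_2$ commute (the lemma before Theorem~\ref{thm:2-cat}) and are free, the second equation forces $h_2$ and $h_5$ into units via the orbit-space coincidence $P_2/\cal H_2=P_2/\cal H_4$ applied to the first crossing's data, and then freeness of the $\cal H_1$--action on $P_1$ forces $h_1$ to be a unit; hence the $\cal H_1$--action on the diamond is free. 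The $\cal H_3$--action is handled symmetrically using $\b_2\Dprod_{\frak G_5}\t_2$ and the right $\cal H_3$--structure of $P_1$. For the orbit spaces, I would show $\bigl(P_1\Dprod P_2\bigr)/\cal H_1\cong \bigl(P_1\Dprod P_2\bigr)/\cal H_3$ by passing through the common quotient $P_1/\cal H_1=P_1/\cal H_3$: the $\cal H_2\times\cal H_5$-- and $\cal H_1$--(resp.\ $\cal H_3$--)actions on $(P_1\times_X P_2)\cap(P_1\times_Y P_2)$ all commute, so the iterated quotient can be formed in either order, and on $P_1/\cal H_1=P_1/\cal H_3$ the remaining data glue to give the identification. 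Axiom \ref{E2} is entirely analogous, replacing $(\cal H_1,\cal H_3)$ by $(\cal H_2,\cal H_5)$ on the source side and $(\cal H_4,\cal H_6)$ on the target side (note the quotient defining $P_1\Dprod P_2$ is already by $\cal H_2\times\cal H_5$, so $\ref{E2}$ concerns the \emph{outer} copies $\cal H_4,\cal H_6$ for the target groupoid $\cal N_1\Dprod_{\frak G_5}\cal N_2$); here one uses \ref{E2} for $P_1$ and $P_2$.

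Finally, for the exchanger case: if $P_1$ and $P_2$ are exchangers (Morita equivalences), the preceding proposition already gives that $P_1\Dprod P_2$ is a Morita equivalence between the two diamond-product groupoids, so combined with \ref{E1}--\ref{E2} it is an exchanger. Continuity of all the structure maps and of the identifications of orbit spaces is routine and follows from the quotient topologies and the fact that $P_1,P_2$ are locally trivial principal bundles; I would invoke this without grinding through it.

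\textbf{Main obstacle.} The delicate point is the orbit-space coincidence in \ref{E1} and \ref{E2}: one must verify that quotienting $(P_1\times_X P_2)\cap(P_1\times_Y P_2)$ first by $\cal H_2\times\cal H_5$ and then by $\cal H_1$ gives the same space as quotienting by $\cal H_2\times\cal H_5$ and then by $\cal H_3$. This requires all four actions to pairwise commute on the fibered-product-intersection space — which ultimately rests on Proposition~\ref{pro:images_crossing} (images of $\a_1$ and $\b_1$ commute in $\cal M$) together with the commuting-actions lemma for $P_1$ — and then a careful bookkeeping of which copy of which $\cal H_i$ acts on which factor, since the notation makes it easy to conflate the two copies of $\cal H_2$ (one from ${\bf A}_1$'s crossing data, one that gets quotiented out) and similarly for $\cal H_5$. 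Getting the fibered-product conditions $\vp_1(p_1)=\vp_2(p_2)$ and $\psi_1(p_1)=\psi_2(p_2)$ to interact correctly with the moment maps of the diamond-product groupoids is where most of the genuine care is needed.
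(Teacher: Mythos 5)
Your overall strategy --- write the induced actions of $\cal H_1,\cal H_3,\cal H_4,\cal H_6$ on $P_1\Dprod P_2$ explicitly and reduce axioms \ref{E1}--\ref{E2} to the corresponding axioms for $P_1$ and $P_2$ --- is exactly the strategy of the paper's (one-sentence) proof. But your execution has two genuine problems. The first is the pairing of the bundles. For $P_1\Dprod P_2$ viewed as a semi-exchanger from $(\frak G_1,\cal M_1\Dprod_{\frak G_2}\cal M_2,\frak G_3)$ to $(\frak G_4,\cal N_1\Dprod_{\frak G_5}\cal N_2,\frak G_6)$, axiom \ref{E1} pairs the left $\cal H_1$--action with the right $\cal H_4$--action (both supported on the first factor $P_1$, so the relevant input is \ref{E1} for $P_1$), and \ref{E2} pairs $\cal H_3$ with $\cal H_6$ (both supported on the second factor, input \ref{E2} for $P_2$). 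You instead pair $\cal H_1$ with $\cal H_3$, appeal to a ``right $\cal H_3$--structure of $P_1$'' and to orbit spaces $P_2/\cal H_2=P_2/\cal H_4$; but $\cal H_3$ does not act on $P_1$ and $\cal H_4$ does not act on $P_2$ at all (the bundles acting on $P_2$ are $\cal H_2,\cal H_3$ on the left and $\cal H_5,\cal H_6$ on the right), so several of the actions and identifications you invoke do not exist.

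Second, and more seriously, the freeness argument breaks at its key step. You assert that the equation $p_2=\d_1(h_2)p_2\nu_1(h_5)^{-1}$ ``forces $h_2$ and $h_5$ into units via the orbit-space coincidence.'' It does the opposite: \ref{E1} for $P_2$ says precisely that the left $\cal H_2$--orbit of $p_2$ coincides with its right $\cal H_5$--orbit, so for \emph{every} $h_2$ there is a (unique, by freeness) $h_5$ with $\d_1(h_2)p_2=p_2\nu_1(h_5)$; nontrivial pairs $(h_2,h_5)$ satisfying your second equation therefore always exist. The real content of freeness on the quotient is the following. The point $p_2$ determines, via \ref{E1} for $P_2$, a transition bijection $\cal H_2^{\vp_2(p_2)}\to\cal H_5^{\psi_2(p_2)}$, $h_2\mto h_5$, while $p_1$ determines another one via \ref{E2} for $P_1$, say $h_2\mto h_5'$ with $\b_1(h_2)p_1=p_1\mu_1(h_5')$. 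The relation $[\a_1(h_1)p_1,p_2]=[p_1,p_2]$ then reduces to $\a_1(h_1)p_1=p_1\mu_1(h_5'h_5^{-1})$, and to conclude that $h_1$ is a unit one must rule out a nontrivial discrepancy $h_5'\neq h_5$ producing a nontrivial $h_1$ --- for instance by showing the two transition bijections agree, or by controlling $\lambda_1(\cal H_4)\cap\mu_1(\cal H_5)$ after invoking \ref{E1} for $P_1$. Neither your argument nor the paper's one-line justification carries out this comparison, and it is the actual mathematical content of the freeness claim; as written, your derivation proves nothing at this point.
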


\begin{proof}
By construction of the morphisms involved in the diamond product of crossings (see Lemma~\ref{lem:diamond_crossing}), we see that the actions of $\cal H_1$, $\cal H_3$, $\cal H_4$ and $\cal H_6$ on $P_1\Dprod P_2$, are respectively given by 

\begin{align*}
h_1\cdot [p_1,p_2]=[\a_1(h_1)p_1, p_2], \ h_3\cdot[p_1,p_2]=[p_1,\t_1(h_3)p_2] \\ 
[p_1,p_2]\cdot h_4=[p_1\lambda_1(h_4),p_2], \ [p_1,p_2]\cdot h_6=[p_1,p_2\rho_1(h_6)] 
\end{align*}
where the actions involved are well defined. Therefore, that these actions  on $P_1\Dprod P_2$ satisfy~\ref{E1} and~\ref{E2} follows from $P_1$ and $P_2$ satisfying these axioms. 
\end{proof}

We shall note that the use of the label "horizontal" comes from our viewing the exchanger $P_1\Dprod P_2$ as the composition of two arrows horizontally represented as below
\[
\xymatrix{
\frak G_1 & \underset{\cal M_1}{\xext} \ar@{=>}[d]^{P_1} & \frak G_2 & \underset{\cal M_2}{\xext} \ar@{=>}[d]^{P_2} & \frak G_3 \\ 
\frak G_4 & \underset{\cal N_1}{\xext} & \frak G_5 & \underset{\cal N_2}{\xext} & \frak G_6 
}
\]

The following proposition shows how horizontal composition is compatible (in the higher categorical sense) with the vertical composition discussed in the previous sections. 

\begin{pro}[Coherence law]\label{pro:coherence_exchangers}
Assume given the data of four (semi-)exchangers as below 
\[
\xymatrix{
\frak G_1 & \underset{\cal M_1}{\xext} \ar@{=>}[d]^{P_1} & \frak G_2 & \underset{\cal M_2}{\xext} \ar@{=>}[d]^{P_2} & \frak G_3 \\ 
\frak G_4 & \underset{\cal N_1}{\xext} \ar@{=>}[d]^{Q_1}& \frak G_5 & \underset{\cal N_2}{\xext}\ar@{=>}[d]^{Q_2} & \frak G_6 \\
\frak G_7 & \underset{\cal R_1}{\xext} & \frak G_8 & \underset{\cal R_2}{\xext} & \frak G_9
}
\] 
Then, there is an isomorphism of (semi-)exchangers 
\begin{eqnarray}
(P_1\Dprod P_2)\underset{\cal N_1\Dprod_{\frak G_5}\cal N_2}{\bullet} (Q_1\Dprod Q_2) \stackrel{\eta_{P_1,P_2,Q_1,Q_2}}{\exch} (P_1\underset{\cal N_1}{\bullet}Q_1)\Dprod (P_2\underset{\cal N_2}{\bullet}Q_2).
\end{eqnarray}
\end{pro}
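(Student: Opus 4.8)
The plan is to exhibit an explicit homeomorphism between the two quotient spaces and check that it intertwines all the groupoid actions and momentum maps that make them (semi-)exchangers. Both sides are obtained from the same underlying fibered product of four copies of the relevant spaces, quotiented by a product of bundles of groups; the asserted isomorphism should just be a reshuffling of coordinates, analogous to the associator $[p,[q,t]]\mapsto[[p,q],t]$ and the comparison maps used throughout \S\ref{sec:3-cat} and in Definition and Proposition~\ref{pro:diamond_product}.

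First I would unwind both sides. On the left, $P_1\underset{\cal N_1}{\bullet}Q_1$ and $P_2\underset{\cal N_2}{\bullet}Q_2$ are the usual balanced products of generalized morphisms, so their diamond product $\Dprod$ is, by definition, a quotient of a subspace of $(P_1\underset{\cal N_1}{\bullet}Q_1)\times(P_2\underset{\cal N_2}{\bullet}Q_2)$ by an action of $\cal H_2'\times\cal H_5'$ (the relevant $\frak G$'s sitting between the crossed extensions being composed horizontally). On the right, $(P_1\Dprod P_2)\underset{\cal N_1\Dprod_{\frak G_5}\cal N_2}{\bullet}(Q_1\Dprod Q_2)$ is a balanced product, over the diamond-product groupoid $\cal N_1\Dprod_{\frak G_5}\cal N_2$, of two diamond products of exchangers. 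Both therefore have representatives that are quadruples $(p_1,p_2,q_1,q_2)$ lying in an iterated fibered product over the various unit spaces, and the two equivalence relations are generated by the same collection of elementary moves coming from the images of the $\alpha_1,\beta_1,\delta_1,\theta_1$-type maps and the balancing relations for $\cal N_1,\cal N_2$. The map $\eta_{P_1,P_2,Q_1,Q_2}$ is then $[[p_1,q_1],[p_2,q_2]]\mapsto[[p_1,p_2],[q_1,q_2]]$ (with appropriate bracket types), and the bulk of the verification is that this descends, is well defined in both directions, and is continuous with continuous inverse — which follows because on representatives it is literally the identity permutation of the four coordinates, and the two quotient topologies agree since they are quotients of the same subspace of $P_1\times P_2\times Q_1\times Q_2$ by the same (commuting) group actions.

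After that I would check the structural compatibilities: that $\eta$ commutes with the left action of $\cal M_1\Dprod_{\frak G_2}\cal M_2$ and the right action of $\cal R_1\Dprod_{\frak G_8}\cal R_2$, and respects the momentum maps to the unit spaces of $\frak G_1$ and $\frak G_9$; and, if all four inputs are genuine exchangers, that $\eta$ sends a Morita equivalence to a Morita equivalence (which is automatic once it is a homeomorphism compatible with the free and transitive-on-fibers actions). All of these are immediate from the coordinate-wise description of the actions given in~\eqref{eq3:product_exchangers} and the analogous formulas for the balanced product.

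The main obstacle, as usual in this kind of bookkeeping, is purely notational: one must keep straight which of the many groups $\cal H_2,\cal H_3,\cal H_4,\cal H_5,\cal H_5',\dots$ acts on which factor and in which slot, especially because the diamond product $\Dprod$ of exchangers quotients by a \emph{middle} group bundle while $\underset{\cal N}{\bullet}$ balances over a \emph{groupoid}, and these interleave. I expect the cleanest route is to first reduce (by passing to pullbacks, as done everywhere in \S\ref{sec:3-cat}) to the case where all crossed modules in a given row share one unit space, then to describe both sides as a single explicit double coset space $\bigl((P_1\times P_2)\cap\cdots\bigr)/\bigl((\cal H_2\times\cal H_5)\times(\cal N\text{-balancing})\bigr)$ and observe that the two ways of grouping the relations give literally the same equivalence relation, so that $\eta$ is the identity on a common set of representatives. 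The coherence (naturality and the higher pentagon/interchange-type identities one would want for Theorem~\ref{thm:3-cat}) then reduces to the corresponding identities for $\Dprod$ and $\underset{\cal N}{\bullet}$ separately, already available from Definition and Proposition~\ref{pro:diamond_product} and Proposition~\ref{pro:exchangers_compo}.
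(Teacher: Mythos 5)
Your proposal is correct and is essentially the paper's own argument: the paper's entire proof reads ``The canonical map that permutes the components gives the desired isomorphism,'' which is exactly your observation that both sides are quotients of the same fibered product of quadruples $(p_1,p_2,q_1,q_2)$ by the same generated equivalence relation, so the comparison map is the identity on representatives. Your additional remarks on well-definedness, continuity, and equivariance are the routine verifications the paper leaves implicit.
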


\begin{proof}
The canonical map that permutes the components gives the desired isomorphism. 
\end{proof}


\subsection{Spatial composition of morphisms of exchangers}

We have already defined two compositions of morphisms of (semi-)exchangers: a horizontal composition and a vertical one. We are now defining a third one that we will call {\em spatial composition}. Specifically, suppose we have crossings, semi-exchangers, and morphisms of exchangers as below 

\begin{center}
\begin{tikzpicture}
\node (G1) at (-2,0) {${\frak G_1}$};
\node (G2) at (2,0) {${\frak G_2}$};
\node (G3) at (6,0) {${\frak G_3}$};
\node (M1) at (0,1.35) {${\cal M}_1$};
\node (N1) at (0,-1.35) {${\cal N}_1$};
\node (M2) at (4,1.35) {${\cal M}_2$};
\node (N2) at (4,-1.35) {${\cal N}_2$};
\node (P1) at (-1,0) {$P_1$};
\node (Q1) at (1.1,0) {$Q_1$};
\node (e) at (0,0.3) {$\eta$};
\node (P2) at (3,0) {$P_2$};
\node (Q2) at (5.1,0) {$Q_2$};
\node (z) at (4,0.3) {$\zeta$};

\draw[->, >=latex] (G1) to[bend left=55] (G2);
\draw[->,>=latex] (G1) to[bend right=55] (G2);
\draw[2ar] (-0.2,1) to[bend right=60] (-0.2,-1);
\draw[2ar] (0.2,1) to[bend left=60] (0.2,-1);
\draw[3ar] (-0.5,0) -- (0.5,0);
\draw (-0.5,0) -- (0.5,0);

\draw[->, >=latex] (G2) to[bend left=55] (G3);
\draw[->,>=latex] (G2) to[bend right=55] (G3);
\draw[2ar] (3.8,1) to[bend right=60] (3.8,-1);
\draw[2ar] (4.2,1) to[bend left=60] (4.2,-1);
\draw[3ar] (3.5,0) -- (4.5,0);
\draw (3.5,0) -- (4.5,0);
\end{tikzpicture}
\end{center} 

Then, we get the morphism of semi-exchangers 

\begin{center}
\begin{tikzpicture}
\node (G1) at (-2.5,0) {${\frak G_1}$};
\node (G3) at (2.5,0) {${\frak G_3}$};
\node (M) at (0,1.44) {${\cal M}_1\Dprod_{\frak G_2}{\cal M}_2$};
\node (N) at (0,-1.4) {${\cal N}_1\Dprod_{\frak G_2}{\cal N}_2$};
\node (P) at (-1.4,0) {$P_1\Dprod P_2$};
\node (Q) at (1.4,0) {$Q_1\Dprod Q_2$};
\node (e) at (0,0.3) {$\eta\Dprod \zeta$};

\draw[->, >=latex] (G1) to[bend left=45] (G3);
\draw[->,>=latex] (G1) to[bend right=45] (G3);
\draw[2ar] (-0.2,1) to[bend right=60] (-0.2,-1);
\draw[2ar] (0.2,1) to[bend left=60] (0.2,-1);
\draw[3ar] (-0.5,0) -- (0.5,0);
\draw (-0.5,0) -- (0.5,0);
\end{tikzpicture}
\end{center} 
by setting $(\eta\Dprod \zeta)([p_1,p_2])\colonequals [\eta(p_1),\zeta(p_2)]$. Indeed, this is well defined, for the maps $\eta\colon P_1\rTo Q_1$ and $\zeta\colon Q_1\rTo Q_2$ commutes with the groupoid actions involved. It is now a matter of simple algebraic verifications to check the coherence laws below; so we omit the proofs.  

\begin{pro}\label{pro2:coherence_exchangers}
Suppose we have four morphisms of semi-exchangers as below 
\begin{center}
\begin{tikzpicture}
\node (G1) at (-6,0) {${\frak G}_1$};
\node (G2) at (0,0) {${\frak G}_2$};
\node (G3) at (6,0) {${\frak G}_3$};
\node (M1) at (-3,2.5) {${\cal M}_1$};
\node (N1) at (-3,0) {${\cal N}_1$};
\node (R1) at (-3,-2.5) {${\cal R}_1$};
\node (M2) at (3,2.5) {${\cal M}_2$};
\node (N2) at (3,0) {${\cal N}_2$};
\node (R2) at (3,-2.5) {${\cal R}_2$};
\node (P1) at (-4.2,1) {$P_1$};
\node (Q1) at (-1.8,1) {$Q_1$};
\node (S1) at (-4.2,-1) {$S_1$};
\node (T1) at (-1.8,-1) {$T_1$};
\node (P2) at (1.8,1) {$P_2$};
\node (Q2) at (4.2,1) {$Q_2$};
\node (S2) at (1.8,-1) {$S_2$};
\node (T2) at (4.2,-1) {$T_2$};
\node (e1) at (-3,1.3) {$\eta_1$};
\node (z1) at (-3,-0.7) {$\zeta_1$};
\node (e2) at (3,1.3) {$\eta_2$};
\node (z2) at (3,-0.7) {$\zeta_2$}; 

\draw[-] (G1) to[bend left=30] (-3.3,2.5);
\draw[->,>=latex] (-2.7,2.5) to[bend left=30] (G2);
\draw[->,>=latex] (G1) -- (N1) -- (G2); 
\draw[-] (G1) to[bend right=30] (-3.3,-2.5);
\draw[->,>=latex] (-2.7,-2.5) to[bend right=30] (G2);
\draw[-] (G2) to[bend left=30] (2.7,2.5);
\draw[->,>=latex] (3.3,2.5) to[bend left=30] (G3);
\draw[->,>=latex] (G2) -- (N2) --(G3);
\draw[-] (G2) to[bend right=30] (2.7,-2.5);
\draw[->,>=latex] (3.3,-2.5) to[bend right=30] (G3);

\draw[2ar] (-3.4,2.3) to[bend right=60] (-3.4,0.2);
\draw[2ar] (-2.6,2.3) to[bend left=60] (-2.6,0.2);

\draw[2ar] (-3.4,-0.2) to[bend right=60] (-3.4,-2.3);
\draw[2ar] (-2.6,-0.2) to[bend left=60] (-2.6,-2.3);

\draw[2ar] (2.6,2.3) to[bend right=60] (2.6,0.2);
\draw[2ar] (3.4,2.3) to[bend left=60] (3.4,0.2);

\draw[2ar] (2.6, -0.2) to[bend right=60] (2.6,-2.3);
\draw[2ar] (3.4,-0.2) to[bend left=60] (3.4,-2.3);

\draw[3ar] (-3.8,1) -- (-2.2,1);
\draw (-3.8,1) -- (-2.2,1);
\draw[3ar] (-3.8,-1) --(-2.2,-1);
\draw (-3.8,-1) --(-2.2,-1);
\draw[3ar] (2.2,1) --(3.8,1);
\draw (2.2,1) -- (3.8,1);
\draw[3ar] (2.2,-1) -- (3.8,-1);
\draw (2.2,-1) -- (3.8,-1);
\end{tikzpicture}
\end{center}

Then the following diagram of morphisms of semi-exchangers commutes 
\[
\xymatrix{\relax
(P_1\Dprod P_2)\underset{\cal N_1\Dprod_{\frak G_2}\cal N_2}{\bullet}(S_1\Dprod S_2) \ar@3{->}[rrr]^{(\eta_1\Dprod \eta_2)\star_v(\zeta_1\Dprod \zeta_2)} \ar@3{<->}[d]_-{\eta_{P_1,P_2,S_1,S_2}} &&& (Q_1\Dprod Q_2)\underset{\cal N_1\Dprod_{\frak G_2}\cal N_2}{\bullet}(T_1\Dprod T_2) \ar@3{<->}[d]^-{\eta_{Q_1,Q_2,T_1,T_2}} \\ 
(P_1\underset{\cal N_1}{\bullet}S_1)\Dprod(P_2\underset{\cal N_2}{\bullet}S_2) \ar@3{->}[rrr]^{(\eta_1\star_v\zeta_1)\Dprod (\eta_2\star_v\zeta_2) } &&& (Q_1\underset{\cal N_1}{\bullet}T_1)\Dprod (Q_2\underset{\cal N_2}{\bullet}T_2)
}
\] 
\end{pro}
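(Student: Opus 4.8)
The plan is to verify that the square commutes by evaluating both of its composite $3$--arrows on an arbitrary representative and checking that the outputs coincide. Every $3$--arrow in the diagram is induced by a continuous map given by an explicit formula on tuples of points of the underlying exchanger spaces, so the statement reduces to the elementary fact that shuffling the entries of a tuple commutes with applying a prescribed map in each entry separately; consequently there is no genuine obstacle here, only a bookkeeping one.

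First I would reduce to the case where the three crossed modules in each column share a single unit space, exactly as in the proofs of Lemma~\ref{lem:diamond_crossing} and Proposition~\ref{pro:coherence_exchangers}, so that the pullback decorations can be dropped; the general case then follows by running the identical computation on the appropriate pulled-back data. Next I would record the explicit descriptions: unwinding the construction in the proof of Proposition~\ref{pro:coherence_exchangers}, the interchange isomorphism is the component--shuffling homeomorphism
\[
\eta_{P_1,P_2,S_1,S_2}\colon\; \big[[p_1,p_2],[s_1,s_2]\big]\;\mto\;\big[[p_1,s_1],[p_2,s_2]\big],
\]
and $\eta_{Q_1,Q_2,T_1,T_2}$ is given by the same formula with $Q_i,T_i$ replacing $P_i,S_i$; by definition of the spatial composition, $\eta_1\Dprod\eta_2$ sends $[p_1,p_2]$ to $[\eta_1(p_1),\eta_2(p_2)]$ and $\zeta_1\Dprod\zeta_2$ sends $[s_1,s_2]$ to $[\zeta_1(s_1),\zeta_2(s_2)]$; and by definition of the vertical composition of $3$--arrows, $\eta_i\star_v\zeta_i$ sends $[p_i,s_i]$ to $[\eta_i(p_i),\zeta_i(s_i)]$ for $i=1,2$.

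It then remains to chase $\big[[p_1,p_2],[s_1,s_2]\big]$ around the two legs. The top--then--right leg first produces, via $(\eta_1\Dprod\eta_2)\star_v(\zeta_1\Dprod\zeta_2)$, the class $\big[[\eta_1(p_1),\eta_2(p_2)],[\zeta_1(s_1),\zeta_2(s_2)]\big]$, to which $\eta_{Q_1,Q_2,T_1,T_2}$ assigns $\big[[\eta_1(p_1),\zeta_1(s_1)],[\eta_2(p_2),\zeta_2(s_2)]\big]$; the left--then--bottom leg first applies $\eta_{P_1,P_2,S_1,S_2}$, producing $\big[[p_1,s_1],[p_2,s_2]\big]$, to which $(\eta_1\star_v\zeta_1)\Dprod(\eta_2\star_v\zeta_2)$ assigns the same class $\big[[\eta_1(p_1),\zeta_1(s_1)],[\eta_2(p_2),\zeta_2(s_2)]\big]$. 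Hence the diagram commutes. The only step requiring care --- and already handled by Proposition~\ref{pro:coherence_exchangers} together with the definitions of $\Dprod$ and of the balanced product $\underset{\cal N}{\bullet}$ --- is the well-definedness of these shuffles on the relevant quotient spaces; this follows from the equivariance of $\eta_1,\eta_2,\zeta_1,\zeta_2$ with respect to the $\cal H_j$--actions defining the diamond and balanced products, which ensures that representatives are carried to representatives. I expect this verification of well-definedness to be the most delicate point, though it remains entirely routine.
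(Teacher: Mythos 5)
Your argument is correct and is precisely the ``simple algebraic verification'' that the paper declares routine and omits: both legs of the square send a representative $\big[[p_1,p_2],[s_1,s_2]\big]$ to $\big[[\eta_1(p_1),\zeta_1(s_1)],[\eta_2(p_2),\zeta_2(s_2)]\big]$, and well-definedness on the quotients follows from the $\cal H_j$--equivariance built into the definitions of morphisms of semi-exchangers. Nothing further is needed.
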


A second coherence relation upon compositions of morphisms of semi-exchangers is given by the following proposition. 

\begin{pro}\label{pro3:coherence_exchangers}
Suppose given four morphisms of semi-exchangers as below 
\begin{center}
\begin{tikzpicture}
\node (G1) at (-2,0) {${\frak G_1}$};
\node (G2) at (2,0) {${\frak G_2}$};
\node (G3) at (6,0) {${\frak G_3}$};
\node (M1) at (0,1.42) {${\cal M}_1$};
\node (N1) at (0,-1.42) {${\cal N}_1$};
\node (M2) at (4,1.42) {${\cal M}_2$};
\node (N2) at (4,-1.42) {${\cal N}_2$};
\node (P1) at (-1.42,0) {$P_1$};
\node (Q1) at (0,0) {$Q_1$};
\node (S1) at (1.42,0) {$S_1$};
\node (e1) at (-0.71,0.3) {$\eta_1$};
\node (e2) at (0.71,0.3) {$\eta_2$};
\node (P2) at (2.58,0) {$P_2$};
\node (Q2) at (4,0) {$Q_2$};
\node (S2) at (5.42,0) {$S_2$};
\node (z1) at (3.29,0.3) {$\zeta_1$};
\node (z2) at (4.71,0.3) {$\zeta_2$};

\draw[->, >=latex] (G1) to[bend left=60] (G2);
\draw[->,>=latex] (G1) to[bend right=60] (G2);
\draw[2ar] (0,1) -- (Q1) -- (0,-1);
\draw[2ar] (-0.8,0.8) to[bend right=50] (-0.8,-0.8);
\draw[2ar] (0.8,0.8) to[bend left=50] (0.8,-0.8);
\draw[3ar] (-1,0) -- (-0.25,0);
\draw (-1,0) -- (-0.25,0);
\draw[3ar] (0.25,0) -- (1,0);
\draw (0.25,0) -- (1,0);

\draw[->, >=latex] (G2) to[bend left=60] (G3);
\draw[->,>=latex] (G2) to[bend right=60] (G3);
\draw[2ar] (4,1) -- (Q2) -- (4,-1);
\draw[2ar] (3.2,0.8) to[bend right=50] (3.2,-0.8);
\draw[2ar] (4.8,0.8) to[bend left=50] (4.8,-0.8);
\draw[3ar] (3,0) -- (3.75,0);
\draw (3,0) -- (3.75,0);
\draw[3ar] (4.25,0) -- (5,0);
\draw (4.25,0) -- (5,0);
\end{tikzpicture}
\end{center}
Then 
\[
(\eta_1\Dprod \zeta_1)\star_h(\eta_2\Dprod\zeta_2)=(\eta_1\star_h\eta_2)\Dprod(\zeta_1\star_h\zeta_2).
\]
\end{pro}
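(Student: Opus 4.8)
The plan is to reduce the asserted identity $(\eta_1\Dprod\zeta_1)\star_h(\eta_2\Dprod\zeta_2)=(\eta_1\star_h\eta_2)\Dprod(\zeta_1\star_h\zeta_2)$ to a bare computation on representatives, exactly as the preceding coherence results (Propositions~\ref{pro:coherence_exchangers} and~\ref{pro2:coherence_exchangers}) were handled: both sides are continuous maps built out of the underlying maps $\eta_i,\zeta_i$ by the explicit formulas for diamond product ($\Dprod$) and horizontal composition ($\star_h$) of semi-exchanger morphisms, so it suffices to check that they agree on each equivalence class of a point of the relevant space. First I would unwind the domain: horizontal composition of the semi-exchangers $P_1\stackrel{\eta_1}{\exch}Q_1$ and $P_2\stackrel{\eta_2}{\exch}Q_2$ (over $\frak G_2$) is computed via the fibered-product-and-quotient construction $P_i\Dprod\text{-}$ from \S~\ref{subs:exchangers}, while the horizontal composites $\eta_1\star_h\eta_2$ and $\zeta_1\star_h\zeta_2$ are the maps $[p,q]\mapsto[\eta_i(p),\ldots]$ of Theorem~\ref{thm:2-cat}; a generic element of the source of both sides is then a class of the form $[[p_1,p_2],[p_1',p_2']]$ with the appropriate matching conditions on moment maps.

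The key steps, in order, are: (1) write out $(\eta_1\Dprod\zeta_1)\star_h(\eta_2\Dprod\zeta_2)$ applied to such a class — first apply $\eta_i\Dprod\zeta_i$ componentwise inside each diamond factor using $(\eta\Dprod\zeta)([p_1,p_2])=[\eta(p_1),\zeta(p_2)]$, then apply the horizontal-composition formula on the outer $\bullet$-product; (2) write out $(\eta_1\star_h\eta_2)\Dprod(\zeta_1\star_h\zeta_2)$ applied to the same class — now the outer operation is $\Dprod$, so one applies $\eta_1\star_h\eta_2$ to the first component and $\zeta_1\star_h\zeta_2$ to the second, each of which is a horizontal composite of morphisms; (3) observe that in both cases the result is the class represented by the quadruple $(\eta_1(p_1),\zeta_1(p_2),\eta_2(p_1'),\zeta_2(p_2'))$ up to the same family of quotient relations (the $\cal H_2\times\cal H_5$-type actions of~\eqref{eq:product_exchangers} interleaved with the $\bullet$-quotient by the middle groupoid), so the two classes coincide; (4) note well-definedness and continuity are inherited from the component maps, since each $\eta_i,\zeta_i$ commutes with the groupoid actions in play, so the two maps on the quotients are genuinely equal and not merely equal on representatives.

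I expect the only real obstacle to be bookkeeping: there are four source spaces $P_1,P_2,Q_1,Q_2$, each carrying several induced group-bundle actions, and the combined space carries both a diamond-style quotient (by a pair-groupoid action $\cal H\times\cal H$) and a $\bullet$-quotient (by the intermediate groupoid $\cal N_1\Dprod_{\frak G_2}\cal N_2$), so one must be careful that the order in which these identifications are imposed does not matter — which is precisely the content of the statement. The clean way to dispatch this is to identify, once and for all, a single "total" space of quadruples $(p_1,p_2,p_1',p_2')$ with all compatibility constraints, quotiented by the product of all the relevant $\cal H$-actions and the middle-groupoid action, and to check that both $(\eta_1\Dprod\zeta_1)\star_h(\eta_2\Dprod\zeta_2)$ and $(\eta_1\star_h\eta_2)\Dprod(\zeta_1\star_h\zeta_2)$ factor through this common quotient as the single map $(p_1,p_2,p_1',p_2')\mapsto(\eta_1(p_1),\zeta_1(p_2),\eta_2(p_1'),\zeta_2(p_2'))$. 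Everything else is the kind of routine algebraic verification the paper has been suppressing, so a short argument of this shape suffices.
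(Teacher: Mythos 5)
Your proposal does not establish the stated identity: you have conflated the two compositions $\star_h$ and $\star_v$ of semi-exchanger morphisms. In the paper, $\star_h$ is defined (immediately after the definition of a semi-exchanger morphism) as the literal composite $\eta\star_h\zeta\colonequals\zeta\circ\eta\colon P\rTo T$ of two morphisms $P\stackrel{\eta}{\exch}Q\stackrel{\zeta}{\exch}T$ lying in the \emph{same} hom-collection $\fXExt({\bf A},{\bf B})$; it is $\star_v$ that is computed on the $\bullet$-product by the formula $[p_1,p_2]\mto[\eta_1(p_1),\eta_2(p_2)]$. In Proposition~\ref{pro3:coherence_exchangers} the morphisms $\eta_1,\eta_2$ are composable in the $\star_h$ sense ($P_1\rTo Q_1\rTo S_1$, all semi-exchangers from $\cal M_1$ to $\cal N_1$), likewise $\zeta_1,\zeta_2$, and the pairs matched by $\Dprod$ are $(\eta_i,\zeta_i)$. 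Hence both sides of the identity are maps $P_1\Dprod P_2\rTo S_1\Dprod S_2$: there is no outer $\bullet$-quotient by the middle groupoid and no element of the form $[[p_1,p_2],[p_1',p_2']]$ anywhere in sight. The domain you describe, and your target quadruple $(\eta_1(p_1),\zeta_1(p_2),\eta_2(p_1'),\zeta_2(p_2'))$ in which $\eta_1$ and $\eta_2$ are applied to \emph{different} points, are exactly the data of Proposition~\ref{pro2:coherence_exchangers} (the interchange of $\Dprod$ with $\star_v$), which is what your sketch in effect re-proves.

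The argument actually needed here is shorter than what you wrote. For $[p_1,p_2]\in P_1\Dprod P_2$ one has
\[
\bigl((\eta_1\Dprod \zeta_1)\star_h(\eta_2\Dprod\zeta_2)\bigr)([p_1,p_2])=(\eta_2\Dprod\zeta_2)\bigl([\eta_1(p_1),\zeta_1(p_2)]\bigr)=[\eta_2(\eta_1(p_1)),\zeta_2(\zeta_1(p_2))],
\]
while
\[
\bigl((\eta_1\star_h\eta_2)\Dprod(\zeta_1\star_h\zeta_2)\bigr)([p_1,p_2])=[(\eta_2\circ\eta_1)(p_1),(\zeta_2\circ\zeta_1)(p_2)],
\]
and these visibly coincide; well-definedness on classes is already guaranteed since each $\eta_i$ and $\zeta_i$ commutes with the relevant $\cal H$- and groupoid actions, so each map in the chain descends to the quotient defining $\Dprod$. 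Your underlying strategy (check equality on representatives, with continuity and well-definedness inherited from the components) is sound and is all the paper intends by ``simple algebraic verifications,'' but as written it is aimed at the wrong pair of compositions and would not compile into a proof of this proposition.
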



\subsection{Proof of Theorem~\ref{thm:3-cat}}

Given two groupoid crossed modules $\frak G_1$ and $\frak G_2$, we define $\XMod(\frak G_1,\frak G_2)$ be the collection of all crossings $\frak G_1\underset{\cal M}{\cross}\frak G_2$; in other words, it is the sub-collection of $\fXExt$ consisting  of all objects of the form $(\frak G_1,\cal M, \frak G_2)$. It follows from Theorem~\ref{thm:2-cat} that $\XMod(\frak G_1,\frak G_2)$ is a weak $2$--category whose $1$--arrows are semi-exchangers $\cal M\sxc \cal N$, and $2$--arrows are morphisms of these. The {\em weak unit} in $\XMod(\frak G_1,\frak G_2)$ associated to an arrow $\cal M$ is the trivial exchanger ${\bf I}_{\cal M}$ (cf. Example~\ref{ex:unit_exchanger}). Furthermore, as a consequence of the coherence laws established in Propositions~\ref{pro:coherence_exchangers},~\ref{pro2:coherence_exchangers}, and~\ref{pro3:coherence_exchangers}, we have the lemma below. 

\begin{lem}
Let $\frak G_i, i=1,2,3$, be groupoid crossed modules. Then the assignment 
\[
\Dprod\colon \XMod(\frak G_1,\frak G_2)\times \XMod(\frak G_2,\frak G_3)\rTo \XMod(\frak G_1,\frak G_3)
\]
sending a pair of crossings $(\cal M,\cal N)$ to the crossing $\cal M\Dprod_{\frak G_2}\cal N$ together with the correspondence mappings on semi-exchangers and their morphisms, is a weak $2$--functor.
\end{lem}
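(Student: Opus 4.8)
The plan is to view $\XMod(\frak G_1,\frak G_2)\times\XMod(\frak G_2,\frak G_3)$ as the product (again a weak $2$--category) of the weak $2$--categories furnished by Theorem~\ref{thm:2-cat}, to assemble the data of a weak $2$--functor out of it from the constructions of the previous two subsections, and then to verify the coherence axioms, nearly all of which have already been recorded. I would organise it as follows.

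\emph{The underlying assignment.} On objects, $\Dprod$ sends a pair of crossings $(\cal M,\cal N)$ to the diamond product crossing $\cal M\Dprod_{\frak G_2}\cal N$, which is a crossing from $\frak G_1$ to $\frak G_3$ by Definition and Proposition~\ref{pro:diamond_product}. On $1$--arrows, a pair of semi-exchangers $P_1$ (in $\XMod(\frak G_1,\frak G_2)$) and $P_2$ (in $\XMod(\frak G_2,\frak G_3)$) goes to their horizontal composition $P_1\Dprod P_2$, which is a semi-exchanger, and an exchanger whenever $P_1$ and $P_2$ are, by the construction of the horizontal composition of semi-exchangers. On $2$--arrows, a pair of semi-exchanger morphisms $\eta,\zeta$ goes to the spatial composition $\eta\Dprod\zeta$, $[p_1,p_2]\mapsto[\eta(p_1),\zeta(p_2)]$. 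Restricted to each pair of hom--categories this is a strict functor: it preserves identity semi-exchanger morphisms tautologically and is strictly compatible with their vertical composition $\star_h$ by Proposition~\ref{pro3:coherence_exchangers}.

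\emph{The structural $2$--isomorphisms.} The compositor is the family $\eta_{P_1,P_2,Q_1,Q_2}$ of exchanger isomorphisms from Proposition~\ref{pro:coherence_exchangers}, comparing $(P_1\Dprod P_2)\bullet(Q_1\Dprod Q_2)$ with $(P_1\bullet Q_1)\Dprod(P_2\bullet Q_2)$; its naturality with respect to spatial composition of $2$--arrows is exactly the commuting diagram of Proposition~\ref{pro2:coherence_exchangers}. The unitor is the canonical homeomorphism ${\bf I}_{\cal M}\Dprod{\bf I}_{\cal N}\cong{\bf I}_{\cal M\Dprod_{\frak G_2}\cal N}$ which, on representatives, identifies a pair $(m,n)$ with $\b_2(m)=\d_2(n)$ with the arrow $[m,n]$ of $\cal M\Dprod_{\frak G_2}\cal N$; this is precisely the image under $\Dprod$ of the pair of trivial exchangers of Example~\ref{ex:unit_exchanger}. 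Both families are evidently natural in the crossings and semi-exchangers involved, being defined by canonical rearrangements of representatives.

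\emph{Coherence.} What remains is to check the pentagon axiom relating the compositor $\eta_{\cdot}$ to the associators $a_{\cdot,\cdot,\cdot}$ of Theorem~\ref{thm:2-cat}, together with the two triangle axioms relating $\eta_{\cdot}$ to the unitors $r_{\cdot},l_{\cdot}$. In each case every map in sight --- the associators $a_{P,Q,T}\colon[p,[q,t]]\mapsto[[p,q],t]$, the unitors $r_P,l_P$, the compositor $\eta_{\cdot}$, and the maps induced by the diamond products --- is a canonical permutation of a tuple of representatives followed by passage to a quotient, so each coherence square commutes already on representatives and hence descends. I expect the main obstacle to be purely notational: one must check at every step that the relevant component-permuting map is invariant under the appropriate $\cal H_i\times\cal H_j$--action, so that it does descend to the quotient spaces defining the iterated diamond products. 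This is exactly the content of the propositions cited above, applied repeatedly; no new idea beyond those coherence laws is needed.
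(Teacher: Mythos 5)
Your proposal is correct and follows essentially the same route as the paper, which offers no separate proof of this lemma beyond asserting it as a consequence of Propositions~\ref{pro:coherence_exchangers}, \ref{pro2:coherence_exchangers}, and~\ref{pro3:coherence_exchangers} — exactly the three results you deploy as compositor, naturality of the compositor, and functoriality on hom-categories, with the remaining unitor and pentagon/triangle checks being the routine on-representatives verifications you describe. (One cosmetic slip: the composition of $2$--arrows within a hom-category is what the paper calls $\star_h$, its \emph{horizontal} composition, not the vertical one.)
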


Now, to end the proof, we show that the trivial crossed extensions are indeed weak identity $1$--morphisms. More precisely, we need to show that

\begin{pro}
Let $\frak G_1$ and $\frak G_2$ be groupoid crossed modules. For every crossing 
\[
\xymatrix{
\cal H_1 \ar[rr]^{\p_1} \ar[rd]_{\a_1} && \cal G_1 \\
& \cal M \ar[ru]_{\a_2}  \ar[rd]^{\b_2} & \\
\cal H_2 \ar[ru]^{\b_1} \ar[rr]^{\p_2} && \cal G_2
}
\]
there are semi-exchangers 
\[
\cal M\Dprod_{\frak G_2}\cal O_{\frak G_2} \stackrel{R_{\frak G_2}}{\sxc} \cal M\stackrel{\bar{R}_{\frak G_2}}{\sxc}\cal M\Dprod_{\frak G_2}\cal O_{\frak G_2}
\]
and morphisms 
\[
R_{\frak G_2}\underset{\cal M}{\bullet}\bar{R}_{\frak G_2}\exch {\bf I}_{\cal M\Dprod_{\frak G_2}\cal O_{\frak G_2}} \ {\rm and \ } \ \bar{R}_{\frak G_2}\underset{\cal M\Dprod_{\frak G_2}\cal O_{\frak G_2}}{\bullet}R_{\frak G_2}\exch {\bf I}_{\cal M}.
\]
Analogously, there are semi-exchangers
\[
\cal O_{\frak G_1}\Dprod_{\frak G_1}\cal M \stackrel{L_{\frak G_1}}{\sxc} \cal M\stackrel{\bar{L}_{\frak G_1}}{\sxc}\cal O_{\frak G_1}\Dprod_{\frak G_1}\cal M
\]
and morphisms  
\[
L_{\frak G_1}\underset{\cal M}{\bullet}\bar{L}_{\frak G_1}\exch {\bf I}_{\cal O_{\frak G_1}\Dprod_{\frak G_1}\cal M} \ {\rm and \ } \ \bar{L}_{\frak G_1}\underset{\cal O_{\frak G_1}\Dprod_{\frak G_1}\cal M}{\bullet}L_{\frak G_1}\exch {\bf I}_{\cal M}.
\]
\end{pro}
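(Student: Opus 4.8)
\emph{Strategy.} As throughout the paper, it suffices to treat the case in which all the groupoids involved share a single unit space $X$ and $\tau=\s=\Id_X$; the general case follows by the usual passage to pullbacks. The plan is to deduce all four assertions from one structural fact: diamonding an arbitrary crossing with a trivial crossed extension changes nothing up to a canonical strict isomorphism. Concretely, I will build a strict isomorphism of crossings $\Phi_{\frak G_2}\colon\cal M\Dprod_{\frak G_2}\cal O_{\frak G_2}\stackrel{\cong}{\rTo}\cal M$ which, paired with $\Id_{\frak G_1}$ and $\Id_{\frak G_2}$, is a homomorphism of crossed extensions $(\frak G_1,\cal M\Dprod_{\frak G_2}\cal O_{\frak G_2},\frak G_2)\rTo(\frak G_1,\cal M,\frak G_2)$ in the sense of Definition~\ref{def:SCM}; here the two isomorphism conditions on the $\a_1$-- and $\b_1$--parts are automatic since $\Phi_{\frak G_2}$ is a global isomorphism intertwining the relevant maps. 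Because the identity strict morphisms are hypercovers and a groupoid isomorphism is a weak equivalence, this is an equivalence of crossed extensions, so Proposition~\ref{pro:equivalence_vs_exchanger} hands us the exchanger $R_{\frak G_2}\colonequals P_{\Phi_{\frak G_2}}$ from $\cal M\Dprod_{\frak G_2}\cal O_{\frak G_2}$ to $\cal M$. Taking $\bar R_{\frak G_2}$ to be the reversed exchanger of $R_{\frak G_2}$ provided by Lemma~\ref{lem:exchanger_product}, that same lemma supplies the exchanger isomorphisms $R_{\frak G_2}\underset{\cal M}{\bullet}\bar R_{\frak G_2}\exch{\bf I}_{\cal M\Dprod_{\frak G_2}\cal O_{\frak G_2}}$ and $\bar R_{\frak G_2}\underset{\cal M\Dprod_{\frak G_2}\cal O_{\frak G_2}}{\bullet}R_{\frak G_2}\exch{\bf I}_{\cal M}$, which are exactly the morphisms demanded. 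The left--unit assertions come out identically, starting from a strict isomorphism of crossings $\cal O_{\frak G_1}\Dprod_{\frak G_1}\cal M\cong\cal M$.

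\emph{Building the isomorphism.} Recall from Definition~\ref{def:trivial_crossing} that $\cal O_{\frak G_2}=\cal H_2\rtimes\cal G_2$ with structure maps $h\mapsto(h^{-1},\p_2(h))$, $(h,g)\mapsto g$, $h\mapsto(h,s(h))$, $(h,g)\mapsto\p_2(h)g$, and that in $\cal M\Dprod_{\frak G_2}\cal O_{\frak G_2}$ the factor $\cal O_{\frak G_2}$ is inserted, in the notation of Lemma~\ref{lem:diamond_crossing}, through its first two maps. A typical element is then a class $[m,(h,\b_2(m))]$ with $m\in\cal M$ and $h\in\cal H_2^{t(m)}$, the equivalence being generated by the relations $[\b_1(h')m,((h')^{-1},\p_2(h'))(h,\b_2(m))]\sim[m,(h,\b_2(m))]$ for $h'\in\cal H_2^{t(m)}$. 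A one--line computation with the crossed--module identities gives $(h^{-1},\p_2(h))(h,g)=(1,\p_2(h)g)$, whence $[m,(h,\b_2(m))]=[\b_1(h)m,(1,\p_2(h)\b_2(m))]$; so each class has a unique representative whose $\cal H_2$--entry is a unit, and $m\mapsto[m,(1,\b_2(m))]$ is a bijection $\cal M\to\cal M\Dprod_{\frak G_2}\cal O_{\frak G_2}$ with inverse $[m,(h,g)]\mapsto\b_1(h)m$. It is a homomorphism of topological groupoids straight from the definition of the diamond product, and it intertwines the two families of crossing morphisms by the commuting squares and complexes of $\cal M$ (axioms~\ref{CR2},~\ref{CR3}): the vanishing of $\a_2\circ\b_1$ and of $\b_2\circ\a_1$, together with $\b_2\circ\b_1=\p_2$ and $\a_2\circ\a_1=\p_1$. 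This is $\Phi_{\frak G_2}$. For the left unit one proceeds symmetrically: in $\cal O_{\frak G_1}\Dprod_{\frak G_1}\cal M$ the factor $\cal O_{\frak G_1}$ enters via its last two maps, a class $[(h,g),m]$ with $\p_1(h)g=\a_2(m)$ normalises to the unique representative whose $\cal H_1$--entry is a unit, and $m\mapsto[(1,\a_2(m)),m]$ is the desired strict isomorphism of crossings.

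\emph{Where the real work is.} None of this is deep: once the normal forms are pinned down, well--definedness, surjectivity, the groupoid--homomorphism property, and compatibility with the structure maps $\a_i,\b_i$ are all mechanical, and the passage to exchangers and their coherences is purely formal through Proposition~\ref{pro:equivalence_vs_exchanger} and Lemma~\ref{lem:exchanger_product}. The one place where care is genuinely needed is the bookkeeping: one must keep straight which of the four structure maps of $\cal O_{\frak G}$ plays the role of an $\a_i$ and which the role of a $\b_i$ inside the diamond product, since this is precisely what forces the right unit to be $\cal M\Dprod_{\frak G_2}\cal O_{\frak G_2}$ and the left unit to be $\cal O_{\frak G_1}\Dprod_{\frak G_1}\cal M$ (rather than the other pairings), and a careless matching produces a diamond product that fails to collapse onto $\cal M$.
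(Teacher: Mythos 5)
Your proposal is correct and rests on the same key computation as the paper's proof: the collapse $[m,(h,g)]\mapsto\b_1(h)m$ identifying $\cal M\Dprod_{\frak G_2}\cal O_{\frak G_2}$ with $\cal M$ (the paper encodes exactly this in the left action $[m,(h_2,g_2)]\cdot m'\colonequals\b_1(h_2)mm'$ on the bimodule $R_{\frak G_2}=\cal M$, rather than isolating it as a strict isomorphism of crossings). Your packaging --- promoting the collapse to an isomorphism and then invoking Proposition~\ref{pro:equivalence_vs_exchanger} and Lemma~\ref{lem:exchanger_product} to produce the exchangers and their coherence morphisms formally --- is a clean and slightly more systematic route to the same bimodules the paper writes down by hand.
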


\begin{proof}
As usual, we assume, for the sake of simplicity, that both crossed modules $\frak G_1$ and $\frak G_2$ have the common unit space $X=\cal M^0$.  We get the semi-exchanger $R_{\frak G_2}$ by setting $R_{\frak G_2}\colonequals \cal M$ and considering the groupoid left action of $\cal M\Dprod_{\frak G_2}\cal O_{\frak G_2}\rrTo X$ and the right groupoid action of $\cal M\rrTo X$ on the space $\cal M$ defined through the moment maps 
\[
X \stackrel{t}{\lTo} \cal M \stackrel{s}{\rTo} X
\]
and the formulas $[m,(h_2,g_2)]\cdot m'\colonequals \b_1(h_2)mm'$ and $m'\cdot m''\colonequals mm''$, respectively. Similarly, by taking $\bar{R}_{\frak G_2}$ to be the topological space $\cal M\Dprod_{\frak G_2}\cal O_{\frak G_2}$ acted upon on the left by $\cal M\rrTo X$ via the formula 
\[
m\cdot [m',(h_2,g_2)]\cdot [mm', (h_2^{\b_2(m)^{-1}},\b_2(m)g_2)], \ {\rm for \ } s(m)=t(m')=t(g_2)=s(h_2), 
\] 
and on the right by the groupoid $\cal M\Dprod_{\frak G_2}\cal O_{\frak G_2}\rrTo X$ by groupoid multiplication, we get a semi-exchanger. Now, the canonical projections 
\[
\cal M\underset{\cal M}{\bullet}(\cal M\Dprod_{\frak G_2}\cal O_{\frak G_2})\rTo \cal M\Dprod_{\frak G_2}\cal O_{\frak G_2}
\]
and 
\[
(\cal M\Dprod_{\frak G_2}\cal O_{\frak G_2})\underset{\cal M\Dprod_{\frak G_2}\cal O_{\frak G_2}}{\bullet}\cal M\rTo \cal M
\]
gives the desired morphisms of exchangers. Similar methods can easily be used to get $L_{\frak G_1}$ and its "weak inverse".  
\end{proof}



\end{document}